\newtheorem{thm}{Theorem}
\newtheorem{lem}[thm]{Lemma}
\newtheorem{cor}[thm]{Corollary}
\newtheorem*{thm*}{Theorem}
\theoremstyle{remark}
\newtheorem{remark}{Remark}
\DeclareMathOperator{\lin}{lin}
\DeclareMathOperator{\ord}{ord}
\DeclareMathOperator{\dimn}{dim}
\DeclareMathOperator{\essinf}{ess\, inf}
\DeclareMathOperator{\esssup}{ess\, sup}
\theoremstyle{remark}
\newtheorem*{rem}{Remark}
\newcommand{\R}{\mathbb R}
\newcommand{\C}{\mathbb C}
\newcommand{\N}{\mathbb N}
\newcommand{\Z}{\mathbb Z}
\newcommand{\LL}{\mathcal L}
\newcommand{\PP}{\mathbb P}
\newcommand{\NN}{\mathcal N}
\newcommand{\om}{\omega}
\newcommand{\mc}[1]{\mathcal{#1}}
\newcommand{\Om}{\Omega}
\newcommand{\bbp}{\mathbb{P}}
\newcommand{\sig}{\sigma}
\newcommand{\mcl}{\mathcal{L}}
\newcommand{\lam}{\lambda}
\newcommand{\ka}{\kappa}
\newcommand{\lex}[2]{\lam_{#2}({#1})}
\newcommand{\al}{\alpha}
\newcommand{\Blaschke}{\mathsf{Blaschke_R(\Omega)}}
\begin{document}
\title[Lyapunov spectrum stability and collapse]{Stability and Collapse of the Lyapunov 
spectrum for Perron-Frobenius Operator cocycles}
\author{Cecilia Gonz\'alez-Tokman and Anthony Quas}
\begin{abstract}
In this paper, we study random Blaschke products, acting on the
unit circle, and consider the cocycle of Perron-Frobenius operators acting on 
Banach spaces of
analytic functions on an annulus. We completely describe the Lyapunov spectrum
of these cocycles.
As a corollary, we obtain a simple random Blaschke product system
where the Perron-Frobenius cocycle has infinitely many distinct Lyapunov exponents, but where
arbitrarily small natural perturbations cause a complete collapse of the Lyapunov spectrum,
except for the exponent 0 associated with the absolutely continuous invariant 
measure. That is, under perturbations, the Lyapunov exponents become 0 with
multiplicity 1, and $-\infty$ with infinite multiplicity.
This is superficially similar to the finite-dimensional phenomenon,
discovered by Bochi \cite{Bochi-thesis}, that away from the uniformly hyperbolic setting,
small perturbations can lead to a collapse of the Lyapunov spectrum to zero.
In this paper, however, the cocycle and its perturbation are explicitly described; and further, 
the mechanism for collapse is quite different.

We study stability of the Perron-Frobenius cocycles
arising from general random Blaschke products. We give a necessary and sufficient
criterion for stability of the Lyapunov spectrum in terms of the derivative
of the random Blaschke product at its random fixed point, and use this to show
that an open dense set of Blaschke product cocycles have hyperbolic 
Perron-Frobenius cocycles.

In the final part, we prove a relationship between the Lyapunov spectrum of a single
cocycle acting on two different Banach spaces, allowing us to draw conclusions
for the same cocycles acting on $C^r$ functions spaces.
\end{abstract}
\maketitle

\section{Introduction}
A well known technique for computing rates of decay of correlation for hyperbolic
dynamical systems is based on identifying the spectral gap for its Perron-Frobenius
operator, acting on a suitable Banach space of functions (different function spaces 
give rise to different spectral gaps, and so to different rates of decay). In particular, 
exceptional eigenvalues (those outside the essential spectral radius) play a key role 
in determining rates of decay of correlation. 

In the autonomous case, Keller and Liverani \cite{KellerLiverani} established, in a landmark
paper based on spectral techniques, robust checkable conditions under which the 
exceptional eigenvalues, and their corresponding eigenspaces, vary continuously in
response to small perturbations of the dynamics, or more precisely, of its Perron-Frobenius operator. 

With Froyland,
we have been working towards random analogues of the result of Keller and Liverani, where instead of 
the Perron-Frobenius operator of a single map, one has a random dynamical system
(that is, a skew product of the \emph{base dynamics} $\sigma:\Omega\to\Omega$
with $\omega$-dependent maps, $T_\omega$, in the fibres). One then forms
the random linear dynamical system, or cocycle, where the fibre maps are now the Perron-Frobenius
operators, $\LL_{T_\omega}$, acting on a Banach $X$. In the case of cocycles, 
spectral techniques no longer apply. 
A significant motivation for this line of research comes from the fact that Froyland 
and collaborators have developed 
effective tools making use of finite-dimensional approximations of Perron-Frobenius cocycles
to identify regions of interest in environmental dynamical systems (see e.g. \cite{SFM}).
Our program is aimed towards assessing the robustness of these methods.

The non-autonomous analogue of eigenvalues, Lyapunov exponents, are known
to be far more sensitive to perturbations than eigenvalues. Based on an outline proposed by Ma\~n\'e, 
Bochi showed in his thesis \cite{Bochi-thesis}
 that on any compact surface there is a residual set of $C^1$ area-preserving 
 diffeomorphisms that are either Anosov or have all Lyapunov exponents equal to zero.
Similar results are established for two-dimensional matrix cocycles.
Bochi and Viana \cite{BochiViana} extended this to higher-dimensional systems and
cocycles, so that these results show that Lyapunov exponents are highly unstable. 

On the other hand, Ledrappier and Young \cite{LedrappierYoung} showed that if one makes
\emph{absolutely continuous} perturbations to invertible matrix cocycles, 
small perturbations lead to small changes in the Lyapunov exponents. Ochs \cite{Ochs} 
then showed in the finite-dimensional invertible case, small changes in Lyapunov 
exponents lead
to small changes (in probability) in the Oseledets spaces, the non-autonomous 
analogue of (generalized) eigenspaces.

In \cite{FGQ-Nonlin}, with Froyland, we gave fairly general conditions under which
the top Oseledets space responds continuously to perturbations of the Perron-Frobenius operator
cocycle. In \cite{FGQ-CPAM}, we extended the results of Ledrappier and Young, and of Ochs
to the semi-invertible setting (where the base dynamics are assumed to be invertible,
but no injectivity or invertibility assumptions are made on the operators). 
In \cite{FGQ-TAMS}, we (again with Froyland) also gave the first result
on stability of Lyapunov exponents in an infinite-dimensional setting:
we consider
Hilbert-Schmidt cocycles on a separable Hilbert space with exponential decay
of the entries. In this case, there is no single natural notion of noise, but we consider
perturbations by additive noise with faster exponential decay. Again, we recover
in \cite{FGQ-TAMS} the stability of the Lyapunov exponents and Oseledets spaces.
Another related result is due to Nakano and Wittsten \cite{NakanoWittsten}, who study 
random perturbations of 
partially expanding maps of the torus (skew products of circle rotations over uniformly 
expanding maps of the circle) whose transfer operators have a spectral gap. Using 
semi-classical analysis, they show that the spectral 
gap is preserved under small random perturbations.

One could interpret the available results collectively as saying that carefully chosen 
perturbations may lead to radical change to
the Lyapunov spectrum, while noise-like perturbations of the cocycle tend to lead to
small changes to the Lyapunov spectrum.

The examples that we focus on in this work are expanding finite Blaschke products,  a class of analytic maps
from the unit circle to itself. The Perron-Frobenius operators for single maps of this
type, acting on the Hardy space of a suitable annulus, were studied by Bandtlow,
Just and Slipantschuk \cite{BJS}, where they used results on composition operators
to obtain a precise description 
of the set of eigenvalues. Indeed, the eigenvalues they obtain are precisely the non-negative
powers of the derivative of the underlying Blaschke product at its unique (attracting) fixed
point in the unit disc, and their complex conjugates.
We study a random version, where instead of a single 
Blaschke product, $T$, one applies a Blaschke product $T_\omega$ that is 
selected by the base dynamics.

In Section~\ref{sec:LyapSpectrum}
we generalize the results of \cite{BJS}
to the non-autonomous setting, and show that the Lyapunov spectrum 
of the Perron-Frobenius cocycle is given by the non-negative multiples of the
Lyapunov exponent of the underlying Blaschke
product cocycle at the random attracting fixed point in the unit disc (with multiplicity
two for all positive multiples of this exponent). To our knowledge, this is the first time
that a complete description of an infinite Lyapunov spectrum of a Perron-Frobenius cocycle
has been given. We find it quite remarkable that the Perron-Frobenius spectrum 
(describing what happens to densities on the unit circle) is governed in this way 
by the derivative at the random fixed point in the interior of the unit disc.

In Section \ref{sec:phasetrans}, 
we show, to our surprise, that there are natural examples of random dynamical systems,
where natural perturbations of the Perron-Frobenius cocycle
lead to a collapse of the Lyapunov spectrum.
We focus on a particular Blaschke
product cocycle (with maps $T_0$ and $T_1$ applied in an i.i.d.\ way,
where $T_0(z)=z^2$ and $T_1(z)=[(z+\frac 14)/(1+\frac z4)]^2$). 
The Perron-Frobenius operator of $T_0$ is known to be
highly degenerate \cite[Exercise 2.14]{Baladi}. If the frequency of applying $T_0$
is $p$, then we find a phase transition: for $p\ge \frac 12$, the Lyapunov spectrum
collapses (so that there is an exponent 0 with multiplicity 1, and all other Lyapunov exponents
are $-\infty$), while for $p<\frac 12$, there is a complete (infinite) Lyapunov spectrum. We then
consider normal perturbations (corresponding to adding random normal noise
to the dynamical system), $\LL^\epsilon_0$ and $\LL^\epsilon_1$ and show that
for $p\ge \frac 14$, there is collapse of the Lyapunov spectrum for all $\epsilon>0$.

In particular, for $\frac 14\le p<\frac 12$, the unperturbed system has a complete
Lyapunov spectrum, while arbitrarily small normal perturbations have a collapsed
Lyapunov spectrum. We also show that collapse can occur for every $p>0$ 
in the setting of uniform perturbations. Unlike in Bochi's setting, our perturbations
are explicitly described and arise naturally in the area. 

In Section \ref{sec:Lyapstab}, we show that, under natural conditions on the underlying Blaschke
product cocycle, the corresponding Perron-Frobenius cocycle has some hyperbolicity properties,
guaranteeing a uniform separation between fast and slow Oseledets subspaces. This allows us
to give a simple necessary and sufficient condition
for stability of the Lyapunov spectrum.
In particular, we show that the set of stable cocycles is open and dense. 

In the Appendix we compare Lyapunov exponents and Oseledets splittings of random linear dynamical
systems that arise from  restricting the cocycle to a finer subspace. 
This allows us to study the action of the Perron-Frobenius cocycle on coarser Banach spaces
such as $C^r$.
(In the case of autonomous systems, this question has been addressed 
by Baladi and Tsujii in \cite{BaladiTsujii08}.)
We  construct an explicit example of a cocycle of analytic maps of the circle 
(indeed, of Blaschke products) whose Perron-Frobenius operator, when acting on a 
$C^r$ function space, has negative exceptional Lyapunov exponents. 
The corresponding situation in the autonomous case was first established by Keller and 
Rugh \cite{KellerRugh}, and it was suggested in \cite{BJS} that Blaschke products 
could also exhibit this phenomenon. 

\section{Statement of Theorems}
In this section, we state our main theorems. We will assume standard definitions, 
but for completeness, some terms used here will be defined in the next section.

For a finite Blaschke product,  set $r_T(R):=\max_{|z|=R}|T(z)|$.
By \cite{Tischler} if the restriction of a Blaschke product to the unit circle is expanding,
then $r_T(R)<R$ for some $R<1$. 
By the Hadamard three circle theorem, $R\mapsto r_T(R)$ satisfies the log-convexity property:
$r_T(R_1^{1-\lambda}R_2^\lambda)\le r_T(R_1)^{1-\lambda}r_T(R_2)^\lambda$
for $0<\lambda< 1$.
In particular for Blaschke products $r_T(1)=1$
so if $r_T(R)<R$ then $r_{T}(R')<R'$ for all $R'\in(R,1)$.
We let $\beta(T)=\inf\{R>0\colon r_T(R)<R\}$. Then 
$r_T(R)<R$ if and only if $R\in (\beta(T),1)$.  

For a Blaschke product cocycle $\mathcal T=(T_\omega)_{\omega\in\Omega}$, 
we define $r_{\mathcal T}(R)=\esssup_{\omega\in\Omega} r_{T_\omega}(R)$.
If $\mathcal T$ is finite, for $R\in (\esssup_\omega \beta(\omega),1)$ 
we have $r_{\mathcal T}(R)<R$. 
The condition $r_{\mathcal T}(R)<R$ will be imposed throughout the paper.

\begin{thm}[Lyapunov Spectrum of a Blaschke product cocycle]
\label{thm:LyapSpect}
Let $\sigma$ be an invertible ergodic measure-preserving transformation
of a probability space $(\Omega,\PP)$. Let $R<1$ and let
$\mathcal T=(T_\omega)_{\omega\in\Omega}$
be a family of finite Blaschke products, depending measurably on $\omega$,
satisfying $r:=r_{\mathcal T}(R)<R$.
Let $\LL_\omega$ denote the Perron-Frobenius operator of $T_\omega$,
acting on the Hilbert space $H^2(A_R)$ on the annulus 
$A_R\colon=\{z\colon R < |z|<1/R\}$.

Then the cocycle is compact and the following hold:
\begin{enumerate}
\item(Random Fixed Point).\label{it:randomfp}
There exists a measurable map $x\colon \Omega\to \overline {D_r}$ 
(with $x(\omega)$ written as $x_\omega$), such that 
$T_\omega(x_\omega)=x_{\sigma(\omega)}$.
For all $z\in D_{R}$, $T_{\sigma^{-N}\omega}^{(N)}(z):=
T_{\sigma^{-1}\omega}\circ\cdots
\circ T_{\sigma^{-N}\omega}(z)\to x_\omega$;
\item (Critical Random Fixed Point) If $\PP(\{\omega\colon 
T_\omega'(x_\omega)=0\})>0$, then the Lyapunov spectrum \label{it:criticalfp}
of the cocycle is 0 with multiplicity 1; and $-\infty$ with infinite multiplicity.
\item (Generic case) If $\PP(\{\omega\colon T_\omega'(x_\omega)=0\})=0$, then define
$\Lambda=\int\log |T_\omega'(x_\omega)|\,d\PP(\omega)$. This satisfies
$\Lambda\le \log(r/R)<0$.\label{it:generic}

If $\Lambda=-\infty$, then the Lyapunov spectrum of the cocycle is 0 with multiplicity 1;
and $-\infty$ with infinite multiplicity.

If $\Lambda>-\infty$, then the Lyapunov spectrum of the cocycle is
0 with multiplicity 1; and $n\Lambda$ with multiplicity 2 for each $n\in\N$.
The Oseledets space with exponent 0 is spanned by $1/(z-x_\omega)-1/(z-1/\bar x_\omega)$.
The Oseledets space with exponent $j\Lambda$ is spanned by two functions,
one a linear combination of 
$1/(z-x_\omega)^2,\ldots, 1/(z-x_\omega)^{j+1}$ with a pole of order $j+1$ at $x_\omega$;
the other a linear combination of $z^{k-1}/(1-\bar x_\omega z)^{k+1}$ for $k=1,\ldots,j$ with
a pole of order $j+1$ at $1/\bar x_\omega$. 
\end{enumerate}
\end{thm}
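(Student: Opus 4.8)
The plan is to reduce the study of the Perron--Frobenius cocycle on $H^2(A_R)$ to a more tractable model by exploiting the structure of finite Blaschke products. The key observation, following \cite{BJS}, is that a finite Blaschke product $T$ extends holomorphically to a neighbourhood of the closed disc, maps $\overline{D_R}$ strictly inside itself (since $r_T(R)<R$), and its Perron--Frobenius operator $\LL_T$ on $H^2(A_R)$ is conjugate (via duality / the splitting $H^2(A_R)\cong H^2(D_{1/R})\oplus \overline{H^2_0(D_{1/R})}$, i.e. functions holomorphic inside and functions vanishing at infinity holomorphic outside) to a weighted composition operator built from the inverse branches of $T$. Because the forward images $\overline{D_R}\supset T(\overline{D_R})\supset T^{(2)}(\overline{D_R})\supset\cdots$ shrink at a uniform rate $r/R$, the composition with $T_\omega$ on the ``interior'' summand is contracted into a small disc, which gives both compactness of the cocycle and lets us locate the dynamics near the orbit of the random fixed point. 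First I would establish part \eqref{it:randomfp}: the maps $T_{\sigma^{-1}\omega}\circ\cdots\circ T_{\sigma^{-N}\omega}$ send $D_R$ into a disc of radius $r<R$, and by the uniform contraction (Schwarz--Pick on $D_R$, or directly $r_{\mathcal T}(R)<R$) these iterates form a Cauchy sequence, uniformly on $D_R$, converging to a point $x_\omega\in\overline{D_r}$; measurability is automatic from the construction as a pointwise limit of measurable maps, and the cocycle relation $T_\omega(x_\omega)=x_{\sigma\omega}$ follows by matching the two nested intersections.

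Next I would change coordinates fibrewise to move the random fixed point to the origin. Writing $\phi_\omega$ for a Möbius automorphism of the disc sending $x_\omega$ to $0$ (and hence $1/\bar x_\omega$ to $\infty$), conjugating the cocycle by the induced (bounded, boundedly invertible) operators on $H^2(A_R)$ does not change the Lyapunov spectrum or the (pulled-back) Oseledets spaces, and turns $\mathcal T$ into a cocycle $\widetilde{\mathcal T}=(S_\omega)$ of Blaschke products with $S_\omega(0)=0$ and $S_\omega'(0)=:\mu_\omega$; note $\log|\mu_\omega|=\log|T_\omega'(x_\omega)|$ up to the cocycle of derivatives of the $\phi$'s, which telescopes and contributes $0$ to all Birkhoff averages, so $\Lambda$ is unchanged. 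Now the transfer operator $\LL_{S_\omega}$ has a distinguished invariant structure: $1$ is a fixed density (exponent $0$), and on the ``interior'' Hardy summand the composition operator with the inverse branch $\psi_\omega$ of $S_\omega$ fixing $0$ acts on $\mathrm{span}\{1, z, z^2,\dots\}$ in an \emph{upper triangular} fashion with diagonal entries $\psi_\omega'(0)^{n}=\mu_\omega^{-n}$ on $z^n$ — equivalently, after the duality, $\LL_{S_\omega}$ acting on $\mathrm{span}\{1/(z-\cdot)^{k}\}$ is upper triangular with diagonal $\mu_\omega^{n}$; the ``exterior'' summand behaves symmetrically (by $z\mapsto 1/\bar z$), producing the claimed second copy $z^{k-1}/(1-\bar x_\omega z)^{k+1}$. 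I would make this precise by exhibiting, for each $j$, the finite-dimensional subspace $V_j(\omega)$ of functions with poles of order $\le j+1$ at $x_\omega$ and at $1/\bar x_\omega$ (together with the constant), checking $\LL_\omega V_j(\omega)\subseteq V_{j}(\sigma\omega)$ and that the induced action on the graded pieces $V_j/V_{j-1}$ is multiplication by scalars of modulus $|\mu_\omega|^{j} = |T_\omega'(x_\omega)|^{j}$ (times $1$ for the residual ``analytic'' directions that don't show up — the operator is one-sided, so the constant picks up exponent $0$). The flag $\{0\}\subset V_0\subset V_1\subset\cdots$ is the Oseledets filtration, and the Multiplicative Ergodic Theorem for compact cocycles (Lian--Lu / Thieullen, as stated in the previous section) then reads off the exponents: $0$ once, and $j\Lambda$ with multiplicity $2$ once we verify that each graded piece $V_j/V_{j-1}$ is two-dimensional and carries exponent $j\Lambda = j\int\log|T_\omega'(x_\omega)|$, by Birkhoff's ergodic theorem applied to the scalar cocycle $\prod \mu_{\sigma^k\omega}^{\,j}$. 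The cases $\Lambda=-\infty$ and $\PP(T_\omega'(x_\omega)=0)>0$ are then immediate: every graded piece above level $0$ has Lyapunov exponent $-\infty$ (the product $\mu$ vanishes infinitely often, or its log-average is $-\infty$), so the spectrum collapses to $0$ with multiplicity $1$ and $-\infty$ with infinite multiplicity; integrability giving $\Lambda\le\log(r/R)<0$ comes from $|x_\omega|\le r$, $|x_{\sigma\omega}|\le r$ and the Schwarz-type estimate $|T_\omega'(x_\omega)|\le$ (derivative bound from $T_\omega(\overline{D_R})\subseteq\overline{D_r}$), i.e. the Cauchy estimate $|T_\omega'(x_\omega)|\le r/R$ on the disc of radius $R$ about $x_\omega$... (being slightly careful that $x_\omega$ need not be centred, one uses instead that $T_\omega$ maps $D_R$ into $D_r$ and a Schwarz--Pick / hyperbolic-metric contraction argument to get the clean bound $\log(r/R)$).

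The main obstacle I expect is two-fold. First, making the upper-triangular / graded structure rigorous \emph{measurably and uniformly} along the random orbit: the subspaces $V_j(\omega)$ depend on $x_\omega$, which varies measurably but not continuously, so I must verify that the change-of-basis operators between $V_j(\omega)$ and a fixed model have subexponential norm growth (this is where $|x_\omega|\le r<R$ and the resulting uniform bounds on the $\phi_\omega$ are essential — they keep the poles uniformly inside the annulus, so all the relevant operator norms are bounded above and below uniformly in $\omega$), so that the Lyapunov exponents are genuinely governed by the diagonal scalar cocycles and nothing leaks between graded pieces. Second, one must rule out ``extra'' exponents: a priori the compact operator $\LL_\omega$ could have Oseledets directions not captured by the $V_j$'s. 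Here I would argue that $\bigcup_j V_j(\omega)$ is dense in $H^2(A_R)$ (partial fractions / Laurent expansion on the annulus, using that $A_R$ is an annulus and the poles at $x_\omega, 1/\bar x_\omega$ together with the constant generate a dense subspace after iteration — more carefully, one shows the complementary directions are in the ``fast-decaying'' part of the Oseledets splitting, exponent $-\infty$, because they are mapped by $\LL_\omega$ into spaces of functions holomorphic on ever-larger discs, contracted at rate $r/R$ at every step). Once density plus the uniform control on the flag is in hand, the MET forces the spectrum to be exactly as claimed, and the explicit generators of the Oseledets spaces are obtained by un-conjugating the model generators $1$, $z^{j}$ and $\bar z^{j}$ back through $\phi_\omega$, which produces precisely $1/(z-x_\omega)-1/(z-1/\bar x_\omega)$ at exponent $0$ and the stated linear combinations of $1/(z-x_\omega)^{k}$ and $z^{k-1}/(1-\bar x_\omega z)^{k+1}$ at exponent $j\Lambda$.
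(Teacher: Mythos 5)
Your positive half follows essentially the same route as the paper: the random fixed point via Schwarz--Pick contraction of the hyperbolic metric, the equivariant flag of functions with poles of order at most $j+1$ at $x_\omega$ and at $1/\bar x_\omega$, the upper-triangular action on it with diagonal $(T_\omega'(x_\omega))^j$, the inversion symmetry $\LL_I$ doubling multiplicities, Birkhoff's theorem producing $j\Lambda$, and the hyperbolic estimate for $\Lambda\le\log(r/R)$. (The paper does this without your fibrewise M\"obius conjugation, working directly in the basis $(z-x_\omega)^{-(j+1)}$; the conjugation is used only later, for stability, where keeping $r_{\tilde{\mathcal T}}(R)<R$ requires passing to a power of $\sigma$.) A minor slip: in the paper's normalisation ($dz$, not arclength) the invariant function is $1/z$, equivalently $1/(z-x_\omega)-1/(z-1/\bar x_\omega)$, not the constant $1$; indeed $\LL_{z\mapsto z^2}1=0$.

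The genuine gap is the exhaustion step: you must show that the flag accounts for the \emph{entire} exceptional spectrum, i.e.\ that every finite-dimensional Oseledets fast space $E(\omega)$ is contained in a flag space, and neither of your two suggestions does this. Density of $\bigcup_j V_j(\omega)$ only shows that $0$ and $j\Lambda$ occur with at least the stated multiplicities; it does not rule out additional exceptional exponents or larger multiplicities. Your fallback claim that ``the complementary directions are in the fast-decaying part, exponent $-\infty$'' fails: since the union of the flag is dense there is no nonzero closed complement to speak of, and concretely $f(z)=1/(z-y)$ with $y\in D_r\setminus\{x_\omega\}$ lies outside the flag yet has exponent $0$, since $\LL_\omega^{(n)}f$ converges to the top Oseledets function (Corollary \ref{cor:topspace}); analyticity of $\LL^{(n)}_\omega f$ on larger annuli does not force norm decay. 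What is needed, and what the paper supplies, is: (i) Lemma \ref{lem:fastapprox2}, that if $\Pi_{E(\omega)\parallel F(\omega)}(V)=E(\omega)$ then $\LL^{(n)}_\omega V$ approaches $E(\sigma^n\omega)$; (ii) the explicit computation (Lemma \ref{lem:insidefp}, Corollary \ref{cor:outsidefp}) that for $V$ a span of Laurent monomials the images $\LL^{(n)}_\omega V$ converge to the flag spaces along the orbit; and (iii) a Poincar\'e-recurrence argument upgrading ``distance tends to zero along the orbit'' to the actual containment $E(\omega)\subset P^-_N(\omega)\oplus W_0(\omega)\oplus P^+_N(\omega)$ almost everywhere (Corollary \ref{cor:fast}). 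Only after this containment can one restrict to the finite-dimensional equivariant spaces and read the exact spectrum off the triangular diagonal; note that the same exhaustion is also required for the collapse conclusions in the critical and $\Lambda=-\infty$ cases, so without it parts (2) and (3) are not established, not merely the multiplicity count in the generic case.
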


\begin{cor}\label{cor:unpert}
Let $\Omega=\{0,1\}^\Z$, $\sigma$ be the shift map,
and $\PP_p$ be the Bernoulli measure where $\PP([0])=p$.
Let $\LL_0$ be the Perron-Frobenius operator of $T_0\colon z\mapsto z^2$
and $\LL_1$ be the Perron-Frobenius operator of $T_1\colon z\mapsto 
[(z+\frac 14)/(1+\frac z4)]^2$
and consider the operator cocycle generated by $\LL_\omega:=\LL_{\omega_0}$, 
acting on $H^2(A_R)$, where $R$ satisfies $r_{\mathcal T}(R)<R$. Then
\begin{enumerate}[label=(\alph*)]
%\item
%The cocycle is compact;\label{it:compact}
\item\label{it:finite}
If $p<\frac 12$, then the cocycle has countably infinitely many finite Lyapunov exponents;
% \newcont{and is not uniformly hyperbolic};
\item \label{it:infinite}
If $p\ge \frac 12$, then $\lambda_1=0$ and all remaining 
Lyapunov exponents are $-\infty$.
\end{enumerate}
\end{cor}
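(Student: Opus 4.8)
The strategy is to apply Theorem~\ref{thm:LyapSpect} directly, so that everything reduces to identifying which of its three cases applies and, in case~\ref{it:generic}, whether $\Lambda$ is finite. Write $\phi(z)=\tfrac{z+1/4}{1+z/4}$, so that $T_1=\phi^2$, $\phi$ is a disc automorphism carrying $-\tfrac14$ to $0$, and $T_0,T_1$ are finite Blaschke products of degree $2$; abbreviate $\int(\cdot)\,d\PP_p$ by $\mathbb E[\cdot]$. The main point of the argument is to confine the random fixed point well inside the disc. Since the maximum of $|\phi|$ on $\{|z|=\tfrac14\}$ equals $\tfrac{1/4+1/4}{1+1/16}=\tfrac8{17}$, one has $r_{T_0}(\tfrac14)=\tfrac1{16}$ and $r_{T_1}(\tfrac14)=(\tfrac8{17})^2=\tfrac{64}{289}$, whence $r_{\mathcal T}(\tfrac14)=\tfrac{64}{289}<\tfrac14$. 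By the maximum principle, $T_\omega(\overline{D_{1/4}})\subseteq\overline{D_{64/289}}\subseteq D_{1/4}$ for every $\omega$, so every backward iterate $T^{(N)}_{\sigma^{-N}\omega}(0)$ lies in $\overline{D_{64/289}}$; since $0\in D_R$ for the given $R$ as well, Theorem~\ref{thm:LyapSpect}(\ref{it:randomfp}) gives $|x_\omega|\le\tfrac{64}{289}<\tfrac14$ almost surely. Moreover $x_\omega$ is the limit of the compositions $T_{\sigma^{-1}\omega}\circ\cdots\circ T_{\sigma^{-N}\omega}(0)$, so it is measurable with respect to $(\omega_n)_{n<0}$ and hence independent of $\omega_0$ under $\PP_p$.

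Because $|x_\omega|<\tfrac14$ we have $x_\omega\ne-\tfrac14$, so $T_1'(x_\omega)=2\phi(x_\omega)\phi'(x_\omega)\ne0$, and $\log|\phi(x_\omega)|$ and $\log|\phi'(x_\omega)|$ are bounded above and below uniformly in $\omega$ (both $\phi$ and $\phi'$ are analytic and nonvanishing on $\overline{D_{64/289}}$, since the zero $-\tfrac14$ of $\phi$ and the pole $-4$ lie outside it); in particular $b:=\mathbb E[\log|\phi(x_\omega)|]$ is finite, and it is negative because $|\phi|<1$ there. The stationarity relation $x_{\sigma\omega}=T_{\omega_0}(x_\omega)$ gives $\log|x_{\sigma\omega}|=2\log|x_\omega|$ when $\omega_0=0$ and $\log|x_{\sigma\omega}|=2\log|\phi(x_\omega)|$ when $\omega_0=1$, all bounded above; integrating, using the independence of $\omega_0$ and $x_\omega$ and the $\sigma$-invariance of $\PP_p$, yields the affine identity
\[
a=2p\,a+2(1-p)\,b,\qquad a:=\mathbb E[\log|x_\omega|]\in[-\infty,0).
\]

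Suppose first $p\ge\tfrac12$. If $a$ were finite, the equality $(1-2p)a=2(1-p)b$ would force both sides to vanish (the left being $\ge0$, the right $\le0$); but the left vanishes only when $p=\tfrac12$ and the right only when $p=1$, so this is impossible and $a=-\infty$. Now, if $\PP_p(T_\omega'(x_\omega)=0)>0$ then Theorem~\ref{thm:LyapSpect}(\ref{it:criticalfp}) applies directly; otherwise we are in case~\ref{it:generic}, where $\Lambda=p(\log2+a)+(1-p)\,\mathbb E[\log|T_1'(x_\omega)|]=-\infty$ (as $p>0$ and the last expectation is finite). Either way the Lyapunov spectrum is $0$ with multiplicity $1$ and $-\infty$ with infinite multiplicity, which is~\ref{it:infinite}. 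Suppose instead $p<\tfrac12$. Then $1-2p>0$, so $a=\tfrac{2(1-p)}{1-2p}\,b$ is finite and negative; hence $\PP_p(x_\omega=0)=0$, and combined with $x_\omega\ne-\tfrac14$ this gives $\PP_p(T_\omega'(x_\omega)=0)=0$, placing us in case~\ref{it:generic}. There $\Lambda=p(\log2+a)+(1-p)\big(\log2+b+\mathbb E[\log|\phi'(x_\omega)|]\big)$ is finite and $\le\log(r/R)<0$ by the theorem, so the Lyapunov exponents are $0$ and $\{n\Lambda:n\in\N\}$, which are countably many and distinct because $\Lambda<0$. This is~\ref{it:finite}.

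The one genuinely delicate step is the first: confining $x_\omega$ to a disc on which $T_1$ has no critical point. This is a numerical fact particular to this pair of maps --- the displacement $\tfrac14$ in $T_1$ is small enough that $r_{\mathcal T}(\tfrac14)<\tfrac14$, yet large enough that the random fixed point stays away from the critical point $-\tfrac14$ of $T_1$ --- and once it is in hand, the relevant analytic quantities become uniformly bounded and the dichotomy is forced by the single recursion above. I would expect the write-up to concentrate its effort there, and on the measure-theoretic bookkeeping (the measurability of $x_\omega$ with respect to the past, its independence of $\omega_0$, and the splitting of the defining integrals by the value of $\omega_0$); the case analysis that follows is short.
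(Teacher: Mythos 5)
Your reduction to Theorem \ref{thm:LyapSpect} is the right frame, and your treatment of $p\ge\tfrac12$ is correct and genuinely different from the paper's: where the paper partitions $\Omega$ according to the block of $0$'s preceding the origin and uses the doubly exponential bounds of Lemma \ref{lem:randomfp}\ref{it:partd} together with the geometric series $\sum_n(2p)^n$, you derive the self-consistency identity $a=2p\,a+2(1-p)\,b$ for $a=\mathbb E[\log|x_\omega|]$ from stationarity and the independence of $x_\omega$ from $\omega_0$, and for $p\ge\tfrac12$ your sign argument correctly forces $a=-\infty$, hence $\Lambda=-\infty$ and part \ref{it:infinite}.

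The case $p<\tfrac12$, however, has a genuine gap. The identity $a=2p\,a+2(1-p)\,b$ is an identity in $[-\infty,0)$, and for $0<p<\tfrac12$ it has \emph{two} solutions: the finite value $2(1-p)b/(1-2p)$ and $a=-\infty$ (if $a=-\infty$ both sides equal $-\infty$). Your step ``$1-2p>0$, so $a=\tfrac{2(1-p)}{1-2p}b$ is finite'' implicitly subtracts $2p\,a$ from both sides, which is illegitimate when $a$ may be $-\infty$; thus the finiteness of $a$, which is exactly the content of part \ref{it:finite}, is asserted rather than proved. This is precisely where the paper invests its effort: on $[10^n\cdot 0]$ one has $x_\omega\ge T_1(0)^{2^n}$, so $\int\log T_\omega'(x_\omega)\,d\PP_p\ge(1-p)\log\tfrac{15}{32}+p(1-p)\log T_1(0)\sum_n(2p)^n>-\infty$ when $2p<1$; the delicate point is this $2^n$ versus $p^n$ trade-off, not the confinement of $x_\omega$ away from the critical point. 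Your recursion can be repaired without abandoning it: set $a_M=\mathbb E[\max(\log|x_\omega|,-M)]$, note that for $M$ large enough $\max(\log|x_{\sigma\omega}|,-M)$ equals $2\max(\log|x_\omega|,-M/2)$ on $\{\omega_0=0\}$ and equals $2\log|\phi(x_\omega)|$ on $\{\omega_0=1\}$ (since $\log|\phi|$ is bounded below on $\overline{D_{64/289}}$), so that $a_M=2p\,a_{M/2}+2(1-p)b$; iterating gives $a_{2^kM_0}=(2p)^k a_{M_0}+2(1-p)b\sum_{j<k}(2p)^j$, and letting $k\to\infty$ with $2p<1$ yields, by monotone convergence, $a=2(1-p)b/(1-2p)>-\infty$. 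With that inserted, the remainder of your $p<\tfrac12$ case (no critical random fixed point a.s., $\Lambda$ finite and negative, spectrum $\{0\}\cup\{n\Lambda\colon n\in\N\}$) goes through.
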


We define an operator on $H^2(A_R)$ by
$$
(\mathcal N_\epsilon f)(z)=\frac{1}{\sqrt{2\pi}}\int_{-\infty}^\infty 
f(ze^{-2\pi i\epsilon t})e^{-2\pi i\epsilon t-t^2/2}\,dt.
$$
We show below that this operator corresponds to the operator on functions
on $\R/\Z$ given by $\mathcal N^{\R/\Z}_\epsilon f(x)=
\mathbb Ef(x+\epsilon N)$, where $N$ is a standard normal random variable. 
That is, $\mathcal N_\epsilon^{\R/\Z}$ acts on densities by convolution 
with a Gaussian with variance $\epsilon^2$. Let $\LL_\omega$ be the cocycle
in the theorem above and consider the perturbation $\LL^\epsilon_\omega$ of
$\LL_\omega$ given by $\LL^\epsilon_\omega=\mathcal N_\epsilon\circ \LL_\omega$.

\begin{cor}[Collapse of Lyapunov spectrum]\label{cor:pert}
Let $\Omega=\{0,1\}^\Z$, equipped with the map $\sigma$ and measure $\PP_p$
as above. If $p\ge \frac 14$, the perturbed cocycle 
$(\LL_\omega^\epsilon)_{\omega\in\Omega}$ has $\lambda_1=0$
and $\lambda_j=-\infty$ for all $j>1$. 

In particular, if $\frac 14\le p<\frac 12$, then the unperturbed cocycle has a complete
Lyapunov spectrum, but for each $\epsilon>0$, the Lyapunov spectrum collapses.
\end{cor}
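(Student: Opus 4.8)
The plan is to prove the two halves of the statement separately. The assertion $\lambda_1=0$ follows exactly as in the generic case of Theorem~\ref{thm:LyapSpect}: the bounded functional $\phi(f)=a_{-1}$ extracting the coefficient of $z^{-1}$ from the Laurent expansion $f=\sum_n a_n z^n$ is fixed by every $\LL^\epsilon_\omega=\NN_\epsilon\circ\LL_\omega$ — it is fixed by $\LL_\omega$ because, under the standard identification of $H^2(A_R)$ with densities used in Section~\ref{sec:LyapSpectrum}, it corresponds to the total-mass functional, which every transfer operator preserves; and it is fixed by $\NN_\epsilon$ because $\NN_\epsilon z^{-1}=z^{-1}$, as the computation below shows. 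Hence $\|\LL^{\epsilon,(N)}_\omega\|\ge\phi(z^{-1})/(\|\phi\|\,\|z^{-1}\|)>0$ uniformly in $N$, so $\lambda_1\ge0$; and $\lambda_1\le 0$ because $\NN_\epsilon$ is a contraction of $H^2(A_\rho)$ for every $\rho\in(0,1)$, so the Lasota--Yorke-type estimates underlying Theorem~\ref{thm:LyapSpect} are unaffected by its insertion and $\sup_{\omega,N}\|\LL^{\epsilon,(N)}_\omega\|<\infty$. It remains to show $\lambda_2=-\infty$; since Lyapunov exponents are non-increasing, this forces $\lambda_j=-\infty$ for all $j>1$.

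I would work in the Laurent basis $e_n(z)=z^n$ of $H^2(A_R)$, for which $\|e_n\|^2=R^{2n}+R^{-2n}$ (so $\|e_n\|$ is increasing in $|n|$). Evaluating the defining integral (a Gaussian characteristic function) gives $\NN_\epsilon e_n=e^{-2\pi^2\epsilon^2(n+1)^2}e_n$, so $\NN_\epsilon$ is diagonal and a contraction of every $H^2(A_\rho)$; hence each $\LL^\epsilon_\omega$ is bounded and compact, and $\NN_\epsilon e_{-1}=e_{-1}$. Since $\LL_0 e_{2m+1}=e_m$ and $\LL_0 e_{2m}=0$, we get $\LL^\epsilon_0 e_{2m+1}=e^{-2\pi^2\epsilon^2(m+1)^2}e_m$ and $\LL^\epsilon_0 e_{2m}=0$, and iterating,
\[
(\LL^\epsilon_0)^k e_{2^k(l+1)-1}=\exp\!\Big(-\tfrac{2\pi^2\epsilon^2}{3}(4^k-1)(l+1)^2\Big)e_l\qquad(l\in\Z),
\]
every other basis vector being annihilated. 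As the $e_j$ are mutually orthogonal, the nonzero singular values of $(\LL^\epsilon_0)^k$ can be read off directly: the index $l=-1$ contributes exactly $1$ (the equivariant direction $e_{-1}$ is genuinely fixed, untouched by arbitrarily long runs of $T_0$), while for $l\neq-1$ one has $\|e_l\|\le\|e_{2^k(l+1)-1}\|$, so the contribution is at most $\exp(-\tfrac{2\pi^2\epsilon^2}{3}(4^k-1)(l+1)^2)$. Thus $s_1((\LL^\epsilon_0)^k)=1$ and $s_2((\LL^\epsilon_0)^k)\le C\,e^{-c\epsilon^2 4^k}$ with $c=2\pi^2/3$ and $C=C(R)$, whence $\|\wedge^2(\LL^\epsilon_0)^k\|=s_1 s_2\le C\,e^{-c\epsilon^2 4^k}$. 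This \emph{doubly} exponential decay in $k$ — the exponent-doubling $e_{2m+1}\mapsto e_m$ compounds the Gaussian's quadratic damping into $\sum_{i<k}4^i$ rather than $\sum_{i<k}2^i$ — is what lowers the collapse threshold from $\tfrac12$ to $\tfrac14$. For $T_1$ only the crude bound $\|\wedge^2(\LL^\epsilon_1)^m\|\le\|\LL^\epsilon_1\|^{2m}=:C_1^{2m}$ is needed.

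Next I would decompose $\LL^{\epsilon,(N)}_\omega$ into its maximal constant blocks and use the submultiplicativity $\|\wedge^2(AB)\|\le\|\wedge^2 A\|\,\|\wedge^2 B\|$. Writing $M_k(\omega,N)$ for the number of maximal runs of $0$'s of length exactly $k$ completed inside $[0,N)$ (a run straddling an endpoint only contributes a factor $\le1$, since $\|\wedge^2(\LL^\epsilon_0)^k\|\le 1$), and bounding all the $1$-blocks together by $C_1^{2N}$, one obtains
\[
\log\|\wedge^2\LL^{\epsilon,(N)}_\omega\|\ \le\ 2N\log C_1+\#\{0\text{-blocks in }[0,N)\}\cdot\log C-c\epsilon^2\sum_{k\ge1} 4^k M_k(\omega,N).
\]
Under $\PP_p$, Birkhoff's theorem applied to the indicator of $\{\omega_{-1}=1,\ \omega_0=\cdots=\omega_{k-1}=0,\ \omega_k=1\}$ gives $\tfrac1N M_k(\omega,N)\to(1-p)^2 p^k$ almost surely for each $k$, so dropping all but finitely many run-lengths yields, for every fixed $K$,
\[
\liminf_{N\to\infty}\frac1N\sum_{k\ge1} 4^k M_k(\omega,N)\ \ge\ (1-p)^2\sum_{k=1}^{K}(4p)^k .
\]
When $p\ge\tfrac14$ the right-hand side diverges as $K\to\infty$, so $\tfrac1N\sum_k 4^k M_k\to+\infty$ a.s., whence $\tfrac1N\log\|\wedge^2\LL^{\epsilon,(N)}_\omega\|\to-\infty$. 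By Kingman's subadditive ergodic theorem this limit equals $\lambda_1+\lambda_2$; since $\lambda_1=0$ is finite, $\lambda_2=-\infty$, and therefore $\lambda_j=-\infty$ for all $j>1$. The ``in particular'' assertion then follows by combining this with Corollary~\ref{cor:unpert}\ref{it:finite}.

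The step I expect to be the main obstacle is the exact singular-value analysis of $(\LL^\epsilon_0)^k$: one must verify both that $s_1\equiv1$ — so that the equivariant density threads arbitrarily long $T_0$-runs undamaged, keeping $\lambda_1$ at $0$ — and that $s_2$ decays like $e^{-c\epsilon^2 4^k}$ with $4^k$ and not $2^k$, which is precisely the mechanism by which the perturbation halves the exponent in the critical ratio and drops the threshold to $\tfrac14$. This demands carefully combining the exponent-halving action of $\LL_0$, the growth $\|e_n\|\asymp R^{-|n|}$, and the super-exponential Gaussian damping. A secondary, routine, point is handling the endpoint case $p=\tfrac14$ (done via the $\liminf$ over partial sums above) and the runs that straddle the ends of $[0,N)$.
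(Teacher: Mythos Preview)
Your argument for $\lambda_2=-\infty$ is correct and follows the same mechanism as the paper's proof: both exploit the doubly-exponential decay $\|(\LL_0^\epsilon)^k\|_{\text{off the fixed direction}}\lesssim e^{-c\epsilon^2 4^k}$ and the divergence of $\sum_k (4p)^k$ for $p\ge\tfrac14$. The differences are cosmetic bookkeeping. You use the exterior power $\|\wedge^2(\cdot)\|$ and count maximal $0$-runs directly via Birkhoff; the paper uses its volume-growth quantity $\mathcal D_2$ from Lemma~\ref{lem:exptsviavol} (which plays the same role as $\|\wedge^2(\cdot)\|$), bounds $\mathcal D_2((\LL_0^\epsilon)^n\LL_1^\epsilon)$ via part~\ref{it:Dfactor} of that lemma after restricting to the codimension-one subspace $H^2_0(A_R)$, and then \emph{induces} on the cylinder $[1]$, applying Birkhoff to the return-time function $N(\omega)$ to get $\int 4^{N(\omega)}\,d\tilde\PP_p=\infty$. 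Your direct run-counting and the paper's inducing are equivalent repackagings of the same ergodic-theoretic fact.

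The one step that is not adequately justified is your claim that $\sup_{\omega,N}\|\LL^{\epsilon,(N)}_\omega\|<\infty$, which you attribute to ``Lasota--Yorke-type estimates underlying Theorem~\ref{thm:LyapSpect}''. No such estimate appears there; Corollary~\ref{cor:compact} gives a uniform bound $H^2(A_R)\to H^2(A_\rho)$ for a \emph{single} Blaschke Perron--Frobenius operator, and for the unperturbed cocycle this iterates because $\LL_\omega^{(N)}=\LL_{T_\omega^{(N)}}$ is again the transfer operator of a Blaschke product. Once $\NN_\epsilon$ is interleaved this identification is lost, and contractivity of $\NN_\epsilon$ on each $H^2(A_\rho)$ does not by itself propagate the bound, since $r_{T_1}(\rho)<\rho$ need not hold for $\rho<R$. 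The paper avoids this issue entirely: it first proves $\lambda_2=-\infty$, then invokes the coincidence of exceptional exponents of a cocycle and its adjoint (\cite[Theorem~13]{GTQ-JMD}) together with the fixed functional $\phi$ to conclude $\lambda_1=\lambda_1^*=0$ directly; it also exhibits the equivariant density $f_\omega$ explicitly. Replacing your norm-boundedness claim with either of these would close the gap.
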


Similarly, we define $\LL^{U,\epsilon}_i:=\mathcal U_\epsilon\circ\LL_i$, where 
$\mathcal U^{\R/\Z}_\epsilon$ convolves densities
with a bump function of integral 1 supported on $[-\epsilon, \epsilon]$,
$$
(\mathcal U_\epsilon f)(z)=\frac{1}{2}\int_{-1}^1 
f(ze^{-2\pi i\epsilon t})e^{-2\pi i \epsilon  t}\,dt.
$$
\begin{cor}\label{cor:Unifpert}
Let $\Omega=\{0,1\}^\Z$, equipped with the map $\sigma$ and measure $\PP_p$
as in Corollary~\ref{cor:unpert}. For every $p>0$, there are arbitrarily 
small values of $\epsilon$ for which the perturbed cocycle 
$(\LL_\omega^{U,\epsilon})_{\omega\in\Omega}$ has $\lambda_1=0$
and $\lambda_j=-\infty$ for all $j>1$. 
\end{cor}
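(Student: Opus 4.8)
The plan is to reduce the claim to showing that for suitable small $\epsilon$, the perturbed cocycle $(\LL_\omega^{U,\epsilon})_{\omega\in\Omega}$ falls under case~\eqref{it:criticalfp} or the $\Lambda=-\infty$ branch of case~\eqref{it:generic} of Theorem~\ref{thm:LyapSpect}. First I would record that $\mathcal U_\epsilon$, like $\mathcal N_\epsilon$, is itself the Perron--Frobenius operator of a (random) analytic perturbation on $A_R$ for $R$ close enough to $1$, so that the composed operators $\LL_i^{U,\epsilon}=\mathcal U_\epsilon\circ \LL_i$ are again Perron--Frobenius operators of finite Blaschke products of $A_R$ --- more precisely, averages over a random rotation of $T_i$. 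Concretely, $\LL_\omega^{U,\epsilon}$ is the Perron--Frobenius operator associated with the random map $z\mapsto e^{2\pi i \epsilon t}T_{\omega_0}(z)$, where at each step one additionally draws $t$ uniformly from $[-1,1]$; one must check that for $R<1$ close enough to $1$ this enlarged cocycle still satisfies $r_{\mathcal T}(R)<R$, which follows from continuity of $r_{T_i}$ and the fact that rotation does not change $|T(z)|$ on circles (so in fact $r$ is unchanged).

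Next I would locate the random fixed point of the enlarged system. For the unperturbed cocycle, $T_0(z)=z^2$ has $T_0'(0)=0$: whenever a symbol $0$ is ever applied (which happens $\PP_p$-a.s.\ for $p>0$), the backward orbit is pulled to $0$ and, crucially, the derivative there vanishes. For the uniformly perturbed cocycle the map applied when $\omega_0=0$ is $z\mapsto e^{2\pi i\epsilon t}z^2$, whose unique fixed point in $D_r$ is still $0$ and whose derivative at $0$ is still $0$. Hence the random fixed point $x_\omega$ of the perturbed cocycle is still attracting, and for $\PP_p$-a.e.\ $\omega$ there are infinitely many times $n$ in the past with $\omega_{-n}=0$, so the corresponding map in the composition $T^{(N)}_{\sigma^{-N}\omega}$ has a critical point exactly at the relevant iterate; equivalently $x_\omega=0$ on a positive-measure set and $T_\omega'(x_\omega)=0$ there. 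This is precisely the hypothesis $\PP(\{\omega\colon T_\omega'(x_\omega)=0\})>0$ of case~\eqref{it:criticalfp}, so Theorem~\ref{thm:LyapSpect} gives $\lambda_1=0$ and $\lambda_j=-\infty$ for $j>1$.

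The main obstacle is the phrase ``there are arbitrarily small values of $\epsilon$'': I expect one cannot take \emph{every} small $\epsilon$, because for a fixed $\epsilon$ the perturbed map $z\mapsto e^{2\pi i\epsilon t}z^2$ is a genuinely random family (the rotation angle $\epsilon t$ varies), and one needs the random fixed point computation above to be valid for the \emph{enlarged} skew product, whose base now includes the uniform variable $t$. The delicate point is that $e^{2\pi i\epsilon t}z^2$ still has a critical fixed point in $D_r$ only because $z=0$ is a fixed point regardless of the rotation; this is special and robust, so the argument does go through, but one should double-check that no issue arises from the fact that $\mathcal U_\epsilon$ is built from a \emph{uniform} rather than a Gaussian kernel --- in particular that $\LL_i^{U,\epsilon}$ remains a compact operator on $H^2(A_R)$ and that the enlarged cocycle is still measurable and satisfies the $r_{\mathcal T}(R)<R$ hypothesis. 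Once these bookkeeping points are settled, the statement follows, and the restriction to a sequence $\epsilon\to 0$ (rather than an interval) is only needed to keep $R$ and the annulus bounds uniform, which is harmless. The last sentence of the corollary is then immediate by combining with Corollary~\ref{cor:unpert}\ref{it:finite}.
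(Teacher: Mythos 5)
Your reduction to Theorem~\ref{thm:LyapSpect} does not work, for two separate reasons. First, $\LL_i^{U,\epsilon}=\mathcal U_\epsilon\circ\LL_i$ is an \emph{average} of Perron--Frobenius operators of the rotated maps $z\mapsto e^{2\pi i\epsilon t}T_i(z)$; it is not itself the Perron--Frobenius operator of any Blaschke product, so Theorem~\ref{thm:LyapSpect} simply does not apply to the cocycle $(\LL^{U,\epsilon}_\omega)$ over $\Omega$. Passing to the enlarged base $\Omega\times[-1,1]^{\Z}$ produces a genuinely different (quenched) cocycle, and its Lyapunov exponents are not those of the annealed cocycle the corollary is about; the paper is careful to distinguish these two situations (see the quenched versus annealed examples at the end of Section~\ref{sec:Lyapstab}). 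Second, even for that quenched cocycle your appeal to case~\eqref{it:criticalfp} fails: the random fixed point is $0$ only when \emph{every} past symbol is $0$ (a $\PP_p$-null event for $p<1$), so $T_\omega'(x_\omega)=0$ on a null set, not a positive-measure set --- exactly as in Lemma~\ref{lem:randomfp}\ref{it:partd}, where $x_\omega$ is small but nonzero after a block of $0$'s. The only route to collapse through Theorem~\ref{thm:LyapSpect} would then be $\Lambda=-\infty$, and the computation of Corollary~\ref{cor:unpert} shows this needs $p\ge\tfrac12$; it cannot give the claimed collapse for every $p>0$.

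You have also misdiagnosed why the statement says ``arbitrarily small values of $\epsilon$'' rather than all small $\epsilon$: it is not a bookkeeping issue about $R$, but an arithmetic resonance with $T_0(z)=z^2$ that is the heart of the actual proof. One computes that $\mathcal U_\epsilon$ acts diagonally on the monomials $\hat e_n(z)=z^{n-1}$ with multiplier $\sin(2\pi n\epsilon)/(2\pi n\epsilon)$, while $\LL_0$ sends $\hat e_{2m}\mapsto\hat e_m$ and kills odd frequencies. Choosing $\epsilon=b/2^k$ with $b$ odd makes the multiplier vanish on every frequency divisible by $2^k$, so $(\LL_0^{U,\epsilon})^k$ annihilates the invariant codimension-one subspace $H^2_0(A_R)$. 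Since for any $p>0$ a block of $k$ consecutive $0$'s occurs $\PP_p$-a.s.\ along the orbit, the restricted cocycle is eventually the zero operator, giving $\lambda_2=-\infty$ directly (and $\lambda_1=0$ via the fixed adjoint functional, as in Corollary~\ref{cor:pert}). This mechanism --- nilpotency of a finite block of perturbed $\LL_0$'s for dyadic-rational $\epsilon$ --- is what produces collapse for all $p>0$, and it is entirely absent from your proposal.
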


The above corollaries give simple explicit examples and perturbations of Perron-Frobenius cocycles
for which the Lyapunov spectrum collapses. We now give general conditions for stability
and instability.

\begin{thm}[Stability of Lyapunov Spectrum]\label{thm:Lyapcont}
Let $\sigma$ be an ergodic invertible measure-preserving transformation of 
$(\Omega,\PP)$. Let $R<1$ and let $\mathcal T=(T_\omega)_{\omega\in\Omega}$
be a Blaschke product cocycle satisfying $r_{\mathcal T}(R)<R$.

\begin{enumerate}[label=(\alph*)]
\item \label{it:stabpert}
Suppose $\essinf_\omega |T_\omega'(x_\omega)|>0$.
% Suppose that
%$r+\delta< R(1-\delta/R)^2$ where $\delta=\sup_{\omega\in\Omega}
%|x_\omega|$.
Then if $(\LL_\omega)$ is the Perron-Frobenius cocycle of $(T_\omega)$
and $(\LL_\omega^\epsilon)$ is a family of Perron-Frobenius cocycles
such that $\esssup_\omega\|\LL_\omega^\epsilon-\LL_\omega\|\to 0$ as $\epsilon\to 0$,
then $\mu_k^\epsilon\to \mu_k$ as $\epsilon\to 0$, where $(\mu_k)$ is the sequence
of Lyapunov exponents of $(\LL_\omega)$, listed with multiplicity
and $(\mu_k^\epsilon)$ is the sequence of Lyapunov exponents of 
$(\LL_\omega^\epsilon)$.
\item \label{it:unstpert}
Suppose $\essinf_\omega|T_\omega'(x_\omega)|=0$.
Then there exists a family of Blaschke product cocycles $\mathcal T^\epsilon=
(T^\epsilon_\omega)_{\omega\in\Omega}$
such that $\esssup_\omega\|\LL_{T_\omega^\epsilon}-\LL_{T_\omega}\|\to 0$ as $\epsilon\to 0$
with the property
that the Lyapunov exponents of $(\LL_{T_\omega^\epsilon})$ are 0 with multiplicity 1, and $-\infty$
with infinite multiplicity for all $\epsilon>0$.
\end{enumerate}
\end{thm}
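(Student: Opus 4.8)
The plan is to prove the two parts separately: \ref{it:stabpert} (stability) by a semicontinuity argument resting on hyperbolicity of the cocycle, and \ref{it:unstpert} (instability) by an explicit perturbation that installs a critical random fixed point.

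\textbf{Part \ref{it:stabpert}.} Under $\essinf_\omega|T_\omega'(x_\omega)|>0$ we have $\PP(T_\omega'(x_\omega)=0)=0$ and $\Lambda:=\int\log|T_\omega'(x_\omega)|\,d\PP\in(-\infty,0)$, so Theorem~\ref{thm:LyapSpect}\ref{it:generic} gives $\mu_1=0$, $\mu_{2n}=\mu_{2n+1}=n\Lambda$ for $n\ge1$, with partial sums $S_{2n+1}=n(n+1)\Lambda$ and $S_{2n}=n^2\Lambda$. I would obtain $\mu_k^\epsilon\to\mu_k$ from three facts. \emph{(i) Upper semicontinuity of every partial sum.} Writing $S_k=\inf_N\frac1N\int\log\|\wedge^k\LL_\omega^{(N)}\|\,d\PP$ (Fekete's lemma, applied to the subadditive sequence $N\mapsto\int\log\|\wedge^k\LL_\omega^{(N)}\|\,d\PP$), for each fixed $N$ the integrand converges $\PP$-a.e.\ (in $[-\infty,\infty)$) as $\esssup_\omega\|\LL_\omega^\epsilon-\LL_\omega\|\to0$, while staying bounded above by a fixed integrable function; reverse Fatou then gives $\limsup_{\epsilon\to0}\frac1N\int\log\|\wedge^k\LL_\omega^{\epsilon,(N)}\|\,d\PP\le\frac1N\int\log\|\wedge^k\LL_\omega^{(N)}\|\,d\PP$, and taking the infimum over $N$ yields $\limsup_{\epsilon\to0}S_k^\epsilon\le S_k$ for all $k$. \emph{(ii) Convergence at the jump indices $k\in\{1,3,5,\dots\}$}: this uses the hyperbolicity input described next, and gives $S_{2n+1}^\epsilon\to S_{2n+1}$ for every $n\ge0$; for $k=1$ one may also argue directly, since each $\LL_\omega^\epsilon$, being a Perron--Frobenius operator, preserves the integral and hence has $\mu_1^\epsilon=0$. \emph{(iii) An elementary combination}: from (i)--(ii), $\liminf_\epsilon\mu_{2n+1}^\epsilon\ge\liminf_\epsilon S_{2n+1}^\epsilon-\limsup_\epsilon S_{2n}^\epsilon\ge S_{2n+1}-S_{2n}=n\Lambda$, while $\limsup_\epsilon\mu_{2n}^\epsilon\le\limsup_\epsilon S_{2n}^\epsilon-\liminf_\epsilon S_{2n-1}^\epsilon\le S_{2n}-S_{2n-1}=n\Lambda$; since $\mu_{2n+1}^\epsilon\le\mu_{2n}^\epsilon$, this pins down $\mu_{2n+1}^\epsilon\to n\Lambda$, and then $\mu_{2n}^\epsilon=S_{2n+1}^\epsilon-S_{2n-1}^\epsilon-\mu_{2n+1}^\epsilon\to2n\Lambda-n\Lambda=n\Lambda$.

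\textbf{The hyperbolicity input for (ii).} The key is that $\essinf_\omega|T_\omega'(x_\omega)|>0$ upgrades the a.e.\ Oseledets splitting into a \emph{uniformly dominated} one: for each $n$ there is an equivariant splitting $H^2(A_R)=F_\omega^{(n)}\oplus G_\omega^{(n)}$ with $\dim F_\omega^{(n)}=2n+1$, uniform bounds on the angle between the summands and on $\|\LL_\omega|_{F_\omega^{(n)}}\|^{\pm1}$, and a uniform domination rate separating the exponents $\ge n\Lambda$ from those $\le(n+1)\Lambda$. Here I would use the explicit Oseledets data of Theorem~\ref{thm:LyapSpect}\ref{it:generic} --- the rational functions with controlled poles at $x_\omega$ and $1/\bar x_\omega$, on whose finite-dimensional span $\LL_\omega$ acts essentially triangularly with ``diagonal entries'' powers of $T_\omega'(x_\omega)$, now uniformly bounded away from $0$. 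Since uniformly dominated splittings are robust under uniform operator perturbations, for small $\epsilon$ the cocycle $(\LL_\omega^\epsilon)$ has a splitting $F_\omega^{(n),\epsilon}\oplus G_\omega^{(n),\epsilon}$ with $\dim F_\omega^{(n),\epsilon}=2n+1$ and $F_\omega^{(n),\epsilon}\to F_\omega^{(n)}$ uniformly in $\omega$; then $F_\omega^{(n),\epsilon}$ is exactly the span of the top $2n+1$ Oseledets directions of $(\LL_\omega^\epsilon)$, and since $\wedge^{2n+1}\LL_\omega^\epsilon$ acts on the line $\wedge^{2n+1}F_\omega^{(n),\epsilon}$ by a scalar $c_\omega^\epsilon$ with $\log|c_\omega^\epsilon|$ uniformly bounded and $c_\omega^\epsilon\to c_\omega$ uniformly, we get $S_{2n+1}^\epsilon=\int\log|c_\omega^\epsilon|\,d\PP\to\int\log|c_\omega|\,d\PP=S_{2n+1}$. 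I expect this step --- extracting the \emph{uniform} dominated splitting from the explicit spectral picture and then invoking its roughness in the Banach-space setting --- to be the main obstacle in \ref{it:stabpert}.

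\textbf{Part \ref{it:unstpert}.} If $\PP(T_\omega'(x_\omega)=0)>0$ or $\Lambda=-\infty$ the spectrum is already collapsed by Theorem~\ref{thm:LyapSpect}, so one may take $\mathcal T^\epsilon=\mathcal T$; assume neither. For $\eta>0$ put $E_\eta=\{\omega:0<|T_\omega'(x_\omega)|<\eta\}$, which has positive measure since $\essinf_\omega|T_\omega'(x_\omega)|=0$. The construction is, for $\omega\in E_\eta$, to compose $T_\omega$ with two disc automorphisms close to the identity --- one pre-, one post-composed --- so as to simultaneously flatten $T_\omega$ at $x_\omega$ and keep $(x_\omega)$ a random fixed point. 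Let $p_\omega$ be a critical point of $T_\omega$ nearest $x_\omega$ (a measurable selection among the roots of $T_\omega'$); let $\psi_\omega$ be the M\"obius disc automorphism with $\psi_\omega(x_\omega)=p_\omega$, chosen continuously in $(x_\omega,p_\omega)$ and equal to the identity when $p_\omega=x_\omega$; and let $\tau_\omega$ be the disc automorphism with $\tau_\omega(T_\omega(p_\omega))=x_{\sigma\omega}$, chosen similarly. Set $T_\omega^\epsilon=\tau_\omega\circ T_\omega\circ\psi_\omega$ on $E_\eta$ and $T_\omega^\epsilon=T_\omega$ elsewhere. Then $T_\omega^\epsilon$ is a finite Blaschke product of the same degree with $r_{\mathcal T^\epsilon}(R)<R$ for $\eta$ small; since $\LL_{T_\omega^\epsilon}=\LL_{\tau_\omega}\LL_{T_\omega}\LL_{\psi_\omega}$ and both automorphisms are within $O(|p_\omega-x_\omega|)$ of the identity on $\overline{A_R}$, continuity of $T\mapsto\LL_T$ near degree-one maps gives $\esssup_\omega\|\LL_{T_\omega^\epsilon}-\LL_{T_\omega}\|\to0$ as $\eta\to0$. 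The crucial points are that $T_\omega^\epsilon(x_\omega)=\tau_\omega(T_\omega(\psi_\omega(x_\omega)))=\tau_\omega(T_\omega(p_\omega))=x_{\sigma\omega}$ for \emph{every} $\omega$, so by the uniqueness in Theorem~\ref{thm:LyapSpect}\ref{it:randomfp} the random fixed point of $\mathcal T^\epsilon$ is still $(x_\omega)$, while for $\omega\in E_\eta$, $(T_\omega^\epsilon)'(x_\omega)=\tau_\omega'(T_\omega(p_\omega))\,T_\omega'(p_\omega)\,\psi_\omega'(x_\omega)=0$. Hence $\PP(\{(T_\omega^\epsilon)'(x_\omega)=0\})\ge\PP(E_\eta)>0$, and Theorem~\ref{thm:LyapSpect}\ref{it:criticalfp} forces the Lyapunov spectrum of $(\LL_{T_\omega^\epsilon})$ to be $0$ with multiplicity $1$ and $-\infty$ with infinite multiplicity; choosing $\eta=\eta(\epsilon)\to0$ so that the operator perturbation is $<\epsilon$ gives the required family. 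Beyond this two-automorphism trick, the only points needing work are the estimate $|p_\omega-x_\omega|\to0$ as $\eta\to0$ (so $\psi_\omega,\tau_\omega$ are genuinely near the identity) and the continuity of $T\mapsto\LL_T$, both of which follow from the standing assumption $r_{\mathcal T}(R)<R$ through the inverse-branch description of $\LL_T$.
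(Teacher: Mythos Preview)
Your outline for part \ref{it:stabpert} is essentially the paper's strategy: upper semicontinuity of partial sums via a subadditive quantity (the paper uses $\mathcal E_l$, you use $\|\wedge^k\cdot\|$, which amounts to the same thing), and lower semicontinuity via a uniform cone field coming from uniform domination. The paper carries out the step you flag as ``the main obstacle'' by first conjugating the cocycle by $M_{x_\omega}$ so that the random fixed point is $0$ (and passing to a time-$n$ map so that the new cocycle again maps $\bar D_R$ into itself), and then doing explicit upper-triangular matrix estimates in the basis $\hat e_{-j}(z)=R^{j+1}z^{-(j+1)}$ to get uniform bounds on the projections $\Pi_{E_k(\omega)\parallel F_k(\omega)}$ and a genuine cone contraction. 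So on \ref{it:stabpert} you are on the right track; the work you are deferring is exactly the content of the paper's Lemmas in Section~\ref{sec:Lyapstab}.

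For part \ref{it:unstpert}, however, your construction has a real gap. You need that on $E_\eta=\{0<|T_\omega'(x_\omega)|<\eta\}$ there is a critical point $p_\omega$ with $|p_\omega-x_\omega|\to 0$ as $\eta\to 0$, and you assert this ``follows from the standing assumption $r_{\mathcal T}(R)<R$''. It does not. Consider (after conjugating so that $x_\omega=0$) the Blaschke products $T_n(z)=z\,\dfrac{z^n-r^n}{1-r^n z^n}$ with $r=\tfrac12$: these satisfy $T_n(0)=0$, $|T_n'(0)|=r^n\to 0$, and $r_{T_n}(R)<R$ for every $R\in(0,1)$, yet the critical points of $T_n$ in $D$ lie at $|z|=r\,(n+1)^{-1/n}\to r=\tfrac12$, bounded away from $0$. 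So in a cocycle using such maps your Möbius factors $\psi_\omega,\tau_\omega$ need not be close to the identity, and the conclusion $\esssup_\omega\|\LL_{T_\omega^\epsilon}-\LL_{T_\omega}\|\to 0$ breaks down. The paper avoids this entirely with an algebraic trick: after conjugating so that $\tilde T_\omega(0)=0$, one writes $\tilde T_\omega(z)=zP_\omega(z)$ with $P_\omega$ itself a Blaschke product and $P_\omega(0)=\tilde T_\omega'(0)$; whenever $|P_\omega(0)|<\delta$ one replaces $P_\omega$ by $Q_\omega=M_{P_\omega(0)}^{-1}\circ P_\omega$, which satisfies $Q_\omega(0)=0$ and $|Q_\omega-P_\omega|\le 3\delta$ on $C_1$. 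Then $\tilde S_\omega(z)=zQ_\omega(z)$ has $\tilde S_\omega'(0)=0$ on a set of positive measure and is uniformly $O(\delta)$-close to $\tilde T_\omega$, after which one conjugates back. The point is that the size of the perturbation is controlled directly by $|\tilde T_\omega'(0)|$, not by the distance to a critical point.
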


\begin{remark}
As a corollary of part \ref{it:stabpert}, it follows from \cite{FGQ-TAMS}
that the Oseledets spaces also converge in probability to those of the 
unperturbed cocycle. In fact, the cone arguments used here allow us
to draw the stronger conclusion that the Oseledets spaces converge 
essentially uniformly in $\omega$ to those of the unperturbed cocycle.
\end{remark}

\begin{remark}
Note that part \ref{it:stabpert} allows quite general perturbations to the Perron-Frobenius 
cocycle, while part \ref{it:unstpert} shows that if the random fixed
point has unbounded contraction, there are counterexamples even within the class of
Perron-Frobenius operators of Blaschke products.
For these counterexamples, the corresponding Blaschke products satisfy
$\esssup_\omega\max_{z\in \bar{D}_1}|T^\epsilon_\omega(z)-T_\omega(z)|<\epsilon$.
\end{remark}

Let $\Blaschke$ denote the set of measurable maps 
$\mathcal T:\omega\mapsto T_\omega$
from $\Omega$ to the collection of Blaschke products such that 
$r_{\mathcal T}(R)<R$. We equip $\Blaschke$ with the distance
$d(\mathcal T,\mathcal S)=\esssup_\omega\max_{z\in \bar{D}_1}
|T_\omega(z)-S_\omega(z)|$ (which, by the maximum principle, is the
same as $\esssup_\omega\max_{z\in C_1}|T_\omega(z)-S_\omega(z)|$).
We say that $\mathcal T\in\Blaschke$ is \emph{stable} if the conditions
of Theorem \ref{thm:Lyapcont}\ref{it:stabpert} apply.

\begin{cor}\label{cor:stabdense}
Let $\sigma$ be an invertible ergodic measure-preserving transformation
of $(\Omega,\PP)$. The stable Blaschke product cocycles form an open
dense subset of $\Blaschke$.
\end{cor}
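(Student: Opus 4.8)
The plan is to exhibit, for each $\mathcal T\in\Blaschke$, an arbitrarily small perturbation that is stable (density), and to show that stability is preserved under small perturbations (openness). For both parts, the key object is the map $\omega\mapsto T_\omega'(x_\omega)$, where $x_\omega$ is the random fixed point supplied by Theorem~\ref{thm:LyapSpect}\ref{it:randomfp}, since by the definition following Theorem~\ref{thm:Lyapcont}, $\mathcal T$ is stable exactly when $\essinf_\omega|T_\omega'(x_\omega)|>0$.

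For \emph{density}: given $\mathcal T$ and $\delta>0$, I would replace each $T_\omega$ by a Blaschke product $T_\omega^\delta$ that agrees with $T_\omega$ up to $\delta$ in the $d$-metric but has a nonvanishing derivative along the orbit of the random fixed point, uniformly in $\omega$. A clean way: compose $T_\omega$ with a small Blaschke factor, or more simply perturb the zeros of $T_\omega$ inside the disc so that none of them sits at (or too close to) $x_\omega$; since the $x_\omega$ lie in $\overline{D_r}$ with $r<R<1$, a uniform $\delta$-perturbation of the zeros moves $x_\omega$ only slightly (the random fixed point depends continuously on the cocycle, via the contraction in Theorem~\ref{thm:LyapSpect}\ref{it:randomfp}), and one can arrange $|{T_\omega^\delta}'(x_\omega^\delta)|\ge c(\delta)>0$ for a uniform constant. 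One must check that $T_\omega^\delta$ still satisfies $r_{\mathcal T^\delta}(R')<R'$ for some $R'<1$, which follows from continuity of $R\mapsto r_T(R)$ and the log-convexity / Tischler input recalled in Section~2, possibly after shrinking $R$ slightly. The main technical care here is keeping the lower bound on the derivative \emph{uniform} in $\omega$ while only spending a uniform amount $\delta$ of perturbation — this is where an essential-supremum argument, rather than a pointwise one, is needed, and it is the step I expect to be the main obstacle.

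For \emph{openness}: suppose $\mathcal T$ is stable, so $\essinf_\omega|T_\omega'(x_\omega)|=:2\eta>0$. I claim any $\mathcal S$ with $d(\mathcal T,\mathcal S)$ small enough is also stable. First, the random fixed points are close: $x_\omega^{\mathcal S}$ is the limit of $S_{\sigma^{-1}\omega}\circ\cdots\circ S_{\sigma^{-N}\omega}$, and since each $S_\omega$ is uniformly $d(\mathcal T,\mathcal S)$-close to $T_\omega$ and the compositions contract uniformly at rate $r/R<1$ on $D_R$, a standard telescoping estimate gives $\esssup_\omega|x_\omega^{\mathcal S}-x_\omega^{\mathcal T}|\le C\,d(\mathcal T,\mathcal S)$. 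Second, by Cauchy's estimate on a fixed circle inside $C_1$, $d(\mathcal T,\mathcal S)$-closeness of $T_\omega,S_\omega$ on $\overline{D_1}$ forces $\esssup_\omega|S_\omega'(\zeta)-T_\omega'(\zeta)|$ small, uniformly for $\zeta$ in $\overline{D_r}$ (away from $C_1$); combined with equicontinuity of the derivatives this yields $\esssup_\omega\bigl|S_\omega'(x_\omega^{\mathcal S})-T_\omega'(x_\omega^{\mathcal T})\bigr|<\eta$ once $d(\mathcal T,\mathcal S)$ is small. Hence $\essinf_\omega|S_\omega'(x_\omega^{\mathcal S})|\ge\eta>0$, so $\mathcal S$ is stable, proving the stable set is open. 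Assembling the two parts gives the corollary.
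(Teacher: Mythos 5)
Your openness argument is sound and is essentially the paper's own: the random fixed points of nearby cocycles are uniformly close (a telescoping/contraction estimate in the hyperbolic metric on $D_R$), and Cauchy estimates on a circle strictly inside $C_1$ convert $d(\mathcal T,\mathcal S)$-closeness into closeness of derivatives on $\overline{D_\rho}$, whence $\essinf_\omega|S_\omega'(x_\omega^{\mathcal S})|>0$. That half stands.

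The density half, however, has a genuine gap, and it is exactly the one you flag yourself. The condition you must arrange is $|{T_\omega^\delta}'(x_\omega^\delta)|\ge c(\delta)>0$ where $x_\omega^\delta$ is the random fixed point of the \emph{perturbed} cocycle, and $x_\omega^\delta$ depends on the entire past $(T^\delta_{\sigma^{-k}\omega})_{k\ge 1}$, not on $T^\delta_\omega$ alone. So ``perturb the zeros of $T_\omega$ away from $x_\omega$'' (besides conflating zeros with critical points --- the relevant set is $\{T_\omega'=0\}$) does not decouple the two effects: moving the maps moves the fixed point in an $\omega$-nonlocal way, and nothing in your sketch prevents the new critical points from again landing essentially at the new fixed points on a set of positive measure, nor produces a lower bound on the derivative that is uniform in $\omega$ with a perturbation of uniformly bounded size. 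You name this as ``the main obstacle'' but offer no mechanism to overcome it, so the density claim is not proved. The paper's resolution is precisely a mechanism that removes the coupling: conjugate by $M_{x_\omega}$ so the random fixed point becomes $0$, factor $\tilde T_\omega(z)=zP_\omega(z)$ with $P_\omega$ a Blaschke product and $\tilde T_\omega'(0)=P_\omega(0)$, and replace $P_\omega$ by $M_{2\delta}\circ P_\omega$ exactly on the set $\{|P_\omega(0)|\le\delta\}$, leaving it alone otherwise; then $\tilde S_\omega(z)=zQ_\omega(z)$ still fixes $0$, so after undoing the conjugacy the perturbed cocycle $S_\omega=M_{x_{\sigma\omega}}\circ\tilde S_\omega\circ M_{-x_\omega}$ has the \emph{same} random fixed point $x_\omega$ as $\mathcal T$, while $|S_\omega'(x_\omega)|\ge\bigl(\tfrac{1-r}{1+r}\bigr)^2\delta$ uniformly and $d(\mathcal S,\mathcal T)\le\epsilon$ by the M\"obius estimates of Lemma~\ref{lem:Blaschkbounds}. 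Some device of this kind --- fixing the random fixed point (or otherwise making the lower bound on the derivative insensitive to its motion) --- is what your proposal is missing.
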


The following theorem allows us to analyse the Lyapunov spectrum of Perron-Frobenius cocycles
of Blaschke products acting on coarser Banach spaces than $H^2(A_R)$.

\begin{thm}\label{thm:METcomp}
Let $\mathcal R=(\Omega,\PP,\sigma,X,\LL)$ be a random linear dynamical system
let $X'$ be a dense subspace of $X$ equipped with a finer norm such that $\LL_\omega(X')\subset X'$
for a.e.\ $\omega$. Let $\mathcal R'=(\Omega,\PP,\sigma,X',\LL|_{X'})$ be the restriction 
of $\mathcal R$ to $X'$. Suppose that $\mathcal R$ and $\mathcal R'$ both satisfy the 
conditions of Theorem \ref{thm:MET} (Multiplicative Ergodic Theorem). Then the exceptional Lyapunov
exponents of $\mathcal R$ and $\mathcal R'$ that exceed $\max(\kappa(\mathcal R),\kappa(\mathcal R'))$
coincide, as do the corresponding Oseledets spaces.
\end{thm}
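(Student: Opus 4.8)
The plan is to compare the Oseledets filtrations of $\mathcal R$ and $\mathcal R'$ directly, exploiting that the injection $\iota:X'\hookrightarrow X$ intertwines the two cocycles. First I would fix a value $\theta>\max(\kappa(\mathcal R),\kappa(\mathcal R'))$ that is not an exponent of either system, and let $V_\omega\subset X$ (resp.\ $V'_\omega\subset X'$) be the fast Oseledets subspace of $\mathcal R$ (resp.\ $\mathcal R'$) collecting all exceptional exponents $>\theta$; by the hypothesis both are finite-dimensional and equivariant, and by the MET both admit a characterization as the set of vectors whose forward growth rate under the cocycle is $>\theta$ (equivalently, the vectors lying in the image of the $N$-fold composition that survive at rate $>\theta$). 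The key observation is that this growth-rate characterization is \emph{norm-robust} on $X'$: for $v\in X'$, the growth rate of $\|\LL^{(n)}_\omega v\|_{X'}$ and of $\|\LL^{(n)}_\omega v\|_X = \|\LL^{(n)}_\omega\iota v\|_X$ agree, because $\|\cdot\|_X\le C\|\cdot\|_{X'}$ gives one inequality for free, while the reverse follows from the quasi-compactness encoded in $\kappa(\mathcal R')<\theta$ together with the finite-dimensionality of the fast space — on the (finite-dimensional, equivariant, measurably varying) space $V'_\omega$ all norms are comparable with a tempered constant, so no growth rate is lost.

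The second step is to show $\iota(V'_\omega)=V_\omega$ for a.e.\ $\omega$. The inclusion $\iota(V'_\omega)\subseteq V_\omega$ is immediate from the forward growth-rate characterization above: any $v\in V'_\omega$ grows at rate $>\theta$ in $X'$, hence at the same rate in $X$, hence lies in $V_\omega$. For the reverse inclusion I would argue by dimension count: it suffices to show $\dim V_\omega\le\dim V'_\omega$ a.e. Here I would use the other half of the MET, namely the \emph{backward} (or pullback) construction of the fast space. Pick $v\in V_\omega\subset X$; by density of $X'$ in $X$ and continuity of $\LL_\omega$ one can perturb, but a cleaner route is: the fast space of $\mathcal R$ at $\omega$ equals $\bigcap_{N\ge 1}\LL^{(N)}_{\sigma^{-N}\omega}\big(E^N_{\sigma^{-N}\omega}\big)$ where $E^N$ is a suitable "good" complement; since $\LL^{(N)}_{\sigma^{-N}\omega}$ maps $X'$ into $X'$ and the construction in $X'$ yields subspaces that map into the corresponding ones in $X$ under $\iota$, one gets $V_\omega\subseteq \overline{\iota(X')}\cap(\text{rate}>\theta)$. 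Combined with the fact that on the rate-$>\theta$ part the two norms are equivalent (so this part of $X$ is already inside $\iota(X')$, there being no room for genuinely new directions invisible to the finer norm), this forces $V_\omega=\iota(V'_\omega)$.

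Once the fast spaces are identified, the matching of the exceptional exponents exceeding $\theta$ and of the finer Oseledets splitting within $V_\omega$ follows by restricting both cocycles to this common finite-dimensional equivariant bundle and applying uniqueness in the finite-dimensional MET: on $V_\omega\cong\iota(V'_\omega)$ the two cocycles are conjugate via $\iota$, and a single finite-dimensional linear cocycle has a unique Lyapunov spectrum and Oseledets decomposition. Letting $\theta\downarrow\max(\kappa(\mathcal R),\kappa(\mathcal R'))$ through a sequence of non-exceptional values then yields the full statement.

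\textbf{Main obstacle.} The delicate point is the reverse inclusion $V_\omega\subseteq\iota(V'_\omega)$, i.e.\ ruling out "hidden" fast directions in $X$ that are not in (the image of) $X'$ or that grow slower in the $X'$-norm. This is where one must genuinely use that $\kappa(\mathcal R')<\theta$: the finer system still sees the fast part quasi-compactly, so the fast space of $\mathcal R'$ cannot be \emph{smaller} than that of $\mathcal R$, only a priori larger; pairing this with the trivial inclusion $\iota(V'_\omega)\subseteq V_\omega$ closes the gap. Making the norm-comparison on the equivariant finite-dimensional bundle uniform enough (tempered, not just pointwise finite) so that no exponent is distorted is the technical heart, and is essentially an application of the tempered-norm lemmas underlying the semi-invertible MET.
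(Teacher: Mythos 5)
There is a genuine gap, and it sits exactly where you flag your ``main obstacle''. Your argument for $V_\omega\subseteq\iota(V'_\omega)$ is circular: you invoke ``on the rate-$>\theta$ part the two norms are equivalent, so this part of $X$ is already inside $\iota(X')$'', but the norm comparison you established lives on the finite-dimensional $X'$-fast space $V'_\omega$ (via a measurable/tempered constant), not on $V_\omega$ --- whether the vectors of $V_\omega$ lie in $X'$ at all is precisely the point in question, since $X'$ is only dense, not closed, in $X$. The appeal to a backward/pullback construction $\bigcap_N \LL^{(N)}_{\sigma^{-N}\omega}(E^N_{\sigma^{-N}\omega})$ is not carried out and does not obviously help: the complements $E^N$ would have to be chosen inside $X'$ with control in the $X$-norm, which is again the issue. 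Likewise your phrase ``the fast space of $\mathcal R'$ cannot be smaller than that of $\mathcal R$'' is the statement to be proved, not a consequence of quasi-compactness of $\mathcal R'$. The paper closes this gap with a density argument you do not have: working inductively in $j$ (so that $E_{j-1}(\omega)=E'_{j-1}(\omega)$ is already known), the boundedness of the Oseledets projections of the $X$-system gives $\Pi_{E_j(\omega)\parallel F_j(\omega)}\bigl(X'\cap F_{j-1}(\omega)\bigr)=V_j(\omega)$ because $X'$ is dense in $X$ and $V_j(\omega)$ is finite-dimensional; one then picks a finite-dimensional $U\subset X'\cap F_{j-1}(\omega)$ with $\Pi_{E_j\parallel F_j}(U)=V_j(\omega)$, notes that every nonzero $h\in U$ has $\lambda_X(\omega,h)=\lambda_j$, hence $\lambda_{X'}(\omega,h)\ge\lambda_X(\omega,h)>\alpha$ and, by the external-exponents lemma, $\lambda_{X'}(\omega,h)=\lambda_j$; this forces $\lambda_j$ to be an exceptional exponent of $\mathcal R'$ with multiplicity at least $\dim V_j(\omega)$, and the dimension count, combined with $V'_j(\omega)\subseteq V_j(\omega)$, yields equality. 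Nothing in your proposal produces this finite-dimensional subspace of $X'$ realizing the $X$-exponent, so the reverse inclusion is unproved.

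A secondary problem: your claimed ``characterization'' of the fast space as the set of vectors whose forward growth rate exceeds $\theta$ is false --- a vector $v+w$ with $v$ in the fast space and $w$ a nonzero slow component has growth rate $>\theta$ but does not lie in the fast space (indeed that set is not even a subspace). So the ``immediate'' inclusion $\iota(V'_\omega)\subseteq V_\omega$ needs more than growth rates: one must use that $V'_\omega$ is an equivariant finite-dimensional family (as the paper does, together with its inductive identification of $E_{j-1}$ with $E'_{j-1}$), not a pointwise rate condition. Your first step --- the agreement of $X$- and $X'$-growth rates for vectors whose $X'$-rate exceeds $\max(\kappa(\mathcal R),\kappa(\mathcal R'))$, proved by splitting off the fast component and comparing norms on a finite-dimensional equivariant family along a positive-density set of returns --- does match the paper's Lemma on coincidence of external exponents; it is the second half of the argument that is missing.
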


\section{background}
Recall that a (finite) \emph{Blaschke product} is a map from $\hat\C$ to itself of the form:
$$
T(z)=\zeta\prod_{j=1}^n \frac{z-\zeta_j}{1-\bar \zeta_jz},
$$
where the $\zeta_j$'s lie in $D$, the open unit disc and $|\zeta|=1$. 
We say a function from $\Omega$ into the collection of Blaschke products is measurable
if each of the parameters, $n$, $\zeta$, $\zeta_1,\ldots,\zeta_n$ is a Borel measurable function 
of $\omega$. We note also that if $S(z)=\xi\prod_{j=1}^n (z-\xi_j)(1-\bar \xi_jz)^{-1}$ and
$(\xi, \xi_1, \dots, \xi_n)$ is sufficiently close to $(\zeta, \zeta_1, \dots, \zeta_n)$, 
then $\max_{z\in C_1}|S(z)-T(z)|<\epsilon$.

We record the following:
\begin{lem}[Properties of finite Blaschke products]\label{lem:Blaschkprop}
Let $T$ be a Blaschke product. Then
\begin{enumerate}[label=(\alph*)]
\item $T(C_1)=C_1$;
\item $T\circ I=I\circ T$, where $I$ is the inversion map $I(z)=1/\bar z$;
\item $T$ maps $D_1$ to itself (and hence $T$ maps $\hat \C\setminus \bar D_1$ to itself);
\end{enumerate}
Further,
\begin{enumerate}[resume, label=(\alph*)]
\item if $T$ is a non-constant map from the closed unit disc to itself that is
analytic in the interior and maps the boundary to itself, then $T$ is a finite Blaschke product.
\label{it:Blaschkchar}
\item
The composition of two finite Blaschke products is again a finite Blaschke product.
\label{it:Blaschkcomp}
\end{enumerate}
\end{lem}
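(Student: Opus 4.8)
The plan is to reduce parts (a)--(c) and (e) to the single-factor identity, and to treat (d), the converse characterization, as the real content. For $a\in D_1$ write $\phi_a(z)=(z-a)/(1-\bar a z)$. A direct computation gives
\[
1-|\phi_a(z)|^2=\frac{(1-|a|^2)(1-|z|^2)}{|1-\bar a z|^2},
\]
so, since $|a|<1$, the modulus $|\phi_a(z)|$ is $<1$, $=1$ or $>1$ according as $|z|$ is $<1$, $=1$ or $>1$. Writing $T=\zeta\prod_{j}\phi_{\zeta_j}$ with $|\zeta|=1$ and multiplying moduli, this immediately yields the inclusions $T(D_1)\subseteq D_1$, $T(C_1)\subseteq C_1$ and $T(\hat\C\setminus\bar D_1)\subseteq\hat\C\setminus\bar D_1$, which is (c) and half of (a). For the reverse inclusion in (a) I use that $T$ is a non-constant rational map, hence surjective onto $\hat\C$; any preimage of a point of $C_1$ must, by the trichotomy above, lie on $C_1$, so $T(C_1)=C_1$. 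For (b) it suffices to show $I\circ T\circ I=T$ (then compose with $I$, using $I^2=\mathrm{id}$), and this I check factor by factor: each $\phi_a$ is invariant under conjugation by $I$, and multiplication by the unimodular constant $\zeta$ commutes with $I$ because $1/\bar\zeta=\zeta$; the identity then extends by continuity to $0$ and $\infty$.

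The substantive step is (d). Suppose $T$ is non-constant, analytic on $D_1$, continuous on $\bar D_1$, with $T(C_1)\subseteq C_1$. First I claim $T$ has only finitely many zeros in $D_1$: the zeros are isolated (else $T\equiv0$), so an infinite zero set would accumulate at a point of $C_1$, where continuity of $T$ would force the value $0$, contradicting $|T|=1$ on $C_1$. Let $a_1,\dots,a_m$ be the zeros listed with multiplicity, put $B=\prod_{j=1}^m\phi_{a_j}$ and $g=T/B$. Since $B$ is a rational function whose poles all lie outside $\bar D_1$ and whose zeros in $D_1$ are exactly the $a_j$ with the matching multiplicities, $g$ extends to a function analytic on $D_1$ and continuous on $\bar D_1$, with no zeros in $D_1$; and $|B|=1$ on $C_1$ gives $|g|=1$ on $C_1$. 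The maximum modulus principle applied to $g$ gives $|g|\le1$ on $D_1$, and applied to $1/g$ (legitimate since $g$ is zero-free on $D_1$) gives $|g|\ge1$; hence $|g|\equiv1$ on $D_1$, which forces $g$ to be a constant $\zeta$ with $|\zeta|=1$ (a holomorphic function of constant modulus is constant, by the open mapping theorem). Thus $T=\zeta B$ is a finite Blaschke product.

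Finally (e) follows from (a), (c) and (d). If $T,S$ are finite Blaschke products then $S(\bar D_1)\subseteq\bar D_1$ by (a) and (c), so $T\circ S$ has no poles in $\bar D_1$; it is therefore analytic on $D_1$ and continuous on $\bar D_1$, it is non-constant because $T$ and $S$ are, and $(T\circ S)(C_1)=T(S(C_1))=T(C_1)=C_1$. Hence (d) applies and $T\circ S$ is a finite Blaschke product. I expect the only genuine obstacle to be the argument for (d): one must be a little careful that the zero set is finite — this is exactly where boundary continuity is used — and that $g=T/B$ really is continuous up to $C_1$, so that the two-sided maximum modulus argument giving $|g|\equiv1$ is valid; the rest is bookkeeping with the single-factor identity.
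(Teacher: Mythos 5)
Your proof is correct. For this lemma the paper offers no argument at all --- it simply records these facts as standard --- so there is nothing to compare against; what you have written is the classical proof (the single-factor identity $1-|\phi_a(z)|^2=(1-|a|^2)(1-|z|^2)/|1-\bar a z|^2$ for (a)--(c), Fatou's two-sided maximum-modulus argument for the characterization (d), and (e) as a corollary of (a), (c), (d)). Two small points worth making explicit: in (a) the surjectivity step uses that $T$ is non-constant, i.e.\ that the product in the definition is non-empty, which is the situation of interest in the paper; and in (d), if $T$ had no zeros in $D_1$ your argument makes $g=T$ constant, contradicting non-constancy, so the zero set is automatically non-empty and $T=\zeta B$ is indeed a finite Blaschke product with at least one factor. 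Neither point is a gap, just worth a sentence each.
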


%It is easy to verify that Blaschke products map the unit circle to itself. 
%They also map the interior of the unit circle, the unit disc, to itself; as well as the
%exterior to itself. Indeed, Blaschke products satisfy $T(1/\bar z)=1/\overline{T(z)}$,
%so that Blaschke products commute with the inversion map $I(z)=1/\bar z$. 
We are interested in finite Blaschke products whose restriction to the unit circle is
expanding. A simple sufficient condition for this, namely that 
$\sum_{j=1}^n \frac{1-|\zeta_j|}{1+|\zeta_j|}>1$,  may be found in the 
work of Martin \cite{Martin}.

Let $\pi(x)=e^{2\pi ix}$ be the natural bijection between $\R/\Z$ 
and $C_1$, the unit circle in the complex plane.
Let $S$ be an orientation-preserving expanding real analytic map 
from $\R/\Z$ to $\R/\Z$ and let 
$T=\pi S\pi^{-1}$ be its conjugate mapping from $C_1$ to $C_1$. Then there
exist $r<R<1$ such that $T$ maps the annulus $A_R=\{z\colon R<|z|<1/R\}$
over $A_r$ in a $k$-to-1 way (where $k$ is the absolute value of the degree of $S$). 
We will work with a family of expanding analytic maps of $C_1$, all mapping
$C_R$ into $\bar D_r$ (and hence $C_{1/R}$ into $D_{1/r}^c$).
We consider the separable Hardy-Hilbert space $H^2(A_R)$ of analytic functions on $A_R$ 
with an $L^2$ extension to $\partial A_R$. An orthonormal basis for the Hilbert space 
is $\{d_nz^n\colon n\in\Z\}$, where $d_n=R^{|n|}(1+R^{4|n|})^{-1/2}=R^{|n|}(1+o(1))$.
More details can found in \cite{BJS}.

Let $(\sigma_i)_{i=1,\ldots,k}$ be a family of inverse branches of $S$ and $(\tau_i)_{i=1,\ldots,k}$ 
be a family of inverse branches of $T\vert_{C_1}$.

If $f\in C(C_1)$ and $g\in C(\R/\Z)$, define
\begin{align*}
\LL_Tf(z)&=\sum_{i=1}^k f(\tau_i(z))\tau_i'(z)
\text{; and}\\
\LL_Sg(x)&=\sum_{i=1}^k g(\sigma_i(x))\sigma_i'(x)
\end{align*}

In fact, rather than acting on $C(C_1)$, we think of $\LL_T$ as acting on $H^2(A_R)$.
This corresponds to an action of $\LL_S$ on the strip 
$\R/\Z\times(\log R/(2\pi),-\log R/(2\pi))$.
Note that there is a one--one correspondence between elements of $H^2(A_R)$
and their restrictions to $C_1$ (which are necessarily continuous). 
\begin{figure}
\includegraphics[width=2in]{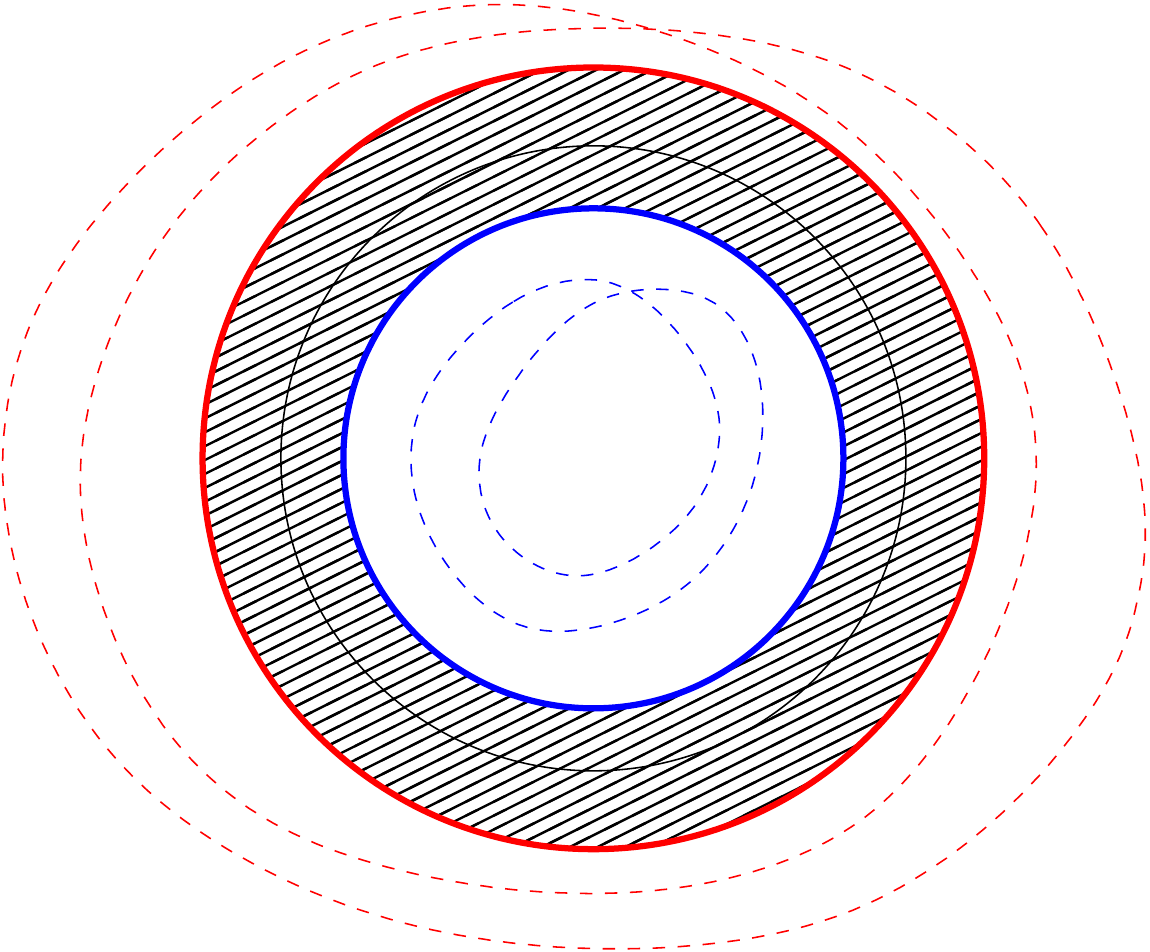}
\caption{Schematic diagram showing the annulus $A_R$ (shaded); the 
inner boundary, $C_R$ (blue); its image under $T$ (dashed, blue);
the outer boundary $C_{1/R}$ (red); and its image (dashed, red).}
\end{figure}

We record the following lemma which is a version of \cite[Remark 4.1]{SBJ}
\begin{lem}[Correspondence between Perron-Frobenius Operators]\label{lem:corr}
The operators $\LL_S$ and $\LL_T$ defined above are conjugate
by the map $\mathcal Q\colon C(C_1)\to C(\R/\Z)$ given by 
$$
(\mathcal Qf)(x)=f(e^{2\pi ix})e^{2\pi ix}.
$$
\end{lem}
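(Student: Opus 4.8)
The plan is to verify directly that $\mathcal Q\LL_T=\LL_S\mathcal Q$ as operators on $C(C_1)$. Since $\mathcal Q$ is a linear bijection from $C(C_1)$ onto $C(\R/\Z)$ — with inverse $(\mathcal Q^{-1}g)(e^{2\pi ix})=e^{-2\pi ix}g(x)$, which is well defined because $g$ is $\Z$-periodic — this identity is exactly the asserted conjugacy $\LL_S=\mathcal Q\LL_T\mathcal Q^{-1}$.

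First I would pin down how the inverse branches of $T|_{C_1}$ and of $S$ correspond. Since $T=\pi S\pi^{-1}$ and $\pi$ is a local biholomorphism with $\pi(x)=e^{2\pi ix}$, each inverse branch $\tau_i$ of $T|_{C_1}$ matches an inverse branch $\sigma_i$ of $S$ with $\tau_i\circ\pi=\pi\circ\sigma_i$, i.e.\ $\tau_i(e^{2\pi ix})=e^{2\pi i\sigma_i(x)}$; this formula makes sense on $\R/\Z$ because $\sigma_i$ descends to $\R/\Z$ and $\pi$ is $\Z$-periodic. Differentiating via the chain rule: writing $z=e^{2\pi iw}$ so that $dw/dz=1/(2\pi iz)$ and differentiating $\tau_i(z)=e^{2\pi i\sigma_i(w)}$, one gets $\tau_i'(z)=\sigma_i'(w)\,\tau_i(z)/z$, equivalently $\tau_i'(e^{2\pi ix})=\sigma_i'(x)\,e^{2\pi i(\sigma_i(x)-x)}$.

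Given this, the verification is a one-line substitution. For $f\in C(C_1)$, writing $z=e^{2\pi ix}$,
\begin{align*}
(\mathcal Q\LL_Tf)(x)&=e^{2\pi ix}\sum_{i=1}^k f(\tau_i(z))\,\tau_i'(z)
=e^{2\pi ix}\sum_{i=1}^k f\!\left(e^{2\pi i\sigma_i(x)}\right)\sigma_i'(x)\,e^{2\pi i(\sigma_i(x)-x)}\\
&=\sum_{i=1}^k f\!\left(e^{2\pi i\sigma_i(x)}\right)e^{2\pi i\sigma_i(x)}\sigma_i'(x)
=\sum_{i=1}^k (\mathcal Qf)(\sigma_i(x))\,\sigma_i'(x)=(\LL_S\mathcal Qf)(x).
\end{align*}
Restricting to $H^2(A_R)$ (and to its image under $\mathcal Q$), the same termwise identity gives the conjugacy in the setting actually used in the paper.

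The only delicate point — and it is bookkeeping rather than a real obstacle — is the consistent labelling of the branches $\tau_i$ and $\sigma_i$ and the check that $\tau_i\circ\pi=\pi\circ\sigma_i$ is globally well defined on $\R/\Z$ even though $\pi$ is not globally invertible; once the termwise derivative identity $\tau_i'(e^{2\pi ix})=\sigma_i'(x)\,e^{2\pi i(\sigma_i(x)-x)}$ is in place, everything else is immediate. This is the computation behind \cite[Remark 4.1]{SBJ}.
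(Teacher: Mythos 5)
Your proof is correct: the branch correspondence $\tau_i\circ\pi=\pi\circ\sigma_i$, the chain-rule identity $\tau_i'(e^{2\pi ix})=\sigma_i'(x)e^{2\pi i(\sigma_i(x)-x)}$, and the termwise substitution indeed yield $\mathcal Q\LL_T=\LL_S\mathcal Q$, which is the asserted conjugacy since $\mathcal Q$ is a linear bijection. The paper gives no proof of this lemma, simply citing \cite[Remark 4.1]{SBJ}, and your direct verification is precisely the intended computation.
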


In particular, the spectral properties of $\LL_S$ and $\LL_T$ are the 
same, so that even though $\LL_S$ is the more standard object in 
dynamical systems, we will study $\LL_T$ since this will allow us to 
directly apply tools of complex analysis. Further, if $S_{i_1},\ldots,S_{i_n}$ are
expanding maps of  $\R/\Z$ and $T_{i_1},\ldots,T_{i_n}$ are their conjugates, acting
on $C_1$, then $\LL_{S_{i_n}}\circ\cdots \circ\LL_{S_{i_1}}=
\mathcal Q^{-1}\LL_{T_{i_n}}\circ\cdots\circ
\LL_{T_{i_1}}\mathcal Q$, so the Lyapunov exponents of a cocycle of 
$\LL_S$ operators are
the same as the Lyapunov exponents of the corresponding cocycle of $\LL_T$ operators
(provided that $\mathcal Q$ is an isomorphism of the two Banach 
spaces on which the operators are acting). 

We record the following well known lemma.

\begin{lem}[Duality Relations]\label{lem:dual}
Let $T$ be an expanding analytic map from $C_1$ to $C_1$ and let 
$\mathcal L_T$ be as above. 
If $f\in C(C_1)$, $g\in L^\infty(C_1)$, then
$$
\frac{1}{2\pi i}\int_{C_1} f(z)g(Tz)\,dz=
\frac{1}{2\pi i}\int_{C_1} \mathcal L_Tf(z)g(z)\,dz.
$$
\end{lem}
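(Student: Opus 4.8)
The plan is to prove the identity by the standard change-of-variables argument that realizes the transfer operator $\mathcal L_T$ as the dual of the composition operator $g\mapsto g\circ T$. Since $T$ is orientation-preserving, expanding and analytic on $C_1$ (indeed the restriction of a finite Blaschke product, by Lemma~\ref{lem:Blaschkprop}\ref{it:Blaschkchar}), the restriction $T|_{C_1}\colon C_1\to C_1$ is a $k$-to-$1$ covering map. Fixing the inverse branches $\tau_1,\dots,\tau_k$, their images $J_i=\tau_i(C_1)$ are closed arcs with pairwise disjoint interiors whose union is $C_1$, and on each $J_i$ the map $T$ restricts to an orientation-preserving analytic diffeomorphism onto $C_1$ with inverse $\tau_i$. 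Because $T$ is orientation-preserving, each $\tau_i$ is orientation-preserving, so a suitable cyclic ordering of the $J_i$ makes the concatenation $J_1\ast\cdots\ast J_k$ equal to $C_1$ as an oriented contour; hence $\sum_{i=1}^k\int_{J_i}=\int_{C_1}$ on integrable $1$-forms. (Note the branches wrap $C_1$ exactly once in total, not $k$ times: it is $T$, not $\tau_i$, that has degree $k$.)

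First I would treat the case $g\in C(C_1)$. For each $i$, the substitution $w=\tau_i(z)$, which is an analytic diffeomorphism $C_1\to J_i$ with $dw=\tau_i'(z)\,dz$ and $T(w)=T(\tau_i(z))=z$, gives
$$
\frac{1}{2\pi i}\int_{J_i} f(w)\,g(Tw)\,dw
=\frac{1}{2\pi i}\int_{C_1} f(\tau_i(z))\,\tau_i'(z)\,g(z)\,dz .
$$
Summing over $i=1,\dots,k$, the left-hand sides add up to $\frac{1}{2\pi i}\int_{C_1}f(w)g(Tw)\,dw$ by the orientation remark, while the right-hand sides add up to $\frac{1}{2\pi i}\int_{C_1}\bigl(\sum_{i=1}^k f(\tau_i(z))\tau_i'(z)\bigr)g(z)\,dz=\frac{1}{2\pi i}\int_{C_1}\mathcal L_Tf(z)\,g(z)\,dz$ by the definition of $\mathcal L_T$. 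This is the asserted identity for continuous $g$.

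Finally I would remove the continuity assumption on $g$ by density. On the finite-measure circle $C_1$ one has $L^\infty(C_1)\subset L^1(C_1)$, so choose continuous $g_n\to g$ in $L^1(C_1)$. Each $\tau_i$ is bi-Lipschitz with bounded (away from $0$ and $\infty$) derivative on $C_1$, so the change of variables above shows $\|g_n\circ T-g\circ T\|_{L^1(C_1)}\le C\|g_n-g\|_{L^1(C_1)}\to0$ (here $g\circ T$ is unambiguously defined branchwise and is bounded measurable); since $f$ and $\mathcal L_Tf$ are continuous, hence bounded on $C_1$, both sides of the identity are continuous in $g$ with respect to $L^1$ convergence against a bounded weight, so the identity for $g_n$ passes to the limit. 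One could equally avoid the approximation and invoke the measure-theoretic change of variables directly, since $w\mapsto\tau_i(z)$ is a $C^1$-diffeomorphism of arcs and the integrand is bounded measurable.

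I do not expect a genuine obstacle: this is the "well known" duality between transfer and composition operators. The only points needing a little care are the orientation bookkeeping, ensuring the arcs $J_i$ recombine to $C_1$ once with the correct sign, and the interpretation of $g(Tz)$ and of the two integrals when $g$ is merely $L^\infty$, which are handled by the branchwise definition of $g\circ T$ together with the density argument above.
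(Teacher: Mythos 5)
Your argument is correct, and it is exactly the standard branchwise change-of-variables computation that the duality statement rests on; the paper itself gives no proof, recording the lemma as ``well known,'' so your write-up simply supplies the intended argument (substitute $w=\tau_i(z)$ on each arc $J_i=\tau_i(C_1)$, sum over branches, then extend from continuous $g$ to $L^\infty$ by density). The only points needing care --- that the branches are really defined on the circle cut at a point so the $J_i$ tile $C_1$ up to endpoints, that orientation-preservation (automatic here, since $T$ is conjugate to an orientation-preserving map of $\R/\Z$) makes the arcs recombine with the right sign, and that $g\circ T$ is well defined a.e.\ because the branches are bi-Lipschitz --- are all ones you address adequately.
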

That is, if a linear functional $\theta$ is defined by integrating against $g$, 
then $\mathcal L_T^*\theta$
is given by integrating against $g\circ T$.

Let $\sigma$ be an ergodic measure-preserving transformation of $(\Omega,\PP)$.
Let $X$ be a Banach space and suppose that $(\LL_\omega)_{\omega\in\Omega}$ is a
family of linear operators on $X$ that is \emph{strongly measurable}, that is
for any fixed $x\in X$, $\omega\mapsto\LL_\omega(x)$ is $(\mathcal F_\Omega,
\mathcal F_X)$-measurable, where $\mathcal F_\Omega$ is the $\sigma$-algebra
on $\Omega$ and $\mathcal F_X$ is the Borel $\sigma$-algebra on $X$.
In this case we say that the tuple $\mathcal R=(\Omega,\PP,\sigma,X,\mathcal L)$ is a
\emph{random linear dynamical system} and we define $\mathcal L^{(n)}_\omega=
\LL_{\sigma^{n-1}\omega}\circ\cdots\circ\LL_\omega$.

A cocycle analogue to the (logarithm of the) 
spectral radius of a single operator is the quantity
$$
\lambda_1(\omega)=\lim_{n\to\infty}\tfrac 1n\log\|\LL^{(n)}_\omega\|.
$$
If one assumes that $\int\log\|\LL_\omega\|\,d\PP<\infty$, then the
Kingman sub-additive ergodic theorem guarantees the 
$\PP$-a.e.\ convergence of this limit to a value in $[-\infty,\infty)$. 
Ergodicity also ensures that $\lambda_1(\cdot)$
is almost everywhere constant, so that we just write $\lambda_1$. 

A second quantity of interest is the analogue of the (logarithmic) essential spectral
radius, the \emph{asymptotic index of compactness} \cite{Thieullen}:
$$
\kappa(\omega)=\lim_{n\to\infty}\tfrac 1n\log\alpha(\LL^{(n)}_\omega),
$$
where $\alpha(L)$ is the \emph{index of compactness} of an operator $L$, the
infimum of those real numbers $t$ such that the 
image of the unit ball in $X$ under $L$ may be covered by finitely many balls 
of radius $t$, so that $L$ is a compact operator if and only if $\alpha(L)=0$.
The quantity $\alpha(L)$ is also sub-multiplicative, so that Kingman's
theorem again implies $\kappa(\omega)$ exists for $\PP$-a.e. $\omega$ and is
independent of $\omega$, so that we just write $\kappa$. The cocycle will be called
\emph{quasi-compact} if $\kappa<\lambda_1$. The first Multiplicative Ergodic Theorem 
in the context of quasi-compact cocycles of operators on Banach spaces was proved
by Thieullen \cite{Thieullen}. We require a semi-invertible
version (that is: although the base dynamical system is required to be invertible,
the operators are not required to be injective) of a result of Lian and Lu \cite{LianLu}.

\begin{thm}[\cite{GTQ-JMD}]\label{thm:MET}
Let $\sigma$ be an invertible ergodic measure-preserving transformation of 
a probability space $(\Omega,\PP)$ and
let $\omega\mapsto\LL_\omega$ be a quasi-compact 
strongly measurable cocycle of operators acting on a
Banach space $X$ with a separable dual satisfying $\int\log\|\LL_\omega\|\,d\PP(\omega)<\infty$.

Then there exist $1\le\ell\le\infty$, 
exponents $\lambda_1\ge \lambda_2\ge \ldots\ge\lambda_\ell\ge\kappa\ge
-\infty$, finite multiplicities $m_1,m_2,\ldots,m_\ell$ and subspaces 
$V_1(\omega),\ldots,V_\ell(\omega),W(\omega)$ such that 
\begin{enumerate}[label=(\alph*)]
\item $\dimn(V_i(\omega))=m_i$; 
\item $\LL_\omega V_i(\omega)=V_i(\sigma(\omega))$ and
$\LL_\omega W(\omega)\subset W(\sigma(\omega))$;
\item $V_1(\omega)\oplus\ldots\oplus V_\ell(\omega)\oplus W(\omega)=X$;
\item for $x\in V_i(\omega)\setminus\{0\}$, $\lim\frac 1n\log
\|\mathcal L^{(n)}_\omega x\|\to\lambda_i$;
\item for $x\in W(\omega)$, $\limsup\frac 1n\log \|\LL^{(n)}_\omega x\|
\le \kappa$.
\end{enumerate}
\end{thm}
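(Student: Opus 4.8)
The plan is to run the standard argument for a semi-invertible quasi-compact multiplicative ergodic theorem, as carried out in \cite{GTQ-JMD} (refining \cite{Thieullen} and \cite{LianLu}): first pin down the exponents via a subadditive quantity, then build the Oseledets splitting by a pullback/limit argument that uses invertibility of $\sigma$, and finally split the top finite-dimensional block with the classical two-sided Oseledets theorem. For the first step I would work with approximation numbers: for a bounded operator $A$ on $X$, set $a_k(A)=\inf\{\|A-F\|:\dimn \mathrm{range}\,F<k\}$, so that $a_1(A)=\|A\|$, the $a_k(A)$ are non-increasing in $k$, one has submultiplicativity $a_{j+k-1}(AB)\le a_j(A)a_k(B)$, and $\lim_k a_k(A)$ agrees with $\alpha(A)$ up to comparison of measures of noncompactness. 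For fixed $k$, $n\mapsto\log a_k(\LL^{(n)}_\omega)$ is subadditive along $\sigma$ (up to a harmless shift of the index) and bounded above by an integrable multiple of $n$ since $\int\log\|\LL_\omega\|\,d\PP<\infty$; Kingman's theorem and ergodicity then give an a.e.\ constant limit $\Lambda_k\in[-\infty,\infty)$ with $\Lambda_1\ge\Lambda_2\ge\cdots\downarrow\kappa$. I would then let $\lambda_1>\cdots>\lambda_\ell>\kappa$ be the distinct values among the $\Lambda_k$ exceeding $\kappa$, with $m_i$ the number of $k$ having $\Lambda_k=\lambda_i$ and $m=m_1+\cdots+m_\ell$.

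Next I would construct the splitting. The \emph{slow space} $W(\omega):=\{x\in X:\limsup_n\frac1n\log\|\LL^{(n)}_\omega x\|\le\kappa\}$ is, by the upper estimates on $a_k(\LL^{(n)}_\omega)$ from the first step, a closed, measurably varying, forward-equivariant subspace of codimension $m$; this uses only measurability of the cocycle, not invertibility of $\sigma$. The \emph{fast bundle} requires the invertibility: for each $n$ I would choose an $m$-dimensional $E_n\subset X$ on which $\LL^{(n)}_{\sigma^{-n}\omega}$ is maximally expanding (as measured by the product of its first $m$ approximation numbers) and set $F_n:=\LL^{(n)}_{\sigma^{-n}\omega}(E_n)$; the gap $\lambda_\ell>\kappa$, fed into the submultiplicativity inequalities, forces $(F_n)$ to be Cauchy with exponentially small increments in the Grassmannian of $m$-planes, yielding $V(\omega):=\lim_n F_n$ with $\dimn V(\omega)=m$, $\LL_\omega V(\omega)=V(\sigma\omega)$, measurable dependence on $\omega$ (separability of $X^*$ makes the relevant Grassmannian standard Borel), and $X=V(\omega)\oplus W(\omega)$. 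It then remains to show the projection $\Pi_\omega$ onto $V(\omega)$ along $W(\omega)$ is tempered, $\frac1n\log\|\Pi_{\sigma^n\omega}\|\to0$ a.e., which is precisely an exponential dichotomy separating growth rates $\ge\lambda_\ell$ from $\le\kappa$; on $V(\omega)$ this simultaneously upgrades the $\limsup$ in the growth rate to a genuine limit.

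Finally, the restriction of $\LL$ to the finite-dimensional equivariant bundle $V(\cdot)$ is an injective cocycle of $m\times m$ matrices over the invertible ergodic base $\sigma$, so the classical two-sided Oseledets theorem produces $V(\omega)=V_1(\omega)\oplus\cdots\oplus V_\ell(\omega)$ with $\dimn V_i(\omega)=m_i$ and exact exponential rate $\lambda_i$ on $V_i(\omega)\setminus\{0\}$. Combining this with the slow-space and dichotomy facts above gives all of (a)--(e): (a) and (b) are immediate from the construction, (c) is $X=V(\omega)\oplus W(\omega)$ refined by the classical splitting, (d) is the classical conclusion on $V_i$, and (e) follows from the definition of $W(\omega)$.

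The hard part will be the fast-bundle step. A general Banach space carries no inner product, so neither the convergence of $(F_n)$ nor --- more delicately --- the temperedness of $\Pi_\omega$ can lean on orthogonality; both must be extracted from the submultiplicative approximation-number inequalities together with quantitative control of the minimal angle between $F_n$ and the complements supplied by the slow filtration. A secondary subtlety is the degenerate case $\kappa=-\infty$, or some $\lambda_i=-\infty$: there I would first run the argument for the cocycle with exponents truncated below at level $-M$, obtain the corresponding splitting, and then pass to the limit $M\to\infty$. All of this is carried out in detail in \cite{GTQ-JMD}.
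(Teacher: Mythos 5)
The paper does not actually prove Theorem~\ref{thm:MET}: it is imported verbatim from \cite{GTQ-JMD}, so there is no in-paper argument to compare yours against, and your proposal is best judged as a reconstruction of the cited proof. At that level your outline matches the broad strategy of \cite{GTQ-JMD}: exponents from Kingman applied to submultiplicative quantities, the slow space defined directly by its growth rate, the fast bundle obtained as a limit of images of subspaces pushed forward from $\sigma^{-n}\omega$ (which is exactly where invertibility of the base and separability of $X^*$ enter), temperedness of the projections, and a finite-dimensional argument on the equivariant fast bundle.

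Two points where you genuinely diverge, and where care is needed. First, \cite{GTQ-JMD} (and this paper, in Lemma~\ref{lem:exptsviavol}) measures growth through the volume-type quantities $\mathcal D_k$ rather than approximation numbers $a_k$. This is not purely cosmetic: in a general Banach space $\lim_k a_k(A)$ is the distance to finite-rank operators, which need not coincide with the index of compactness $\alpha(A)$ (equality with the distance to compact operators already requires the approximation property), so defining the exceptional exponents via $a_k$ risks producing a threshold strictly above $\kappa$ and silently discarding exponents between the two; the cited proof is arranged so that the relevant threshold is exactly $\kappa$. Second, your final step invokes the classical two-sided Oseledets theorem on the restricted cocycle $\LL_\omega\colon V(\omega)\to V(\sigma\omega)$; this restriction is indeed an isomorphism of $m$-dimensional spaces, but the two-sided theorem also requires $\log^+$-integrability of the inverse, which is not granted by the hypotheses. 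One should instead use the semi-invertible finite-dimensional theorem (or the filtration-plus-limit construction of the individual $V_j$ as in \cite{GTQ-JMD}), which needs only $\int\log^+\|\LL_\omega\|\,d\PP<\infty$ and invertibility of $\sigma$. Finally, the two steps you flag as hard --- Grassmannian convergence of the candidate fast spaces without an inner product, and temperedness of $\Pi_\omega$ --- are precisely the technical heart of \cite{GTQ-JMD}, and your proposal defers them back to that reference; as a gloss on a quoted theorem this is acceptable, but as a self-contained proof it remains an outline.
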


For a bounded linear operator $A$ from $X$ to itself, we defined 
the following crude notion of volume growth in \cite{GTQ-JMD}:
$$
\mathcal D_k(A)=
\sup_{x_1,\ldots,x_k}\prod_{j=1}^kd(A x_j,\text{lin}(\{Ax_i\colon i<j\})),
$$
where the supremum is taken over $x$'s of norm 1; $\lin(\{y_1,\ldots,y_n\})$ denotes
the linear span of the vectors $y_1,\ldots,y_n$; the linear span of the empty set
is taken to be \{0\}; and $d(x,S):=\inf_{y\in S}\|x-y\|$.

\begin{lem}\label{lem:exptsviavol}
Let $\sigma$, $(\Omega,\PP)$ and $\omega\mapsto\LL_\omega$ 
be as in the statement of 
Theorem \ref{thm:MET}. Let $\mu_1\ge \mu_2\ge \ldots$ be
the sequence of $\lambda$'s in decreasing order \emph{with repetition}, so that 
$\lambda_i$ occurs $m_i$ times in the sequence.

\begin{enumerate}[label=(\alph*)]
\item $\mathcal D_k$ is sub-multiplicative: $\mathcal D_k(AB)\le 
\mathcal D_k(A)\mathcal D_k(B)$ if $A$ and $B$
are bounded linear operators on $X$;\label{it:Dsubmult}
\item There exists a constant $c_k$ such that 
if $Y$ is a closed subspace of $X$ of co-dimension 1 and $A$ is a linear operator on $X$,
then $\mathcal D_k(A)\le c_k\|A\|\|A|_Y\|^{k-1}$.\label{it:Dfactor}
\item 
$\frac 1n\log \mathcal D_k(\LL_\omega^{(n)})\to \mu_1+\ldots+\mu_k$ for
$\PP$-almost every $\omega$;\label{it:Dexpts}
\end{enumerate}
\end{lem}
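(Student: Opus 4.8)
The plan is to prove the three parts of Lemma~\ref{lem:exptsviavol} essentially independently, with parts \ref{it:Dsubmult} and \ref{it:Dfactor} being purely operator-theoretic and part \ref{it:Dexpts} being the substantive consequence of the Multiplicative Ergodic Theorem.

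\medskip

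\noindent\textbf{Part \ref{it:Dsubmult} (submultiplicativity).} The plan is to show that for any choice of unit vectors $x_1,\dots,x_k$, one has $\prod_{j=1}^k d(ABx_j,\lin\{ABx_i:i<j\})\le \mathcal D_k(A)\,\mathcal D_k(B)$. The natural move is to relate the ``staircase'' quantity $\prod_j d(ABx_j,\lin\{ABx_i:i<j\})$ to a more symmetric object. First I would observe that $\mathcal D_k(A)$ is, up to a bounded factor depending only on $k$, comparable to the supremum over unit $x_1,\dots,x_k$ of the $k$-dimensional ``parallelepiped volume'' $\mathrm{Vol}(Ax_1,\dots,Ax_k)$ defined via the Gram–Schmidt-type product $\prod_j d(Ax_j,\lin\{Ax_i:i<j\})$ — in fact these are literally equal once one notes the Gram–Schmidt product is invariant under replacing the list $Bx_1,\dots,Bx_k$ by any basis of the same span with the same volume. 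The cleanest route: write $Bx_1,\dots,Bx_k$, apply Gram–Schmidt inside $X$ to get an orthogonal-ish list $y_1,\dots,y_k$ with $\lin\{y_1,\dots,y_j\}=\lin\{Bx_1,\dots,Bx_j\}$ and $\prod\|y_j\|=\prod_j d(Bx_j,\lin\{Bx_i:i<j\})\le \mathcal D_k(B)$; then $\prod_j d(ABx_j,\lin\{ABx_i:i<j\})=\prod_j d(Ay_j,\lin\{Ay_i:i<j\})\le \mathcal D_k(A)\prod_j\|y_j\|\le\mathcal D_k(A)\mathcal D_k(B)$, where the middle inequality uses that scaling each $y_j$ to a unit vector scales the volume by exactly $\prod\|y_j\|$. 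This is routine and I would not belabor it.

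\medskip

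\noindent\textbf{Part \ref{it:Dfactor} (factoring through a hyperplane).} Given a codimension-one closed subspace $Y$, I would pick a unit functional $\phi$ with $\ker\phi=Y$ and a vector $e$ with $\phi(e)=1$, $\|e\|\le 2$ (say), so every $x$ decomposes as $x=\phi(x)e+y$ with $y\in Y$ and $\|y\|\le \|x\|+2|\phi(x)|\le 3\|x\|$. Then for unit vectors $x_1,\dots,x_k$, in the staircase product $\prod_j d(Ax_j,\lin\{Ax_i:i<j\})$ the first factor is at most $\|A\|$, and each subsequent factor $d(Ax_j,\lin\{Ax_i:i<j\})$: after subtracting an appropriate multiple of $Ax_1$ (which lies in the span being projected off, once $j\ge 2$) one replaces $x_j$ by a vector in $Y$ of norm $O(1)$, so that factor is $\le C\|A|_Y\|$. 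Collecting constants gives $\mathcal D_k(A)\le c_k\|A\|\,\|A|_Y\|^{k-1}$. The only care needed is that the linear combination used to move $x_j$ into $Y$ is uniformly controlled, which follows from the bound on $\|e\|$; this is again routine bookkeeping.

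\medskip

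\noindent\textbf{Part \ref{it:Dexpts} (volume growth equals sum of top exponents).} This is the heart of the lemma and where I expect the real work. The plan is a two-sided estimate. For the lower bound: take unit vectors $x_1,\dots,x_k$ lying one in each of the top Oseledets directions (more precisely, choose $x_j\in V_1(\omega)\oplus\cdots\oplus V_\ell(\omega)$ realizing the first $k$ exponents $\mu_1,\dots,\mu_k$, using that these spaces are finite-dimensional and equivariant); then growth of $\mathcal L^{(n)}_\omega$ on the span of these vectors is governed, via the MET conclusions (d) and the uniform-angle / bounded-distortion estimates implicit in the finite-dimensional equivariant splitting, by $e^{n(\mu_1+\cdots+\mu_k+o(n))}$, giving $\liminf \frac1n\log\mathcal D_k(\mathcal L^{(n)}_\omega)\ge \mu_1+\cdots+\mu_k$. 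The subtlety here is controlling the angles between the evolved subspaces: one needs that $d(\mathcal L^{(n)}_\omega x_j,\lin\{\mathcal L^{(n)}_\omega x_i:i<j\})$ is not exponentially smaller than $\|\mathcal L^{(n)}_\omega x_j\|$, which follows from the Oseledets splitting being a direct sum with subexponentially-varying projections (a standard consequence of the MET, e.g.\ via tempered variation of the splitting, or by an argument as in \cite{GTQ-JMD}). For the upper bound: combine submultiplicativity (part \ref{it:Dsubmult}) with part \ref{it:Dfactor} applied iteratively. Using $W(\omega)$ and finitely many equivariant hyperplanes one shows $\mathcal D_k(\mathcal L^{(n)}_\omega)$ is bounded by a product of $k$ ``directional norms'', the largest $k$ of which have exponential growth rates $\mu_1,\dots,\mu_k$; invoking Kingman's subadditive ergodic theorem on the subadditive sequence $\log\mathcal D_k(\mathcal L^{(n)}_\omega)$ (subadditive by part \ref{it:Dsubmult}) gives a.e.\ convergence of $\frac1n\log\mathcal D_k(\mathcal L^{(n)}_\omega)$ to a constant, and the lower bound above pins that constant at $\mu_1+\cdots+\mu_k$. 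The main obstacle, as noted, is making the angle/distortion control in the lower bound precise without re-deriving large parts of the MET — I would handle this by citing the relevant tempered-splitting estimates from \cite{GTQ-JMD} (or \cite{FGQ-CPAM}) rather than reproving them, and phrasing the lower bound as: on the finite-dimensional equivariant subspace $\bigoplus_{i}V_i(\omega)$ the cocycle acts like an invertible finite-dimensional cocycle whose $k$-fold exterior power has Lyapunov exponent $\mu_1+\cdots+\mu_k$, and $\mathcal D_k$ restricted there is uniformly comparable (up to the dimension-dependent constant $c_k$ and a tempered factor) to the norm of that exterior power.
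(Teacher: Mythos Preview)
The paper does not prove this lemma in-line; it simply cites Lemmas 1, 8 and 12 of \cite{GTQ-JMD}. Your sketches for parts \ref{it:Dsubmult} and \ref{it:Dexpts} are essentially the arguments one finds there (Gram--Schmidt reduction for sub-multiplicativity; Kingman plus a lower bound from the Oseledets splitting for the volume growth), so on those points your proposal matches the cited source and is correct.

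There is, however, a genuine gap in your argument for part \ref{it:Dfactor}. You claim that for $j\ge 2$ one can subtract a multiple of $Ax_1$ from $Ax_j$ to land in $A(Y)$ with uniformly controlled norm, and that ``this follows from the bound on $\|e\|$''. It does not. Writing $x_j=\phi(x_j)e+y_j$ with $y_j\in Y$, the vector $Ae$ is \emph{not} in $\lin\{Ax_i:i<j\}$ in general, so you are forced to eliminate the $e$-component of $x_j$ using $x_1$ itself; the required coefficient is $\phi(x_j)/\phi(x_1)$, which is unbounded unless you have arranged $|\phi(x_1)|\ge|\phi(x_j)|$ for all $j$. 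The bound $\|e\|\le 2$ plays no role in controlling that ratio. The fix (and this is what is done in \cite{GTQ-JMD}) is to first permute the $x_j$ so that $|\phi(x_1)|=\max_j|\phi(x_j)|$; then $|\phi(x_j)/\phi(x_1)|\le 1$ and $\|x_j-(\phi(x_j)/\phi(x_1))x_1\|\le 2$, giving $d(Ax_j,\lin\{Ax_i:i<j\})\le 2\|A|_Y\|$ for $j\ge2$. This permutation step is not free in a Banach space: one must invoke the separate fact that the staircase product $\prod_j d(Ax_j,\lin\{Ax_i:i<j\})$ changes under permutation of the $x_j$ only by a constant depending on $k$, which is another of the preliminary lemmas in \cite{GTQ-JMD}. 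Once that is in hand your outline goes through, with $c_k$ absorbing both the permutation constant and the factor $2^{k-1}$.
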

The proof of this is in Lemmas 1,8 and 12 of \cite{GTQ-JMD}.

\section{Lyapunov Spectrum for expanding Blaschke products}
\label{sec:LyapSpectrum}

\begin{lem}\label{lem:contraction}
Let $R<1$ and let $T$ be a Blaschke product satisfying $r:=r_T(R)<R$.
Let $d_R$ be the hyperbolic metric on $D_{R}$: $d_R(z,w)=d_H(z/R,w/R)$,
where $d_H$ is the standard hyperbolic metric on the unit disc. Then
$$
d_R(T(z),T(w))\le \tfrac rR \,d_R(z,w)
\text{ for all $z,w\in D_R$.}
$$
\end{lem}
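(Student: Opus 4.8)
The plan is to reduce the statement to the classical Schwarz--Pick lemma by rescaling. Consider the map $S\colon D_1 \to \hat{\mathbb C}$ defined by $S(z) = \tfrac1R\, T(Rz)$; since $T$ maps $D_R$ into $D_r \subset D_1$, the map $S$ sends $D_1$ into $D_{r/R} \subset D_1$ and is analytic there. The target disc $D_{r/R}$ is strictly inside the unit disc, and this is precisely the feature we will exploit: an analytic self-map of $D_1$ whose image lies in a smaller concentric disc is a strict contraction in the hyperbolic metric, with contraction factor controlled by $r/R$. Under the identification $d_R(z,w) = d_H(z/R, w/R)$, proving the claimed inequality for $T$ on $D_R$ is equivalent to proving $d_H(S(z), S(w)) \le \tfrac rR\, d_H(z,w)$ for all $z,w \in D_1$.

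To obtain the contraction factor $r/R$, first recall that by Schwarz--Pick, any analytic $S\colon D_1 \to D_1$ satisfies $d_H(S(z),S(w)) \le d_H(z,w)$; but here the image is contained in $D_{r/R}$, so we can do better. Factor $S = \iota \circ \widetilde S$, where $\widetilde S\colon D_1 \to D_1$ is the rescaling $\widetilde S(z) = \tfrac{R}{r}\, S(z)$ (an analytic self-map of $D_1$, since $|S(z)| < r/R$) and $\iota\colon D_1 \to D_1$ is the dilation $\iota(w) = \tfrac rR\, w$. The first factor is a hyperbolic weak contraction by Schwarz--Pick. For the second, I would check directly that the dilation $w \mapsto \tfrac rR w$ contracts the hyperbolic metric of $D_1$ by the factor $\tfrac rR$: using the infinitesimal form $ds = \tfrac{2|dw|}{1-|w|^2}$, the pulled-back metric has density $\tfrac{2(r/R)|dw|}{1-(r/R)^2|w|^2} \le \tfrac rR \cdot \tfrac{2|dw|}{1-|w|^2}$, since $1 - (r/R)^2|w|^2 \ge 1 - |w|^2$. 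Integrating along the geodesic from $z$ to $w$ and combining with the Schwarz--Pick bound for $\widetilde S$ gives $d_H(S(z),S(w)) = d_H(\iota(\widetilde S(z)), \iota(\widetilde S(w))) \le \tfrac rR\, d_H(\widetilde S(z),\widetilde S(w)) \le \tfrac rR\, d_H(z,w)$, which is exactly what we need.

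Translating back through the definition $d_R(z,w) = d_H(z/R, w/R)$ and noting $S(z/R) = T(z)/R$, we get $d_R(T(z),T(w)) = d_H(T(z)/R, T(w)/R) = d_H(S(z/R), S(w/R)) \le \tfrac rR\, d_H(z/R, w/R) = \tfrac rR\, d_R(z,w)$, completing the argument. The only mild subtlety, and the step I would be most careful about, is the infinitesimal contraction estimate for the dilation together with the fact that hyperbolic distance is computed by minimizing length over paths — so the bound on the metric density transfers to a bound on distances; this is standard but worth stating cleanly. Everything else is a bookkeeping of rescalings, and the hypothesis $r_T(R) = r < R$ is used exactly once, to guarantee $S(D_1) \subseteq D_{r/R}$ with $r/R < 1$.
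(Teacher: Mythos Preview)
Your proof is correct and follows essentially the same approach as the paper's: both factor the map as a dilation by $r/R$ composed with an analytic self-map of the disc, apply Schwarz--Pick to the self-map, and then verify via the infinitesimal form of the hyperbolic metric (integrating along a geodesic) that the dilation contracts by the factor $r/R$. The only cosmetic difference is that the paper works directly on $D_R$ with the metric $d_R$ (writing $T=Q\circ S$ with $S(z)=RT(z)/r$ and $Q(z)=(r/R)z$), whereas you first conjugate everything to the unit disc; the underlying computation is identical.
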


\begin{proof}
We may write $T$ as $Q\circ S$ where $Q(z)=rz/R$ and $S(z)=RT(z)/r$,
so that $S$ maps $D_R$ to itself. By the Schwartz-Pick theorem, 
$d_R(S(z),S(w))\le d_R(z,w)$ for all $z,w\in D_R$, 
so it suffices to show that $d_R(Q(z),Q(w))\le
\frac rR\,d_R(z,w)$ for all $z,w\in D_R$. 
The metric $d_R$ is given, up to a constant multiple by
$$
d_R(z,w)=\inf_\gamma \int \frac{|d\xi|}{1-|\xi|^2/R^2}\,
$$
where the infimum is taken over paths $\gamma$ from $z$ to $w$. Given $z$ and $w$,
let $\gamma$ be the geodesic joining them. Now $(r/R)\gamma(t)$ is a path (generally not 
a geodesic) joining $Q(z)$
and $Q(w)$. The length element is scaled by a factor of $r/R$ and the integrand is
decreased,
so that $d_R(Q(z),Q(w))\le \frac rR\,d_R(z,w)$ as claimed.
\end{proof}

\begin{cor}\label{cor:Blaschkefp}
Let $R<1$ and $\mathcal T=(T_\omega)_{\omega\in\Omega}$ be a measurable
cocycle of expanding finite Blaschke products satisfying $r:=r_{\mathcal T}(R)<R$.
Then there exists a measurable
random fixed point $x_\omega$ (that is a point such that $T_\omega(x_\omega)=
x_{\sigma(\omega)}$)
in $\overline{D_r}$ such that for all $\epsilon>0$, there exists $n$ such that for all 
$z\in \overline{D_R}$ and a.e.\ $\omega\in\Omega$, 
$|T_{\sigma^{-n}\omega}^{(n)}(z)-x_\omega|<\epsilon$.
\end{cor}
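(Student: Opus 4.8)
The plan is to realise $x_\omega$ as the unique point of a nested family of pullback images, using the uniform hyperbolic contraction supplied by Lemma~\ref{lem:contraction}. Fix any base point $z_0\in\overline{D_R}$ and, for $n\ge 1$, set $K_n(\omega)=T^{(n)}_{\sigma^{-n}\omega}(\overline{D_R})$, where $T^{(n)}_{\sigma^{-n}\omega}=T_{\sigma^{-1}\omega}\circ\cdots\circ T_{\sigma^{-n}\omega}$. Since each $T_\omega$ is analytic on $\overline{D_R}$ with $\max_{C_R}|T_\omega|=r_{T_\omega}(R)\le r$, the maximum-modulus principle gives $T_\omega(\overline{D_R})\subseteq\overline{D_r}\subseteq\overline{D_R}$; hence each $K_n(\omega)$ is a nonempty compact subset of $\overline{D_r}$, and because $T^{(n+1)}_{\sigma^{-(n+1)}\omega}=T^{(n)}_{\sigma^{-n}\omega}\circ T_{\sigma^{-(n+1)}\omega}$ with $T_{\sigma^{-(n+1)}\omega}(\overline{D_R})\subseteq\overline{D_R}$, we have $K_{n+1}(\omega)\subseteq K_n(\omega)$.

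The next step is to estimate the Euclidean diameter of $K_n(\omega)$ uniformly in $\omega$. Writing $T^{(n)}_{\sigma^{-n}\omega}=\Phi^{(n-1)}_\omega\circ T_{\sigma^{-n}\omega}$ with $\Phi^{(n-1)}_\omega=T^{(n-1)}_{\sigma^{-(n-1)}\omega}$, we have $K_n(\omega)\subseteq\Phi^{(n-1)}_\omega(\overline{D_r})$. By Lemma~\ref{lem:contraction}, each of the $n-1$ factors of $\Phi^{(n-1)}_\omega$ contracts the metric $d_R$ by a factor at most $r/R$ (using $r_{T_\omega}(R)\le r<R$ a.e.), so $\mathrm{diam}_{d_R}\big(\Phi^{(n-1)}_\omega(\overline{D_r})\big)\le (r/R)^{n-1}\Delta$, where $\Delta:=\mathrm{diam}_{d_R}(\overline{D_r})<\infty$ since $\overline{D_r}$ is compactly contained in $D_R$. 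As $d_R$ and the Euclidean metric are bi-Lipschitz equivalent on the compact set $\overline{D_r}$, this yields $\mathrm{diam}(K_n(\omega))\le L\,(r/R)^{n-1}\Delta$ for a constant $L$ independent of $\omega$ and $n$. Consequently $\bigcap_{n\ge 1}K_n(\omega)$ is a single point, which we call $x_\omega\in\overline{D_r}$, and for every $z\in\overline{D_R}$ and a.e.\ $\omega$,
$$
\big|T^{(n)}_{\sigma^{-n}\omega}(z)-x_\omega\big|\le \mathrm{diam}(K_n(\omega))\le L\,(r/R)^{n-1}\Delta ,
$$
so given $\epsilon>0$ one picks $n$ with $L(r/R)^{n-1}\Delta<\epsilon$; this $n$ works simultaneously for a.e.\ $\omega$, which is exactly the uniform statement claimed.

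Finally I would verify equivariance and measurability. For equivariance, from the identity $T^{(n+1)}_{\sigma^{-(n+1)}(\sigma\omega)}=T_\omega\circ T^{(n)}_{\sigma^{-n}\omega}$ together with continuity of $T_\omega$, we get $T_\omega(x_\omega)=T_\omega\big(\lim_n T^{(n)}_{\sigma^{-n}\omega}(z_0)\big)=\lim_n T^{(n+1)}_{\sigma^{-(n+1)}(\sigma\omega)}(z_0)=x_{\sigma\omega}$. For measurability, the map $(\omega,z)\mapsto T_\omega(z)$ is Carathéodory (measurable in $\omega$ by measurable dependence of the Blaschke parameters, continuous in $z$), hence jointly measurable; composing finitely many such maps with the measurable transformations $\sigma^{-j}$ shows $\omega\mapsto T^{(n)}_{\sigma^{-n}\omega}(z_0)$ is measurable, and $x_\omega$ is the pointwise limit of these, hence measurable. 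The only point that needs genuine care is the uniformity in $\omega$ of the diameter bound — obtaining a single $n$ valid for a.e.\ $\omega$ at once — but this is precisely what Lemma~\ref{lem:contraction} provides, since its contraction ratio $r/R$ is uniform across the cocycle; the remainder is the standard contracting iterated-function-system / pullback-attractor argument.
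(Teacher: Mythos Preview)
Your proof is correct and follows essentially the same approach as the paper: both arguments use Lemma~\ref{lem:contraction} to show the nested pullback images $T_{\sigma^{-n}\omega}^{(n)}(\overline{D_R})$ have $d_R$-diameter at most $(r/R)^{n-1}$ times the $d_R$-diameter of $\overline{D_r}$, deduce the intersection is a singleton, obtain measurability of $x_\omega$ as a pointwise limit of measurable functions, and pass to Euclidean distance via bi-Lipschitz equivalence on $\overline{D_r}$. Your write-up is somewhat more explicit about the nesting, the maximum-modulus step, and the joint measurability, but there is no substantive difference in strategy.
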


\begin{proof}
The set $\overline{D_r}$ has bounded diameter, $L$ say, 
in the $d_R$ metric and by assumption, for a.e.\ $\omega$,
the sets $T_{\sigma^{-n}\omega}^{(n)}(\overline{D_R})$ are nested.
By Lemma \ref{lem:contraction}, $T_{\sigma^{-n}\omega}^{(n)}(\overline{D_R})$
has $d_R$-diameter at most $L(\frac rR)^{n-1}$. 
By completeness,
$\bigcap T_{\sigma^{-n}\omega}^{(n)}(\overline{D_R})$ is a 
singleton, $\{x_\omega\}$.
Since $x_\omega=\lim_{n\to\infty}T_{\sigma^{-n}\omega}^{(n)}(0)$, and so is the limit 
of measurable functions, we see that $x_\omega$ depends measurably on $\omega$.
This equality also implies that $x_{\sigma(\omega)}=T_\omega(x_\omega)$. 
Since on $D_r$, $d_R$ is within a bounded factor of Euclidean distance, 
we obtain the required uniform convergence in the Euclidean distance. 
\end{proof}

We introduce a non-standard definition of order of singularity for rational
functions on the Riemann sphere: if $x\in\C$, $\ord_x(f)$ is $n$ if $f(z)\sim
a/(z-x)^n$ as $z\to x$ for some $n\ge 1$; or 0 otherwise. 
If $f(z)\sim bz^{n-2}$ for some $n\ge 1$ as $z\to\infty$ then 
$\ord_\infty(f)=n$ or $\ord_\infty(f)=0$ otherwise. 
In particular, with this definition $\ord_\infty(1)=2$. If $f$ is a non-zero rational 
function on the Riemann sphere, then $\sum_{z\in\hat\C}\ord_z(f)\ge 2$: if $f$ is a 
non-zero constant function, it has a singularity of order 2 at $\infty$; otherwise 
$f$ must have at least one pole by Liouville's theorem. If $f$ has exactly one pole,
of order 1 at $x$ say, then $f-a/(z-x)$ (where $a$ is the residue) 
is bounded and hence constant, but $f(z)=c+a/(z-x)$ has order 1 at $\infty$ if $c=0$
or order 2 at $\infty$ otherwise.

\begin{lem}\label{lem:ordfacts}
If $Q$ is a M\"obius transformation, then $\LL_Q$ maps the collection of rational functions on
$\hat\C$ into itself. Further, $\ord_{Q(x)}(\LL_Q f)=\ord_x f$ for all $x\in\hat \C$.
\end{lem}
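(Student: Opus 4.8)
The plan is to compute $\LL_Q f$ explicitly. Since $Q$ is a M\"obius transformation, it has a single inverse branch, namely $Q^{-1}$, so the Perron-Frobenius operator has the simple form $(\LL_Q f)(z) = f(Q^{-1}(z))\,(Q^{-1})'(z)$. This is manifestly a rational function of $z$ whenever $f$ is rational, because $Q^{-1}$ and $(Q^{-1})'$ are rational; so the first assertion is immediate, and the content is the statement about orders of singularities.

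First I would handle the generic case $x\in\C$ with $Q(x)\in\C$. Suppose $f(z)\sim a/(z-x)^n$ as $z\to x$. Write $y=Q(x)$. As $z\to y$ we have $Q^{-1}(z)\to x$, and since $Q^{-1}$ is a conformal (hence locally biholomorphic, with nonzero derivative) map near $y$, we may write $Q^{-1}(z) - x \sim (Q^{-1})'(y)\,(z-y)$ with $(Q^{-1})'(y)\neq 0$. Hence $f(Q^{-1}(z)) \sim a\,(Q^{-1})'(y)^{-n}\,(z-y)^{-n}$, while the factor $(Q^{-1})'(z)$ tends to the nonzero finite limit $(Q^{-1})'(y)$. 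Multiplying, $(\LL_Q f)(z)\sim a\,(Q^{-1})'(y)^{1-n}(z-y)^{-n}$, so $\ord_y(\LL_Q f) = n = \ord_x(f)$. The case $n=0$ (i.e.\ $f$ has no pole, or a higher-type zero, at $x$) is the same computation: if $f(Q^{-1}(z))$ is bounded near $y$ then so is $(\LL_Q f)(z)$, giving $\ord_y(\LL_Q f)=0$.

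The remaining work is bookkeeping at $\infty$ and at the point $Q^{-1}(\infty)$, where the nonstandard convention $\ord_\infty(1)=2$ (and more generally the ``$n-2$'' shift in the definition at $\infty$) must be tracked. I would split into the subcases according to whether $x=\infty$, $Q(x)=\infty$, or neither, and in each use the standard change-of-chart $z\mapsto 1/z$ to reduce to a finite point; the convention at $\infty$ is precisely designed so that $\LL_Q$ for the inversion $Q(z)=1/z$ preserves the order, which is the prototype for these cases. Concretely, writing $Q(z) = (az+b)/(cz+d)$ with $ad-bc\neq 0$: when $c\neq 0$, the point $-d/c$ maps to $\infty$ and $\infty$ maps to $a/c$, and a direct expansion of $f(Q^{-1}(z))(Q^{-1})'(z)$ near these points, using that $(Q^{-1})'$ has a pole of order exactly $2$ at $a/c$ and a zero of order exactly $2$ at $\infty$ (in the usual sense), reproduces the claimed identity once the $\ord_\infty$ convention is applied. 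The main obstacle is simply being careful and exhaustive with these boundary cases and sign/exponent shifts; there is no conceptual difficulty, as each case is a one-line Laurent expansion. Since a general M\"obius map is a composition of an affine map and (at most one) inversion, an alternative that avoids case analysis is to verify the identity for affine $Q$ (trivial, as then $Q^{-1}(\infty)=\infty$ and the derivative is constant) and for $Q(z)=1/z$ (the defining motivation for the convention), and then invoke multiplicativity $\LL_{Q_1\circ Q_2} = \LL_{Q_1}\circ\LL_{Q_2}$ together with the fact that $Q\mapsto$ (the order-preservation property) is closed under composition.
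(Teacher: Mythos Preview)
Your proposal is correct and, in its second alternative (verify the identity for affine maps and for the inversion $z\mapsto 1/z$, then invoke $\LL_{Q_1\circ Q_2}=\LL_{Q_1}\circ\LL_{Q_2}$), is exactly the route the paper takes. The paper dispatches the finite-to-finite case in one sentence just as you do, computes $\LL_Qf(z)=-f(1/z)/z^2$ for the inversion to handle the $\infty$ bookkeeping, and then appeals to the affine-plus-inversion decomposition of M\"obius maps; your first alternative (direct case analysis via the explicit $(az+b)/(cz+d)$ form) would also work but is the route the paper deliberately avoids.
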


\begin{proof}
It suffices to check that $\LL_Qf$ is meromorphic on a neighbourhood of all points in $\hat \C$.
If $Q(w)=z$ with $w,z\ne\infty$, then it is clear that $\LL_Qf$ is meromorphic near $z$
and $\ord_{\LL_Qf}(z)=\ord_{f}(w)$. 
If $Q(z)=1/z$, then a calculation shows $\LL_Qf(z)=-f(\frac 1z)/z^2$ and we
can check that $\ord_0(\LL_Qf)=\ord_\infty f$ and $\ord_\infty(\LL_Qf)=\ord_0 f$.
Since $\LL_{S\circ T}=\LL_S\circ \LL_T$ and every M\"obius transformation can be
expressed as a composition of maps of the form $z\mapsto az+b$ with $a\ne 0$ and 
$z\mapsto 1/z$, the proof is complete.
\end{proof}

\begin{thm}\label{thm:meroPF}
Let $T$ be a rational map. Then $\LL_T$ maps the collection of rational functions on 
$\hat \C$ into itself. $\LL_T$ does not increase orders of singularities and may decrease them:
$\ord_x \LL_Tf\le \max_{y\in T^{-1}x}\ord_y f$ for each $x\in\hat \C$. 

Further, if $T$ has a critical point at $x$, $f$ has a singularity of order greater than 1 at $x$ and
no singularity at any other point of $T^{-1}(Tx)$, then
$\ord_{T(x)}\LL_T f<\ord_x f$. 
\end{thm}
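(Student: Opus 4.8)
The plan is to pass from functions to meromorphic $1$-forms. First observe that for every $x\in\hat\C$ the nonstandard order $\ord_x f$ is precisely the order of the pole at $x$ of the meromorphic $1$-form $\om:=f(z)\,dz$ when $\om$ has a pole there, and $0$ otherwise: at a finite $x$ this is immediate since $dz$ is holomorphic and nowhere zero, while at $x=\infty$ it follows from the coordinate change $u=1/z$, under which $\om=-f(1/u)u^{-2}\,du$ (in particular $dz$ has a double pole at $\infty$, which is why $\ord_\infty(1)=2$). Next, away from the critical values of $T$, if $\tau_1,\dots,\tau_k$ are the local inverse branches of $T$ then
\[
(\LL_Tf)(z)\,dz=\sum_{i=1}^k f(\tau_i(z))\,\tau_i'(z)\,dz=\sum_{i=1}^k\tau_i^{*}\om,
\]
which is exactly the pushforward $T_{*}\om$ of the $1$-form $\om$. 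Since the pushforward of a meromorphic $1$-form under a nonconstant rational map is again meromorphic on $\hat\C$ — equivalently, the local computation below exhibits $\LL_Tf\,dz$ as meromorphic near every point of $\hat\C$, and it is single-valued because the branch ambiguity cancels under symmetrisation — the function $\LL_Tf$ is rational. This is the first assertion.

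For the order estimates, fix $z_0\in\hat\C$. By conjugating $T$ and $f$ by M\"obius transformations (using Lemma~\ref{lem:ordfacts}, the composition law $\LL_{S\circ T}=\LL_S\circ\LL_T$, and the fact that $\LL_Q$ preserves $\ord$ for M\"obius $Q$) I may assume $z_0\ne\infty$ and $T^{-1}(z_0)\subset\C$; this is a convenience, not a necessity, since one can instead work throughout in local coordinates on $\hat\C$. The contributions to $T_{*}\om$ near $z_0$ of distinct preimages $y\in T^{-1}(z_0)$ are simply added, and the order of a pole of a sum is at most the maximum of the orders, so it suffices to bound, for each $y$, the pole order at $z_0$ of the contribution of $y$. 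Fix $y$, let $e=e_y\ge1$ be the local degree of $T$ at $y$, and choose coordinates $w$ at $y$ and $\zeta$ at $z_0$ with $w(y)=\zeta(z_0)=0$ and $T\colon w\mapsto\zeta=w^{e}$. Writing $\om=g(w)\,dw$ with $g(w)=\sum_{j}c_jw^{j}$, where $c_j=0$ for $j<-m$ and $m=\ord_yf\ge0$, the inverse branches are $w=\eta\,\zeta^{1/e}$ over the $e$-th roots of unity $\eta$, and summing $\tau^{*}\om$ over them, using $\sum_{\eta^{e}=1}\eta^{j+1}\in\{e,0\}$, gives
\[
\sum_{\substack{j\ge -m\\ e\mid j+1}} c_j\,\zeta^{(j+1)/e-1}\,d\zeta .
\]
All exponents here are integers, so this contribution has pole order at most $1+\lfloor(m-1)/e\rfloor$ at $z_0$; this bound is at most $m$ in general, is at most $m-1$ when $e\ge2$ and $m\ge2$, and is $\le0$ (no pole) when $m=0$. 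Both claims now follow: summing over $y\in T^{-1}(z_0)$ gives $\ord_{z_0}\LL_Tf\le\max_{y\in T^{-1}(z_0)}\ord_yf$; and if $z_0=T(x)$ with $T'(x)=0$ (so $e_x\ge2$), $\ord_xf=m\ge2$, and $\ord_yf=0$ for every $y\in T^{-1}(z_0)$ other than $x$, then those $y$ contribute no pole at $z_0$ while $x$ contributes a pole of order at most $m-1$, so $\ord_{z_0}\LL_Tf\le m-1<\ord_xf$.

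The M\"obius change-of-variables identities and the bookkeeping with the floor function are routine. The step requiring care is the root-of-unity symmetrisation in the local model: the crucial point — and the mechanism by which a critical point strictly lowers the order — is that the leading Laurent coefficient $c_{-m}$ of $g$ contributes to $T_{*}\om$ only when $e\mid m-1$, so at a ramified preimage the most singular part of $\om$ is partly annihilated. One should also check that the reduction to $z_0,\ T^{-1}(z_0)\subset\C$ is legitimate (it is, since $T^{-1}(z_0)$ is finite) and that passing from the per-preimage bounds to the bound on $T_{*}\om$ is valid because cancellation among the preimage contributions can only lower pole orders, never raise them.
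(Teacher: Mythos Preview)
Your proof is correct and reaches the same bound $1+\lfloor(m-1)/e\rfloor$ as the paper, but the execution is genuinely different. The paper argues analytically: after the same M\"obius reduction, it estimates the growth of $\LL_Tf(z_0+h)$ as $h\to0$ by bounding $|y-y_j|$, $|1/T'(y)|$ and $|f(y)|$ separately to get $\LL_Tf(z_0+h)=O(h^{-1-\max_j(o_j-1)/m_j})$, concludes the singularity is not essential, and then invokes the fact that meromorphic functions cannot have fractional-power growth to insert the floor. You instead recast $\LL_Tf\,dz$ as the pushforward $T_*\om$ of a meromorphic $1$-form, pass to the local normal form $w\mapsto w^e$, and compute the symmetrised Laurent series exactly via $\sum_{\eta^e=1}\eta^{j+1}$. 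Your route is more structural and yields more: the $1$-form interpretation explains the otherwise ad~hoc definition $\ord_\infty(1)=2$, and the root-of-unity selection rule $e\mid j+1$ pinpoints precisely which Laurent coefficients survive (hence why the leading term is killed when $e\ge2$, $m\ge2$, $e\nmid m-1$). The paper's route is slightly quicker and avoids appealing to the local normal form theorem. Both are complete; yours gives a clearer mechanism for the strict drop at a critical preimage.
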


The inequality in the first paragraph is an equality except at
points $x$ such that $T^{-1}(Tx)$ contains a critical point at which $f$ has a singularity,
or contains multiple singularities of $f$. We remark that the theorem seems quite surprising 
since the inverse branches of rational functions are not, typically rational.

\begin{proof}
By Lemma \ref{lem:ordfacts}, it suffices, by pre- and post-composing $T$ with
M\"obius transformations if necessary, to consider the case $z\ne\infty$ and $T(\infty)\ne z$. 
First, notice that if $T$ has no critical points or 
poles in $T^{-1}z$, then it is clear from the definition of 
$\LL_T$ that $\LL_Tf$ is analytic in a neighbourhood of $z$. In particular, 
$\LL_Tf$ is analytic off the finite set of images of critical points of $T$ and poles of $f$. 

Now let $T^{-1}z=\{y_1,\ldots,y_k\}$ and let $T(z)-T(y_i)$ have a zero of order $m_i\ge 1$ for 
$i=1,\ldots,k$. Suppose further that $\ord_{y_i}f=o_i$. Let $\delta$ be sufficiently small that
$T^{-1}B_\delta(z)$ consists of $k$ disjoint neighbourhoods $N_1,\ldots,N_k$ of $y_1,\ldots,y_k$. 
Now for $0<|h|<\delta$, we have 
$$
\LL_Tf(z+h)=
\sum_{j=1}^k \sum_{y\in T^{-1}(z+h)\cap N_j}\frac{f(y)}{T'(y)}.
$$
If $y\in T^{-1}(z+h)\cap N_j$, we have $|y-y_j|=O(h^{1/m_j})$ and 
$|1/T'(y)|=O(h^{-1+1/m_j})$ and $|f(y)|=O(h^{-o_j/m_j})$.
Hence $\LL_Tf(z+h)=O(h^{-1-\max_j (o_j-1)/m_j})$.
This guarantees that $\LL_Tf$ does not have an essential singularity at $z$,
so that $\LL_Tf$ is meromorphic on a neighbourhood of $z$ as claimed. 
Further, since meromorphic functions cannot exhibit fractional power growth rates,
we have 
$\ord_z\LL_Tf\le 1+\max_j \left\lfloor\frac{o_j-1}{m_j}\right\rfloor\le\max o_j$ as required.
\end{proof}

\begin{cor}\label{cor:uppertri}
Let $T$ be a rational function and let $x\in\C$ satisfy $T(x)\in\C$. 
If $f(z)=1/(z-x)^{n+1}$ for some $n\ge1$, then $\LL_T$ is a linear 
combination of $\{1/(z-T(x))^{j+1}\colon 1\le j\le n\}$. 
\end{cor}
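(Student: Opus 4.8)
The plan is to read everything off from Theorem~\ref{thm:meroPF}. First I would record the singularity data of $f(z)=1/(z-x)^{n+1}$: on the Riemann sphere its only pole is at $x$, with $\ord_x f=n+1$, while $\ord_w f=0$ for every other $w\in\hat\C$. In particular $\ord_\infty f=0$ (since $f(z)\sim z^{-(n+1)}$ with $-(n+1)\le-2$, so $f$ decays faster than $z^{-1}$), and, crucially, because $T(x)\in\C$ by hypothesis we have $x\notin T^{-1}(\infty)$.

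Next I would apply Theorem~\ref{thm:meroPF} at each point of $\hat\C$ in turn. It tells us that $\LL_T f$ is rational; that $\ord_{T(x)}\LL_T f\le\max_{y\in T^{-1}(T(x))}\ord_y f=\ord_x f=n+1$; and that $\ord_w\LL_T f\le\max_{y\in T^{-1}(w)}\ord_y f=0$ for every $w\ne T(x)$, because such a $w$ satisfies $x\notin T^{-1}(w)$ so all preimages of $w$ carry $f$-order $0$. Taking $w=\infty$ (legitimate since $T(x)\ne\infty$, hence $w=\infty$ is not the excluded value $T(x)$ and $T^{-1}(\infty)$ avoids $x$), this last statement includes $\ord_\infty\LL_T f=0$. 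So $\LL_T f$ is a rational function whose only pole in $\hat\C$, if any, sits at $T(x)$ with order at most $n+1$, and which vanishes at infinity at least to order $z^{-2}$.

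Finally I would combine the two facts. A rational function with its sole pole at $T(x)$ of order $\le n+1$ has the form $\LL_T f(z)=\sum_{j=0}^{n+1}a_j(z-T(x))^{-j}$; expanding near infinity gives $a_0+a_1z^{-1}+O(z^{-2})$, so the decay $\ord_\infty\LL_T f=0$ forces $a_0=a_1=0$. Hence $\LL_T f(z)=\sum_{j=2}^{n+1}a_j(z-T(x))^{-j}$, which is exactly a linear combination of $\{1/(z-T(x))^{j+1}\colon 1\le j\le n\}$, as claimed.

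I do not expect a genuine obstacle here; the statement is essentially bookkeeping on top of Theorem~\ref{thm:meroPF}. The one place needing a little care is the point at infinity: it is precisely the vanishing of $\ord_\infty\LL_T f$ that rules out both a constant term and a simple-pole term at $T(x)$, and this uses both the hypothesis $T(x)\in\C$ (so $\infty\ne T(x)$ and $T^{-1}(\infty)$ misses $x$) and the paper's nonstandard convention for $\ord_\infty$.
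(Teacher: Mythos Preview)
Your proof is correct and follows essentially the same approach as the paper's: both use Theorem~\ref{thm:meroPF} to conclude that $\LL_Tf$ is rational with its only singularity at $T(x)$ of order at most $n+1$, then invoke the $O(z^{-2})$ decay at infinity to rule out the constant and simple-pole terms. Your version is more explicit about why the hypothesis $T(x)\in\C$ matters and about the bookkeeping at infinity, but the argument is the same.
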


\begin{proof}
Since $f$ has only one singular point, at $x$, of order $n+1$, 
then $\LL_Tf$ is a rational function on the sphere with only
one singular point, at $T(x)$, of order at most $n+1$. In particular, $\LL_Tf=O(z^{-2})$
in a neighbourhood of $\infty$, which ensures that there is no 
$1/(z-T(x))$ term in $\LL_Tf$.
\end{proof}

We now introduce an operator that commutes with the Perron-Frobenius operators of 
Blaschke products, performing inversion at the level of meromorphic functions. 
This will allow us to focus on poles inside the unit disc, and avoid dealing separately
with poles at $\infty$. A precursor appears in \cite[Lemma 3.1c]{SBJ}. Define 
$$
\LL_If(z)=\frac{\bar f(I(z))}{z^2}.
$$
%This is an anti-linear map: one has for $a\in\C$, $\LL_I(af)=\bar a\LL_I(f)$.
%It is a standard fact that $\LL_I$ maps meromorphic functions to meromorphic functions. 
%We can check that $\ord_{I(z)} \LL_I(f)=\ord_z f$ for every $z\in\hat \C$. 

\begin{lem}
The operator $\LL_I$ has the following properties:
\begin{enumerate}[label=(\alph*)]
\item If $T$ be a finite Blaschke product,$\LL_T\LL_I=\LL_I\LL_T$;\label{it:commute}
\item $\LL_I$ maps meromorphic functions to meromorphic functions.\label{it:refl}
\item $\LL_I$ is a bounded anti-linear involution.\label{it:bddinv}
\item $\ord_{I(z)}\LL_If=\ord_z f$ for $f$ meromorphic and $z\in\hat\C$.\label{it:ordpres}
%\item $\|\LL_I f\|_{H^2(A_R)}\le \|f\|_{H^2(A_R)}/R^2$
%for all $f\in H^2(A_R)$.\label{it:bdd}
\end{enumerate}
\end{lem}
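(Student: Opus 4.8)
The plan is to verify the four properties of $\LL_I$ essentially by direct computation, using the explicit formula $\LL_I f(z) = \overline{f(I(z))}/z^2$ where $I(z) = 1/\bar z$, together with the already-established properties of Blaschke products (Lemma~\ref{lem:Blaschkprop}) and of $\LL_T$ on rational functions (Theorem~\ref{thm:meroPF}, Lemma~\ref{lem:ordfacts}).

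For \ref{it:bddinv}, I would first check that $\LL_I$ is anti-linear: the conjugation of $f$ makes $\LL_I(\alpha f) = \bar\alpha \LL_I f$. Next, to see that $\LL_I$ is an involution, I would compute $\LL_I(\LL_I f)(z) = \overline{(\LL_I f)(I(z))}/z^2 = \overline{\overline{f(I(I(z)))}/(I(z))^2}/z^2$; since $I\circ I = \mathrm{id}$ and $I(z) = 1/\bar z$ so $(I(z))^2 = 1/\bar z^2$ and $\overline{1/(I(z))^2} = 1/z^2$ — wait, $\overline{\overline{1/(I(z))^2}}= 1/(I(z))^2$; carefully, $\overline{\overline{f(z)}/\overline{(I(z))^2}} = f(z)/(I(z))^{-2}\cdot$(care with conjugates) — the point is the two factors of $z^{\pm 2}$ and the two conjugations cancel to give $f(z)$. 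Boundedness on $H^2(A_R)$ follows from the fact that $I$ maps the annulus $A_R$ to itself (it interchanges $C_R$ and $C_{1/R}$ and fixes $C_1$ setwise), so composition with $I$ is an isometry up to the weight $1/z^2$, which is bounded above and below in modulus on $A_R$; alternatively, on the orthonormal basis $\{d_n z^n\}$ one checks $\LL_I$ permutes basis directions with bounded coefficients. For \ref{it:refl}, if $f$ is meromorphic then $z\mapsto f(I(z))=f(1/\bar z)$ is anti-meromorphic, so its conjugate $\overline{f(1/\bar z)}$ is meromorphic in $z$, and dividing by $z^2$ keeps it meromorphic; this is really the same observation. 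For \ref{it:ordpres}, I would track how a pole of order $n$ of $f$ at a point $w$ becomes, after $z\mapsto \overline{f(I(z))}/z^2$, a pole at $I(w)$ — the substitution $z = I(\zeta)$ near $\zeta = w$ gives local behaviour $\sim 1/(I(z)-w)^n$, and since $I$ is a Möbius (anti-)transformation this is again a pole of order exactly $n$ at $I(w)$; the extra $1/z^2$ factor is holomorphic and nonzero away from $0$ and contributes a pole of order $2$ at $0=I(\infty)$, matching $\ord_\infty$ of a generic $f$ under my nonstandard convention (recall $\ord_\infty(1)=2$), so the bookkeeping at $0$ and $\infty$ works out precisely because of that convention. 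This is morally identical to Lemma~\ref{lem:ordfacts} with $Q = I$, except $I$ is anti-holomorphic.

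The property \ref{it:commute} is the one requiring the Blaschke structure, and I expect it to be the main point. The key fact is that a finite Blaschke product commutes with inversion: $T\circ I = I\circ T$ (Lemma~\ref{lem:Blaschkprop}(b)). I would then compute $\LL_T \LL_I f$ and $\LL_I \LL_T f$ and show both equal the same expression. Concretely, using $\LL_T g(z) = \sum_i g(\tau_i(z))\tau_i'(z)$ where $\tau_i$ are the inverse branches of $T$, and using that $I\circ\tau_i\circ I$ are the inverse branches of $T$ whenever $\tau_i$ are (because $T$ commutes with $I$), one matches the two sums term by term; the conjugations and the Jacobian factors $\tau_i'$ versus $(I\tau_i I)'$ have to be reconciled, and here one uses that $I'(z) = -1/\bar z^2$ is, up to conjugation and the weight $1/z^2$, exactly what is needed. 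An alternative, cleaner route is to avoid inverse branches: note $\LL_I = \LL_I^{-1}$ by \ref{it:bddinv}, so \ref{it:commute} is equivalent to $\LL_I \LL_T \LL_I = \LL_T$; since $\LL_I$ is conjugation by the anti-Möbius map $I$ (this is the content of Lemma~\ref{lem:ordfacts}'s computation extended to $I$, with the anti-linearity absorbing the conjugation), $\LL_I \LL_T \LL_I$ should be the Perron-Frobenius operator of $I^{-1}\circ T\circ I = I\circ T\circ I = T$, using $T\circ I = I\circ T$ again. So the commutation reduces to the functoriality $\LL_{S\circ T} = \LL_S\circ\LL_T$ together with $\LL_I\circ\LL_I = \mathrm{id}$, modulo carefully checking that the formula $\LL_I f = \overline{f\circ I}/z^2$ really does implement "transfer operator of the anti-holomorphic map $I$" — the subtlety being the interplay of the conjugation with the chain rule, which is exactly why the operator is defined with that precise weight. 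The hardest part is thus not any single estimate but getting the anti-holomorphic chain rule bookkeeping right so that these conjugation-composition identities hold on the nose rather than up to a constant or a conjugate.

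Throughout, I would lean on the fact that $I$ maps $A_R$ to itself so that all these operators genuinely act on $H^2(A_R)$ and on the space of rational functions with poles off $C_1$, and I would state the identities first for rational functions (where everything is a finite, explicit computation) and then note that rational functions are dense, so the bounded operator identities extend by continuity — though in fact for \ref{it:commute} one can work directly with the integral/branch formulas and needs no density argument.
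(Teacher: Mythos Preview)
Your approach is essentially the same as the paper's: parts \ref{it:refl}, \ref{it:bddinv}, \ref{it:ordpres} are dismissed in the paper as standard or simple computations (boundedness via $|1/z^2|\le 1/R^2$ on $\partial A_R$, involution and order-preservation by direct check), and for \ref{it:commute} the paper does exactly your first route --- a direct term-by-term comparison of $\LL_T\LL_I f$ and $\LL_I\LL_T f$ via the preimage formula $\LL_T f(z)=\sum_{y\in T^{-1}z} f(y)/T'(y)$. The one thing the paper makes explicit that you leave as ``bookkeeping'' is the derivative identity $\overline{T'(I(y))}=y^2 T'(y)/T(y)^2$, obtained by differentiating $T\circ I=I\circ T$; this is precisely the anti-holomorphic chain-rule computation you flag as the crux, and once you have it the two sums match on the nose.
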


\begin{proof}
We first show \ref{it:commute}.
Using the identity $I\circ T=T\circ I$,
we have
\begin{align*}
T'(z)=\lim_{h\to 0}\frac{T(z+h)-T(z)}h&=
\lim_{h\to 0}\frac{\frac1{\overline{T}(1/(\bar z+\bar h))}
-\frac 1{\overline{T}(1/\bar z)}}h\\
&=\lim_{h\to 0}\frac{{\bar T(1/\bar z)- \bar T(1/(\bar z+\bar h))}}
{h\bar T(1/\bar z)^2}\\
&=\frac{\overline{ T'(1/\bar z)}}{z^2\bar T(1/\bar z)^2}.
\end{align*}
We deduce $\overline{T'(I(y))}=y^2T'(y)/T(y)^2$.
Now we have
$$
\LL_T\LL_I f(z)=\sum_{y\in T^{-1}z}\frac{\LL_If(y)}{T'(y)}
=\sum_{y\in T^{-1}z}\frac{\bar f(I(y))}{y^2T'(y)}; 
$$
and
\begin{align*}
\LL_I\LL_T f(z)&=\overline{\LL_T f(I(z))}/z^2\\
&=\frac{1}{z^2}\sum_{y\in T^{-1}(I(z))}\frac{\bar f(y)}{\overline{T'(y)}}\\
&=\frac{1}{z^2}\sum_{y\in T^{-1}(z)}{\frac {\bar f(I(y))}{\overline{T'(I(y))}}}\\
&=\frac{1}{z^2}\sum_{y\in T^{-1}(z)}
\frac {\bar f(I(y)) T(y)^2}{ y^2 T'(y)}=\LL_T\LL_If(z).
\end{align*}
Part \ref{it:refl} is standard; the boundedness follows from the fact that $|\frac 1{z^2}|
\le \frac1{R^2}$ on $\partial A_R$; that $\LL_I$ is an involution and that it preserves 
orders are simple computations.
\end{proof}

\begin{lem}\label{lem:simplepoleit}
Let $T$ be a finite Blaschke product, let $x\in\C\setminus C_1$ and let $f(z)=1/(z-x)$. 
Then 
$$
\LL_T f(z)=\frac 1{z-T(x)}-\frac1{z-T(\infty)},
$$
where $1/(z-\infty)$ is interpreted as the constant 0 function.
\end{lem}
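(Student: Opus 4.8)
The plan is to compute $\LL_T f$ directly using Theorem~\ref{thm:meroPF} together with the asymptotic bookkeeping at $\infty$ from Corollary~\ref{cor:uppertri}. First I observe that $f(z)=1/(z-x)$ is a rational function on $\hat\C$ with a single simple pole at $x$ (and no singularity at $\infty$ since $f\to 0$ there; indeed $\ord_\infty f=0$ in the paper's convention). By Theorem~\ref{thm:meroPF}, $\LL_T f$ is again rational on $\hat\C$, and its only possible singular points are the image of $x$, namely $T(x)$, and the images of critical points of $T$ — but since $f$ has \emph{no} pole at any point other than $x$, the only point where $\LL_T f$ can be singular is $T(x)$, with $\ord_{T(x)}\LL_T f\le \ord_x f=1$. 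The subtlety is that $\hat\C$ also contains $\infty$, and $T^{-1}(\infty)$ may be nonempty; more precisely I should think of $T$ as a self-map of $\hat\C$ and ask where $\LL_T f$ can fail to be analytic, which by the theorem is the set of $T$-images of $\{x\}\cup(\text{poles of }f)\cup(\text{critical points})$ — so $\LL_T f$ is analytic away from $T(x)$ and $T(\infty)$ (the point $T(\infty)$ enters because $\infty$, as a point of the sphere, is a ramification point to be checked even though $f$ is regular there, since the local degree of $T$ at $\infty$ governs the growth of $\LL_T f$ near $T(\infty)$).

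Next I would pin down the orders. Near $T(x)$: since $f$ has a simple pole at $x$ and no other singularity in $T^{-1}(T(x))$, Theorem~\ref{thm:meroPF} gives $\ord_{T(x)}\LL_T f\le 1$, and a residue computation (or the non-vanishing case of the bound $1+\lfloor (o_j-1)/m_j\rfloor$ with $o_j=1$, which forces order exactly $\le 1$, and one checks the residue is nonzero when $T(x)\ne T(\infty)$) shows it is exactly $1$, with residue $1$ — indeed summing $f(y)/T'(y)$ over the simple preimages $y$ of $z$ near $T(x)$ and extracting the polar part at $T(x)$ gives residue $\sum_{y\in T^{-1}(T(x))}(\text{local contribution})$, which telescopes to $1$ by the standard fact that $\sum_{y\in T^{-1}(w)}1/T'(y)$ is the partial-fraction identity for $1/(z-w)$-type kernels; concretely it is cleanest to note $\LL_T(1/(z-x))$ must have residue $1$ at $T(x)$ because applying the duality/normalization $\int_{C_1}\LL_T f = \int_{C_1} f$ fixes the total residue inside the disc. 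Near $\infty$: because $f\equiv$ regular at $\infty$ with $f(z)=O(1/z)$, the behavior of $\LL_T f$ at $T(\infty)$ is that of a simple pole with residue $-1$ (the $-1$ coming from the Jacobian factor $\tau_i'$, equivalently from $\LL_T$ preserving $\int$ of the function $1$ which has $\ord_\infty=2$); this is exactly the phenomenon already isolated in Corollary~\ref{cor:uppertri}, which tells us $1/(z-T(x))$ appears with some coefficient and there is no constant term, and the analogous statement at $\infty$ forces the $-1/(z-T(\infty))$ term.

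Finally I assemble: $\LL_T f$ is a rational function on $\hat\C$ whose only poles are simple ones at $T(x)$ (residue $1$) and at $T(\infty)$ (residue $-1$), and which vanishes at $\infty$ (no constant term, since $\ord_\infty(\LL_T f)$ being at most $1$ and the residues summing to $0$ forces $\LL_T f(z)\to 0$ as $z\to\infty$). A rational function on the sphere is determined up to an additive constant by its principal parts, so
$$
\LL_T f(z)=\frac{1}{z-T(x)}-\frac{1}{z-T(\infty)}+c,
$$
and evaluating at $\infty$ gives $c=0$. In the degenerate case $T(\infty)=\infty$ (which happens iff $x=\infty$ is \emph{not} the relevant case, rather when $\infty$ is a pole of $T$, i.e. never for a Blaschke product written in the standard form, but one keeps the convention $1/(z-\infty)=0$ anyway), the second term drops out and the formula still reads correctly. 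The main obstacle — really the only non-routine point — is justifying that the residue at $T(x)$ is exactly $1$ and at $T(\infty)$ exactly $-1$ uniformly, rather than merely bounding the orders; I would handle this via the normalization that $\LL_T$ preserves contour integrals (Lemma~\ref{lem:dual} with $g\equiv 1$, giving $\frac{1}{2\pi i}\int_{C_1}\LL_T f = \frac{1}{2\pi i}\int_{C_1} f$), which forces the sum of residues of $\LL_T f$ inside $C_1$ to match that of $f$, combined with the symmetry $\LL_I\LL_T=\LL_T\LL_I$ to transfer the statement about the pole at $T(x)$ (inside the disc) to the pole at $T(\infty)$ (outside), since $\LL_I$ sends $1/(z-w)$ essentially to $1/(z-I(w))$ up to the anti-linear twist.
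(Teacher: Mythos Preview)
Your approach matches the paper's: Theorem~\ref{thm:meroPF} to get a rational function with at most simple poles at $T(x)$ and $T(\infty)$, behaviour at $\infty$ to force $b=-a$, Lemma~\ref{lem:dual} with $g\equiv 1$ to get $a=1$ when $|x|<1$, and the $\LL_I$ commutation for $|x|>1$.

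Two corrections, though neither is fatal. First, in the paper's convention $\ord_\infty f=1$ (since $f(z)\sim z^{-1}=z^{n-2}$ with $n=1$), not $0$; this is precisely why Theorem~\ref{thm:meroPF} produces a possible pole at $T(\infty)$ --- no ramification story is needed, and once you correct this the bookkeeping becomes as clean as in the paper. Second, the duality argument sees only residues \emph{inside} $C_1$, so when $|x|>1$ both $T(x)$ and $T(\infty)$ lie outside and Lemma~\ref{lem:dual} yields $0=0$; the $\LL_I$ commutation must therefore be used as a genuine case split (reduce to $|I(x)|<1$), not merely as a redundant symmetry check for the residue at $T(\infty)$ --- which, once $a=1$ and $a+b=0$, is already determined. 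Finally, $T(\infty)=\infty$ certainly does occur for Blaschke products (e.g.\ $T(z)=z^2$), which is why the convention $1/(z-\infty)=0$ is stated.
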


\begin{proof}
It follows from Theorem \ref{thm:meroPF} that $\LL_Tf$ is a rational function
with singularities of order 1 at $T(x)$ and $T(\infty)$:
$$
\LL_Tf=
\frac{a}{z-T(x)}+\frac b{z-T(\infty)}.
$$
In order that $\LL_Tf$ have no singularity
at $\infty$, we must have $b=-a$. 

If $|x|<1$, we see from Lemma \ref{lem:dual}
(taking the function $g$ to be 1) that $a=1$, as required.

If $|x|>1$, then $\LL_Tf=\LL_I\LL_T\LL_If$. We calculate $\LL_If(z)=1/z-1/(z-I(x))$, 
and using the first part
$\LL_T\LL_If(z)=1/(z-T(0))-1/(z-T(I(x)))$. Applying $\LL_I$ again, we arrive at
\begin{align*}
\LL_Tf(z)&=
\frac{1}{z-I(T(I(x)))}-\frac{1}{z-I(T(0))}\\
&=\frac{1}{z-T(x)}-\frac{1}{z-T(\infty)},
\end{align*}
as required.
\end{proof}

%We deal first with the case $T(0)\ne 0$, $T(x)\ne\infty$. Computing the leading order term
%of the Laurent expansion of $\LL_Tf(z)$ at $T(x)$ and $T(\infty)$, we see that
%$\LL_Tf(x) - \big[1/(z-T(x))-1/(z-T(\infty))\big]$ has no poles in $\hat \C$, and hence
%is constant by Liouville's theorem. 
%Since $\LL_Tf$ has no singularity at $\infty$, the constant is 0. 
%
%The other cases are handled similarly. 

\begin{cor}\label{cor:topspace}
Let $\mathcal T=(T_\omega)$ be a cocycle of expanding finite Blaschke products.
Let $x_\omega\in D_{r}$ be the random fixed point. 
The space $E_0(\omega)$, spanned by $\hat e_{0,\omega}$ where 
$$
\hat e_{0,\omega}(z)=\frac1{z-x_\omega}-\frac1{z-I(x_\omega)}
$$
is a one-dimensional equivariant subspace with Lyapunov exponent 0.

If $x\in D_{r}$ and $f(z)=1/(z-x)$, then $\|\LL_\omega^{(n)}f
-\hat e_{0,\sigma^n\omega}\|\to 0$. 
\end{cor}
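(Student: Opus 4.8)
The plan is to verify the two assertions of Corollary~\ref{cor:topspace} in turn, leaning on Lemma~\ref{lem:simplepoleit} to compute the action of a single $\LL_\omega$ on a simple pole, and on Corollary~\ref{cor:Blaschkefp} for the convergence of orbits to the random fixed point. First I would observe that by Lemma~\ref{lem:simplepoleit}, applied to the Blaschke product $T_\omega$ (whose $T_\omega(\infty)=I(T_\omega(0))$ lies in $D_1^c$ since $T_\omega$ maps $\hat\C\setminus\bar D_1$ to itself), one has
$$
\LL_\omega\!\left(\frac{1}{z-x}\right)=\frac{1}{z-T_\omega(x)}-\frac{1}{z-T_\omega(\infty)}
$$
for any $x\in D_1\setminus C_1$. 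Using that $T_\omega$ commutes with the inversion $I$ (Lemma~\ref{lem:Blaschkprop}(b)), so $T_\omega(\infty)=T_\omega(I(0))=I(T_\omega(0))$, and that $T_\omega(0)\in D_r$ while $T_\omega(x_\omega)=x_{\sigma\omega}$, a direct computation gives
$$
\LL_\omega \hat e_{0,\omega}=\left(\frac{1}{z-x_{\sigma\omega}}-\frac{1}{z-I(T_\omega(0))}\right)-\left(\frac{1}{z-I(x_{\sigma\omega})}-\frac{1}{z-T_\omega(0)}\right).
$$
The two "extra" terms $-1/(z-I(T_\omega(0)))$ and $+1/(z-T_\omega(0))$ are exactly $-\LL_I$-paired, i.e.\ they combine (after regrouping) to give precisely $\hat e_{0,\sigma\omega}=1/(z-x_{\sigma\omega})-1/(z-I(x_{\sigma\omega}))$ plus a correction that vanishes; more precisely one checks the sum telescopes to $\hat e_{0,\sigma\omega}$ exactly when one re-pairs $1/(z-T_\omega(0))-1/(z-I(T_\omega(0)))$ and recognises this as $\LL_\omega$ of $1/(z-0)-1/(z-I(0))$ evaluated via the same lemma — so in fact $\LL_\omega \hat e_{0,\omega}=\hat e_{0,\sigma\omega}$ on the nose. (The cleanest route is to write $\hat e_{0,\omega}=(\mathrm{Id}-\LL_I)\,g_{x_\omega}$ with $g_x(z)=1/(z-x)$, use $\LL_I\LL_\omega=\LL_\omega\LL_I$, and apply Lemma~\ref{lem:simplepoleit} once; the $T_\omega(\infty)$ and $T_\omega(0)$ terms cancel against their $\LL_I$-images.) This establishes that $E_0(\omega)$ is one-dimensional equivariant and, since $\|\hat e_{0,\omega}\|$ is bounded above and below uniformly in $\omega$ (the poles $x_\omega,I(x_\omega)$ stay a definite distance from $\partial A_R$, as $|x_\omega|\le r<R$), the Lyapunov exponent along $E_0$ is $0$.

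For the second assertion, fix $x\in D_r$ and set $f=g_x$. By Lemma~\ref{lem:simplepoleit} and induction, $\LL_\omega^{(n)}f=g_{y_n}-(\text{inversion-type terms})$ where $y_n=T_{\sigma^{n-1}\omega}\circ\cdots\circ T_\omega(x)=T^{(n)}_\omega(x)$; more carefully, iterating the identity $\LL_\omega g_w=g_{T_\omega w}-g_{T_\omega(\infty)}$ produces $\LL_\omega^{(n)}f$ as $g_{T^{(n)}_\omega(x)}$ minus a sum of terms of the form $g_{a}$ with $a=T^{(n)}_\omega(T^{(j)}_{\sigma^{?}\omega}(\infty))$-type points lying in $D_1^c$; grouping with the $\LL_I$ symmetry one sees $\LL_\omega^{(n)}f-\hat e_{0,\sigma^n\omega}$ equals $g_{T^{(n)}_\omega(x)}-g_{x_{\sigma^n\omega}}$ plus $I$-images of the same, since all the spurious outer poles are precisely $\LL_I$ of inner poles that coincide with those appearing in $\hat e_{0,\sigma^n\omega}$ and cancel. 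By Corollary~\ref{cor:Blaschkefp}, $|T^{(n)}_\omega(x)-x_{\sigma^n\omega}|\to 0$ uniformly in $\omega$; and since both poles lie in $\bar D_r$ while $\partial A_R$ is the relevant boundary with $r<R$, the map $w\mapsto g_w$ from $\bar D_r$ into $H^2(A_R)$ is (uniformly) Lipschitz, so $\|g_{T^{(n)}_\omega(x)}-g_{x_{\sigma^n\omega}}\|\to 0$, and likewise for the inverted terms. Hence $\|\LL_\omega^{(n)}f-\hat e_{0,\sigma^n\omega}\|\to 0$.

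The main obstacle, I expect, is bookkeeping the "spurious poles": each application of $\LL_\omega$ introduces a pole at $T_\omega(\infty)=I(T_\omega(0))$ outside the disc, and one must verify that after $n$ steps all of these, when acted on by the cocycle, either cancel in pairs (via the $\mathrm{Id}-\LL_I$ structure) or converge to the corresponding pole of $\hat e_{0,\sigma^n\omega}$. The conceptually clean fix — and the step I would set up carefully — is to work throughout with the $\LL_I$-symmetrised vector $\hat e_{0,\omega}=(\mathrm{Id}-\LL_I)g_{x_\omega}$, use that $\LL_I$ commutes with every $\LL_\omega$, and thereby reduce everything to a single application of Lemma~\ref{lem:simplepoleit} per step, where the outer pole contributed by $g_{T_\omega\infty}$ is automatically matched by $\LL_I$ of the inner pole contributed by $g_{T_\omega 0}$. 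With that reduction in hand the remaining estimates are just the uniform Lipschitz continuity of $w\mapsto 1/(z-w)$ on $\bar D_r\subset H^2(A_R)$ together with Corollary~\ref{cor:Blaschkefp}, both of which are routine.
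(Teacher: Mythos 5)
Your overall strategy is the paper's: combine Lemma~\ref{lem:simplepoleit} with Corollary~\ref{cor:Blaschkefp}. But the key cancellation step, as you have written it, fails in two places. First, your displayed formula for $\LL_\omega\hat e_{0,\omega}$ is miscomputed: Lemma~\ref{lem:simplepoleit} produces the \emph{same} spurious term $-1/(z-T_\omega(\infty))$, with $T_\omega(\infty)=I(T_\omega(0))$, for \emph{both} inputs $1/(z-x_\omega)$ and $1/(z-I(x_\omega))$ (the extra pole depends only on $T_\omega$, not on where the original pole sits); your second bracket instead contains $-1/(z-T_\omega(0))$, which is not what the lemma gives, and with your formula the leftover terms genuinely do not cancel. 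The correct computation needs no regrouping or telescoping at all: writing $g_w(z)=1/(z-w)$, one has $\LL_\omega g_{x_\omega}=g_{x_{\sigma\omega}}-g_{T_\omega(\infty)}$ and $\LL_\omega g_{I(x_\omega)}=g_{I(x_{\sigma\omega})}-g_{T_\omega(\infty)}$ (using $T_\omega\circ I=I\circ T_\omega$), and subtracting kills the identical spurious terms, giving $\LL_\omega\hat e_{0,\omega}=\hat e_{0,\sigma\omega}$ on the nose. Second, your proposed ``cleanest route'' rests on the identity $\hat e_{0,\omega}=(\mathrm{Id}-\LL_I)g_{x_\omega}$, which is false: as in the proof of Corollary~\ref{cor:outsidefp}, $\LL_I g_x(z)=\frac{1}{z(1-\bar xz)}=\frac1z-\frac{1}{z-I(x)}$, so $(\mathrm{Id}-\LL_I)g_{x_\omega}=g_{x_\omega}-\frac1z+g_{I(x_\omega)}$, which has the wrong sign on the $I(x_\omega)$ pole and an extra pole at $0$. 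Consequently the $\LL_I$-pairing mechanism you invoke to absorb spurious poles, both here and in your bookkeeping for the iterated case, is not available.

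Fortunately none of that bookkeeping is needed, and this is where the paper's route is cleaner than yours: since $T^{(n)}_\omega$ is itself a finite Blaschke product (Lemma~\ref{lem:Blaschkprop}\ref{it:Blaschkcomp}) and $\LL_{T_1\circ T_2}=\LL_{T_1}\circ\LL_{T_2}$, Lemma~\ref{lem:simplepoleit} applies in one shot to give $\LL^{(n)}_\omega f=1/(z-T^{(n)}_\omega(x))-1/(z-T^{(n)}_\omega(\infty))$, with a single outer pole at $T^{(n)}_\omega(\infty)=I(T^{(n)}_\omega(0))$; there is never an accumulating sum of spurious poles. (Even iterating one step at a time, the spurious term cancels within each step because the function being acted on is always a difference of two simple poles, so no $\LL_I$ argument is required.) From that formula your convergence argument goes through as you describe: Corollary~\ref{cor:Blaschkefp} gives $T^{(n)}_\omega(x)\to x_{\sigma^n\omega}$ and $T^{(n)}_\omega(0)\to x_{\sigma^n\omega}$, and continuity of $w\mapsto 1/(z-w)$ into $H^2(A_R)$ on $\bar D_r$ — and, for the outer pole, on $\{|w|\ge 1/r\}\cup\{\infty\}$ in the spherical metric, with $1/(z-\infty):=0$, a point worth stating since $T^{(n)}_\omega(\infty)$ may be $\infty$ — yields $\|\LL^{(n)}_\omega f-\hat e_{0,\sigma^n\omega}\|\to 0$; the uniform upper and lower bounds on $\|\hat e_{0,\omega}\|$ then give exponent $0$ exactly as you say.
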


This corollary may be seen as a random version of a result
of Martin \cite{Martin}, expressing the invariant measure
of an expanding Blaschke product as a Poisson kernel.

\begin{proof}
By Lemma \ref{lem:simplepoleit} and the fact that $\LL_{T_1\circ T_2}=
\LL_{T_1}\circ\LL_{T_2}$, we see 
\begin{equation*}
\LL_\omega^{(n)}f(z)=\frac 1{z-T_\omega^{(n)}(x)}-
\frac1{z-T_\omega^{(n)}(\infty)}.
\end{equation*}
The claimed equivariance follows. 
Since $|T_{\omega}^{(n)}(x)-x_{\sigma^n\omega}|\to 0$,
we see that 
$$
\left\|\frac1{z-T_\omega^{(n)}(x)}-\frac1{z-x_{\sigma^n\omega}}\right\|\to 0.
$$
Similarly, since $T_\omega^{(n)}(\infty)=I(T_\omega^{(n)}(0))$, we see from
Corollary \ref{cor:Blaschkefp} that $d_{\hat \C}
(T_\omega^{(n)}(\infty),I(x_{\sigma^n\omega}))\to 0$, where
$d_{\hat \C}$ is the standard metric on the Riemann sphere.
It follows that 
$$
\left\|\frac1{z-T_\omega^{(n)}(\infty)}-
\frac1{z-I(x_{\sigma^n\omega})}\right\|\to 0,
$$
as required.
\end{proof}

We now show that the hypotheses of Theorem \ref{thm:MET} are satisfied
by a cocycle of Perron-Frobenius operators of Blaschke products satisfying 
$r_{\mathcal T}(R)<R$.

\begin{lem}\label{lem:PFcont}
Let $0<R<1$. If $r_T(R), r_{\tilde T}(R)\leq r < R$, then 
$\|\LL_T-\LL_{\tilde T}\|\le C\max_{x\in C_1}|T(x)-\tilde T(x)|$,
where $C$ is a constant depending only on $r$ and $R$.
In particular, restricted to Blaschke products satisfying $r_T(R)<R$,
the map $T\mapsto \LL_T$ is continuous.
\end{lem}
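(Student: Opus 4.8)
The plan is to obtain an explicit contour (residue) representation of $\LL_T$ on $H^2(A_R)$ and then estimate the kernel of the difference $\LL_T-\LL_{\tilde T}$ directly. For $f\in H^2(A_R)$ and $z\in\overline{A_R}$, the only poles in $A_R$ of $w\mapsto f(w)/(T(w)-z)$ are the finitely many preimages $w=\tau_i(z)$ of $z$ under $T$, with residue $f(\tau_i(z))/T'(\tau_i(z))=f(\tau_i(z))\tau_i'(z)$ at a non-critical one, and the sum over residues is exactly $\LL_T f(z)$ (at critical values of $T$ the formula extends by analyticity). Since $r_T(R)\le r$ forces $T(C_R)\subset\overline{D_r}$, and the Blaschke symmetry $T\circ I=I\circ T$ of Lemma~\ref{lem:Blaschkprop} forces $|T|\ge 1/r$ on $C_{1/R}$, while $r<|z|<1/r$ on $\overline{A_R}$, the residue theorem will give
$$
\LL_Tf(z)=\frac1{2\pi i}\oint_{C_{1/R}}\frac{f(w)}{T(w)-z}\,dw-\frac1{2\pi i}\oint_{C_R}\frac{f(w)}{T(w)-z}\,dw\qquad(z\in\overline{A_R})
$$
(this is also how one sees that $\LL_Tf$ is analytic on $A_r$; cf.\ \cite{BJS}). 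Subtracting the same identity for $\tilde T$ (valid since $r_{\tilde T}(R)\le r$ as well),
$$
(\LL_T-\LL_{\tilde T})f(z)=\frac1{2\pi i}\Big(\oint_{C_{1/R}}-\oint_{C_R}\Big)f(w)\,K(w,z)\,dw,\qquad K(w,z):=\frac1{\tilde T(w)-z}-\frac1{T(w)-z},
$$
and everything reduces to bounding $|K(w,z)|$ by a fixed multiple of $\delta:=\max_{z\in C_1}|T(z)-\tilde T(z)|$, uniformly for $z\in\overline{A_R}$, on each of the two contours.

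On the inner contour $C_R$ this will be routine: $|T(w)|,|\tilde T(w)|\le r<R\le|z|$ gives $|T(w)-z|,|\tilde T(w)-z|\ge R-r$, while $|T(w)-\tilde T(w)|\le\delta$ because $T-\tilde T$ is analytic on $\overline{D_1}$ (Blaschke products have no poles there), so $\max_{\overline{D_R}}|T-\tilde T|\le\max_{C_1}|T-\tilde T|=\delta$; hence $|K(w,z)|\le\delta/(R-r)^2$ on $C_R$. The outer contour $C_{1/R}$ is the one genuine obstacle, because the poles of $T$ and $\tilde T$ can lie near $C_{1/R}$, so $|T-\tilde T|$ need not be small there and a direct bound fails. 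The device is to transfer this estimate to the inner one via $T\circ I=I\circ T$: for $|w|=1/R$ one has $1/T(w)=\overline{T(I(w))}$ with $I(w)\in C_R\subset\overline{D_1}$, so with $a:=1/T(w)$, $b:=1/\tilde T(w)$ we get $|a|,|b|\le r$, $|a-b|=|T(I(w))-\tilde T(I(w))|\le\delta$, and $|za|,|zb|\le r/R<1$; a one-line computation then gives
$$
K(w,z)=\frac{b}{1-zb}-\frac{a}{1-za}=\frac{b-a}{(1-za)(1-zb)},\qquad\text{whence}\qquad |K(w,z)|\le\frac{\delta}{(1-r/R)^2}.
$$
(Equivalently, one may split $\LL_T$ into its two contour contributions and use the commutation $\LL_I\LL_T=\LL_T\LL_I$ together with boundedness of $\LL_I$ to reduce the outer estimate to the inner one; I would present whichever is shorter.)

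With these two kernel bounds, Cauchy--Schwarz on each circle gives $|(\LL_T-\LL_{\tilde T})f(z)|\le C_1(r,R)\,\delta\,\big(\|f\|_{L^2(C_R)}+\|f\|_{L^2(C_{1/R})}\big)$ for every $z\in C_R\cup C_{1/R}$ (the kernel bounds are valid for such $z$ too, $|T(w)-z|$ staying bounded below). Comparing boundary $L^2$ norms with the $H^2(A_R)$ norm in both directions --- immediate from the orthonormal basis $\{d_nz^n\}$, since $\|z^n\|_{L^2(C_R)}^2$, $\|z^n\|_{L^2(C_{1/R})}^2$ and $d_n^{-2}=\|z^n\|_{H^2(A_R)}^2$ are comparable uniformly in $n$ up to constants depending only on $R$ --- and using that $(\LL_T-\LL_{\tilde T})f\in H^2(A_R)$, I conclude $\|\LL_T-\LL_{\tilde T}\|\le C(r,R)\,\delta$. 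For the continuity assertion, fix $T$ with $r_T(R)<R$ and put $r:=\tfrac12(r_T(R)+R)<R$; if $\max_{C_1}|T-\tilde T|$ is small then by the maximum principle $r_{\tilde T}(R)=\max_{C_R}|\tilde T|\le r_T(R)+\max_{\overline{D_R}}|\tilde T-T|\le r_T(R)+\max_{C_1}|\tilde T-T|\le r$, so the estimate applies with this $r$ and $\|\LL_T-\LL_{\tilde T}\|\le C(r,R)\max_{C_1}|T-\tilde T|\to 0$. As indicated, the only real difficulty is the outer kernel estimate, which must be routed through the Blaschke symmetry $T\circ I=I\circ T$; everything else is bookkeeping with the $H^2(A_R)$ norm.
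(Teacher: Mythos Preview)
Your argument is correct, but the route differs from the paper's. The paper never writes down the Cauchy-type kernel representation of $\LL_Tf$; instead it computes the Laurent coefficients directly via the duality relation, obtaining $\langle \LL_Tf,e_n\rangle=\frac{1}{2\pi i\,d_n}\int_{C_1}f(z)\,T(z)^{-(n+1)}\,dz$, deforms the contour to $C_{1/R}$ (for $n\ge 0$) or $C_R$ (for $n<0$), and bounds $|\langle(\LL_T-\LL_{\tilde T})f,e_n\rangle|$ coefficient-by-coefficient, finding decay of order $(|n|+1)(r/R)^{|n|}\delta\|f\|$; square-summability then gives the operator-norm bound. You instead bound the full kernel $K(w,z)$ pointwise on each boundary circle and pass through a sup-norm estimate on $\partial A_R$. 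The essential device is identical: on the outer circle both proofs must invoke the Blaschke symmetry $T\circ I=I\circ T$ (the paper uses it as the identity $|T(z)^{-(n+1)}-\tilde T(z)^{-(n+1)}|=|T(I(z))^{n+1}-\tilde T(I(z))^{n+1}|$ for $z\in C_{1/R}$, you use it to rewrite $1/(T(w)-z)$ as $a/(1-za)$ with $a=\overline{T(I(w))}$). Your approach is arguably cleaner for the continuity statement in isolation; the paper's coefficient estimate, however, immediately yields the sharper information $|\langle\LL_Tf,e_n\rangle|\lesssim (r/R)^{|n|}\|f\|$, which is exactly what is recycled in the proof of Corollary~\ref{cor:compact} to get $\|\LL_T\|_{H^2(A_R)\to H^2(A_\rho)}\le C$ and hence compactness.
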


\begin{proof}
Recall that $H^2(A_R)$ is a Hilbert space with respect to the inner product
$\langle f,g\rangle=
\frac{1}{2\pi }\left(\int_{\partial A_R}f(z)\overline{g(z)}\,
\frac {|dz|}{|z|}\right).$
With respect to this inner product, the functions $e_n(z)=d_nz^n$ form an
orthonormal basis of $H^2(A_R)$, where $d_n=(R^{2n}+R^{-2n})^{-1/2}$,
so that $d_n\sim R^{|n|}$.

We now compute
\begin{align*}
\langle \LL_T(f),e_n\rangle&=
\frac{d_n}{2\pi i}\int_{\partial A_R}\LL_Tf(z)\bar z^n\,dz/z\\
&=\frac{d_n}{2\pi i}\left(
\int_{C_R}\LL_Tf(z)R^{2n}/z^n\,\frac{dz}z+
\int_{C_{1/R}}\LL_Tf(z)R^{-2n}/z^n\,\frac{dz}z\right)\\
&=\frac{d_n(R^{2n}+R^{-2n})}{2\pi i}\int_{C_1}\frac{\LL_Tf(z)}{z^{n+1}}\,dz\\
&=\frac{1}{2\pi id_n}\int_{C_1}\frac{f(z)}{T(z)^{n+1}}\,dz.
\end{align*}
Let $f$ be an arbitrary element  of $H^2(A_R)$ and let $n\ge 0$. 
Let $T$ and $\tilde T$ be any two Blaschke products satisfying
$r_T(R)\le r$ and $r_{\tilde T}(R)\le r$ for some $r<R$, and let $\delta=\max_{z\in C_1}|T(z)-\tilde T(z)|$.
Note by the maximum modulus principle, $|T(z)-\tilde T(z)|\le\delta$
for all $z\in D_1$ also.
Then deforming the contour to $C_{1/R}$, we see
\begin{align*}
|\langle (\LL_T-\LL_{\tilde T})f,e_n\rangle|
&\le
\frac{1}{2\pi d_n}\int_{C_{1/R}}|f(z)|\left|\frac{1}{T(z)^{n+1}}-
\frac{1}{\tilde T(z)^{n+1}}\right|\,|dz|\\
&\le \frac{1}{Rd_n}\|f\|\max_{z\in C_{1/R}}\left|\frac{1}{T(z)^{n+1}}-
\frac1{\tilde T(z)^{n+1}}\right|\\
&=\frac{1}{Rd_n}\|f\|\max_{z\in C_R}|T(z)^{n+1}-\tilde T(z)^{n+1}|\\
&\le \frac{(n+1)r^n}{Rd_n}\|f\|\max_{z\in C_R}|T(z)-\tilde T(z)|\\
&\le \tfrac{2(n+1)}{R}(\tfrac rR)^n\delta\|f\|,
\end{align*}
where for the third line, we used the fact that Blaschke products commute with inversion,
and for the fourth line, we used $r_T(R),r_{\tilde T}(R)\le r$.
If $n=-k$ with $k\ge 1$, then an analogous computation, deforming the contour
to $C_{R}$, shows
$$
|\langle (\LL_T-\LL_{\tilde T})f,e_n\rangle|
\le \tfrac{2(k-1)}{R}(\tfrac rR)^{k-2}\delta\|f\|.
$$
In particular, since the $(e_n)$ form an orthonormal basis, we deduce
$$
\|(\LL_T-\LL_{\tilde T})f\|\le C\delta\|f\|,
$$
where $C$ depends only on $r$ and $R$, as required.

We note that $r_T(R)$ depends continuously on $T$. Hence, if $r_T(R)<R$ and $\tilde T$ is sufficiently close to $T$,  then $r_{\tilde{T}}(R)\leq \frac{R+r_T(R)}{2}<R$ and the last statement of the lemma follows from the argument above.
\end{proof}

\begin{cor}\label{cor:compact}
Let $r<\rho<R<1$. There exists a $C>0$ such that
if the Blaschke product $T$ satisfies $r_T(R)\le r$,
then $\|\LL_T\|_{H^2(A_R)\to H^2(A_\rho)}\le C$.
In particular, $\LL_T$ is compact as an operator from $H^2(A_R)$ to itself.
\end{cor}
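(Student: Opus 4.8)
The plan is to re-run the estimate inside the proof of Lemma \ref{lem:PFcont}, but now compute the $H^2(A_\rho)$-norm of $\LL_T f$ instead of bounding the difference of two such operators. Concretely, fix $r<\rho<R<1$, and for a Blaschke product $T$ with $r_T(R)\le r$, apply the same contour computation to obtain $\langle \LL_T f,e_n^\rho\rangle$, where $e_n^\rho(z)=d_n^\rho z^n$ is the orthonormal basis of $H^2(A_\rho)$ and $d_n^\rho=(\rho^{2n}+\rho^{-2n})^{-1/2}\sim\rho^{|n|}$. Exactly as in the lemma, for $n\ge 0$ one gets
$$
|\langle \LL_T f,e_n^\rho\rangle|=\frac{1}{2\pi d_n^\rho}\left|\int_{C_1}\frac{f(z)}{T(z)^{n+1}}\,dz\right|,
$$
and then deforming the contour of integration out to $C_{1/R}$ (legitimate since $f$ and $T$ are analytic and $T$ is non-vanishing on the closed annulus $\{R\le|z|\le 1/R\}$ after using that Blaschke products commute with inversion) and using $r_T(R)\le r$ to control $|T(z)|\ge 1/r$ on $C_{1/R}$, we bound this by a constant times $(1/(Rd_n^\rho))\|f\|\, r^{n+1}$. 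For $n=-k$ with $k\ge1$ the analogous deformation to $C_R$ gives a bound of the same shape.

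The key point is the ratio of decay rates: since $d_n^\rho\sim\rho^{|n|}$, the coefficient of $e_n^\rho$ in $\LL_T f$ is $O\big((r/\rho)^{|n|}\|f\|\big)$ up to a polynomial-in-$n$ factor, and $r/\rho<1$. Squaring and summing over $n\in\Z$ gives a geometric series with a finite sum $C^2$ depending only on $r,\rho,R$, hence $\|\LL_T f\|_{H^2(A_\rho)}\le C\|f\|_{H^2(A_R)}$; this proves the first assertion with a constant uniform over all admissible $T$.

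For the final assertion, note that $A_\rho\supset \overline{A_R}$, so the restriction map $\iota\colon H^2(A_\rho)\to H^2(A_R)$ is well defined, and it is compact: any $g\in H^2(A_\rho)$ extends analytically across $C_R$ and $C_{1/R}$, so the inclusion factors through a uniformly bounded family on a strictly larger annulus, and compactness follows from a normal-families/Montel argument (equivalently, the embedding $H^2(A_\rho)\hookrightarrow H^2(A_R)$ is Hilbert–Schmidt, since $\|e_n^\rho\|_{H^2(A_R)}=d_n^\rho/d_n\sim(\rho/R)^{|n|}$ is square-summable). Then $\LL_T\colon H^2(A_R)\to H^2(A_R)$ factors as $\iota$ composed with the bounded operator $\LL_T\colon H^2(A_R)\to H^2(A_\rho)$, and a bounded operator composed with a compact operator is compact.

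I expect the only genuine subtlety to be justifying the contour deformation carefully — i.e.\ checking that $T$ is zero-free on the relevant annulus so that $1/T(z)^{n+1}$ is analytic there — and being slightly careful with the $n<0$ case (poles at the origin), but both of these are handled verbatim as in the proof of Lemma \ref{lem:PFcont}. Everything else is a routine geometric-series estimate.
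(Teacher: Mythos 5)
Your proposal is correct and follows essentially the same route as the paper: the same contour computation $\langle \LL_T f,\tilde e_n\rangle_{H^2(A_\rho)}=\frac{1}{2\pi i\tilde d_n}\int_{C_1}f(z)/T(z)^{n+1}\,dz$, deformation to $C_{1/R}$ for $n\ge 0$ and to $C_R$ for $n<0$, the resulting bound $C(r/\rho)^{|n|}\|f\|$, and square-summability; your explicit factorization of $\LL_T$ through the compact (indeed Hilbert--Schmidt) embedding $H^2(A_\rho)\hookrightarrow H^2(A_R)$ just spells out the step the paper leaves implicit. The only cosmetic slips are the hedge about a ``polynomial-in-$n$ factor'' (there is no differencing of $T^{n+1}$ here, so none arises) and the worry about ``poles at the origin'' for $n<0$ (the inward deformation stops at $C_R$, so this never comes up); neither affects correctness.
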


\begin{proof}
First, notice by the proof of Theorem \ref{thm:meroPF}
that $\LL_Tf$ is analytic on $A_r$.
As in the above proof, $\tilde e_n(z)=\tilde d_nz^n$ is
an orthonormal basis for $H^2(A_\rho)$, where $\tilde d_n=(\rho^{2n}+
\rho^{-2n})^{-1/2}$. As above 
$\langle \LL_Tf,\tilde e_n\rangle_{H^2(A_\rho)}=\frac{1}{2\pi i\tilde d_n}
\int_{C_1}f(z)/T(z)^{n+1}\,dz$. Deforming the contour to $C_{1/R}$ in the case where
$n\ge 0$ and $C_{R}$ when $n<0$, we obtain 
$|\langle \LL_Tf,\tilde e_n\rangle_{H^2(A_\rho)}|\le C(r/\rho)^{|n|}\|f\|$.
Since this is square summable, the result follows.
\end{proof}

In the context of Theorem \ref{thm:LyapSpect}, the map $\omega\mapsto T_\omega$
is measurable, and the map $T\mapsto \LL_T$ is continuous, so that the 
composition, $\omega\mapsto \LL_{T_\omega}$
satisfies the hypotheses of Theorem \ref{thm:MET}.

\begin{lem}\label{lem:fastapprox2}
%Let $\sigma$ be an invertible ergodic measure-preserving transformation of the
%probability space $(\Omega,\PP)$, let $X$ be a Banach space with separable dual,
%and let $(\LL_\omega)_{\omega\in\Omega}$ be a quasi-compact cocycle with 
%Lyapunov exponents $\lambda_1,\ldots,\lambda_l$ (with $1\le l\le\infty$) 
%with multiplicities $m_1,\ldots,m_l$. 
Let $\mathcal R$ be a random linear dynamical system satisfying the conditions of 
Theorem \ref{thm:MET}.
Let $E_j(\omega)$ be the $j$th ``fast
space'' $V_1(\omega)\oplus \dots \oplus V_j(\omega)$ and let $F_j(\omega)$ be the complementary ``slow space''. 
If $V$ is a subspace of $X$ satisfying $\Pi_{E_j(\omega)\parallel F_j(\omega)}(V)=
E_j(\omega)$, then 
$$
\sup_{x\in E_j(\sigma^n\omega)\cap S(X)}
d(x,\LL_\omega^{(n)}V)\to 0\text{ as $n\to\infty$.}
$$
\end{lem}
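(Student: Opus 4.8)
The plan is to reduce the claim to finite-dimensional linear algebra on the fast space, together with the exponential growth estimates already contained in Theorem~\ref{thm:MET}. Set $d=\dim E_j(\omega)=m_1+\cdots+m_j$ and abbreviate $\Pi=\Pi_{E_j(\omega)\parallel F_j(\omega)}$. First I would use the hypothesis $\Pi(V)=E_j(\omega)$ to fix, once and for all, vectors $v_1,\dots,v_d\in V$ whose projections $e_i:=\Pi v_i$ form a basis of $E_j(\omega)$, and write $v_i=e_i+f_i$ with $f_i:=(\mathrm{Id}-\Pi)v_i\in F_j(\omega)$; these vectors depend on $\omega$ but not on $n$, so their norms are fixed constants for the rest of the argument. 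I would also record at the outset that, since $\LL_\omega$ maps each $V_i(\omega)$ \emph{onto} $V_i(\sigma\omega)$ and these are finite-dimensional of equal dimension, $\LL^{(n)}_\omega$ restricts to a linear isomorphism of $E_j(\omega)$ onto $E_j(\sigma^n\omega)$; in particular $\LL^{(n)}_\omega e_1,\dots,\LL^{(n)}_\omega e_d$ is a basis of $E_j(\sigma^n\omega)$.

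The heart of the proof is the choice of approximant. Given $n$ and a unit vector $x\in E_j(\sigma^n\omega)$, I would expand $x=\sum_{i=1}^d c_i\,\LL^{(n)}_\omega e_i$ with (uniquely determined) scalars $c_i=c_i(x,n)$, and take
\[
y:=\sum_{i=1}^d c_i\,\LL^{(n)}_\omega v_i=\LL^{(n)}_\omega\Bigl(\textstyle\sum_i c_i v_i\Bigr)\in\LL^{(n)}_\omega V .
\]
Since $v_i=e_i+f_i$, the two expansions telescope and one gets
\[
x-y=-\sum_{i=1}^d c_i\,\LL^{(n)}_\omega f_i,\qquad\text{hence}\qquad \|x-y\|\le d\,\Bigl(\max_i|c_i|\Bigr)\Bigl(\max_i\|\LL^{(n)}_\omega f_i\|\Bigr).
\]
The crucial feature is that the bound I will eventually obtain for the right-hand side will not involve $x$, so it also bounds $\sup_{x\in E_j(\sigma^n\omega)\cap S(X)}d(x,\LL^{(n)}_\omega V)$.

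It then remains to estimate the two maxima. For the coefficients, $\sum_i c_i e_i=(\LL^{(n)}_\omega|_{E_j(\omega)})^{-1}x$, and since $(e_i)$ is a fixed basis of $E_j(\omega)$ this gives $\max_i|c_i|\le C_0\,\|(\LL^{(n)}_\omega|_{E_j(\omega)})^{-1}\|$ with $C_0$ independent of $n$; I would then bound $\|(\LL^{(n)}_\omega|_{E_j(\omega)})^{-1}\|\le C_1 e^{-(\lambda_j-\varepsilon)n}$ for every $\varepsilon>0$, using that $E_j(\omega)$ is spanned by the top $d$ Oseledets directions, whose slowest growth rate is $\lambda_j$. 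For the other factor, each $f_i$ lies in $F_j(\omega)=V_{j+1}(\omega)\oplus\cdots\oplus W(\omega)$, so parts (d) and (e) of Theorem~\ref{thm:MET} give $\limsup_n\tfrac1n\log\|\LL^{(n)}_\omega f_i\|\le\lambda_{j+1}$ (read as $\kappa$ when $j=\ell$), and since there are only finitely many indices $i$ this yields $\max_i\|\LL^{(n)}_\omega f_i\|\le C_2 e^{(\lambda_{j+1}+\varepsilon)n}$. Multiplying, $\|x-y\|\le d\,C_0C_1C_2\,e^{(\lambda_{j+1}-\lambda_j+2\varepsilon)n}$. Since $\lambda_{j+1}<\lambda_j$ for the $j$th fast space, choosing $\varepsilon<\tfrac12(\lambda_j-\lambda_{j+1})$ makes this exponent negative, so the supremum tends to $0$ (in fact exponentially fast) as $n\to\infty$.

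The only genuinely delicate point is the lower bound $\|\LL^{(n)}_\omega e\|\ge C_1^{-1}e^{(\lambda_j-\varepsilon)n}\|e\|$ for $e\in E_j(\omega)$, i.e.\ the stated upper bound on $\|(\LL^{(n)}_\omega|_{E_j(\omega)})^{-1}\|$: a priori, cancellation between the Oseledets components of $e$ could spoil the lower growth rate, and excluding this needs the sub-exponential (tempered) control of the angles between the Oseledets subspaces along the orbit of $\omega$. This is, however, part of the standard package accompanying the multiplicative ergodic theorem, and can alternatively be read off from the volume estimates of Lemma~\ref{lem:exptsviavol} by comparing $\mathcal D_{d}$ with $\mathcal D_{d-1}$. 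Everything else is routine linear algebra together with the exponential bounds built into Theorem~\ref{thm:MET}.
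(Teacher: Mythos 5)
Your proposal is correct and follows essentially the same route as the paper's proof: both choose, independently of $n$, a finite-dimensional piece of $V$ whose projection onto $E_j(\omega)$ is all of $E_j(\omega)$ (your basis $v_i=e_i+f_i$ versus the paper's operator $Q=(\Pi_{E\parallel F}|_W)^{-1}$), approximate a unit vector $x\in E_j(\sigma^n\omega)$ by the image under $\LL_\omega^{(n)}$ of the corresponding element of $V$, and bound the error, which lies in $\LL_\omega^{(n)}F_j(\omega)$, by combining the lower growth estimate $e^{(\lambda_j-\epsilon)n}$ on $E_j(\omega)$ with the upper estimate $e^{(\lambda_{j+1}+\epsilon)n}$ on $F_j(\omega)$. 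Your explicit remark that the uniform lower bound on $E_j(\omega)$ requires tempered control of the angles between Oseledets subspaces is a point the paper passes over silently (it simply posits the constant $C_\omega$), so your write-up is, if anything, slightly more careful on that step.
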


\begin{proof}
We write $E$ and $F$  for $E_j(\omega)$
and $F_j(\omega)$.
Let $W$ be a subspace of $V$ of the same dimension as $E$ such that
$\Pi_{E\parallel F}(W)=E$. Let $Q=(\Pi_{E\parallel F}
|_W)^{-1}$. Let $0<2\epsilon<\lambda_j-\lambda_{j+1}$ and $C_\omega>0$ satisfy
for every $x\in E_j(\sigma^n\omega)$, and $u\in E$ such that $\LL^{(n)}_\omega u=x$,
$\|u\|\le C_\omega e^{-(\lambda_j-\epsilon)n}\|x\|$; and for every $f\in F$,
$\|\LL_\omega^{(n)}f\|\le C_\omega e^{(\lambda_{j+1}+\epsilon)n}\|f\|$.
Now $Qu-u=Qu-\Pi_{E|F}Qu
\in F$, so that 
$\|\LL_\omega^{(n)}(Qu-u)\|\le C_\omega e^{(\lambda_{j+1}+\epsilon)n}\|Qu-u\|$. Hence,
$\|\LL_\omega^{(n)}(Qu)-x\|\le C_\omega^2 e^{-(\lambda_j-\lambda_{j+1}-2\epsilon)n}(\|Q\|+1)\|x\|$.
Since $\LL_\omega^{(n)}(Qu)\in\LL_\omega^{(n)}W\subset \LL_\omega^{(n)}V$, the proof is complete.
\end{proof}

\begin{lem}\label{lem:insidefp}
Let the measure-preserving transformation and cocycles satisfy
$r_{\mathcal T}(R)<R$  as in the statement of Theorem
\ref{thm:LyapSpect}. Let $V$ be the subspace of $H^2(A_R)$
spanned by the Laurent polynomials $z^{-(j+1)}$ for $j=1,\ldots,N$.
Then $\LL_\omega^{(n)}V$ is spanned by $1/\big(z-T_\omega^{(n)}(0)\big)^{j+1}$
 for $j=1,\ldots,N$. 
 
 In particular, the sequence $\LL_\omega^{(n)}V$ approaches
 the equivariant sequence of subspaces
 $$
 P^-_{N}(\sigma^n\omega):=\lin\left\{
 \frac{1}{(z-x_{\sigma^n\omega})^{j+1}}\colon 1\le j\le N\right\}.
 $$ 
 \end{lem}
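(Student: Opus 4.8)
The plan is to follow the pole of each generator of $V$ under the cocycle. The starting point is that, by Lemma~\ref{lem:Blaschkprop}, the composition $T_\omega^{(n)}:=T_{\sigma^{n-1}\omega}\circ\cdots\circ T_\omega$ is again a finite Blaschke product, hence rational, and that $\LL_\omega^{(n)}=\LL_{T_\omega^{(n)}}$ by the composition rule $\LL_{S\circ T}=\LL_S\circ\LL_T$. Since $r_{\mathcal T}(R)\le r$ together with the maximum modulus principle forces $T_\omega^{(n)}(\overline{D_R})\subseteq\overline{D_r}\subset D_R$ for every $n\ge 1$, in particular $T_\omega^{(n)}(0)\in\overline{D_r}\subset\C$. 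Now $z^{-(j+1)}=1/(z-0)^{j+1}$ is a rational function with a single singularity, a pole of order $j+1$ at $0$, so Corollary~\ref{cor:uppertri} applies with $T=T_\omega^{(n)}$ and $x=0$ and tells us that $\LL_\omega^{(n)}(z^{-(j+1)})$ is a linear combination of $\{1/(z-T_\omega^{(n)}(0))^{j'+1}\colon 1\le j'\le j\}$. Ranging over $j=1,\dots,N$ gives the inclusion $\LL_\omega^{(n)}V\subseteq\lin\{1/(z-T_\omega^{(n)}(0))^{j+1}\colon 1\le j\le N\}$.

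For the reverse inclusion I would track the leading (highest-order) term. Near a point $w=T(x)$ with $T'(x)\ne 0$, the only contribution to the principal part of $\LL_T\big(1/(z-x)^{k+1}\big)$ comes from the local inverse branch $\tau$ of $T$ through $x$ (the other preimages of $T(x)$ are points where $1/(z-x)^{k+1}$ is analytic, so contribute only analytic terms, and the branch through $x$ is non-critical); writing $\tau(w)=x+(w-T(x))/T'(x)+O(|w-T(x)|^2)$ and $\tau'(w)=1/T'(x)+O(|w-T(x)|)$, one gets $\LL_T\big(1/(z-x)^{k+1}\big)=(T'(x))^{k}/(z-T(x))^{k+1}+(\text{poles of order}\le k)$. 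So, in the ordered bases $\{1/(z-x)^{j+1}\}_{j=1}^N$ and $\{1/(z-T(x))^{j+1}\}_{j=1}^N$, $\LL_T$ is upper triangular with diagonal entries $(T'(x))^{j}$. Composing these triangular actions over the orbit $0\mapsto T_\omega(0)\mapsto\cdots\mapsto T_\omega^{(n)}(0)$ and using the chain rule, $\LL_\omega^{(n)}\colon V\to\lin\{1/(z-T_\omega^{(n)}(0))^{j+1}\}$ is upper triangular with diagonal entries $\big((T_\omega^{(n)})'(0)\big)^{j}$; when these are nonzero the map is invertible and its image is the whole $N$-dimensional span, which is the asserted equality. (If $(T_\omega^{(n)})'(0)=0$, i.e.\ the orbit of $0$ meets a critical point of one of the maps, then by Theorem~\ref{thm:meroPF} the top-order pole drops at that step and the image is the span of the lowest-order poles only, of strictly smaller dimension.)

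Finally, for the convergence to $P^-_N$ I would invoke Corollary~\ref{cor:Blaschkefp}: $T_\omega^{(n)}(0)\to x_{\sigma^n\omega}$ in Euclidean distance, uniformly in $\omega$, with all of these points lying in the compact set $\overline{D_r}$, which is at distance at least $R-r>0$ from the inner boundary $C_R$ of $A_R$. Hence, for $x$ ranging over $\overline{D_r}$ the functions $1/(z-x)^{j+1}$ depend continuously on $x$ in $H^2(A_R)$ with uniformly bounded norms and remain uniformly linearly independent, so $x\mapsto\lin\{1/(z-x)^{j+1}\colon 1\le j\le N\}$ is uniformly continuous into the Grassmannian of $N$-planes. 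Combining this with the first two paragraphs gives $\LL_\omega^{(n)}V\to P^-_N(\sigma^n\omega)$; equivariance of the family $P^-_N$ follows from $T_\omega(x_\omega)=x_{\sigma\omega}$ together with Corollary~\ref{cor:uppertri} applied with $x=x_\omega$, the dimension being controlled by the same leading-coefficient computation. I expect the main obstacle to be precisely that leading-coefficient bookkeeping in the second paragraph — being scrupulous that no preimage of $T_\omega^{(n)}(0)$ other than the one reached along the branch through $0$ can feed into the top-order pole, and extracting a convergence rate uniform enough in $\omega$ to be usable in the sequel.
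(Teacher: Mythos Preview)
Your proposal is correct and follows essentially the same approach as the paper. The paper's own proof is a single sentence: it observes that, by Corollary~\ref{cor:uppertri}, $\LL_T$ sends $1/(z-x)^{j+1}$ into $\lin\{1/(z-T(x))^{k+1}:1\le k\le j\}$, and then applies this inductively along the orbit of $0$. You do the same (equivalently, you apply Corollary~\ref{cor:uppertri} once to the composite $T_\omega^{(n)}$), and then go further than the paper by (i) computing the leading diagonal entries $(T_\omega^{(n)}{}'(0))^j$ to address when the inclusion is an equality, and (ii) spelling out the convergence to $P_N^-(\sigma^n\omega)$ via Corollary~\ref{cor:Blaschkefp} and continuity of $x\mapsto 1/(z-x)^{j+1}$ in $H^2(A_R)$, both of which the paper leaves implicit.

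Your caveat about the critical case $(T_\omega^{(n)})'(0)=0$ is well taken: the paper's statement reads ``spanned by'' but its proof only establishes the inclusion $\LL_\omega^{(n)}V\subset\lin\{1/(z-T_\omega^{(n)}(0))^{j+1}\}$, and indeed the equality can fail (e.g.\ for $T_0(z)=z^2$ in Section~\ref{sec:phasetrans}). Downstream, however, only the inclusion is used (see the proof of Corollary~\ref{cor:fast}), so the distinction is harmless for the paper's purposes.
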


\begin{proof}
It suffices to show that if $f(z)=1/(z-x)^{j+1}$ with $x\in D$,
then for any finite expanding Blaschke product,
$\LL_\omega f$ is a linear combination of $1/(z-T(x))^{k+1}$ for $k$ 
in the range 1 to $j$, but that was established in Corollary \ref{cor:uppertri}.
\end{proof}

\begin{cor}\label{cor:outsidefp}
Let the measure-preserving transformation and cocycles
be as above. Let $W$ be the subspace of $H^2(A_R)$
spanned by the Laurent polynomials $z^{j-1}$ for $j=1,\ldots,N$.
Then $\LL_\omega^{(n)}W$ is spanned by 
$z^{j-1}/\big(1-\overline{T_\omega^{(n)}(0)}z)\big)^{j+1}$
for $j=1,\ldots,N$.

In particular,  the sequence $\LL_\omega^{(n)}W$ approaches
the equivariant sequence of subspaces,
$$
P^+_N(\sigma^n\omega)=
\lin\left\{\frac {
z^{j-1}}{(1-\bar x_{\sigma^n\omega} z)^{j+1}}\colon 1\le j\le N\right\}.
$$
\end{cor}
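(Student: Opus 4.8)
The plan is to deduce Corollary \ref{cor:outsidefp} from Lemma \ref{lem:insidefp} by conjugating everything with the reflection operator $\LL_I$, so that the ``$P^+$'' statement becomes the mirror image of the ``$P^-$'' statement under inversion.

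First I would record the computation that underlies the whole argument: directly from $\LL_If(z)=\overline{f(I(z))}/z^2$, one gets, for every $x$ with $|x|<1$ and every $j\ge 1$, that $\LL_I\big((z-x)^{-(j+1)}\big)=z^{j-1}(1-\bar x z)^{-(j+1)}$, and in particular (taking $x=0$) $\LL_I(z^{-(j+1)})=z^{j-1}$. Hence, if $V=\lin\{z^{-(j+1)}\colon 1\le j\le N\}$ is the subspace appearing in Lemma \ref{lem:insidefp}, then $W=\LL_I V$. Here one must note that although $\LL_I$ is anti-linear, it is additive and satisfies $\LL_I(\bar\lambda f)=\lambda\LL_I f$, so it carries complex subspaces to complex subspaces and sends the span of a set to the span of its image.

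Next I would iterate the single-step commutation $\LL_T\LL_I=\LL_I\LL_T$, valid for every finite Blaschke product $T$, along the cocycle to obtain $\LL_\omega^{(n)}\LL_I=\LL_I\LL_\omega^{(n)}$. Combining this with $W=\LL_I V$, with Lemma \ref{lem:insidefp} (which gives $\LL_\omega^{(n)}V=\lin\{(z-T_\omega^{(n)}(0))^{-(j+1)}\colon 1\le j\le N\}$, where $T_\omega^{(n)}(0)\in\overline{D_r}$), and then with the displayed identity applied at $x=T_\omega^{(n)}(0)$, yields
\[
\LL_\omega^{(n)}W=\LL_I\big(\LL_\omega^{(n)}V\big)=\lin\left\{\frac{z^{j-1}}{\big(1-\overline{T_\omega^{(n)}(0)}\,z\big)^{j+1}}\colon 1\le j\le N\right\},
\]
which is the first assertion of the corollary.

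For the limiting statement I would argue exactly as in the proof of Corollary \ref{cor:topspace}. By Corollary \ref{cor:Blaschkefp} applied with $\omega$ replaced by $\sigma^n\omega$, we have $T_\omega^{(n)}(0)\to x_{\sigma^n\omega}$ essentially uniformly in $\omega$, with all these points lying in $\overline{D_r}$. Since $|x|\le r<R$ implies that $z^{j-1}(1-\bar x z)^{-(j+1)}$ is uniformly bounded on a fixed annular neighbourhood of $\overline{A_R}$, the map $x\mapsto z^{j-1}(1-\bar x z)^{-(j+1)}$ is Lipschitz from $\overline{D_r}$ into $H^2(A_R)$, so the generators of $\LL_\omega^{(n)}W$ converge in norm, uniformly in $\omega$, to those of $P^+_N(\sigma^n\omega)$. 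To pass from convergence of generators to convergence of the $N$-dimensional subspaces I would use that for each $x\in\overline{D_r}$ the functions $z^{j-1}(1-\bar x z)^{-(j+1)}$, $1\le j\le N$, have poles at $I(x)$ of pairwise distinct orders (hence are linearly independent) and that, by compactness of $\overline{D_r}$, the least singular value of this frame is bounded below uniformly in $x$; equivariance of $P^+_N$ is then inherited from that of $P^-_N$ (Corollary \ref{cor:uppertri} read along the cocycle) via $\LL_I$-conjugation. The only step here that is not pure bookkeeping is this last uniform-nondegeneracy point, and it is precisely the mechanism already used for the top space in Corollary \ref{cor:topspace}.
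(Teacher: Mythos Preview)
Your proof is correct and follows essentially the same route as the paper: establish $W=\LL_I V$, use the commutation $\LL_\omega^{(n)}\LL_I=\LL_I\LL_\omega^{(n)}$ to transfer Lemma \ref{lem:insidefp}, and apply the explicit computation of $\LL_I\big((z-x)^{-(j+1)}\big)$. You supply more detail than the paper does on the anti-linearity of $\LL_I$ and on the convergence of subspaces, but the argument is the same.
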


\begin{proof}
Notice that $\LL_I$ maps $z^{-(j+1)}$ to $z^{j-1}$ (and vice versa), and is a continuous
operator on $H^2(A_R)$. So
$\LL_I(V)=W$, where $V$ is as in the statement of Lemma 
\ref{lem:insidefp}. 
Since $\LL_\omega$ and $\LL_I$ commute, we see $\LL_\omega^{(n)}W
=\LL_\omega^{(n)}\LL_I(V)=\LL_I(\LL_\omega^{(n)}(V))$. A computation shows
that if $f(z)=1/(z-x)^{j+1}$, then $\LL_If(z)=z^{j-1}/(1-\bar xz)^{j+1}$. 
\end{proof}

\begin{cor}\label{cor:fast}
Let the dynamical system, Blaschke product cocycle and family of 
Perron-Frobenius operators satisfy $r_{\mathcal T}(R)<R$ as above. 
Let $E(\omega)$ be an equivariant family of finite-dimensional fast spaces for the cocycle. 
Then there exists an $N$ such that for $\PP$-a.e. $\omega$,
$$
E(\omega)\subset P^-_N(\omega)\oplus W_0(\omega)\oplus P^+_N(\omega),
$$
where $W_0(\omega)$ is as defined in Corollary \ref{cor:topspace}.
\end{cor}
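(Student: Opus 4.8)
The plan is to show that any equivariant finite-dimensional fast space $E(\omega)$ must be "captured" by the large ambient subspaces $P^-_N(\omega)\oplus W_0(\omega)\oplus P^+_N(\omega)$ once $N$ is taken large enough, using the approximation lemmas already established. The starting observation is that the three families $P^-_N(\omega)$, $W_0(\omega)$, $P^+_N(\omega)$ are equivariant (Lemma~\ref{lem:insidefp}, Corollary~\ref{cor:topspace}, Corollary~\ref{cor:outsidefp}), that they are pairwise transverse for dimension reasons (poles of different orders at the three distinct points $x_\omega$, $\infty$-type via $1/\bar x_\omega$... actually $I(x_\omega)$, and the single simple-pole combination), and that together they have total dimension $2N+1$. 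So it suffices to find $N$ so that $E(\omega)$ lies in the span. Here $E(\omega)=V_1(\omega)\oplus\dots\oplus V_j(\omega)$ for some $j$, so $\dim E(\omega)=m_1+\dots+m_j=:d$ is finite and $\sigma$-a.e.\ constant.

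First I would run the contrapositive/compactness side: the slow space $F_j(\omega)$ complementary to $E(\omega)$ satisfies $\limsup\frac1n\log\|\LL^{(n)}_\omega f\|\le \lambda_{j+1}<\lambda_j$ for $f\in F_j(\omega)$ (Theorem~\ref{thm:MET}(e), or (d) on the next block of $V_i$'s). On the other hand, by Lemma~\ref{lem:insidefp} the subspace $V$ spanned by $z^{-(j+1)},\dots,z^{-(N+1)}$ pushes forward to something approaching $P^-_N(\sigma^n\omega)$; by Corollary~\ref{cor:outsidefp} the subspace $W$ spanned by $z^{j-1}$ pushes forward toward $P^+_N(\sigma^n\omega)$; and by Corollary~\ref{cor:topspace} the function $1/(z-x)$ pushes forward to $\hat e_{0,\sigma^n\omega}$ spanning $W_0(\sigma^n\omega)$. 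Combining, the fixed finite-dimensional subspace $U_N=V\oplus\operatorname{lin}\{1/(z-0)\}\oplus W$ (of dimension $2N+1$, independent of $\omega$) has the property that $\LL^{(n)}_\omega U_N$ converges, uniformly in $\omega$, to $P^-_N(\sigma^n\omega)\oplus W_0(\sigma^n\omega)\oplus P^+_N(\sigma^n\omega)$ in the Grassmannian.

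The core step is then a counting/growth argument, essentially an application of the volume-growth machinery of Lemma~\ref{lem:exptsviavol}, or more directly Lemma~\ref{lem:fastapprox2} read backwards. Suppose for contradiction that $E(\omega)\not\subset P^-_N(\omega)\oplus W_0(\omega)\oplus P^+_N(\omega)$ on a positive-measure set for every $N$. Since all functions in $H^2(A_R)$ are analytic on $A_R$ and $\LL_\omega$ maps into functions analytic on $A_r$ (proof of Theorem~\ref{thm:meroPF}, Corollary~\ref{cor:compact}), any element of the image is rational with poles only at images of $0$ and their inversions; pushing forward any Laurent polynomial of degree $\le N$ in $z^{-1}$, constant term, and degree $\le N$ in $z$ stays in $U_N$-type spaces, so the orthogonal complement of $U_N$ inside a large truncation is mapped by $\LL^{(n)}_\omega$ into a controlled-norm, controlled-dimension piece that decays at rate governed by $(r/R)^{N}$ uniformly (this is exactly the square-summable tail estimate in the proof of Corollary~\ref{cor:compact}). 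Hence for $N$ large, $\|\LL^{(n)}_\omega|_{U_N^\perp}\|$ has top Lyapunov exponent $<\lambda_j$; but if $E(\omega)$ had a nonzero component transverse to $P^-_N\oplus W_0\oplus P^+_N$ for all $N$, equivalently a component in $\overline{\bigcap_N (\LL^{(n)}_\omega U_N)^{c}}$, its forward images would grow at rate $\lambda_j$, contradicting the decay estimate. Taking $N$ with $(r/R)^N e^{\text{(bookkeeping)}}<e^{\lambda_j}$ forces the inclusion.

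The main obstacle I anticipate is the transversality/bookkeeping: making precise that "$E(\omega)$ has a component transverse to the $(2N+1)$-dimensional ambient space for every $N$" leads to a genuine contradiction with the uniform decay of $\LL^{(n)}_\omega$ on the high-frequency tail. The clean way is probably to invoke Lemma~\ref{lem:fastapprox2} with $V=U_N$: one must check the projection hypothesis $\Pi_{E_j(\omega)\parallel F_j(\omega)}(U_N)=E_j(\omega)$, i.e.\ that $U_N$ already surjects onto the fast space modulo the slow space. That surjectivity is the real content and follows because $\LL^{(n)}_\omega U_N$ converges to $P^-_N\oplus W_0\oplus P^+_N$, which (for $N$ large enough that the tail estimate of Corollary~\ref{cor:compact} beats $e^{\lambda_j}$) must already contain the fast space $E(\sigma^n\omega)$ up to vanishingly small error; combined with equivariance of $E$ and an openness argument for the transversality condition this pins down $N$. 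Everything else — equivariance, transversality for dimension reasons, and the decay rate $(r/R)^N$ — is routine given the earlier results.
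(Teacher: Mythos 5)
There is a genuine gap, and it sits exactly at the step you flag as ``the real content''. The hypothesis of Lemma~\ref{lem:fastapprox2}, namely $\Pi_{E(\omega)\parallel F(\omega)}(U_N)=E(\omega)$, does not need (and cannot be obtained from) the circular argument you sketch: you propose to get it from the claim that $\LL^{(n)}_\omega U_N$ converges to $P^-_N\oplus W_0\oplus P^+_N$ and that this space ``must already contain the fast space up to vanishingly small error'' --- but that containment is precisely the statement of the corollary being proved. Moreover, the decay estimate you lean on is false: $\LL_\omega$ restricted to the high Laurent modes (the orthogonal complement of $U_N$) is \emph{not} small of order $(r/R)^N$. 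By Corollary~\ref{cor:uppertri} and the coefficient bounds used later in Lemma~\ref{lem:UH}, $\LL_\omega z^{-(N+2)}$ has components of order one on the \emph{low} modes, and a generic vector in $U_N^\perp\cap H^2(A_R)^-$ has Lyapunov exponent $\Lambda=\lambda_2$, which exceeds $\lambda_j$ as soon as $j\ge 3$; so ``the top Lyapunov exponent of $\LL^{(n)}_\omega|_{U_N^\perp}$ is $<\lambda_j$'' fails and the contradiction argument collapses. The paper's verification is soft and direct: the Laurent polynomials $L$ are dense in $H^2(A_R)$, the projection $\Pi_{E(\omega)\parallel F(\omega)}$ is bounded, so $\Pi_{E(\omega)\parallel F(\omega)}(L)$ is dense in the finite-dimensional space $E(\omega)$ and hence equals it; a finite spanning set of preimages lies in $\lin\{z^{j-1}\colon |j|\le N\}$ for some $N$ (a priori $\omega$-dependent, fixed afterwards using that the good set $A_N$ is $\sigma$-invariant and $\PP$ is ergodic).

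The second omission is the passage from approximation to exact inclusion. Lemma~\ref{lem:fastapprox2} together with Lemma~\ref{lem:insidefp} and Corollaries~\ref{cor:topspace} and~\ref{cor:outsidefp} only yields
$\sup_{x\in E(\sigma^n\omega)\cap S(X)} d\bigl(x,\,P^-_N(\sigma^n\omega)\oplus W_0(\sigma^n\omega)\oplus P^+_N(\sigma^n\omega)\bigr)\to 0$
along forward orbits; no choice of $N$ with ``$(r/R)^N$ beating $e^{\lambda_j}$'' turns this into $E(\omega)\subset P^-_N(\omega)\oplus W_0(\omega)\oplus P^+_N(\omega)$. The paper closes this by a Poincar\'e recurrence argument: the measurable function $\kappa(\omega)=\sup_{x\in E(\omega)\cap S(X)}d(x,P^-_N(\omega)\oplus W_0(\omega)\oplus P^+_N(\omega))$ satisfies $\kappa(\sigma^n\omega)\to 0$ a.e., hence $\kappa=0$ a.e., which is the desired inclusion. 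Your proposal correctly identifies the relevant lemmas, but without the density-plus-boundedness verification of the projection hypothesis and the recurrence step, the argument as written does not go through.
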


\begin{proof}
%Let $\epsilon(\omega)$ be as in the statement of
%Lemma \ref{lem:fastapprox}. 
%Let $r=r_{\mathcal T}(R)$ and let $r<\rho<R$. Then, since 
%$E(\omega)=\LL_{\sigma^{-1}\omega}(E(\sigma^{-1}\omega))\subset  H^2(A_\rho)$,
%elements of $E(\omega)$ have Laurent series with coefficients satisfying 
%$a_n=O(\rho^{|n|})$.
Let $F(\omega)$ be the corresponding slow subspace to $E(\omega)$. 
Since the (finite term) Laurent polynomials, $L$, form a dense subspace of $H^2(A_R)$, 
and $\Pi_{E(\omega)\parallel F(\omega)}$ is bounded, we see that $\Pi_{E(\omega)\parallel F(\omega)}(L)
$ is a dense subspace of $E(\omega)$ and hence is equal to $E(\omega)$. 
Pick out a finite-dimensional subspace $L_1$ of $L$ such that $\Pi_{E(\omega)\parallel 
F(\omega)}(L_1)=E(\omega)$. 
Hence there exists an
$N$ such that $V=\lin\{z^{-1+j}\colon |j|\le N\}$ satisfies
the hypothesis of Lemma \ref{lem:fastapprox2}.
By Lemma \ref{lem:insidefp} and Corollaries \ref{cor:topspace} and
\ref{cor:outsidefp}, we see 
\begin{equation}\label{eq:degN}
\sup_{x\in E(\sigma^n\omega)\cap S(X)}
d(x,P_N^-(\sigma^n\omega)\oplus W_0(\sigma^n\omega)\oplus 
P_N^+(\sigma^n\omega))\to 0.
\end{equation}

For a fixed $N$, let $A_N$ be the set of $\omega$ for which 
\eqref{eq:degN} is satisfied and 
notice that $A_N$ is a $\sigma$-invariant measurable subset of $\Omega$. 
Hence there exists an $N>0$
such that for $\PP$-a.e. $\omega$,
$$
\sup_{x\in E(\sigma^n\omega)\cap S(X)}
d(x,P_N^-(\sigma^n\omega)\oplus W_0(\sigma^n\omega)\oplus 
P_N^+(\sigma^n\omega))\to 0.
$$

It follows from the Poincar\'e recurrence theorem that if 
$\kappa\colon\Omega\to[0,\infty)$ is 
a measurable function such that $\kappa(\sigma^n\omega)\to 0$ 
for $\PP$-a.e.\ $\omega$, 
then $\kappa(\omega)=0$ for almost every $\omega$. 
We apply this to
$$
\kappa(\omega)=\sup_{x\in E(\omega)\cap S(X)}
d(x,P_N^-(\omega)\oplus W_0(\omega)\oplus 
P_N^+(\omega)),
$$
to deduce that
$E(\omega)\subset P_N^-(\omega)\oplus W_0(\omega)\oplus P_N^+(\omega)$
for $\PP$-a.e. $\omega$, as required.
\end{proof}

\begin{proof}[Proof of Theorem \ref{thm:LyapSpect}]
The compactness of the cocycle follows from Corollary \ref{cor:compact} and 
statement \eqref{it:randomfp} follows from Corollary \ref{cor:Blaschkefp}. 

In the light of  Corollary \ref{cor:fast}, 
it suffices to evaluate the Lyapunov exponents when the
system is restricted to the finite-dimensional equivariant subspaces 
$P_N(\omega)=P_N^-(\omega)\oplus W_0(\omega)\oplus P_N^+(\omega)$. 
Notice that since each of $E_0(\omega)$ and $P_N^\pm(\omega)$ is
equivariant, the Lyapunov exponents of the cocycle restricted to $P_N$
are simply the combination of the Lyapunov exponents of $E_0(\omega)$,
$P_N^+(\omega)$ and $P_N^-(\omega)$. Since $\LL_I(P_N^\pm(\omega))=
P_N^\mp(\omega)$, $\LL_I$ is a bounded involution, and $\LL_{T_\omega^{(n)}}
\circ \LL_I=\LL_I\circ\LL_{T_\omega^{(n)}}$, we deduce the Lyapunov exponents
of the restriction of the cocycle to $P^+_N(\omega)$ are the same as those 
of the restriction to $P^-_N(\omega)$. 
As noted in Corollary~\ref{cor:topspace}, the exponent of the cocycle 
restricted to the equivariant 
space $E_0(\omega)$ is 0 (this will turn out to be the leading exponent). 
It suffices to compute the Lyapunov exponents of the restriction of the
cocycle to $P_N^-(\omega)$. Each of these Lyapunov exponents will then 
have multiplicity two for the full cocycle, being repeated as a Lyapunov exponent
in the restriction to $P_N^+(\omega)$.

It follows from Corollary \ref{cor:uppertri} that the matrix representing 
the restriction of the cocycle
to $P_N^-(\omega)$ is upper triangular with respect to the natural 
family of bases, $(z-x_\omega)^{-(j+1)}$ for $j=1,\ldots,N$. 

If $T'(x_\omega)=0$,
then all the diagonal terms of the matrix are 0 by Theorem \ref{thm:meroPF}.
Hence if $T_\omega'(x_\omega)=0$ for
a set of $\omega$'s of positive measure, we see that the Lyapunov spectrum 
is 0 with multiplicity 1 and $-\infty$ with infinite multiplicity.

Otherwise, we compute the leading term of
$\LL_\omega f(z)$ near $x_{\sigma\omega}$, where $f(z)=1/(z-x_\omega)^{j+1}$. 
Let $\alpha=T_\omega'(x_\omega)$. 
Then we have
\begin{align*}
\LL_\omega f(x_{\sigma\omega}+h)&=
\frac {f(x_\omega+h/\alpha)}{T_\omega'(x_\omega)}+O(h^{-j})\\
&=(\alpha/h)^{j+1}/\alpha+O(h^{-j})=\alpha^j/h^{j+1}+O(h^{-j}).
\end{align*}
That is, the diagonal entry of the matrix is $\big(T_\omega'(x_\omega)\big)^j$.
We also verify that the off-diagonal elements of the matrix are bounded:
If $i<j$, then the $(i,j)$ entry of the matrix is given by 
$\frac{1}{2\pi i}\int\LL_\omega[f](z)(z-x_{\sigma\omega})^i\,dz$.
where $f(z)=(z-x_\omega)^{-(j+1)}$ and the integral is over the unit circle.
Since by Corollary \ref{cor:compact},
the operators $\LL_\omega$ are a uniformly bounded family on $H^2(A_R)$, we see
that the entries of the $N\times N$ matrix, representing the restriction to $P_N^-(\omega)$,
are uniformly bounded. (In fact, we give more refined estimates
in Section \ref{sec:Lyapstab}.)

The Lyapunov exponents of the cocycle restricted to $P^+_N(\omega)$
are therefore given by the values
\begin{align*}
&\lim_{n\to\infty}\frac 1n\log\prod_{k=0}^{n-1}
\big|T_{\sigma^k\omega}'(x_{\sigma^k\omega})\big|^j\\
&=j\lim_{n\to\infty}\frac 1n\sum_{k=0}^{n-1}\log 
\big| T_{\sigma^k\omega}'(x_{\sigma^k\omega})\big|\\
&=j\int \log \big| T_\omega'(x_\omega)\big|\,d\PP(\omega)=:j\Lambda,
\end{align*}
where $j$ ranges from 1 to $N$, and we used the Birkhoff ergodic theorem in the last line.

Finally, to show that $\Lambda\le \log\frac rR$, notice that the restriction 
of $d_R$ to $\overline{D_r}$
agrees with Euclidean distance up to a bounded factor. The above shows that $\Lambda=
\lim_{n\to\infty}\log|{T_\omega^{(n)}}'(x_\omega)|$.
Lemma \ref{lem:contraction} shows that 
$d_R(T_\omega^{(n)}(x_\omega+h),T_\omega^{(n)}(x_\omega))\le a|h|(r/R)^n$,
where $a=d_R(x_\omega+h,x_\omega)/|h|$ is a uniformly bounded quantity. 
Hence $|T_\omega^{(n)}(x_\omega+h)-T_\omega^{(n)}(x_\omega)|/h \le c(r/R)^n$. 
The fact that $\Lambda\le\log\frac rR$ follows.
\end{proof}

\section{Spectrum Collapse}\label{sec:phasetrans}
In this section, we focus on an example. 
Let $T_0(z)=z^2$ and $T_1(z)=\left(\frac{z+1/4}{1+z/4}\right)^2$, so that 
both $T_0$ and $T_1$ are expanding degree 2 maps of the unit circle, mapping the unit
disc to itself in a two-to-one way. We take the base
dynamical system to be the full shift $\sigma$ on $\Omega=\{0,1\}^\Z$
with invariant measure $\PP_p$, the Bernoulli measure where each
coordinate takes the value 0 with probability $p$ and 1 with probability $1-p$. 

We let $\mathcal L_0$ and $\mathcal L_1$ be the Perron-Frobenius operators 
corresponding to $T_0$ and $T_1$ acting on the unit circle with respect to the
signed measure, $dz$, and consider the cocycle 
$\LL_\omega:=\LL_{\omega_0}$ and study the properties
of $\LL^{(n)}_\omega:=\LL_{\omega_{n-1}}\circ\cdots\circ\LL_{\omega_0}$.

\begin{lem}\label{lem:randomfp}
Let $T_0$ and $T_1$ be defined as above. Then 
\begin{enumerate}[label=(\alph*)]
\item $T_0$ fixes 0 and $T_1$ fixes $a=\frac 12(7-3\sqrt 5)
\approx 0.146$;
\item $T_0$ and $T_1$ both map the subset $[0,a]$ of the unit disk
in a monotonically increasing way into itself (with disjoint ranges);
\item \label{it:partc}both maps act as contractions on [0,a]:
$\frac{15}{32}\le T_1'\le \frac 23$ on $[0,a]$ and $0\le T_0'\le 2a$ on $[0,a]$.
\end{enumerate}

For $\omega\in\Omega$, let $x_\omega$ denote the random fixed point
as described in Theorem \ref{thm:LyapSpect}.

\begin{enumerate}[resume, label=(\alph*)]
\item
If $\omega=\ldots 10^n\cdot 0\ldots$, then $2b^{2^n}\le 
T_{\omega_0}'(x_\omega)\le
2a^{2^n}$, where $b=T_1(0)$.\label{it:partd}
\end{enumerate}
\end{lem}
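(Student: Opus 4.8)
The plan is to prove the four parts essentially in order, the first three by direct computation and the fourth by combining the contraction estimates from part \ref{it:partc} with the characterization of the random fixed point in Theorem~\ref{thm:LyapSpect}. For part (a), I would solve $T_1(z)=z$ on the reals: writing $\left(\frac{z+1/4}{1+z/4}\right)^2 = z$ and clearing denominators gives a quartic, but one expects $z=1$ (fixed on the circle) to be a root and possibly another spurious one; dividing out the known factors should leave the quadratic whose relevant root in $[0,1)$ is $a = \tfrac12(7-3\sqrt5)$. A quick numerical check $a\approx 0.146$ confirms this is the attracting interior fixed point. That $T_0$ fixes $0$ is immediate.

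For part (b), I would note that both maps send $[0,1)$ into $[0,1)$, are increasing there (each is a composition/square of increasing-on-$[0,1)$ maps — $z\mapsto z^2$ is increasing on $[0,\infty)$, and $z\mapsto \frac{z+1/4}{1+z/4}$ is an increasing Möbius map of $[0,1)$), so monotonicity on $[0,a]$ is inherited; then $T_0([0,a]) = [0,a^2]$ and $T_1([0,a]) = [T_1(0), a]$ with $T_1(0) = 1/16 = b^2$ (writing $b = T_1(0)^{1/2}$... — actually I should be careful: $b := T_1(0)$ per part (d), so $b = 1/16$), and one checks $a^2 < 1/16$ to see the ranges are disjoint. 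Both land inside $[0,a]$ since $a^2 < a$ and $T_1$ maps into $[1/16, a]\subset[0,a]$. For part \ref{it:partc}, differentiate: $T_0'(z) = 2z$, so $0 \le T_0' \le 2a$ on $[0,a]$ is immediate; for $T_1$ one computes $T_1'(z) = 2\left(\frac{z+1/4}{1+z/4}\right)\cdot\frac{(1+z/4) - (z+1/4)/4}{(1+z/4)^2} = 2\left(\frac{z+1/4}{1+z/4}\right)\cdot\frac{15/16}{(1+z/4)^2}$, and one evaluates this increasing (or monotone) expression at the endpoints $z=0$ and $z=a$ to get the bounds $\tfrac{15}{32} \le T_1' \le \tfrac23$. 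These are all routine single-variable estimates.

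Part \ref{it:partd} is the substantive one. Here $\omega = \ldots 1 0^n \cdot 0 \ldots$, meaning $\omega_0 = 0$ but $\omega_{-1} = \cdots = \omega_{-n} = 0$ and $\omega_{-(n+1)} = 1$. By the random fixed point property (Corollary~\ref{cor:Blaschkefp} / Theorem~\ref{thm:LyapSpect}\eqref{it:randomfp}), $x_\omega = \lim_{N\to\infty} T^{(N)}_{\sigma^{-N}\omega}(0)$; since $T_1$ maps $[0,a]$ into $[1/16, a]$ and then $n$ applications of $T_0$ follow (each squaring), we get $x_\omega \in [\,(1/16)^{2^n},\ a^{2^n}\,]$ — more precisely, whatever point $y\in[1/16,a]$ the earlier part of $\omega$ produces after applying $T_1$, the coordinates $\omega_{-n},\dots,\omega_{-1}$ all being $0$ send it to $T_0^{(n)}(y) = y^{2^n} \in [(1/16)^{2^n}, a^{2^n}] = [b^{2^n}, a^{2^n}]$ with $b = T_1(0) = 1/16$. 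Then since $\omega_0 = 0$, $T_{\omega_0}'(x_\omega) = T_0'(x_\omega) = 2x_\omega$, giving $2b^{2^n} \le T_{\omega_0}'(x_\omega) \le 2a^{2^n}$. The main obstacle — really just a point requiring care rather than a genuine difficulty — is bookkeeping the indexing convention: confirming that $\omega = \ldots 1 0^n \cdot 0 \ldots$ forces $x_\omega$ to be the image under exactly $n$ copies of $T_0$ of a point lying in $T_1([0,a]) = [b, a]$ (note $b = 1/16$, and one must check $b \le a$ so this interval is nonempty and sits inside $[0,a]$), and that the relevant derivative is $T_0'$ evaluated at that point. Everything else follows from monotonicity of $T_0^{(n)}(y) = y^{2^n}$ on $[0,a]$ established in parts (b)–(c).
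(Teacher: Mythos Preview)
Your proposal is correct and follows essentially the same approach as the paper. The paper only proves part \ref{it:partd}, treating parts (a)--(c) as routine; your part \ref{it:partd} argument---locating $x_{\sigma^{-n}\omega}\in T_1([0,a])=[b,a]$ via the equivariance $x_{\sigma^{-n}\omega}=T_1(x_{\sigma^{-(n+1)}\omega})$, then pushing forward by $T_0^n$ to get $x_\omega\in[b^{2^n},a^{2^n}]$, and finally using $T_0'(x_\omega)=2x_\omega$---is exactly what the paper does.
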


\begin{proof}
We just prove statement \ref{it:partd}.
Let $\omega_{-(n+1)}=1$ and $\omega_{-n}=\ldots=\omega_{-1}=\omega_0=0$.
Then since $x_{\sigma^{-n}\omega}=T_1(x_{\sigma^{-(n+1)}\omega})$, we have
$b\le x_{\sigma^{-n}\omega}\le a$. 
Since $x_\omega=T_0^n(x_{\sigma^{-n}\omega})$, we have
$b^{2^n}\le x_\omega\le a^{2^n}$ and
$2b^{2^n}\le T_\omega'(x_\omega)\le 2a^{2^n}$.
\end{proof}

\begin{proof}[Proof of Corollary \ref{cor:unpert}]
%For \ref{it:compact}, it is shown in \cite{BJS} that $\LL_\omega$ is compact
%for each $\omega$.
%
For \ref{it:finite}, using Theorem \ref{thm:LyapSpect}, it suffices to prove
$\Lambda>-\infty$, where $\Lambda=\int\log|T_\omega'(x_\omega)|\,d\PP(\omega)$.
$\Omega$ may be countably partitioned (apart from the fixed point of all 0's) into 
$[\cdot1]:=\{\omega\in\Omega\colon \omega_0=1\}$ and the sets
$[10^n\cdot 0]:=\{\omega\in\Omega\colon x_{-(n+1)}=1,x_{-n}=\ldots=x_0=0\}$
for $0\le n<\infty$.
On $[\cdot 1]$, by Lemma \ref{lem:randomfp}\ref{it:partc}, 
$\log T_\omega'(x_\omega)\ge\log\frac{15}{32}$.
On $[10^n\cdot 0]$, $\log T_\omega'(x_\omega)\ge2^n\log b$ by 
Lemma \ref{lem:randomfp}\ref{it:partd}.
Since $\PP([10^n\cdot 0])=(1-p) p^{n+1}$, we see
\begin{align*}
\int\log T_\omega'(x_\omega)\,d\PP(\omega)&=
\int_{[\cdot 1]}\log T_\omega'(x_\omega)\,d\PP
+\sum_{n=0}^\infty \int_{[10^n\cdot 0]}\log T_\omega'(x_\omega)\,d\PP\\
&\ge (1-p)\log\tfrac{15}{32}+p(1-p)\log b\sum_{n=0}^\infty (2p)^n>-\infty.
\end{align*}

For \ref{it:infinite}, arguing as above, we see that on $[10^n\cdot 0]$,
$\log |T_\omega'(x_\omega)|\le 2^n\log a+\log 2$ (where $\log a\approx -1.925$).
Hence 
\begin{align*}
\Lambda&\le \PP_p([1])(\log \tfrac23)+ \sum_{n=0}^\infty
(2^n\log a+\log 2)\PP_p([10^n\cdot 0])\\
&=(1-p)\log\tfrac 23+p\log 2+p(1-p)\log a\sum_{n=0}^\infty (2p)^n=-\infty.
\end{align*}
\end{proof}

\subsection{Gaussian perturbations}\label{subsec:normalpert}
We now consider a perturbed version of the cocycle, where $\mathcal L_i$ is replaced by
$\LL^\epsilon_i:=\mathcal N_\epsilon\circ\LL_i$, where 
$\mathcal N_\epsilon$ has the effect
of convolving a density with a Gaussian with mean 0 and variance $\epsilon^2$. 
On $\R/\Z$, we have
$$
(\mathcal N^{\R/\Z}_\epsilon f)(x)=\frac{1}{\sqrt{2\pi}}\int_{-\infty}^\infty 
f(x-\epsilon t)e^{-t^2/2}\,dt=\mathbb Ef(x+\epsilon N),
$$
where $N$ is a standard normal random variable. 
The corresponding conjugate operator on $C(C_1)$ is $\mathcal N_\epsilon:=
\mathcal N^{C_1}_\epsilon=Q^{-1}\mathcal N_\epsilon^{\R/\Z}Q$, 
where $Q$ is as in Lemma \ref{lem:corr}. A calculation using that lemma shows
$$
(\mathcal N_\epsilon f)(z)=\frac{1}{\sqrt{2\pi}}\int_{-\infty}^\infty 
f(ze^{-2\pi i\epsilon t})e^{-2\pi i\epsilon t-t^2/2}\,dt.
$$

\begin{proof}[Proof of Corollary \ref{cor:pert}]
From the definition of $\LL_0$, we check
$$
\LL_0(f)(z)=\frac 12\left(\frac {f(\sqrt z)}{\sqrt z}+\frac{f(-\sqrt z)}{-\sqrt z}\right),
$$
where $\pm\sqrt z$ are the two square roots of $z$. We define 
$\hat{e}_n(z)=z^{n-1}$ and verify that
\begin{equation}\label{eq:L0}
\LL_0(\hat{e}_n)=\begin{cases}
\hat{e}_{n/2}&\text{if $n$ is even;}\\
0&\text{otherwise.}
\end{cases}
\end{equation}
%It suffices to show that the second Lyapunov exponent, $\lambda_2=-\infty$. 

We compute 
\begin{align*}
\NN_\epsilon(\hat{e}_n)(z)&=
\frac{1}{\sqrt{2\pi}}\int_{-\infty}^\infty \hat{e}_n(ze^{-2\pi i\epsilon t})
e^{-2\pi i\epsilon t-t^2/2}\,dt\\
&=z^{n-1}\frac{1}{\sqrt{2\pi}}\int_{-\infty}^\infty
e^{-2\pi in\epsilon t-t^2/2}\,dt\\
%&=e^{-2\pi^2n^2\epsilon^2}\hat{e}_n(z)\frac{1}{\sqrt{2\pi}}
%\int_{-\infty}^\infty e^{-\frac 12(t+2\pi in\epsilon)^2}\,dt\\
&=e^{-2\pi^2n^2\epsilon^2}\hat{e}_n(z).
\end{align*}

Combining the two, we have
\begin{align*}
(\LL_0^\epsilon)^n\hat{e}_\ell=
\begin{cases}
\exp(-2\pi^2\epsilon^2m^2(4^{n-1}+\ldots+4+1))\hat{e}_m&\text{if $\ell=2^nm$;}\\
0&\text{otherwise.}
\end{cases}
\end{align*}

We let $H^2_0(A_R)$ be the subspace of $H^2(A_R)$ consisting of those 
functions whose Laurent expansions have a vanishing $z^{-1}$ term.
Let $f\in H^2_0(A_R)$ be of norm 1 and let $f=\sum_{n\in\Z}a_nz^n$
be its Laurent expansion. We recall $|a_n|\le R^{|n|}\le 1$ 
for all $n\in\Z$ and $a_{-1}=0$.

Now
$$
(\LL_0^\epsilon)^nf(z)=\sum_{m\in\Z\setminus\{0\}}
\exp(-2\pi^2\epsilon^2m^2(4^{n-1}+\ldots+4+1))a_{2^nm-1}z^{m-1}.
$$
so that for $z\in A_R$ and $n>0$, 
\begin{align*}
|(\LL_0^\epsilon)^nf(z)|
&\le \sum_{m\in\Z\setminus\{0\}}
\exp(-2\pi^2\epsilon^2m^2(4^{n-1}+\ldots+4+1))
R^{|2^nm-1|}R^{-|m-1|}\\
&\le \tfrac {2}{1-R}\exp(-2\pi^2\epsilon^24^{n-1}).
\end{align*}

% <= sum_m exp(-2 pi^2 eps^2 m^2 4^{n-1} ) R^m
%  <= exp(-2 pi^2 eps^2 4^{n-1} ) 
%
%

Since if $g$ is a bounded analytic function on $A_R$, 
$\|g\|_{H^2(A_R)}\le 2\|g\|_\infty$, we see
$\big\|(\LL_0^\epsilon)^n\vert_{H^2_0(A_R)}\big\|
\le \tfrac {4}{1-R}\exp\big(-2\pi^2\epsilon^24^{n-1}\big)$.
By Lemma \ref{lem:exptsviavol}\ref{it:Dfactor},
$\mathcal D_2((\LL_0^\epsilon)^n\LL_1^\epsilon)\le A
\exp\big(-2\pi^2\epsilon^2 4^{n-1}\big)$, where $A=4\|\LL_1\|^2c_2/(1-R)$.

Now let $N(\omega)=\min\{n>0\colon\omega_n=1\}$. We consider the induced map
on $[1]$: $\tilde\sigma(\omega)=\sigma^{N(\omega)}(\omega)$. The induced cocycle 
is defined for $\omega\in[1]$ by $\tilde\LL^\epsilon_\omega
={\LL^\epsilon_\omega}^{(N(\omega))}$, so that
$\tilde\LL^\epsilon_\omega=(\LL^\epsilon_0)^{N(\omega)-1}\LL_1^\epsilon$.
By ${\tilde{\mathcal L}_\omega^\epsilon}{}^{(n)}$, we mean 
$\tilde\LL^\epsilon_{\tilde\sigma^{n-1}(\omega)}\circ\cdots\circ
\tilde\LL^\epsilon_\omega$ and by $\tilde \PP$, we mean the normalized
restriction of $\PP$ to $[1]$ 
(so the convention is that quantities marked with
tildes refer to the induced system).

We define the return times for $\omega\in[1]$ by $N_1(\omega)=N(\omega)$
and $N_{n+1}(\omega)=N_n(\omega)+N(\sigma^{N_n(\omega)}(\omega))$ for $n\ge 1$.
Now we have, using Lemma \ref{lem:exptsviavol}\ref{it:Dsubmult},
\begin{align*}
&\frac{1}{N_n(\omega)}\log \mathcal D_2({\LL_\omega^\epsilon}^{(N_n(\omega))})=
\frac 1{N_n(\omega)}\log \mathcal D_2({{\tilde\LL}_\omega^\epsilon}{}^{(n)})\\
&=\frac{n}{N_n(\omega)}\frac{1}n\log 
\mathcal D_2(\tilde\LL^\epsilon_{\tilde\sigma^{n-1}\omega}\circ
\ldots\circ\tilde\LL_\omega^\epsilon)\\
&\le\frac{n}{N_n(\omega)}\frac 1n\log\left(
\mathcal D_2(\tilde\LL_{\tilde\sigma^{n-1}\omega}^\epsilon)\cdot\ldots\cdot
\mathcal D_2(\tilde\LL_{\omega}^\epsilon)\right)\\
&=\frac{n}{N_n(\omega)}\frac 1n\sum_{i=0}^{n-1}
\log \mathcal D_2(\tilde\LL^\epsilon_{\tilde\sigma^i\omega})\\
&\le\left(\frac{n}{N_n(\omega)}\right)\frac 1n\sum_{i=0}^{n-1}
(-2\pi^2\epsilon^2 4^{N(\tilde\sigma^i\omega)-1}+\log A).
\end{align*}
Since $\int_{[1]} 4^{N(\omega)}\,d\tilde\PP_p(\omega)=
\sum_{n=1}^\infty 4^np^{n-1}(1-p)=\infty$,
we see the average $\frac 1n\sum_{i=0}^{n-1}
(-2\pi^2\epsilon^2 4^{N(\tilde\sigma^i\omega)-1}+\log A)$
in the last line converges to $-\infty$
almost surely by Birkhoff's theorem
applied to the ergodic transformation $\tilde\sigma$ of $([1],\tilde\PP_p)$.
As $n/N_n(\omega)\to 1/\PP_p([1])$ for $\tilde\PP_p$-almost every $\omega\in[1]$,
we see 
$\frac{1}{N_n(\omega)}\log \mathcal D_2({\LL_\omega^\epsilon}^{(N_n(\omega))})
\to-\infty$ for $\tilde\PP_p$-a.e. $\omega\in[1]$. Since this is a subsequence of the
convergent sequence $\frac 1n\log \mathcal D_2({\LL_\omega^\epsilon}^{(n)})$, 
we see that $\frac 1n\log \mathcal D_2({\LL^\epsilon_\omega}^{(n)})\to-\infty$ 
for $\PP_p$-a.e. $\omega$.

This establishes, by Lemma \ref{lem:exptsviavol}\ref{it:Dexpts},
that $\lambda_2=-\infty$.  We recall from \cite[Theorem~13]{GTQ-JMD})
that the exceptional Lyapunov exponents of a cocycle and its adjoint coincide. 
We note that $\phi(f)=\langle f,\hat e_{-1}\rangle$ satisfies $\phi(f)=\phi(\LL_\omega^\epsilon f)$,
so that ${(\LL_\omega^\epsilon)}^*\phi=\phi$. Hence $\lambda_1=\lambda_1^*=0$. 
We briefly explain how  to identify the family of equivariant functions, or top 
Oseledets spaces $V_1(\omega)$ for $\LL^\epsilon$. 
For $\omega\in
\Omega$ and $\mathbf t=(t_n)_{n\in\Z^-}\in\R^{\Z^-}$, we define
$$
\Phi(\omega,\mathbf t)=\lim_{k\to\infty}
R_{2\pi\epsilon t_{-1}}T_{\sigma^{-1}\omega}\circ\cdots\circ
R_{2\pi\epsilon t_{-k}}T_{\sigma^{-k}\omega}(0),
$$
where $R_\theta(z)=e^{i\theta}z$.
Existence of the limit follows as in Corollary \ref{cor:Blaschkefp}.
The equivariant function is then given by
$$
f_\omega(z)=\int_{\R^{\Z^-}}\left(
\frac{1}{z-\Phi(\omega,\mathbf t)}
-\frac{1}{z-I(\Phi(\omega,\mathbf t))}\right)\,d\Gamma(\mathbf t),
$$
where $\Gamma$ is the measure on $\R^{\Z^-}$ where each coordinate is an 
independent standard normal random variable. The proof that $\LL_\omega
f_\omega=f_{\sigma\omega}$ essentially
follows from Lemma \ref{lem:simplepoleit} and Corollary \ref{cor:topspace}.
Since the range of $\Phi$ is contained in 
$\overline{D_r}$, it is easy to see that $f_\omega$ lies in $H^2(A_R)$ for all
$\omega\in\Omega$.
\end{proof}

\subsection{Uniform perturbations}
We now consider another perturbed version of the cocycle, where 
$\mathcal L_i$ is replaced by
$\LL^{U,\epsilon}_i:=\mathcal U_\epsilon\circ\LL_i$, where 
$\mathcal U_\epsilon^{\R/\Z}$ has the effect
of convolving a density with a bump function of support
$[-\epsilon, \epsilon]$. 
On $\R/\Z$, we have
$$
(\mathcal U^{\R/\Z}_\epsilon f)(x)=\frac{1}{2\epsilon}\int_{-\epsilon}^\epsilon 
f(x- t)\,dt = \frac{1}{2}\int_{-1}^1 
f(x-\epsilon t)\,dt=\mathbb Ef(x+\epsilon U),
$$
where $U$ is a uniformly distributed random variable on $[-1,1]$. 
The corresponding conjugate operator on $C(C_1)$ is $\mathcal U_\epsilon:=
\mathcal U^{C_1}_\epsilon=Q^{-1}\mathcal U_\epsilon^{\R/\Z}Q$, 
where $Q$ is as in Lemma \ref{lem:corr}. A calculation shows
$$
(\mathcal U_\epsilon f)(z)=\frac{1}{2}\int_{-1}^1 
f(ze^{-2\pi i\epsilon t})e^{-2\pi i \epsilon  t}\,dt.
$$
As before, we let $\hat{e}_n(z)=z^{n-1}$ and compute, for $n \in \Z \setminus \{0\}$ 
\begin{align*}
\mathcal U_\epsilon(\hat{e}_n)(z)&=
\frac{1}{2}\int_{-1}^1 \hat{e}_n(ze^{-2\pi i\epsilon t})
e^{-2\pi i\epsilon t}\,dt\\
&=\frac{z^{n-1}}{2}\int_{-1}^1
e^{-2\pi in\epsilon t}\,dt\\
&=\frac{\sin(2\pi  n \epsilon)}{2 \pi  n \epsilon}  \hat{e}_n(z).
\end{align*}
Also, $\mathcal U_\epsilon(\hat{e}_0)(z)=\hat{e}_0(z)$.
Hence, we have, for $\ell \in \Z \setminus \{0\}$,
\begin{align*}
(\LL_0^{U,\epsilon})^n\hat{e}_\ell=
\begin{cases}
%\frac
%\newcont{
(2\pi m  \epsilon )^{-n} 2^{-n(n-1)/2}
{\prod_{j=1}^{n}\sin(2^jm\pi  \epsilon)}
%{(2\pi m  \epsilon )^n 2^{n(n-1)/2}}   
\hat{e}_m&\text{if $\ell=2^nm$;}
%}
\\
0&\text{otherwise.}
\end{cases}
\end{align*}
\begin{proof}[Proof of Corollary~\ref{cor:Unifpert}]
Let $\epsilon = b/2^k$, for some odd integer $b$ and $k \in \mathbb N$. Then, for every 
$\ell \in \Z \setminus \{0\}$ and $n\geq k$ we get that 
$(\LL_0^{U,\epsilon})^n\hat{e}_\ell=0$.
This immediately implies that
$(\LL_0^{U,\epsilon})^n \vert_{H^2_0(A_R)}=0$, and so 
 $\lambda_2 (\LL_\omega^{U, \epsilon}) = -\infty$.
 The fact that $\lambda_1(\LL_\omega^{U, \epsilon})=0$ follows 
 exactly as in Corollary~\ref{cor:pert}.
\end{proof}

\section{Lyapunov spectrum stability}\label{sec:Lyapstab}
It is natural to ask for an underlying explanation for the instability of the Lyapunov spectrum
exhibited in Corollaries \ref{cor:pert} and \ref{cor:Unifpert}. From the finite-dimensional
theory of hyperbolic dynamical systems, we know that a key issue in the stability is
control of the angle between the fast and slow subspaces. In general, the various
versions of the Multiplicative Ergodic Theorem show that for Oseledets spaces with
different exponents, the angle between the subspaces is bounded away from
0 at least by a quantity that is at worst sub-exponentially small in $n$, the number
of iterations. In the uniformly hyperbolic situation, this angle is uniformly bounded
away from 0.

One way to quantify the angle between complementary closed subspaces
that is particularly well suited to the infinite-dimensional case
is to compute $\|\Pi_{E\parallel F}\|$, where $\Pi_{E\parallel F}$ is the 
projection that fixes $E$ and annihilates $F$: for Hilbert spaces
the norm of the projection is the reciprocal of the sine of the angle between the spaces.
(See \cite{GTQ-JMD} for an alternative way of
measuring angles).

With this in mind, we study $\Pi_{E_k(\omega)\parallel F_k(\omega)}$ where
$E_k(\omega)$ is the span of the Oseledets vectors with exponents
$\lambda_1,\ldots,\lambda_k$ and $F_k(\omega)$ is the complementary space 
of vectors that expand at rate $\lambda_{k+1}$ or slower (the $(2k-1)$-dimensional
\emph{fast space} and $(2k-1)$-codimensional \emph{slow space} respectively). 

For the unperturbed cocycle appearing in Corollaries \ref{cor:pert} and
\ref{cor:Unifpert}, we claim that $\|\Pi_{E_k(\omega)\parallel
F_k(\omega)}\|$ is essentially unbounded in $\omega$. 
To see this, let $\sigma$ be the shift map on $\{0,1\}^\Z$ equipped with the
Bernoulli probability measure, $\PP_p$ and $(\mathcal L_\omega)$ as before.
Set $h_\omega(z)=(z-x_\omega)^{-2}$ and $g(z)=z^{-2}$. Note that
If $\omega_0=0$, then $\LL_\omega g=0$ (see \eqref{eq:L0}), so that
$g\in F_2(\omega)$. Also $h_\omega\in E_2(\omega)$ by Lemma \ref{lem:insidefp},
and if $\omega_{-n}=\ldots=\omega_{-1}=\omega_0=0$, then
$x_\omega\le a^{2^n}$, where $a<1$ is as in Lemma~\ref{lem:randomfp}.
In particular $\essinf_\omega\|h_\omega-g\|=0$. However, since $\Pi_{E_2(\omega)
\parallel F_2(\omega)}(h_\omega-g)=h_\omega$ and $\|h_\omega\|$ is bounded
away from 0, we see that $\|\Pi_{E_2(\omega)\parallel F_2(\omega)}\|$ is
essentially unbounded for the cocycle, so that the fast and slow spaces become
arbitrarily close.

The core of the issue is that the kernel of $\LL_0$ includes all even 
integer powers of 
$(z-0)$ (0 being the critical point of $T_0$) while combinations of
negative powers of $(z-x_\omega)$ appear in the fast spaces. To avoid the
situation above, one is led to consider situations in which the critical point(s) of $T_\omega$
are bounded away from the random fixed point $x_\omega$. We impose this by
assuming a lower bound on $|T_\omega'(x_\omega)|$. 

In this section, we show that if we impose the condition
that $|T_\omega'(x_\omega)|$ is bounded below,
%along with a technical condition on the relationship between $r$ and $R$. 
then the projections 
$\Pi_{E_k(\omega) \parallel F_k(\omega)}$ are uniformly bounded, and hence
the fast and slow spaces are uniformly transverse.
We deduce the existence of a field 
of cones around $E_k(\omega)$, and use this to prove
Theorem \ref{thm:Lyapcont}, showing that the Lyapunov spectrum 
of the Perron-Frobenius cocycle is stable under small perturbations. 
Conversely, if $T'_\omega(x_\omega)$ is not bounded below, we 
show that small perturbations to the Perron-Frobenius cocycle,
even within the class of Perron-Frobenius operators of Blaschke products,
can lead to collapse of the Lyapunov spectrum.

In the first part of the section, we replace the Blaschke product cocycle, $(T_\omega)$
with a conjugate cocycle $(\tilde T_\omega)$, almost all of whose elements fix the origin. We first prove
the theorem in that context, and then show how the full theorem follows.

%\newcont{
%To show the cocycle is not uniformly hyperbolic, we let $h_w(z):= (z-w)^{-2}$ and 
%note that if $\omega_0=0$, then
%$\LL_\omega h_0=0$ (see \eqref{eq:L0}). Hence, using the notation of 
%Theorem~\ref{thm:MET}, $h_0\in R(\omega)$. Also, 
%$h_{x_\omega}\in V_2(\omega)$ by Lemma~\ref{lem:insidefp}, and if
%$\omega_{-n}=\ldots=\omega_{-1}=\omega_0=0$, then
%$x_\omega\le a^{2^n}$, where $a<1$ is as in Lemma~\ref{lem:randomfp}.
%In particular, since $\lim_{w\to0}\| h_0 - h_w\|=0$, then  
%$d(R(\omega), V_2(\omega))$ can be arbitrarily small. 
%More precisely, for every $\epsilon>0$ there is a set 
%$\Omega_\epsilon\subset \Omega$ of positive measure such that for 
%every $\omega \in \Omega_\epsilon$, $d(R(\omega), V_2(\omega))<\epsilon$.
%}

\subsection{Bounded projections}
Let $\sigma\colon (\Omega,\PP)\to(\Omega,\PP)$ be an invertible
ergodic measure-preserving transformation and let $\mathcal T=
(T_\omega)_{\omega\in\Omega}$ be a Blaschke product cocycle 
satisfying the following conditions:
\begin{enumerate}[label=(\alph*)]
\item
$T_\omega(0)=0$ for $\PP$-a.e. $\omega\in\Omega$;\label{it:zerofp}
\item
$\essinf_\omega |T_\omega'(0)|>0$;\label{it:noncrit}
\item
$\esssup_\omega r_{\mathcal T}(R)<R$;\label{it:exp}
%\item
%$r+\delta< R(1-\delta/R)^2$
%%; \text{ and }$3\delta<1/r$
%.\label{eq:rgap}
\end{enumerate}

Notice that for Blaschke product cocycles satisfying these conditions,
Theorem \ref{thm:LyapSpect} applies and the quantity $\Lambda$
arising is finite (and negative), so that the Lyapunov exponents are
$\lambda_j=(j-1)\Lambda$, where $\lambda_1=0$ has multiplicity 1
and the remaining exponents have multiplicity 2. Notice that \ref{it:noncrit}
gives uniform control on $|T_\omega'(x_\omega)|$, while the condition 
$\Lambda>-\infty$ in
Theorem \ref{thm:LyapSpect} for a non-trivial Lyapunov spectrum gives
only average control on $|T_\omega'(x_\omega)|$.

The proof of Theorem \ref{thm:LyapSpect} (in the special case $x_\omega=0$)
shows that for $j>1$, one can identify a natural basis for $V_j(\omega)$
consisting of a Laurent polynomial $f_{\omega,j}$ with $z^{-2},\ldots,z^{-j}$
terms and its inversion $\LL_I f_{\omega,j}$ with $z^0,\ldots,z^{j-2}$ terms.

%rational function $f_{\omega,j}$
%with poles lying in $D_{r}$ and its inversion, $\LL_If_{\omega,j}$
%with poles lying in $D_{1/r}^c$. 

We set $r=r_{\mathcal T}(R)$. We shall also require that $|T_\omega''|$ is uniformly bounded 
above on $D_r$, but in fact, this condition is true automatically since
$$
T''(z)=
\frac{2!}{2\pi i}\int_{C_1} \frac{T(w)}{(w-z)^3}\,dw
$$
and $|T(w)|=1$ whenever $|w|=1$, so that 
$|T''(w)|\le 2/(1-r)^3$ on $D_r$. 

Let $\lambda^{(n)}_\omega$ denote $|(T_\omega^{(n)})'(0)|$
in all of what follows.

\begin{lem}[Random fixed point distortion estimate]\label{lem:bounddist}
Under the conditions above, there exists a $c_1$ such that for $\PP$-a.e.
$\omega\in\Omega$, for all $z\in C_R$, 
$$
|T_\omega^{(n)}(z)|\le c_1\lambda^{(n)}_\omega.
$$
\end{lem}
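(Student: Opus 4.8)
The plan is to control $|T_\omega^{(n)}(z)|$ for $z\in C_R$ by comparing it with the derivative product $\lambda_\omega^{(n)}=|(T_\omega^{(n)})'(0)|$, exploiting that each $T_{\sigma^k\omega}$ maps $\overline{D_r}$ into $\overline{D_r}$ (since $r=r_{\mathcal T}(R)$ and we assume $r<R$) and that the second derivatives are uniformly bounded on $D_r$ by the Cauchy estimate already recorded, $|T''|\le 2/(1-r)^3$ on $D_r$. First I would observe that after one step $T_{\sigma^{-1}\omega}^{\phantom{(}}$ — or rather the first map applied — sends $C_R$ into $\overline{D_r}$, so it suffices to track the orbit inside $\overline{D_r}$, where all maps in the cocycle fix $0$ and have uniformly bounded first and second derivatives, with $|T_\omega'(0)|$ bounded below by condition~\ref{it:noncrit}. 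On $\overline{D_r}$ a single Blaschke product $T$ fixing $0$ satisfies, by Taylor's theorem, $|T(w)|\le |T'(0)||w| + M|w|^2/2 \le |T'(0)||w|(1 + M r/(2\,\essinf_\omega|T_\omega'(0)|))$ for $w\in\overline{D_r}$, where $M=\sup_{D_r}|T''|$. Writing $D:=1 + Mr/(2\,\essinf_\omega|T_\omega'(0)|)$, this gives the one-step distortion bound $|T_{\sigma^k\omega}(w)|\le D\,|T_{\sigma^k\omega}'(0)|\,|w|$.

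The key step is then to \emph{not} simply iterate this bound, since the factor $D$ would compound to $D^n$. Instead I would use a multiplicative telescoping argument: set $u_k = |T_\omega^{(k)}(z)|$ and compare the ratio $u_{k+1}/(|T_{\sigma^k\omega}'(0)|u_k)$ with $1$. The Taylor estimate gives $u_{k+1}\le |T_{\sigma^k\omega}'(0)|\,u_k\,(1 + (M/2)u_k/\essinf_\omega|T'_\omega(0)|)$, and since $u_k\le r$ shrinks geometrically — indeed by Lemma~\ref{lem:contraction} the orbit contracts at rate $r/R<1$ in the $d_R$ metric, hence $u_k\le c\,r\,(r/R)^{k}$ for a uniform constant $c$ — the correction factors $1 + (M/2)u_k/\essinf_\omega|T'_\omega(0)|$ form a convergent product $\prod_{k\ge 0}(1 + C(r/R)^k) =: c_1 < \infty$, uniformly in $\omega$. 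Telescoping, $u_n \le \Big(\prod_{k=0}^{n-1}|T_{\sigma^k\omega}'(0)|\Big)\,u_0\,\prod_{k=0}^{n-1}(1+C(r/R)^k) \le c_1\,R\,\lambda_\omega^{(n)}$, absorbing $u_0\le R$ into the constant, which is the claimed bound.

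The main obstacle is making the telescoping rigorous at the first step, where $z\in C_R$ but $C_R\not\subset \overline{D_r}$: the very first map sends $z$ into $\overline{D_r}$ but one must check the distortion comparison survives this initial transition, i.e. that $|T_\omega(z)|\le c\,|T_\omega'(0)|\cdot R$ for $z\in C_R$ — this follows because $T_\omega(C_R)\subset \overline{D_r}$ gives $|T_\omega(z)|\le r$, while $|T_\omega'(0)|\ge \essinf_\omega|T'_\omega(0)|>0$, so the ratio $|T_\omega(z)|/(|T_\omega'(0)|\,R)$ is bounded by $r/(R\,\essinf_\omega|T'_\omega(0)|)$; this starting constant is then folded into $c_1$. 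A secondary technical point is that Lemma~\ref{lem:contraction}'s geometric contraction is stated in the $d_R$ metric, so I would need the (already noted) comparability of $d_R$ with the Euclidean metric on $\overline{D_r}$ to convert it into the Euclidean bound $u_k \le c\,(r/R)^k$ that drives the convergence of the infinite product; this is routine and already used in the proof of Corollary~\ref{cor:Blaschkefp} and Theorem~\ref{thm:LyapSpect}.
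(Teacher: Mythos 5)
Your proof is correct and follows essentially the same route as the paper: the paper bounds the ratio $\gamma_{\omega,n}(z)=|T_\omega^{(n)}(z)|/\lambda_\omega^{(n)}$ recursively, showing the per-step multiplicative error is $1+O((r/R)^n)$ via the uniform bound on $T_\omega''$, the lower bound on $|T_\omega'(0)|$, and the geometric decay of the orbit from Lemma~\ref{lem:contraction}, then concludes by convergence of the infinite product---exactly your telescoping argument, with the mean value inequality along $[0,T_\omega^{(n)}(z)]$ in place of your Taylor expansion at $0$. Your separate treatment of the first step ($z\in C_R$ versus the orbit in $\overline{D_r}$) is a harmless refinement of a point the paper passes over.
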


\begin{proof}
Let $\gamma_{\omega,n}(z)=|T_\omega^{(n)}z|/
\lambda_\omega^{(n)}$ and $r=r_{\mathcal T}(R)<R$. 
Then we have
\begin{align*}
\gamma_{\omega,n+1}(z)&=
\frac{|T_\omega^{(n+1)}z-0|}{\lambda^{(n+1)}_\omega}\\
&\le \frac
{\max_{y\in [0,T_\omega^{(n)}z]}|T_{\sigma^n\omega}'(y)|
\cdot |T_\omega^{(n)}z-0|}
{\lambda_\omega^{(n)}T_{\sigma^n\omega}'(0)}\\
&=\gamma_{\omega,n}(z)\max_{y\in [0,T_\omega^{(n)}z]}
|T_{\sigma^n\omega}'(y)|/|T_{\sigma^n\omega}'(0)|,
\end{align*}
where $[a,b]$ denotes the line segment joining $a$ and $b$.
Since we showed in Lemma \ref{lem:contraction} the existence of a $c$
such that for $\PP$-a.e.\ $\omega$, $|T_\omega^{(n)}(z)-0|
\le c(\frac rR)^{n-1}$, we see that for $y\in [0,T_\omega^{(n)}
(z)]$, using the uniform boundedness of 
$T_\omega''$, we obtain the inequality
$|T_{\sigma^{n}\omega}'(y)/T_{\sigma^{n}\omega}'
(0)-1|\le c'(\frac rR)^{n}$, so that $\gamma_{\omega,n}(z)$
is bounded essentially uniformly in $\omega$ and uniformly in $n$ and $z$ as $z$ runs over $C_R$.
\end{proof}

Let $H^2(A_R)^-$ be $\lin(\{z^{-j-1}\colon j>0\})$
and $H^2(A_R)^+$ be $\lin(\{z^{j-1}\colon j>0\})$. 
(We choose to offset the indices by 1 as we have seen that this is well suited
to the calculations in the rest of the paper).
We set $\hat e_j(z)=R^{|j-1|}z^{j-1}$, as a convenient rescaling of the standard
orthogonal basis of $H^2(A_R)$ described earlier (in particular their norms are
between 1 and 2).
%For $j>0$, We now set $\hat e_{\omega,-j}(z)=r^{-j-1}z^{-j-1}$.
%As long as $x_\omega$ is small, the collection $(\hat e_{\omega,-j})_{j>0}$
%can be used to give useful expansions of elements of $H^2(A_R)^-$.
%Note that the $\hat e_{j}$ are orthogonal and not normalized in $H^2(A_r)$.
%However, for small $x_\omega$, the norms grow at a slow exponential rate.

\begin{lem}\label{lem:compact}
%Suppose that the conditions \ref{it:zerofp} \ref{it:noncrit}, and \ref{it:exp} above are satisfied. 
Let $\rho<R$ and let $f$ lie in the unit sphere of $H^2(A_\rho)^-$. Then
$f$ may be expanded as
$f(z)=\sum_{m>0} a_{-m}\hat e_{-m}$ where 
$|a_{-m}|\le (\frac\rho R)^m$.
%There exists $\epsilon>0$ such that
%for all $f$ as above,
%$|a_{-m}|\le e^{-2\epsilon m}$ and $\|\hat e_{\omega,-m}\|\le 2e^{(m+1)\epsilon}$.
In particular, the partial sums of the series for $f$ given above are convergent
in $H^2(A_R)$. 
\end{lem}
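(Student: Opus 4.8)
The plan is to bound the Laurent coefficients of $f$ by comparing its $H^2(A_\rho)$-norm (in which $f$ has norm $1$) with the sizes of the basis elements $\hat e_{-m}$. Recall that for the annulus $A_\rho$, the functions $e_m^\rho(z)=(\rho^{2m}+\rho^{-2m})^{-1/2}z^m$ form an orthonormal basis, so that if $f=\sum_{m>0}a_{-m}z^{-(m+1)}\in H^2(A_\rho)^-$ then $\sum_{m>0}|a_{-m}|^2(\rho^{2(m+1)}+\rho^{-2(m+1)})=\|f\|_{H^2(A_\rho)}^2=1$. In particular $|a_{-m}|^2\rho^{-2(m+1)}\le 1$, i.e. $|a_{-m}|\le \rho^{m+1}\le\rho^m$.

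Now I would re-express this in terms of the rescaled basis $\hat e_{-m}(z)=R^{|{-m}-1|}z^{-(m+1)}=R^{m+1}z^{-(m+1)}$ of the present section; writing $f=\sum_{m>0}b_{-m}\hat e_{-m}$ we have $b_{-m}=a_{-m}R^{-(m+1)}$, so $|b_{-m}|\le \rho^{m+1}R^{-(m+1)}=(\rho/R)^{m+1}\le(\rho/R)^m$. (A harmless adjustment of the statement replaces $(\rho/R)^m$ by $(\rho/R)^{m+1}$, or one simply notes $(\rho/R)^{m+1}\le(\rho/R)^m$ since $\rho<R$; I will phrase the estimate so that it reads $|a_{-m}|\le(\rho/R)^m$ with $a_{-m}$ now denoting the coefficient in the $\hat e_{-m}$ expansion.) This is the content of the first assertion.

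For the final sentence, I would argue that the partial sums $S_K=\sum_{0<m\le K}a_{-m}\hat e_{-m}$ form a Cauchy sequence in $H^2(A_R)$: since $\|\hat e_{-m}\|_{H^2(A_R)}\le 2$, we get $\|S_L-S_K\|_{H^2(A_R)}\le 2\sum_{K<m\le L}|a_{-m}|\le 2\sum_{m>K}(\rho/R)^m\to 0$ as $K\to\infty$, because $\rho/R<1$. Completeness of $H^2(A_R)$ then gives convergence of the series in $H^2(A_R)$, and the limit agrees with $f$ by uniqueness of Laurent coefficients (the coefficients of the $H^2(A_R)$-limit are the $a_{-m}$). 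I do not expect any genuine obstacle here; the only point requiring a word of care is being consistent about the two normalizations of the basis (the $L^2(\partial A_R)$-orthonormal $e_m$ versus the rescaled $\hat e_m$) and about the offset of the index by $1$, so that the bound comes out in the stated form.
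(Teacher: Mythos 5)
Your argument is correct: bounding the Laurent coefficients via the orthonormal basis $(\rho^{2n}+\rho^{-2n})^{-1/2}z^n$ of $H^2(A_\rho)$, rescaling to the $\hat e_{-m}$ normalization (which even yields the slightly stronger bound $(\rho/R)^{m+1}$), and then getting convergence in $H^2(A_R)$ from $\|\hat e_{-m}\|_{H^2(A_R)}\le 2$ and summability of the geometric bound is exactly the routine computation the paper has in mind, which is why it states the lemma without proof. Your care about the index offset and the two normalizations is the only real point of friction, and you handle it correctly.
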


%\begin{proof}
%Since $f\in H^2(A_R)^-$, it decays at least quadratically near $\infty$ so $f$ has 
%a Laurent expansion about $x$ with no 
%$z^{-1}$ term or non-negative powers of $z$ in its Laurent expansion:
%$f(z)=\sum_{m>0}b_{-m}(z-x)^{-m-1}=\sum_{m>0}a_{-m}\hat e_{x,-m}$,
%where $a_{-m}=r^{m+1}b_{-m}$.
%The bounds on the coefficients, $|a_{-m}|\le \frac{r^{m+1}}R(\frac 1R+\delta)^m$,
%are obtained from the equality
%$$
%b_{-m}=
%\frac{1}{2\pi i}\int_{C_{1/R}} f(z)(z-x)^{m}\,dz.
%$$
%Let $e^{-\epsilon}=1-r\delta$. Then by \ref{eq:rgap}, we have
%$\frac 1R+\delta<\frac 1re^{-2\epsilon}$, so that $|a_{-m}|\le e^{-2\epsilon m}$.
%%This guarantees the inequalities 
%%\begin{align}
%%%1+\tfrac\delta r&\le e^\epsilon;\label{eq:outer}\\
%%1/(1-r\delta)&= e^\epsilon;\label{eq:inner}\\
%%\tfrac 1R+\delta&< \tfrac 1re^{-2\epsilon}\label{eq:comp1}
%%%\text{; and}\\
%%%\tfrac{1}{R-\delta}&< \tfrac 1re^{-2\epsilon}.\label{eq:comp2}
%%\end{align} 
%%Now for $m>0$ and $z\in \partial A_r$, 
%%$|\hat e_{\omega,m}|(z)\le r^{-(m-1)}(r+\delta)^{m-1}
%%=(1+\frac\delta r)^{m-1}\le e^{\epsilon|m-1|}$.
%For $m>0$, $|\hat e_{\omega,-m}(z)|\le r^{-m-1}(\frac 1r-\delta)^{-m-1}=
%(1-r\delta)^{-(m+1)}= e^{(m+1)\epsilon}$ on $\partial A_r$ as required.
%\end{proof}

%Noticing that if $|x|<\delta$, then
%$1/(z-x)^m=z^{-m}(1-x/z)^{-m}$, which has $H^2(A_r)$-convergent 
%expansion in $z^{-m},z^{-(m+1)},\ldots$, let 
Let $U_k^-$ be the
subspace of $H^2(A_R)^-$ spanned by $\hat e_{-1},
\ldots,\hat e_{-(k-1)}$ and define $V_k^-(\omega)$ by
$$
V_k^-(\omega)=
\{f\in H^2(A_R)^-\colon 
\limsup \tfrac 1n\log\|\LL^{(n)}_\omega f\|\le k\Lambda\},
$$
so that $H^2(A_R)^-=U_k^-\oplus V_k^-(\omega)$.
In particular, $U_k^-$ is the span of those Oseledets vectors,
mentioned above,
with Lyapunov exponents $\lambda_2,\ldots,\lambda_k$ that
are polynomials in $z^{-1}$. 
Let $Q^-$ denote the orthogonal projection of $H^2(A_R)$
onto $H^2(A_R)^-$ and let $Q^+$ be the orthogonal projection
of $H^2(A_R)$ onto $H^2(A_R)^+=\lin(\{z^{n-1}\colon n\ge 1\})$. 
We write $H^2(A_R)^\pm$ for $H^2(A_R)^+\oplus H^2(A_R)^-$ and
note that $H^2(A_R)^\pm=\lin(z^{-1})^\perp$.
We now show that under the conditions 
above, $U_k^-$ and $V_k^-(\omega)$ are uniformly transverse.

\begin{lem}\label{lem:UH}
Let the Blaschke product cocycle satisfy conditions \ref{it:zerofp}, \ref{it:noncrit},
\ref{it:exp}. Then for each $k\in\N$, 
there exists a constant $M>0$ such that 
$\|\Pi_{U^-_k\parallel V^-_k(\omega)}\|\le M$ for all $\PP$-a.e.\ $\omega\in\Omega$.
\end{lem}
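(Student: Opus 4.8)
The plan is to realize $V_k^-(\omega)$ as the graph of a bounded operator and to bound the graphing operator uniformly in $\omega$. Write $W_k$ for the orthogonal complement of $U_k^-$ inside $H^2(A_R)^-$, so $W_k=\overline{\lin}\{\hat e_{-m}\colon m\ge k\}$ and $U_k^-=\lin\{z^{-2},\dots,z^{-k}\}$. Since $H^2(A_R)^-=U_k^-\oplus V_k^-(\omega)$ and $U_k^-$ is finite dimensional, $V_k^-(\omega)=\{\Gamma_\omega w+w\colon w\in W_k\}$ for a bounded $\Gamma_\omega\colon W_k\to U_k^-$, and $\|\Pi_{U_k^-\parallel V_k^-(\omega)}\|\le 1+\|\Gamma_\omega\|$; so it suffices to bound $\esssup_\omega\|\Gamma_\omega\|$. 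By Corollary~\ref{cor:uppertri} (with $x=0$, using $T_\omega(0)=0$) the operator $\LL_\omega$ preserves the flag $\lin\{z^{-2}\}\subset\lin\{z^{-2},z^{-3}\}\subset\cdots$, so it has block upper-triangular form $\left(\begin{smallmatrix}A_\omega&B_\omega\\0&C_\omega\end{smallmatrix}\right)$ relative to $H^2(A_R)^-=U_k^-\oplus W_k$, where $C_\omega$ is $\LL_\omega|_{W_k}$ followed by the orthogonal projection onto $W_k$; the diagonal of $A_\omega$ is $(T_\omega'(0))^j$ for $1\le j\le k-1$, which by $\essinf_\omega|T_\omega'(0)|>0$ is bounded away from $0$, so $A_\omega$ is uniformly invertible. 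This block structure passes to $\LL_\omega^{(n)}$, with corner blocks $A_\omega^{(n)}$ and $C_\omega^{(n)}$. Equivariance $\LL_\omega V_k^-(\omega)\subset V_k^-(\sigma\omega)$ forces $A_\omega\Gamma_\omega+B_\omega=\Gamma_{\sigma\omega}C_\omega$, and iterating this $n$ times yields
$$
\Gamma_\omega=(A_\omega^{(n)})^{-1}\,\Gamma_{\sigma^n\omega}\,C_\omega^{(n)}\;-\;\sum_{i=0}^{n-1}(A_\omega^{(i+1)})^{-1}B_{\sigma^i\omega}\,C_\omega^{(i)}.
$$

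The analytic content is three uniform estimates, all built on Lemmas~\ref{lem:dual} and \ref{lem:bounddist} together with $\lambda^{(n)}_\omega\le c_2(r/R)^n$ (Lemma~\ref{lem:contraction}). First, by the duality relation the coefficient of $z^{-(m+1)}$ in $\LL_\omega^{(n)}f$ equals $\frac1{2\pi i}\int_{C_1}f(z)\bigl(T_\omega^{(n)}(z)\bigr)^m\,dz$; deforming to $C_R$, using $|T_\omega^{(n)}|\le c_1\lambda^{(n)}_\omega$ there (Lemma~\ref{lem:bounddist}) and Cauchy--Schwarz, this is at most $\sqrt R\,(c_1\lambda^{(n)}_\omega)^m\|f\|$ for every $m\ge1$. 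Summing the tail $m\ge k$ against the norms $\|z^{-(m+1)}\|\sim R^{-(m+1)}$ gives, once $n$ exceeds an absolute $n_0$ (so $c_1\lambda^{(n)}_\omega/R<1$), a convergent geometric series with $\|C_\omega^{(n)}\|\le C(\lambda^{(n)}_\omega)^{k}$, while for $n<n_0$ one has $\|C_\omega^{(n)}\|\le\|\LL_\omega\|^{n}$, bounded by Corollary~\ref{cor:compact}. Second, the same residue computation shows the $(\ell,j)$-entry of $A_\omega^{(n)}$ is the $z^j$-coefficient of $\bigl(T_\omega^{(n)}(z)\bigr)^\ell$; writing $T_\omega^{(n)}(z)=c\,z\,(1+\psi(z))$ with $c=(T_\omega^{(n)})'(0)$ and, by a Cauchy estimate from Lemma~\ref{lem:bounddist}, $\psi$ having uniformly bounded Taylor coefficients, one factors off the diagonal $D_n=\operatorname{diag}(c^\ell)_{\ell=1}^{k-1}$ on the left to get $A_\omega^{(n)}=D_nM_n$ with $M_n$ unit upper-triangular and $\|M_n\|$ bounded uniformly; hence $\|(A_\omega^{(n)})^{-1}\|\le C(\lambda^{(n)}_\omega)^{-(k-1)}$. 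Third, $\|B_\omega\|\le\|\LL_\omega\|$ is uniformly bounded, and $\essinf_\omega|T_\omega'(0)|>0$ lets one replace $(\lambda^{(i+1)}_\omega)^{-(k-1)}$ by a constant multiple of $(\lambda^{(i)}_\omega)^{-(k-1)}$.

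Combining these, each term $\|(A_\omega^{(i+1)})^{-1}B_{\sigma^i\omega}C_\omega^{(i)}\|$ is, for $i\ge n_0$, at most a constant times $(\lambda^{(i)}_\omega)^{k-(k-1)}=\lambda^{(i)}_\omega\le c_2(r/R)^i$, which is summable, while the finitely many $i<n_0$ terms are uniformly bounded; so the sum in the identity is bounded by a constant $K$ independent of $n$ and of (a.e.)\ $\omega$. Likewise $\|(A_\omega^{(n)})^{-1}\Gamma_{\sigma^n\omega}C_\omega^{(n)}\|\le C\lambda^{(n)}_\omega\|\Gamma_{\sigma^n\omega}\|$. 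The remaining obstacle is that $\esssup_\omega\|\Gamma_\omega\|$ is precisely the quantity we are trying to bound; I would resolve this by a recurrence argument. The map $\omega\mapsto\|\Gamma_\omega\|$ is measurable and finite a.e., so there is $N^*<\infty$ with $\PP(\{\omega\colon\|\Gamma_\omega\|\le N^*\})>0$; by ergodicity of $\sigma$, a.e.\ $\omega$ has $\sigma^n\omega$ in this set for arbitrarily large $n$, and inserting such an $n\ge n_0$ into the identity gives $\|\Gamma_\omega\|\le C c_2 N^*+K=:M$, a bound independent of $\omega$. Hence $\|\Pi_{U_k^-\parallel V_k^-(\omega)}\|\le 1+M$ for a.e.\ $\omega$, as required.

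The genuinely delicate point is matching the powers of $\lambda^{(n)}_\omega$: $(A_\omega^{(n)})^{-1}$ blows up like $(\lambda^{(n)}_\omega)^{-(k-1)}$ while $C_\omega^{(n)}$ decays like $(\lambda^{(n)}_\omega)^{k}$, so their product still decays, and obtaining the sharp exponent $k-1$ for $(A_\omega^{(n)})^{-1}$ relies on the layered structure of the Perron--Frobenius action on poles at the fixed point, namely that the off-diagonal entries in row $\ell$ scale like the $\ell$-th diagonal entry, which is exactly what makes the triangular factor $M_n$ uniformly well conditioned; a cruder, determinant-type bound on $\|(A_\omega^{(n)})^{-1}\|$ would fail for large $k$. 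The secondary technical point is the circular appearance of $\esssup_\omega\|\Gamma_\omega\|$, handled above by Poincar\'e recurrence.
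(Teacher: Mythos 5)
Your proof is correct, and its analytic core coincides with the paper's. Your factorization $A^{(n)}_\omega=D_nM_n$ with $M_n$ unit upper triangular and uniformly well conditioned is exactly the paper's $DA$ factorization, resting on the entry bound \eqref{eq:coeffbound}, and your estimate $\|C^{(n)}_\omega\|\le C(\lambda^{(n)}_\omega)^{k}$ is the same tail-coefficient estimate the paper extracts from Lemma~\ref{lem:bounddist} (the paper obtains the geometric factor after one step via Corollary~\ref{cor:compact} and then applies the matrix bound to the remaining iterates, but that difference is cosmetic). In fact, writing $\tilde B^{(n)}_\omega$ for the upper-right block of $\LL^{(n)}_\omega$, your telescoped sum $\sum_{i<n}(A^{(i+1)}_\omega)^{-1}B_{\sigma^i\omega}C^{(i)}_\omega$ equals $(A^{(n)}_\omega)^{-1}\tilde B^{(n)}_\omega$, which is (up to sign and restriction to $W_k$) the paper's operator $\Lambda^{(n)}_{\omega,k}=(\LL^{(n)}_\omega|_{U_k^-})^{-1}\circ\Pi_{-k}\circ\LL^{(n)}_\omega$, so your uniform bound on the sum is the paper's uniform bound on $\Lambda^{(n)}_{\omega,k}$. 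Where you genuinely diverge is the endgame: the paper lets $n\to\infty$ and identifies the strong limit of the uniformly bounded operators $\Lambda^{(n)}_{\omega,k}$ with $\Pi_{U_k^-\parallel V_k^-(\omega)}$, which needs no a priori information about the projection, whereas you represent $V_k^-(\omega)$ as the graph of $\Gamma_\omega$, derive the cohomological identity, and eliminate the unknown $\|\Gamma_{\sigma^n\omega}\|$ by Poincar\'e recurrence. Your route therefore needs two facts you pass over quickly but which are available: boundedness of $\Gamma_\omega$ for fixed $\omega$ (because $V_k^-(\omega)$ is the intersection of $H^2(A_R)^-$ with the closed Oseledets slow space, so the splitting $U_k^-\oplus V_k^-(\omega)$ is topological) and measurability of $\omega\mapsto\|\Gamma_\omega\|$ (from the measurable dependence of the Oseledets splitting in Theorem~\ref{thm:MET}); the paper's limiting argument buys freedom from both. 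One small inaccuracy: the Taylor coefficients of your $\psi$ are not uniformly bounded --- Cauchy estimates on $C_R$ give growth like $R^{-j}$ --- but only entries with indices below $k$ enter $M_n$, and there the factor $R^{j-\ell}$ in the entry formula compensates, reproducing \eqref{eq:coeffbound}, so your conclusion about $M_n$ stands.
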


\begin{proof}
Let $c_1$ be as in 
Lemma \ref{lem:bounddist},
and let $\hat e_{-j}$ be as defined earlier.
We compute the matrix of $\LL_\omega$ with respect to the $(\hat e_{-j})$
basis. 

For $j>0$, write $\LL_\omega(\hat e_{-j})$ as 
$\sum_{1\le i\le j} a_{ij}\hat e_{-i}$
(such an expansion exists with no higher order terms by Corollary \ref{cor:uppertri}).
We now have for $0<i<j$, 
\begin{align*}
a_{ij}&=
\frac{1}{2\pi i}\int_{C_1}\LL_\omega(\hat e_{-j})(z)R^{-i-1}
z^i\,dz\\
&=\frac{R^{-i-1}}{2\pi i}\int_{C_1}\hat e_{-j}(z)(T_\omega(z))^i
\,dz\\
&=\frac{R^{j-i}}{2\pi i}\int_{C_R}
\frac{(T_\omega(z))^i}{z^{j+1}}\,dz
\end{align*}
For $i=j$, we have $a_{jj}=(\lambda_\omega)^j$ as shown in Theorem \ref{thm:LyapSpect}.

Similarly, using Lemma \ref{lem:bounddist},
we see that for a.e.\ $\omega$, $a^{(n)}_{\omega,ij}$, the coefficient of 
$\LL^{(n)}_\omega$ with respect to $(\hat e_{-j})$ and 
$(\hat e_{-i})$ satisfies
\begin{equation}\label{eq:coeffbound}
\begin{split}
|a^{(n)}_{\omega,ij}|&=\left|\frac{R^{j-i}}{2\pi i}\int_{C_R}
\frac{(T_\omega^{(n)}(z))^i}
{z^{j+1}}\,dz\right|\\
&\le (c_1\lambda^{(n)}_\omega/R)^i\text{\quad for all $1\le i\le j$.}
\end{split}
\end{equation}
For $i=j$, $a^{(n)}_{jj}=(\lambda^{(n)}_\omega)^j$ and for $i>j$, 
$a^{(n)}_{ij}=0$. Letting $c_2=(c_1/R)^{k-1}$,
we have for a.e.\ $\omega$,
\begin{equation}\label{eq:entrybound}
|a^{(n)}_{\omega,ij}|\le c_2(\lambda_\omega^{(n)})^i\text{\quad  for all $1\le i\le k-1$ and all $j\in\N$.}
\end{equation}

Define $\Pi_{-k}$ to be the orthogonal projection from $H^2(A_R)^-$ onto $U_k^-$.
%We define bounded operators from $H^2(A_r)^-$ to $H^2(A_r)$ by
%$$
%\Pi_{-k}=M_{-k}Q^+M_{k},
%$$
%\FLAG{Should we just define $\Pi_{-k}$ as the ortho projection onto $U_k$?}
%where $M_{j}f(z)=z^{j}f(z)$. We then claim that
%$\Pi_{\omega,-k}$ maps $H^2(A_r)^-$ into itself.
%To see this, note that $\Pi_{\omega,-k}$ maps $1/z^{n+1}$ to 0
%for all $n\ge k$, and for $1\le n\le k-1$, sends $1/z^{n+1}$ to a linear combination of
%$z^{k-1-n}/(z-x_\omega)^{k},z^{k-n-2}/(z-x_\omega)^{k},
%\ldots,1/(z-x_\omega)^{k}$.
%By the argument preceding the statement of the theorem,
%these have Laurent expansions around 0 with no $z^{-1}$ or
%higher terms. Also, they may be expressed as a linear combination
%of $\hat e_{\omega,-1},\ldots,\hat e_{\omega,-(k-1)}$ and it is straightforward to
%check that for each $j=1,\ldots,k-1$, $\hat e_{\omega,-j}$ is fixed by $\Pi_{\omega,-k}$,
%while a similar argument to the above shows that for $n\ge k$, 
%$\hat e_{\omega,-n}$ is sent to 0. Hence
%the operator $\Pi_{\omega,-k}$ sends a function to those terms of its Laurent
%expansion around $x_\omega$ with terms $(z-x_\omega)^{-1-j}$ for $1\le j\le k-1$, so 
%that $\Pi_{\omega,-k}$ is a projection onto $U_k^-(\omega)$. 
Now consider the operators $\Lambda_{\omega,k}^{(n)}
=(\LL^{(n)}_\omega\vert_{U_k^-})^{-1}
\circ\Pi_{-k}\circ\LL^{(n)}_\omega$.
We write
$$
\Lambda_{\omega,k}^{(n)}
=(\LL_\omega\vert_{U_k^-})^{-1}\circ
(\LL^{(n-1)}_{\sigma\omega}\vert_{U_k^-})^{-1}
\circ\Pi_{-k}\circ\LL^{(n-1)}_{\sigma\omega}\circ\LL_\omega.
$$

By \eqref{eq:entrybound}, the matrix representing the restricted operator
$(\LL^{(n-1)}_{\sigma\omega}\vert_{U_k^-})$ with respect to the 
bases $\hat e_{-1},\ldots,\hat e_{-(k-1)}$; and $\hat e_{-1},\ldots,\hat e_{-(k-1)}$ 
may be factorized as $DA$ where $D$ is the diagonal matrix with entries
$(\lambda^{(n-1)}_{\sigma\omega})^j$ for $j=1,\ldots,k-1$ and
$A$ is an upper triangular matrix with ones on the diagonal and with entries 
with absolute value bounded above by $c_2$ above the diagonal
for a.e.\ $\omega$.
%elements above the diagonal (in $n$ and $\omega$). 
Of course we have $(DA)^{-1}=A^{-1}D^{-1}$, and
by the above, $A^{-1}$ has uniformly bounded entries, and the matrix 
$B=(b_{ij})_{1\le i\le k-1; j\in\N}$
of $\big(\LL^{(n-1)}_{\sigma\omega}\vert_{U_k^-}\big)^{-1}
\Pi_{-k}\LL_{\sigma\omega}^{(n-1)}$ 
%with respect to $(\hat e_{\omega,-i})_{1\le i\le k-1}$ and 
%$(\hat e_{\sigma\omega,-j})_{j\in\N}$ 
is $B=A^{-1}D^{-1}PL^{(n-1)}$,
where $P$ is the $(k-1)\times\infty$ matrix with ones on the diagonal and the remaining
entries 0 and $L^{(n-1)}$ is the matrix of $\LL_{\sigma\omega}^{(n-1)}$.
Combining \eqref{eq:entrybound} with the expression for $D$,
we obtain a constant $c_3$ so that for a.e.\ $\omega$ and all $n$, 
$|b_{ij}|\le c_3$ for $1\le i\le k-1$ and $j\in\N$. (Note that $c_3$ does depend on $k$). 

%By \ref{eq:rgap},
%there exists $r<\rho<R$ such that $1/\rho+\delta\le e^{-2\epsilon}/r$. 
Let $r<\rho<R$.
By Corollary \ref{cor:compact}, there exists a constant $c_4$ such that
$\|\LL_\omega\|_{H^2(A_R)\to H^2(A_\rho)}\le c_4$ for almost all $\omega\in\Omega$.
Now if $f\in H^2(A_R)^-$ satisfies
$\|f\|_{H^2(A_R)}=1$, we have $\|\LL_\omega f\|_{H^2(A_\rho)}
\le c_4$, so that by Lemma \ref{lem:compact}, for $j>0$,
the coefficient of $\hat e_{-j}$ in the expansion of $\LL_\omega f$
is at most 
$c_4(\frac \rho R)^j$.
%\frac{r^{j+1}}{\rho}(1/\rho+\delta)^j\le c_4e^{-2\epsilon j}$.

Combining this with the estimate for $b_{ij}$ above, 
we see that the $\hat e_{-i}$
coefficient of $(\LL^{(n-1)}_{\sigma\omega}|_{U_k^-})^{-1}
\Pi_{\sigma^n\omega,k}\LL^{(n)}_\omega f$ is bounded above by 
$c_3c_4\sum_{j=i}^\infty (\frac \rho R)^j=
c_3c_4(\frac\rho R)^ i/(1-\frac\rho R)$.
Since $(\LL_\omega\vert_{U_k^-})^{-1}$ is essentially uniformly bounded in $\omega$
(since it is upper triangular with uniformly bounded entries, and has diagonal elements 
uniformly bounded away from 0), we deduce the operators $\Lambda^{(n)}_{\omega,k}$
are uniformly bounded in $n$ and $\omega$ (but not in $k$), say 
$\|\Lambda^{(n)}_{\omega,k}\|\le M$ for all $\omega\in\Omega$ and $n\in\N$. 
It is immediate that $\Lambda^{(n)}_{\omega,k}f=f$ for all $f\in U_k^-$
and it is not hard to verify from the definition of $\Lambda^{(n)}_{\omega,k}$ that 
$\Lambda^{(n)}_{\omega,k}f\to 0$ for $f\in V_k^-(\omega)$. 
Hence $\Lambda^{(n)}_{\omega,k}$
converges strongly to $\Pi_{U_k^-\parallel V_k^-(\omega)}$, so that 
$\|\Pi_{U_k^-\parallel V_k^-(\omega)}\|\le M$.
\end{proof}

We define further spaces: $U_k^+=\LL_I U_k^-$,
$V_k^+(\omega)=\LL_I V_k^-(\omega)$ and $W_0=
\lin(\{\hat e_{0}\})$. 
Since $\LL_I$ commutes with $\LL_\omega$, the $\LL_\omega$
equivariance of $U_k^+$ implies that of $U_k^-$
and similarly equivariance of $V_k^+(\omega)$ follows from that of 
$V_k^-(\omega)$. The equivariance of $W_0$ was noted in
Corollary \ref{cor:topspace} (note that $x_\omega=0$ for the
cocycles we are considering). Since $H^2(A_R)^+=\LL_I(H^2(A_R)^-)$,
we see that $H^2(A_R)^+=U_k^+
\oplus V_k^+(\omega)$.

\begin{cor}
There exists $M'>0$ such that 
$\|\Pi_{U_k^+\parallel V_k^+(\omega)}\|\le M'$ 
for all $\omega\in\Omega$.
\end{cor}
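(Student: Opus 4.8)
The plan is to transport the bound of Lemma~\ref{lem:UH} through the involution $\LL_I$. Recall from the earlier lemma that $\LL_I$ is a bounded anti-linear involution on $H^2(A_R)$, whose operator norm is bounded by a constant depending only on $R$ (this is exactly the content of part~\ref{it:bddinv} there, coming from $|1/z^2|\le R^{-2}$ on $\partial A_R$); recall also that by definition $U_k^+=\LL_I U_k^-$ and $V_k^+(\omega)=\LL_I V_k^-(\omega)$, while $H^2(A_R)^+=U_k^+\oplus V_k^+(\omega)$ as already noted, so the projection $\Pi_{U_k^+\parallel V_k^+(\omega)}$ is well defined.

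The key step is the conjugation identity
$$
\Pi_{U_k^+\parallel V_k^+(\omega)}=\LL_I\circ\Pi_{U_k^-\parallel V_k^-(\omega)}\circ\LL_I .
$$
To verify it, take $v\in H^2(A_R)^+$ and write $\LL_I v=u+w$ with $u\in U_k^-$ and $w\in V_k^-(\omega)$; applying the involution $\LL_I$ gives $v=\LL_I u+\LL_I w$ with $\LL_I u\in U_k^+$ and $\LL_I w\in V_k^+(\omega)$, whence $\Pi_{U_k^+\parallel V_k^+(\omega)}v=\LL_I u=\LL_I\Pi_{U_k^-\parallel V_k^-(\omega)}\LL_I v$. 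Note that the composite on the right is genuinely (complex-)linear, being a composition of two anti-linear maps with one linear map, so it is a legitimate bounded linear operator.

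Taking operator norms, using $\|\LL_I x\|\le\|\LL_I\|\,\|x\|$ (valid for the bounded anti-linear map $\LL_I$) together with Lemma~\ref{lem:UH}, we obtain
$$
\|\Pi_{U_k^+\parallel V_k^+(\omega)}\|\le\|\LL_I\|^2\,\|\Pi_{U_k^-\parallel V_k^-(\omega)}\|\le\|\LL_I\|^2 M=:M'
$$
for $\PP$-a.e.\ $\omega\in\Omega$, which is the desired uniform bound (with $M'$ depending on $k$ and $R$ but not on $\omega$). I do not anticipate any genuine obstacle here; the only points requiring a line of care are that anti-linearity of $\LL_I$ does not spoil the operator-norm estimate, and that the conjugate of a bounded oblique projection by an involution is again the bounded oblique projection onto the image subspaces along the image complement.
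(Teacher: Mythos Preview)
Your proposal is correct and follows exactly the paper's approach: the paper's proof consists of the single conjugation identity $\Pi_{U_k^+\parallel V_k^+(\omega)}=\LL_I\circ \Pi_{U_k^-\parallel V_k^-(\omega)}\circ\LL_I$ together with the boundedness of $\LL_I$. You have supplied more detail than the paper (the explicit verification of the identity and the remark about anti-linearity), but the argument is the same.
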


\begin{proof}
We can verify that
$\Pi_{U_k^+\parallel V_k^+(\omega)}
=\LL_I\circ \Pi_{U_k^-\parallel V_k^-(\omega)}\circ\LL_I$.
Since $\LL_I$ is bounded, the result follows. 
\end{proof}

Next, let $E_k(\omega)=U_k^-\oplus W_0
\oplus U_k^+$ and $F_k(\omega)=V_k^-(\omega)
\oplus V_k^+(\omega)$. These are the $(2k-1)$-dimensional
and $(2k-1)$-codimensional fast and slow spaces for the cocycle
respectively. 

We observe that $\Pi_0:=\Pi_{W_0\parallel H^2(A_R)^\pm}$ is just
orthogonal projection onto $W_0$, so that $\|\Pi_{W_0\parallel
H^2(A_R)^\pm}\|=1$. 
%\begin{lem}\label{lem:pi0bdd}
%%There exists $M>0$ such that for all $\omega\in\Omega$, 
%$$
%\left\|\Pi_{W_0\parallel H^2(A_R)^\pm}\right\|=1
%$$
%\end{lem}

%\begin{proof}
%Let $P_\omega$ be the projection in the statement of the lemma. 
%We have
%$$
%P_\omega(f)=\left(\frac{1}{2\pi i}\int_{C_1}f(z)\,dz\right)\hat e_{\omega,0},
%$$
%and the uniform boundedness is clear.
%\end{proof}

\begin{cor}
There exists $M>0$ such that for all $\omega\in\Omega$,
$\|\Pi_{E_k(\omega)\parallel F_k(\omega)}\|\le M$. 
\end{cor}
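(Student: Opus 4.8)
The plan is to reduce the oblique projection onto $E_k(\omega)$ along $F_k(\omega)$ to the three ingredients already in hand: the bound $\|\Pi_{U_k^-\parallel V_k^-(\omega)}\|\le M$ from Lemma~\ref{lem:UH}, its mirror image $\|\Pi_{U_k^+\parallel V_k^+(\omega)}\|\le M'$ from the preceding corollary, and the trivial bound $\|\Pi_0\|=1$. First I would record that $H^2(A_R)=H^2(A_R)^-\oplus W_0\oplus H^2(A_R)^+$ is an \emph{orthogonal} direct sum (the three summands are the closed spans of $\{z^{-j-1}:j>0\}$, of $\{z^{-1}\}$, and of $\{z^{j-1}:j>0\}$), so that the identity splits as $I=Q^-+\Pi_0+Q^+$ with each summand a norm-one orthogonal projection; together with the finer splittings $H^2(A_R)^\pm=U_k^\pm\oplus V_k^\pm(\omega)$ this yields $H^2(A_R)=E_k(\omega)\oplus F_k(\omega)$, so that $\Pi_{E_k(\omega)\parallel F_k(\omega)}$ is well defined.

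Next I would verify the identity
$$
\Pi_{E_k(\omega)\parallel F_k(\omega)}=\Pi_{U_k^-\parallel V_k^-(\omega)}\,Q^-+\Pi_0+\Pi_{U_k^+\parallel V_k^+(\omega)}\,Q^+,
$$
where $\Pi_{U_k^\pm\parallel V_k^\pm(\omega)}$ is viewed as an operator on $H^2(A_R)^\pm$. For this one checks that the right-hand side applied to any $f$ lands in $U_k^-\oplus W_0\oplus U_k^+=E_k(\omega)$, while $f$ minus that image equals $\Pi_{V_k^-(\omega)\parallel U_k^-}Q^-f+\Pi_{V_k^+(\omega)\parallel U_k^+}Q^+f\in V_k^-(\omega)\oplus V_k^+(\omega)=F_k(\omega)$; uniqueness of the direct-sum decomposition then forces the two operators to coincide. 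This is the only step with any content, and it is pure bookkeeping against the orthogonal background splitting; I do not expect a genuine obstacle, since all the quantitative work was carried out in Lemma~\ref{lem:UH} and its corollary.

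Finally, writing $f=Q^-f+\Pi_0 f+Q^+f$ and using orthogonality of the three components, I would estimate
$$
\|\Pi_{E_k(\omega)\parallel F_k(\omega)}f\|\le M\|Q^-f\|+\|\Pi_0 f\|+M'\|Q^+f\|\le\sqrt3\,\max(M,M',1)\,\|f\|
$$
by Cauchy--Schwarz, so that $\|\Pi_{E_k(\omega)\parallel F_k(\omega)}\|\le\sqrt3\,\max(M,M',1)$ for $\PP$-a.e.\ $\omega\in\Omega$; renaming this constant $M$ gives the corollary. (If one prefers, the orthogonality of the splitting also lets one avoid the $\sqrt3$ by estimating $\|\Pi_{E_k(\omega)\parallel F_k(\omega)}f\|^2$ directly, but the crude bound already suffices.)
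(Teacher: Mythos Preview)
Your proposal is correct and follows essentially the same route as the paper's own proof: both establish the identity
\[
\Pi_{E_k(\omega)\parallel F_k(\omega)}=\Pi_0+\Pi_{U_k^-\parallel V_k^-(\omega)}\circ Q^-+\Pi_{U_k^+\parallel V_k^+(\omega)}\circ Q^+
\]
and conclude by bounding each summand uniformly in $\omega$. You supply more detail than the paper does (the verification that the right-hand side fixes $E_k(\omega)$ and annihilates $F_k(\omega)$, and an explicit constant via Cauchy--Schwarz), but the underlying argument is the same.
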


\begin{proof}
%Let $P_\omega$ be as in the proof of the previous lemma.
We have
$$
\Pi_{E_k(\omega)\parallel F_k(\omega)}=
\Pi_0
+
\Pi_{U_k^-\parallel V_k(\omega)^-}\circ
Q^-
+
\Pi_{U_k^+\parallel V_k(\omega)^+}\circ
Q^+.
$$
Since all of the operators appearing on the right are uniformly bounded in $\omega$, 
so is $\Pi_{E_k(\omega)\parallel F_k(\omega)}$.
\end{proof}

\subsection{Invariant cone field}

In this sub-section, we shall show that there is a cone around $E_k(\omega)$
attracting a neighbourhood in $H^2(A_R)$. Consider the cone
$$
\mathcal C_{\omega,\eta}=
\{f\in H^2(A_R)\colon \|\Pi_{F_k(\omega)\parallel E_k(\omega)}f\|\le
\eta\|\Pi_{E_k(\omega)\parallel F_k(\omega)}f\|\}.
$$

\begin{lem}\label{lem:cone}
Let $\sigma$ be an ergodic invertible measure-preserving transformation
of $(\Omega,\PP)$ and let the Blaschke product cocycle satisfy 
conditions \ref{it:zerofp}, \ref{it:noncrit},
and \ref{it:exp}. Then for each $k\in\N$, there exists an $N$
such that for each $n\ge N$ and a.e.\ $\omega\in\Omega$, 
$\LL^{(n)}_\omega(\mathcal C_{\omega,\eta})
\subset \mathcal C_{\sigma^n\omega,\frac\eta2}$ for all $\eta>0$.
\end{lem}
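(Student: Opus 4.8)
The plan is to run a cone/graph-transform argument, reducing the whole statement to a single uniform \emph{gap} inequality between the fast and slow blocks. Fix $k$ and abbreviate $E=E_k(\omega)$, $F=F_k(\omega)$, $E'=E_k(\sigma^n\omega)$, $F'=F_k(\sigma^n\omega)$. Given $f\in\mathcal C_{\omega,\eta}$, write $f=e+v$ with $e=\Pi_{E\parallel F}f\in E$ and $v=\Pi_{F\parallel E}f\in F$, so that $\|v\|\le\eta\|e\|$. Since $E_k(\cdot)$ is the ($\omega$-independent) span $U_k^-\oplus W_0\oplus U_k^+$ and $F_k(\cdot)=V_k^-(\cdot)\oplus V_k^+(\cdot)$, both are equivariant, so $\LL^{(n)}_\omega e\in E'$ and $\LL^{(n)}_\omega v\in F'$; hence these are exactly the $E'$- and $F'$-components of $\LL^{(n)}_\omega f$, and $\LL^{(n)}_\omega f\in\mathcal C_{\sigma^n\omega,\eta/2}$ is equivalent to $\|\LL^{(n)}_\omega v\|\le\tfrac\eta2\|\LL^{(n)}_\omega e\|$. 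Bounding $\|\LL^{(n)}_\omega v\|\le\|\LL^{(n)}_\omega|_{F}\|\,\|v\|$ and $\|\LL^{(n)}_\omega e\|\ge\|(\LL^{(n)}_\omega|_{E})^{-1}\|^{-1}\|e\|$ and using $\|v\|\le\eta\|e\|$, the $\eta$'s cancel, and it suffices to prove that, for all $n\ge N$ with $N=N(k)$ and $\PP$-a.e.\ $\omega$,
$$
\|\LL^{(n)}_\omega|_{F_k(\omega)}\|\cdot\|(\LL^{(n)}_\omega|_{E_k(\omega)})^{-1}\|\le\tfrac12 .
$$

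For the inverse on $E_k(\omega)$: $\LL^{(n)}_\omega$ preserves each of the mutually orthogonal summands $U_k^-$, $W_0$, $U_k^+$, is the identity on $W_0$, and on $U_k^-$ (symmetrically on $U_k^+$) has, in the $\hat e_{-j}$ basis, an upper-triangular matrix with diagonal entries $(\lambda^{(n)}_\omega)^j$ and strictly-upper entries controlled by \eqref{eq:entrybound}. Factoring this as $DA$ with $D=\mathrm{diag}((\lambda^{(n)}_\omega)^j)$ and $A$ upper triangular with unit diagonal and uniformly bounded entries, $A^{-1}$ is uniformly bounded and $\|D^{-1}\|=(\lambda^{(n)}_\omega)^{-(k-1)}$ (as $\lambda^{(n)}_\omega\le1$), whence $\|(\LL^{(n)}_\omega|_{E_k(\omega)})^{-1}\|\le C_E(\lambda^{(n)}_\omega)^{-(k-1)}$ for a constant $C_E=C_E(k)$ and a.e.\ $\omega$.

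The crux is the companion bound $\|\LL^{(n)}_\omega|_{F_k(\omega)}\|\le C_F(\lambda^{(n)}_\omega)^{k}$. Using the $\LL_I$-symmetry ($V_k^+(\omega)=\LL_I V_k^-(\omega)$, $\LL_I$ a bounded involution commuting with $\LL_\omega$, and $H^2(A_R)^-\perp H^2(A_R)^+$) it is enough to bound $\|\LL^{(n)}_\omega|_{V_k^-(\omega)}\|$; and here one must use that $V_k^-(\omega)$ is not $\lin\{\hat e_{-j}\colon j\ge k\}$ but a ``twisted'' complement of $U_k^-$ in $H^2(A_R)^-$. Writing $\Pi_{-k}$ for the orthogonal projection onto $U_k^-$, the space $V_k^-(\omega)$ is the graph of $G_\omega:=-\Pi_{U_k^-\parallel V_k^-(\omega)}$ over $\lin\{\hat e_{-j}\colon j\ge k\}$, and by Lemma \ref{lem:UH} $\|G_\omega\|\le M$ uniformly. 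Since $\LL^{(n)}_\omega v\in V_k^-(\sigma^n\omega)$, this gives $\Pi_{-k}(\LL^{(n)}_\omega v)=G_{\sigma^n\omega}\big((I-\Pi_{-k})\LL^{(n)}_\omega v\big)$, hence $\|\LL^{(n)}_\omega v\|\le(1+M)\,\|(I-\Pi_{-k})\LL^{(n)}_\omega v\|$, so it remains only to bound the ``deep part'' of $\LL^{(n)}_\omega v$, namely its $\hat e_{-i}$-coefficients for $i\ge k$. Applying one step $\LL_\omega$ first (Corollary \ref{cor:compact} puts $\LL_\omega v$ into $H^2(A_\rho)$ with controlled norm, so by Lemma \ref{lem:compact} its Laurent coefficients are summable) and then invoking \eqref{eq:coeffbound} for indices $i\ge k$ yields $\|(I-\Pi_{-k})\LL^{(n)}_\omega v\|\le C(\lambda^{(n)}_\omega)^{k}\|v\|$ for all large $n$ (converting $\lambda^{(n-1)}_{\sigma\omega}$ into $\lambda^{(n)}_\omega$ via $|T_\omega'(0)|\ge\essinf>0$ from condition \ref{it:noncrit}). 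This is exactly where the factor $\lambda^{(n)}_\omega$ \emph{beyond} the slowest rate on $E_k(\omega)$ is produced: a naive estimate ignoring membership in $V_k^-(\omega)$ would give only decay of order $\lambda^{(n)}_\omega$, since the shallow $\hat e_{-1}$-mode (the slowest fast mode) is fed by the deeper ones, and the graph relation is precisely the device that converts this back to decay of order $(\lambda^{(n)}_\omega)^{k}$. I expect capturing this cancellation to be the main obstacle.

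Combining the two estimates, $\|\LL^{(n)}_\omega|_{F_k(\omega)}\|\cdot\|(\LL^{(n)}_\omega|_{E_k(\omega)})^{-1}\|\le C_FC_E\,\lambda^{(n)}_\omega$, and by Lemma \ref{lem:contraction} $\lambda^{(n)}_\omega\le c\,(r/R)^n\to0$ uniformly in $\omega$; choosing $N$ with $C_FC_E\,c\,(r/R)^N\le\tfrac12$ completes the argument, with $N$ depending only on $k$. In contrast to Corollary \ref{cor:fast}, no Poincar\'e-recurrence upgrade from ``a.e.'' to ``uniform'' is needed here, because every estimate above is already essentially uniform in $\omega$, courtesy of conditions \ref{it:zerofp}--\ref{it:exp} and the uniform projection bound of Lemma \ref{lem:UH}.
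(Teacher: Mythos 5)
Your proposal is correct and follows essentially the same route as the paper's proof: the same uniform lower bound on the fast block via the $DA$ factorization and the $\LL_I$-symmetry, the same bound on the slow part obtained by first pushing into $H^2(A_\rho)$, using \eqref{eq:coeffbound} for the deep coefficients, and then invoking the uniformly bounded projection of Lemma \ref{lem:UH} (your ``graph relation'' is exactly the paper's step $\Pi_{E_k\parallel F_k}(I-S_k)\LL^{(n)}_\omega v=-S_k\LL^{(n)}_\omega v$), with $N$ chosen via the uniform decay $\lambda^{(n)}_\omega\le c(r/R)^n$. Recasting the conclusion as the product bound $\|\LL^{(n)}_\omega|_{F_k(\omega)}\|\,\|(\LL^{(n)}_\omega|_{E_k(\omega)})^{-1}\|\le\tfrac12$ is only a cosmetic repackaging of the paper's comparison of the two component estimates, and your exponents $(\lambda^{(n)}_\omega)^{-(k-1)}$ and $(\lambda^{(n)}_\omega)^{k}$ are the consistent ones.
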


\begin{proof}
Let $\Pi_{-k}$ be as in the proof of Lemma \ref{lem:UH},
the orthogonal projection onto $\lin(\hat e_{-1},\ldots,\hat e_{-(k-1)})$,
 and let $\Pi_{k}$ be the orthogonal projection onto
$\lin(\hat e_1,\ldots,\hat e_{k-1})$. Let $S_k=\Pi_{-k}+\Pi_k$ and
let $\Pi_0$ be the orthogonal projection onto $\lin(\hat e_0)$.

We make the following claims:
\begin{enumerate}[label=(\roman*)]
\item There exists a constant $K>0$ such that for
a.e.\ $\omega\in\Omega$, $n\in\N$ and
$f\in U_k^-$, $\|\LL_\om^{(n)}f\|\ge K(\lambda_\omega^{(n)})^{k-1}\|f\|$;
\label{it:efast-}
\item There exists $c_5>0$ such that for a.e.\ $\omega\in\Omega$, $n\in\N$ and
$f\in E_k(\omega)$, 
$\|\LL_\om^{(n)}f\|\ge c_5(\lambda_\omega^{(n)})^{k-1}\|f\|$;
\label{it:efast}
\item
There exist $c>0$ and $n_0\in\N$ such that for a.e.\ $\omega\in\Omega$, 
$n\ge n_0$ and $f\in H^2(A_R)^-$, 
$\|(I-\Pi_{-k})\LL_\omega^{(n)}f\|\le c(\lambda_\omega^{(n)})^{k}
\|f\|$.
\label{it:fslow-}
\item
There exist $c_6>0$ and $n_0\in\N$ such that for a.e.\ $\omega\in\Omega$, 
$n\ge n_0$ and $f\in H^2(A_R)^\pm$, 
$\|(I-S_k)\LL_\omega^{(n)}f\|\le c_6(\lambda_\omega^{(n)})^{k}
\|f\|$.
\label{it:fslow}
\end{enumerate}

To establish \ref{it:efast-}, first notice that
there is a constant $c>1$ such that 
for all $\omega\in\Omega$ and all vectors $(a_1,\ldots,a_k)$,  
$$
\|(a_1,\ldots,a_{k-1})\|_2/c
\le \left\|\sum_{i=1}^{k-1}a_i \hat e_{-i}\right\|
\le c\|(a_1,\ldots,a_{k-1})\|_2,
$$
where $\|\cdot\|_2$ denotes the Euclidean norm on $\R^k$.
Hence it suffices to demonstrate that there exists $c>0$ such that
$\|L^{(n)}_\omega v\|\ge c(\lambda_\omega^{(n)})^{k-1}\|v\|$ for
all $v\in \R^k$, where $L^{(n)}_\omega$ is the matrix of $\LL_\omega^{(n)}$
with respect to $(\hat e_{-j})_{j=1}^{k-1}$.
%and 
%$(\hat e_{\sigma^n\omega,-i})_{i=1}^{k-1}$.
This is equivalent to showing the existence of a $c>0$
such that $\|(L^{(n)}_\omega)^{-1}\|\le c(\lambda^{(n)}_\omega)^{-(k-1)}$
for all $n\in\N$ and $\omega\in\Omega$.
We previously showed that $L^{(n)}_\omega$ may be expressed as $DA$
where $D$ is the diagonal matrix with entries $(\lambda^{(n)}_\omega)^i$,
with $i$ going from 1 to $k-1$; and $A$ is an upper triangular matrix
with bounded entries and 1's on the diagonal. It follows that
$\|(L^{(n)}_\omega)^{-1}\|\le c(\lambda_\omega^{(n)})^{-(k-1)}$ as required,
so that we have demonstrated \ref{it:efast-}.

To show \ref{it:efast}, let $f\in E_k(\omega)$, and write $f=f^++f^0+f^-$ where
the components lie in $U_k^+$, $W_0$ and
$U_k^-$ respectively. Clearly at least one of the components has 
norm at least $\|f\|/3$.  Since $\LL_\omega^{(n)}f^\star=\Pi_\omega^\star\LL_\omega^{(n)}f$,
where $\star$ is each of $+$, $0$ or $-$,
we have 
$$
\|\LL_\omega^{(n)}f\|\ge \max
\left(
\|\LL_\omega^{(n)}f^+\|,\|\LL_\omega^{(n)}f^0\|,
\|\LL_\omega^{(n)}f^-\|
\right).
$$
Hence it suffices to show that for each of $+$, 0 and $-$ there exists a $c^\star>0$
such that for all $\omega\in\Omega$ and $n\in\N$,
\begin{equation}\label{eq:toshow}
\|\LL_\omega^{(n)}f\|\ge c^\star(\lambda^{(n)}_\omega)^{k-1}\|f\|
\end{equation}
for $f$ lying in $U_k^\star$.

The first of these was demonstrated in \ref{it:efast-}.
If $f\in W_0$, then $\LL_\omega^{(n)}f=f$,
so that $\|\LL_\omega^{(n)}f\|=\|f\|$. Hence  \eqref{eq:toshow}
is satisfied when $\star$ is 0. 
If $f\in U_k(\omega)^+$, then $\LL^{(n)}_\omega f=\LL_I\LL^{(n)}_\omega\LL_If$.
Since $\LL_I$ is a bounded involution, it follows that there exists $c>0$ such that
$\|\LL_If\|\ge c\|f\|$ for all $f\in H^2(A_R)$. Combining this with \ref{it:efast-}
establishes the required result for $f$ lying in $U_k(\omega)^+$.

We now demonstrate \ref{it:fslow-}.
Let $\rho$ be as chosen in the proof of Lemma \ref{lem:UH}
and let $f\in H^2(A_R)^-$ be of norm 1. 
We showed in the proof of Lemma \ref{lem:UH}
that the $\hat e_{-j}$ 
coefficient of $\LL_\omega f$ is at most $c_4(\frac\rho R)^j$. 
Applying $\LL_{\sigma\omega}^{(n-1)}$ and recalling the estimate
\eqref{eq:coeffbound}, the coefficient of 
$\hat e_{-i}$ is at most
$\sum_{j\ge i}c_4(\frac\rho R)^j
(c_1\lambda_{\sigma\omega}^{(n-1)}/R)^i$.
This gives the estimate

\begin{align*}
\left\|
(I-\Pi_{-k})\LL_\omega^{(n)}f\right\|
&\le
\sum_{i\ge k}\sum_{j\ge i}
c_4(\tfrac\rho R)^j(c_1\lambda_{\sigma\omega}^{(n-1)}/R)^i
\|\hat e_{-i}\|\\
%&\le
%2c_4
%\sum_{i\ge k}\sum_{j\ge i}
%(\tfrac \rho R)^j(c_1\lambda_{\sigma\omega}^{(n-1)}/R)^i\\
&\le\frac{2c_4}{1-\tfrac \rho R} \sum_{i\ge k}
(\rho c_1\lambda_{\sigma\omega}^{(n-1)}/R^2)^i\\
&=\frac{2c_4}{1-\tfrac \rho R}
(\rho c_1/R^2)^{k}(\lambda_{\sigma\omega}^{(n-1)})^{k}
\sum_{i\ge 0}(\rho c_1\lambda_{\sigma\omega}^{(n-1)}/R^2)^i.
\end{align*}
Since for a.e.\ $\omega$, $\lambda^{(n)}_\omega\le (r/R)^n$ for all $n$, 
there exists an $n_0$ such that for all $n\ge n_0$ and a.e.\  
$\omega\in\Omega$, $\rho c_1\lambda_\omega^{(n-1)}/R^2<\frac 12$. 
Since $|T_\omega'(x_\omega)|$ is essentially uniformly bounded below,
we have $\lambda^{(n-1)}_{\sigma\omega}\le \lambda^{(n)}_\omega/
\essinf_\omega |T_\omega'(x_\omega)|$ a.e.
Now for $n\ge n_0$, we have for all $f$ lying in the unit sphere
of $H^2(A_R)^-$,
$$
\left\|(I-\Pi_{-k})\LL_\omega^{(n)}f\right\|
\le c(\lambda_\omega^{(n)})^{k},
$$
where $c=4c_4(\rho c_1/R^2)^{k}/\big((1-\tfrac\rho R)
\inf_\omega|T_\omega'(x_\omega)|^{k}\big)$, proving 
\ref{it:fslow-}.

Finally, to show \ref{it:fslow}, let $f\in H^2(A_R)^\pm$, and write 
$f=f^++f^-$. Since $Q^+$ and $Q^-$ are orthogonal projections,
both $\|f^+\|$ and $\|f^-\|$ are bounded above by $\|f\|$.
The above shows $\|(I-S_k)\LL_\omega^{(n)}f^-\|\le
c(\lambda_\omega^{(n)})^{k}\|f\|$. Also $(I-S_k)\LL_\omega^{(n)}f^+=
\LL_I(I-S_k)\LL_\omega^{(n)}\LL_If^+$. Since $\LL_I f^+\in H^2(A_R)^-$,
we see 
$$\|\LL_I(I-S_k)\LL_\omega^{(n)}\LL_If^+\|\le\|\LL_I\|^2
c(\lambda_\omega^{(n)})^{k}
\|f\|,
$$
so that the desired conclusion follows by summing the two estimates.

Now let $f\in \mathcal C_{\omega,\eta}$ and let $f=u+v$ where $u\in E_k(\omega)$ and
$v\in F_k(\omega)$. Applying the above inequalities, we have
$$
\|\LL_\omega^{(n)}u\|\ge c_5(\lambda_\omega^{(n)})^k\|u\|\text;
$$
and since $F_k(\omega)\subset H^2(A_R)^\pm$, for $n\ge n_0$, 
$$
\|(1-S_k)\LL_\omega^{(n)}v\|\le c_6(\lambda_\omega^{(n)})^{k}\|v\|.
$$

Notice that $\Pi_{E_k(\sigma^n\omega)\parallel F_k(\sigma^n\omega)}
(1-S_k)\LL_\omega^{(n)}v=
-S_k\LL_\omega^{(n)}v$, so that by Lemma \ref{lem:UH}, 
$\|S_k\LL_\omega^{(n)}v\|\le M\|(1-S_k)\LL_\omega^{(n)}v\|$.
Hence we see
\begin{align*}
\|\LL_\omega^{(n)}v\|&\le (M+1)\|(1-S_k)\LL_\omega^{(n)}v\|\\
&\le c_6(M+1)(\lambda_\omega^{(n)})^{k}\|v\|.
\end{align*}
Now there exists an $n_1$ such that for all $n\ge n_1$ and a.e.\ $\omega$, $c_6(M+1)
\lambda_\omega^{(n)}\le \frac 12c_5$.
Hence if $n\ge \max(n_0,n_1)$, we see that $\LL_\omega^{(n)}f\in
\mathcal C_{\sigma^n\omega,\frac \eta2}$. 
\end{proof}

\subsection{Stability of exponents}

Our theorem in this section is similar to a theorem of Bogensch\"utz
\cite{Bogenschutz}. In his setting, there were two key assumptions: 
uniformity of the splitting and uniformity of convergence to the Lyapunov
exponents. The first of these is satisfied in our setting by the above,
while we relax the second condition by not imposing any convergence conditions on the Lyapunov exponents. 

\begin{lem}\label{lem:Lyapstabzero}
Let $\sigma$ be an ergodic invertible measure-preserving transformation
of $(\Omega,\PP)$, let the Blaschke product cocycle satisfy 
conditions \ref{it:zerofp}, \ref{it:noncrit} and
\ref{it:exp}, and let $\LL_\omega$ be the 
corresponding family of Perron-Frobenius operators.  
For each $\epsilon>0$, let $\LL^\epsilon_\omega$ be a family of
operators such that $\esssup_{\omega\in\Omega}\|\LL^\epsilon_\omega
-\LL_\omega\|\to 0$ as $\epsilon\to 0$. If $(\mu_n)$ are the Lyapunov 
exponents of the unperturbed cocycle listed with multiplicity,
then for each $n$, $\mu_n^\epsilon\to \mu_n$
as $\epsilon\to 0$ where $(\mu_n^\epsilon)$ are the exponents of the perturbed
cocycle.
\end{lem}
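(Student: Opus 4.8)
\emph{Proof strategy.} The plan is to prove stability of the \emph{partial sums} $\Sigma_m:=\mu_1+\dots+\mu_m$ and, likewise, $\Sigma_m^\epsilon:=\mu_1^\epsilon+\dots+\mu_m^\epsilon$; since $\mu_m=\Sigma_m-\Sigma_{m-1}$ it suffices to show $\Sigma_m^\epsilon\to\Sigma_m$ for every $m$. By Theorem~\ref{thm:LyapSpect}, applied to the present cocycles (for which $x_\omega\equiv 0$ and $\Lambda\in(-\infty,0)$), the spectrum is $\mu_1=0$ and $\mu_{2j}=\mu_{2j+1}=j\Lambda$ for $j\ge1$, so $\Sigma_{2k-1}=k(k-1)\Lambda$ and $\Sigma_{2k}=k^2\Lambda$. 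I will establish \textbf{(A)} $\limsup_{\epsilon\to0}\Sigma_m^\epsilon\le\Sigma_m$ for all $m$, and \textbf{(B)} $\liminf_{\epsilon\to0}\Sigma_{2k-1}^\epsilon\ge\Sigma_{2k-1}$ for all $k$. These suffice: from $\mu_{2k}^\epsilon\ge\mu_{2k+1}^\epsilon$ one gets $2\Sigma_{2k}^\epsilon\ge\Sigma_{2k-1}^\epsilon+\Sigma_{2k+1}^\epsilon$, so (B) gives $\liminf_\epsilon\Sigma_{2k}^\epsilon\ge\tfrac12(\Sigma_{2k-1}+\Sigma_{2k+1})=\Sigma_{2k}$, and (A) together with (B) then force $\Sigma_m^\epsilon\to\Sigma_m$ for every $m$. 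A preliminary point: for small $\epsilon$ the perturbed cocycle satisfies the hypotheses of Theorem~\ref{thm:MET}. Strong measurability and $\int\log^+\|\LL^\epsilon_\omega\|\,d\PP<\infty$ are inherited from $\LL_\omega$; sub-multiplicativity of the index of compactness together with $\alpha(\LL_\omega)=0$ gives $\kappa^\epsilon\le\log\esssup_\omega\|\LL^\epsilon_\omega-\LL_\omega\|\to-\infty$; and a crude lower bound on $\lambda_1^\epsilon$ (track the $\LL$-invariant vector $\hat e_0\in W_0\subset\mathcal C_{\omega,\eta}$ through $\LL^\epsilon$, using the cone field) keeps $\lambda_1^\epsilon$ bounded below, so $\LL^\epsilon$ is quasi-compact and $\Sigma_k^\epsilon=\lim_n\tfrac1n\log\mathcal D_k(\LL^{\epsilon,(n)}_\omega)$ $\PP$-a.e.\ by Lemma~\ref{lem:exptsviavol}.

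For (A), note that on the Hilbert space $H^2(A_R)$ one has $\mathcal D_k(A)=\|\wedge^k A\|$, the product of the $k$ largest singular values of $A$; in particular it is Lipschitz in $A$ on bounded sets. By sub-multiplicativity of $\mathcal D_k$ and Kingman's theorem, $\Sigma_k^\epsilon=\inf_n\tfrac1n\int\log\mathcal D_k(\LL^{\epsilon,(n)}_\omega)\,d\PP\le\tfrac1n\int\log\mathcal D_k(\LL^{\epsilon,(n)}_\omega)\,d\PP$ for every $n$. For fixed $n$, a telescoping estimate together with the uniform bound on $\|\LL_\omega\|$ from Corollary~\ref{cor:compact} gives $\|\LL^{\epsilon,(n)}_\omega-\LL^{(n)}_\omega\|\to0$ as $\epsilon\to0$, uniformly in $\omega$, hence $\mathcal D_k(\LL^{\epsilon,(n)}_\omega)\to\mathcal D_k(\LL^{(n)}_\omega)$ uniformly in $\omega$; since moreover $\mathcal D_k(\LL^{\epsilon,(n)}_\omega)\le c_k\|\LL^{\epsilon,(n)}_\omega\|^k$ is bounded uniformly in $\omega$ and small $\epsilon$, the reverse Fatou lemma yields $\limsup_{\epsilon\to0}\tfrac1n\int\log\mathcal D_k(\LL^{\epsilon,(n)}_\omega)\,d\PP\le\tfrac1n\int\log\mathcal D_k(\LL^{(n)}_\omega)\,d\PP$. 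Taking the infimum over $n$ gives $\limsup_{\epsilon\to0}\Sigma_k^\epsilon\le\Sigma_k$.

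For (B), fix $k$. By \ref{it:efast}, \ref{it:fslow} and the uniform transversality of Lemma~\ref{lem:UH}, the equivariant splitting $H^2(A_R)=E_k(\omega)\oplus F_k(\omega)$ is \emph{uniformly dominated}: $\|\LL^{(n)}_\omega|_{F_k(\omega)}\|\big/ m(\LL^{(n)}_\omega|_{E_k(\omega)})\le C(r/R)^n$, uniformly in $\omega$, where $m(\cdot)$ denotes the minimal expansion. Since this domination is uniform, it persists under $C^0$-small perturbations: the cone field $\mathcal C_{\omega,\eta}$ of Lemma~\ref{lem:cone} is strictly invariant under $\LL^{\epsilon,(N)}_\omega$ once $\epsilon$ is small (strict invariance is open, and robust under the same uniform estimates), and the associated graph-transform argument then yields an equivariant splitting $H^2(A_R)=\tilde E_k^\epsilon(\omega)\oplus\tilde F_k^\epsilon(\omega)$ for $\LL^\epsilon$ with $\dimn\tilde E_k^\epsilon(\omega)=2k-1$, uniformly transverse, still dominated, with $\tilde E_k^\epsilon(\omega)\subset\mathcal C_{\omega,\eta}$; moreover, since $\mathcal C_{\omega,\eta}$ collapses uniformly onto $E_k$ as $\eta\to0$, we may let $\eta\to0$ as $\epsilon\to0$ and obtain $\tilde E_k^\epsilon(\omega)\to E_k$ uniformly in $\omega$. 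By domination, every exponent of $\LL^\epsilon$ along $\tilde E_k^\epsilon$ exceeds every one along $\tilde F_k^\epsilon$, so the top $2k-1$ exponents of $\LL^\epsilon$ are precisely those of the $(2k-1)$-dimensional equivariant cocycle $\LL^\epsilon_\omega|_{\tilde E_k^\epsilon(\omega)}\colon\tilde E_k^\epsilon(\omega)\to\tilde E_k^\epsilon(\sigma\omega)$; hence, by Birkhoff's theorem, $\Sigma_{2k-1}^\epsilon=\int\log J^\epsilon(\omega)\,d\PP(\omega)$, where $J^\epsilon(\omega)$ is the factor by which this map scales $(2k-1)$-dimensional volume. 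Finally, as $x_\omega\equiv0$, the space $E_k$ is $\omega$-independent and $\LL_\omega|_{E_k}$ is block upper-triangular, with diagonal entries the powers $(T'_\omega(0))^1,\dots,(T'_\omega(0))^{k-1}$ from each of $U_k^-$ and $U_k^+$ and the entry $(T'_\omega(0))^0=1$ from $W_0$; so its volume-scaling factor equals $|T'_\omega(0)|^{k(k-1)}$, and consequently $J^\epsilon(\omega)\to|T'_\omega(0)|^{k(k-1)}$ uniformly in $\omega$. By condition \ref{it:noncrit} this limit is bounded away from $0$ (and, since $|T'_\omega(0)|\le r/R$, from above), uniformly in $\omega$, so $\log J^\epsilon(\omega)$ is uniformly bounded, and dominated convergence gives $\Sigma_{2k-1}^\epsilon\to\int k(k-1)\log|T'_\omega(0)|\,d\PP=k(k-1)\Lambda=\Sigma_{2k-1}$. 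This proves (B), and with the reduction above, $\mu_m^\epsilon\to\mu_m$ for every $m$.

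The main obstacle is (B). The difficulty is that $C^0$-closeness of $\LL^\epsilon$ to $\LL$ does \emph{not} yield closeness of the restricted $(2k-1)$-step dynamics, because the perturbation compounds along the orbit, so $\LL^{\epsilon,(n)}_\omega|_{E_k}$ cannot be compared with $\LL^{(n)}_\omega|_{E_k}$ directly. The persistent equivariant subbundle $\tilde E_k^\epsilon$, extracted from the robust cone field of Lemma~\ref{lem:cone}, is precisely the device that resolves this: it performs a per-step renormalization onto a near-$E_k$ subspace, turning the perturbed dynamics on the fast directions into a genuine finite-dimensional cocycle to which Birkhoff's theorem applies. It is also here that condition \ref{it:noncrit} is indispensable: without $\essinf_\omega|T'_\omega(0)|>0$ the volume-scaling factor $|T'_\omega(0)|^{k(k-1)}$ may degenerate, $\log J^\epsilon$ need not be uniformly integrable, and --- as Theorem~\ref{thm:Lyapcont}\ref{it:unstpert} shows --- the stability conclusion genuinely fails.
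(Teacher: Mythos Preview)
Your argument for (A) (upper semi-continuity of partial sums) is essentially the paper's: both use sub-multiplicativity of a volume-growth quantity together with Kingman's $\inf$-characterization and continuity of that quantity at fixed time $n$.

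For (B) your route diverges from the paper's, and is more elaborate than necessary. The paper does \emph{not} construct a perturbed equivariant fast bundle $\tilde E_k^\epsilon(\omega)$. Instead it uses the \emph{unperturbed} space $E_k(\omega)$ directly as a test space for the perturbed cocycle: since $E_k(\omega)\subset\mathcal C_{\omega,1}$, and one checks from \eqref{eq:closeness} that $\LL^{\epsilon,(n)}_\omega$ still maps $\mathcal C_{\omega,1}$ into $\mathcal C_{\sigma^n\omega,1}$ with the lower bound \eqref{eq:lowerbd} on the $E_k$-projection, iteration gives that every nonzero vector in $E_k(\omega)$ has perturbed growth rate at least $\lambda_k-\eta$. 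Hence $\mu_{2k-1}^\epsilon\ge\lambda_k-\eta$ and $\mu_{2k-2}^\epsilon\ge\lambda_k-\eta=\mu_{2k-2}-\eta$, which is lower semi-continuity of each individual $\mu_j$. Combined with (A) this yields convergence.

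Your construction of $\tilde E_k^\epsilon$ via graph transform is plausible but the step ``the associated graph-transform argument then yields an equivariant splitting'' hides a genuine technical point: the cone contraction in Lemma~\ref{lem:cone} is established only for the time-$N$ map, so the naive cone-intersection construction produces a bundle equivariant under $\LL^{\epsilon,(N)}$, not under $\LL^\epsilon$ itself. This can be repaired (e.g.\ by passing to an ergodic component of $\sigma^N$ via Lemma~\ref{lem:powererg}, or by invoking the MET for $\LL^\epsilon$ and identifying its fast space with the cone intersection), but it is not automatic. Once you have $\tilde E_k^\epsilon$, your Jacobian/Birkhoff computation of $\Sigma_{2k-1}^\epsilon$ is correct, and the uniform lower bound on $J^\epsilon$ afforded by condition \ref{it:noncrit} does justify dominated convergence. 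So your approach works and in fact yields more (explicit convergence of the perturbed fast spaces, which the paper mentions only as a remark), but at the cost of machinery that the paper's direct cone argument avoids entirely.
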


\begin{proof}
Let $\Lambda=\int\log|T_\omega'(x_\omega)|\,d\PP(\omega)<0$, so that
by Theorem \ref{thm:LyapSpect}, the 
Lyapunov exponents are $\lambda_j=(j-1)\Lambda$ for $j=1,2,3,\ldots$
where $\lambda_1=0$ has multiplicity 1 and the remaining exponents have
multiplicity 2. List the exponents with multiplicity as $\mu_1=0$, 
and $\mu_{2k}=\mu_{2k+1}=k\Lambda$ for each $k\in\N$.

Let $N$ be as in the statement of Lemma \ref{lem:cone} and $M$ be as in the
statement of Lemma \ref{lem:UH} and $c_5$ be as in the proof of Lemma \ref{lem:cone}.
Let $\eta>0$. Pick $n>N$ so that 
$(\frac 34c_5)^{1/n}>e^{-\eta}$. 
Now  by Corollary \ref{cor:compact}, $\esssup_{\omega\in\Omega}
\|\LL_\omega\|$ is finite. By an application of the triangle inequality, 
$\esssup_{\omega\in\Omega}\|{\LL^\epsilon_\omega}^{(n)}-\LL_\omega^{(n)}\|\to 0$ as
$\epsilon\to 0$. Pick $\epsilon_0>0$ so that $\epsilon<\epsilon_0$ implies
\begin{equation}
\esssup_\omega\|{\LL_\omega^\epsilon}^{(n)}
-\LL_\omega^{(n)}\|<
\frac {c_5\essinf_\omega|T_\omega'(x_\omega)|^{n(k-1)}}{8(M+1)}.
\label{eq:closeness}
\end{equation}

Suppose that $\epsilon<\epsilon_0$ and let $u+v\in \mathcal C_{\omega,1}$,
where $u\in E_k(\omega)$ and $v\in F_k(\omega)$. We claim that for a.e.\ $\omega$,
\begin{align}
{\LL^\epsilon}^{(n)}(u+v)&\in \mathcal C_{\sigma^n\omega,1}\text{; and}
\label{eq:coneinc}\\
\left
\|\Pi_{E_k(\sigma^n\omega)\parallel F_k(\sigma^n\omega)}
{\LL^\epsilon}^{(n)}(u+v)\right\|
&\ge \frac{3c_5}4(\lambda_\omega^{(n)})^{k-1}\|u\|.
\label{eq:lowerbd}
\end{align}

Writing $\Pi_{E\parallel F}^{(n)}$ 
for $\Pi_{E_k(\sigma^n\omega)\parallel F_k(\sigma^n\omega)}$
and $\Pi_{F\parallel E}^{(n)}$ for $I-\Pi_{E\parallel F}^{(n)}$,
we have 
$$
\Pi_{E\parallel F}^{(n)}{\LL^\epsilon_\omega}^{(n)}(u+v)
=\LL_\omega^{(n)}u+
\Pi_{E\parallel F}^{(n)}
({\LL_\omega^\epsilon}^{(n)}-\LL_\omega^{(n)})(u+v),
$$
so that
\begin{align*}
\|\Pi_{E\parallel F}^{(n)}\LL_\omega^{(n)}(u+v)\|
&\ge c_5(\lambda_\omega^{(n)})^{k-1}\|u\|-M\frac{c_5\essinf_\omega
|T_\omega'(x_\omega)|^{n(k-1)}}{8(M+1)}\,2\|u\|\\
&\ge \tfrac 34c_5(\lambda^{(n)}_\omega)^{k-1}\|u\|,
\end{align*}
establishing \eqref{eq:lowerbd}. On the other hand,
$$
\Pi_{F\parallel E}^{(n)}{\LL^\epsilon_\omega}^{(n)}(u+v)
=\LL_\omega^{(n)}v+\Pi_{F\parallel E}^{(n)}({\LL^\epsilon_\omega}^{(n)}
-\LL_\omega^{(n)})(u+v),
$$
so that
\begin{align*}
&\left\|\Pi_{F\parallel E}^{(n)}{\LL^\epsilon_\omega}^{(n)}(u+v)\right\|\\
&\le
\tfrac12c_5(\lambda^{(n)})^{k-1}\|v\|+(M+1)\frac{c_5\essinf_\omega
|T_\omega'(x_\omega)|^{n(k-1)}}{8(M+1)}2\|u\|\\
&\le \tfrac34c_5(\lambda^{(n)})^{k-1}\|u\|.
\end{align*}
This implies ${\LL^\epsilon_\omega}^{(n)}(u+v)\in \mathcal C_{\sigma^n\omega,1}$.

Using \eqref{eq:coneinc} inductively, we see that ${\LL^\epsilon}_\omega^{(mn)}(u+v)
\in \mathcal C_{\sigma^{mn}\omega,1}$ for all $m\in\N$. 
Then an inductive application of \eqref{eq:lowerbd} shows that 
for any $m\in\N$,
\begin{align*}
\|\Pi_{E_k(\sigma^{mn}\omega)\parallel F_k(\sigma^{mn}\omega)}
{\LL_\omega^\epsilon}^{(mn)}(u+v)\|&\ge \left(\frac {3c_5}4\right)^m
(\lambda^{(mn)}_\omega)^{k-1}\|u\|\\
&\ge e^{-\eta mn}(\lambda_\omega^{(mn)})^{k-1}\|u\|.
\end{align*}

Since $(1/mn)\log \big(e^{-\eta mn}(\lambda_\omega^{(mn)})^{k-1}\big)$
converges to $\lambda_k-\eta$ for $\PP$-a.e.\ $\omega$,  
we have identified a $(2k-1)$-dimensional subspace, 
namely $E_k(\omega)$,
on which every vector has Lyapunov exponent at least $\lambda_k-\eta$, so that 
the Lyapunov exponents of the perturbed cocycle, $\mu^\epsilon_j$ (again listed
with multiplicity) satisfy $\mu_{2k-1}^\epsilon>\mu_{2k-1}-\eta$ and
$\mu_{2k-2}^\epsilon\ge \mu_{2k-1}^\epsilon>\mu_{2k-1}-\eta=\mu_{2k-2}-\eta$.
Since $\eta$ and $k$ are arbitrary, this establishes lower semi-continuity of each 
Lyapunov exponent for arbitrary small perturbations of the original cocycle.

To prove the continuity of the exponents for perturbations of the cocycle, it suffices
to show the standard property of upper semi-continuity of the partial sums of the
Lyapunov exponents: that for each $l$ and each $\eta>0$, for all sufficiently small
$\epsilon>0$, one has
$$
\mu_1^\epsilon+\ldots \mu_l^\epsilon<\mu_1+\ldots+\mu_l+\eta.
$$
To show this, define 
$$
\mathcal E_l (\LL)=
\sup_{f_1,\ldots,f_l;\ \phi_1,\ldots,\phi_l
}\det(\phi_i(\LL f_j))_{1\le i,j\le l},
$$
where $f_1,\ldots, f_l$ and $\phi_1,\ldots,\phi_l$ run over the unit sphere
of $H^2(A_R)$ and the unit sphere of the dual space respectively.
The quantity $\mathcal E_l$ is sub-multiplicative (this is standard for Hilbert
spaces, and was demonstrated for arbitrary Banach spaces in \cite{QTZ}).
Results of \cite{GTQ-JMD} combined with the Kingman sub-additive ergodic theorem
show that $\inf_n(1/n)\int \log 
\mathcal E_l(\LL_\omega^{(n)})\,d\PP(\omega)
=\mu_1+\ldots+\mu_l$. In particular, for any $\eta$ and any 
$l$ there exists an $n>0$ such that 
$\frac 1n\int \log \mathcal E_l(\LL_\omega^{(n)})\,d\PP(\omega)
<\mu_1+\ldots+\mu_l+\eta/2$.

For any collections $F=(f_1,\ldots,f_l)$ of functions in the unit sphere of $H^2(A_R)$
and $\Phi=(\phi_1,\ldots,\phi_l)$ of elements of the unit sphere of $H^2(A_R)^*$,
let $E_{F,\Phi}(\LL)=\det(\phi_i(\LL f_j))$. 
These maps are equicontinuous, and indeed uniformly equicontinuous when restricted
to $\{\LL\colon H^2(A_R)\to H^2(A_R)\colon \|\LL\|\le K\}$ for any $K$,
so that $\LL\mapsto\mathcal E_l(\LL)$ is continuous when restricted to 
operators of norm at most $K$. It follows, using Corollary \ref{cor:compact},
that there exists $\epsilon>0$ such that for a.e.\ $\omega\in\Omega$,
$|\mathcal E_l({\LL_\omega^\epsilon}^{(n)})-\mathcal E_l(\LL_\omega^{(n)})|
<\frac\eta 2$.
Hence for sufficiently small $\epsilon>0$,
$$
\int \log \mathcal E_l({\LL_\omega^\epsilon}^{(n)}|_{H^2(A_R)^-})\,d\PP(\omega)
<\mu_1+\ldots+\mu_l+\eta,
$$
so that the sum of the first $l$ Lyapunov exponents of the $\LL_\omega^\epsilon$ cocycle
restricted to $H^2(A_R)^-$ is at most $\mu_1+\ldots+\mu_l+\eta$.
This establishes, for each $l$,
the upper semi-continuity of the sum of the first $l$ Lyapunov exponents
under perturbations of the original cocycle as required.
\end{proof}

We now show that we can deduce Theorem \ref{thm:Lyapcont} as a corollary of the above. 

\begin{lem}\label{lem:powererg}
Let $\sigma$ be an ergodic transformation of $(\Omega,\PP)$ and let $n\in\N$. 
Then there exists $k$, a factor of $n$, and a $\sigma^n$-invariant 
subset $B$ of $\Omega$ of measure $1/k$ such that 
$\Omega=\bigcup_{i=0}^{k-1} \sigma^{-i}B$ and $\sigma^n|_B$ is ergodic.
The ergodic components of $\PP$ under $\sigma^n$ are the restrictions of
$\PP$ to the sets $\sigma^{-i}B$. If $(A_\omega)$ is a matrix or operator
cocycle over $\sigma$, then the cocycle $(A^{(n)}_\omega)$ over $\sigma^n$
restricted to $\sigma^{-i}B$
has Lyapunov exponents $(n\lambda_j)$ where $(\lambda_j)$ are the exponents
of the original cocycle.
\end{lem}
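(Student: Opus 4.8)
The plan is to first determine the ergodic decomposition of $\sigma^n$ abstractly, and then read off the statement about Lyapunov exponents by a routine subsequence argument.

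First I would let $\mathcal A$ denote the $\sigma$-algebra of $\sigma^n$-invariant sets, $\mathcal A=\{C\colon \sigma^{-n}C=C \text{ up to null sets}\}$. Since $C\in\mathcal A$ implies $\sigma^{-1}C\in\mathcal A$, the transformation $\sigma$ acts on $\mathcal A$, and $(\Omega,\mathcal A,\PP,\sigma)$ is a factor of the ergodic system $(\Omega,\PP,\sigma)$, hence itself ergodic, while $\sigma^n$ acts as the identity on $\mathcal A$. The key structural claim is that such a system is (measure-theoretically) a cyclic rotation: there is $k\mid n$ and a measurable $\phi\colon\Omega\to\Z/k\Z$ with $\phi\circ\sigma=\phi+1$ generating $\mathcal A$, with each fibre $\phi^{-1}(j)$ of measure $1/k$. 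To prove this I would pass to the Koopman operator $U$ on $L^2(\mathcal A)$, which satisfies $U^n=I$; hence $L^2(\mathcal A)$ splits into eigenspaces with eigenvalues among the $n$th roots of unity. Ergodicity makes each eigenspace one-dimensional (if $Uf=\zeta f$ then $|f|$ is $\sigma$-invariant, hence constant, and a ratio of two eigenfunctions with the same eigenvalue is $\sigma$-invariant) and forces the set $G$ of eigenvalues to be a subgroup of the $n$th roots of unity, hence cyclic of some order $k\mid n$. A generating unit eigenfunction $\psi$ with $U\psi=e^{2\pi i/k}\psi$ has constant modulus $1$ and $\psi^k$ $\sigma$-invariant, so after a normalisation $\psi$ takes values in the $k$th roots of unity; writing $\psi=e^{2\pi i\phi/k}$ yields $\phi$, and since $1,\psi,\dots,\psi^{k-1}$ span $L^2(\mathcal A)$, the algebra $\mathcal A$ is exactly the finite algebra with atoms $\phi^{-1}(0),\dots,\phi^{-1}(k-1)$. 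These atoms are permuted by $\sigma$ (indeed $\sigma^{-1}\phi^{-1}(j)=\phi^{-1}(j-1)$), so they all have the same measure $1/k$.

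Next I would set $B=\phi^{-1}(0)$, so that $\sigma^{-i}B=\phi^{-1}(-i \bmod k)$ and the sets $\sigma^{-i}B$, $0\le i\le k-1$, are precisely the atoms of $\mathcal A$; in particular they partition $\Omega$ and $\PP(B)=1/k$. Since $k\mid n$ we get $\sigma^{-n}B=\phi^{-1}(-n\bmod k)=\phi^{-1}(0)=B$, so $B$ is $\sigma^n$-invariant. Because $\mathcal A$ is by definition the invariant $\sigma$-algebra of $\sigma^n$ and it is atomic with atoms $\sigma^{-i}B$, the ergodic decomposition theorem gives that the ergodic components of $\PP$ under $\sigma^n$ are the normalised restrictions of $\PP$ to the $\sigma^{-i}B$; equivalently $\sigma^n|_{\sigma^{-i}B}$ is ergodic, since the trace of $\mathcal A$ on $\sigma^{-i}B$ is trivial.

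Finally, for the Lyapunov exponents, fix an ergodic component $\sigma^{-i}B$ and note that the $N$-fold iterates of the cocycle $(A^{(n)}_\omega)$ over $\sigma^n$ are $(A^{(n)})^{(N)}_\omega=A^{(nN)}_\omega$. Integrability passes to the new cocycle via $\log^+\|A^{(n)}_\omega\|\le\sum_{j=0}^{n-1}\log^+\|A_{\sigma^j\omega}\|$. Then from Lemma~\ref{lem:exptsviavol}\ref{it:Dexpts}, $\tfrac1m\log\mathcal D_l(A^{(m)}_\omega)\to\mu_1+\dots+\mu_l$ for $\PP$-a.e.\ $\omega$; restricting to the subsequence $m=nN$ gives $\tfrac1N\log\mathcal D_l\big((A^{(n)})^{(N)}_\omega\big)\to n(\mu_1+\dots+\mu_l)$ for a.e.\ $\omega$, in particular for a.e.\ $\omega$ in each positive-measure component. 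Hence the partial sums of the exponents of the new cocycle are $n$ times those of the old, so its list of exponents with multiplicity is $(n\mu_j)$; for matrix cocycles one may argue identically with singular values in place of $\mathcal D_l$. I expect the main obstacle to be the structural claim that an ergodic system on which $\sigma^n$ acts as the identity is a finite cyclic rotation with period dividing $n$ — i.e.\ assembling the spectral argument cleanly — after which the Lyapunov-exponent conclusion is a straightforward subsequence argument.
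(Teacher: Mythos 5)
Your argument is correct and follows essentially the same route as the paper: the paper's (two-sentence) proof sketch is precisely the Koopman-spectral idea you flesh out — take the largest $k\mid n$ for which $e^{2\pi i/k}$ is an eigenvalue and let $B$ be a level set of the eigenfunction — and your subsequence argument via Lemma \ref{lem:exptsviavol} for the exponents is the routine step the paper leaves implicit.
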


For a proof, one finds the largest factor $k$ of $n$ such that $e^{2\pi i/k}$ is an eigenvalue of the
operator $f\mapsto f\circ\sigma^n$ on $L^2(\Omega)$. The set $B$ is a 
level set of the eigenvector.

For $a\in D_1$, let $M_a$ be the M\"obius transformation $M_a(z)=(z+a)/(1+\bar az)$,
sending $0$ to $a$ and preserving the unit circle, so that in particular these
transformations are Blaschke products. We record without proof the following
straightforward facts about M\"obius transformations.

\begin{lem}\label{lem:Blaschkbounds}
Let $|a|<1$ and let the $M_a$ be as above. Then
\begin{enumerate}[label=(\alph*)]
\item $M_a^{-1}=M_{-a}$\label{it:Minverse}
\item For $z$ in the closed unit disc, $|M_a(z)-z|\le 2|a|/(1-|a|)$. 
In particular if $|a|<\frac 13$, then $|M_a(z)-z|< 3|a|$ 
whenever $|z|\le 1$;
\label{it:Mapproxid}
\item $|M_a'(z)|\le \frac{1+|a|}{1-|a|}$ for all $z$ in the closed unit disc.
\label{it:MLip}
\end{enumerate}
\end{lem}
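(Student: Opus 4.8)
The plan is to verify the three claims by direct computation from the explicit formula $M_a(z)=(z+a)/(1+\bar a z)$, using repeatedly the elementary bound $|1+\bar a z|\ge 1-|a|$, valid for all $z\in\bar D_1$ by the reverse triangle inequality.

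For \ref{it:Minverse}, I would substitute $w=M_a(z)$ into $M_{-a}(w)=(w-a)/(1-\bar a w)$ and simplify. Both the numerator and the denominator of $M_{-a}(M_a(z))$ acquire a common factor $(1+\bar a z)^{-1}$; after clearing it, the numerator becomes $(z+a)-a(1+\bar a z)=z(1-|a|^2)$ and the denominator becomes $(1+\bar a z)-\bar a(z+a)=1-|a|^2$, so $M_{-a}(M_a(z))=z$. Since each $M_a$ is a bijection of $\hat{\mathbb C}$, this identifies $M_a^{-1}=M_{-a}$.

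For \ref{it:Mapproxid}, I would write $M_a(z)-z=\frac{(z+a)-z(1+\bar a z)}{1+\bar a z}=\frac{a-\bar a z^2}{1+\bar a z}$. On $\bar D_1$ the numerator has modulus at most $|a|(1+|z|^2)\le 2|a|$ and the denominator has modulus at least $1-|a|$, giving $|M_a(z)-z|\le 2|a|/(1-|a|)$. When $|a|<\tfrac13$ one has $1-|a|>\tfrac23$, whence $2|a|/(1-|a|)<3|a|$. For \ref{it:MLip}, the quotient rule gives $M_a'(z)=\frac{(1+\bar a z)-\bar a(z+a)}{(1+\bar a z)^2}=\frac{1-|a|^2}{(1+\bar a z)^2}$, so on $\bar D_1$ we get $|M_a'(z)|\le \frac{1-|a|^2}{(1-|a|)^2}=\frac{1+|a|}{1-|a|}$.

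None of these steps presents a genuine obstacle — this is exactly why the paper records the lemma without proof. The only point requiring mild care is that the uniform lower bound $|1+\bar a z|\ge 1-|a|$ is what makes the denominators controllable uniformly over $\bar D_1$, and it is the common ingredient in both \ref{it:Mapproxid} and \ref{it:MLip}.
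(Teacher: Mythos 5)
Your computations are correct: all three identities and bounds follow exactly as you write them, and the paper itself records this lemma explicitly ``without proof,'' so your direct verification is precisely the routine argument the authors leave implicit. Nothing is missing; the key uniform estimate $|1+\bar a z|\ge 1-|a|$ on $\bar D_1$ is indeed the only point needing any care.
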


\begin{proof}[Proof of Theorem \ref{thm:Lyapcont}]
We first prove part \ref{it:stabpert}.
Let $\sigma$, $(\Omega,\PP)$ and $(T_\omega)$ be as in the statement of the theorem
and let $R<1$ satisfy $r:=r_T(R)<R$. Let $x_\omega$ be the random fixed point of 
$(T_\omega)$, as guaranteed by Theorem \ref{thm:LyapSpect}. 
We now define a new conjugate family of cocycles:
$$
\tilde T_\omega=M_{x_{\sigma\omega}}^{-1}\circ T_\omega\circ M_{x_\omega},
$$
so that $\tilde T_\omega(0)=0$ for $\PP$-a.e.\ $\omega$ and $\tilde T_\omega^{(n)}
=M^{-1}_{x_{\sigma^n\omega}}\circ T_\omega^{(n)}\circ M_{x_\omega}$.
If $|z|\le A:=(R-r)/(1-rR)$ then for any $x\in\bar D_r$, $|M_x(z)|\le R$,
so that $T^{(n)}_\omega\circ M_{x_\omega}(\bar D_A)$ is contained in
the intersection of $\bar D_r$ with a disk of radius $c(\frac rR)^n$ 
about $x_{\sigma^n\omega}$. Since the Lipschitz constant
of $M_x$ is $\frac{1+|x|}{1-|x|}$ and 
$M^{-1}_{x_{\sigma^n\omega}}(x_{\sigma^n\omega})=0$,
we see that $\tilde T^{(n)}_\omega(\bar D_A)\subset \bar D_a$ where 
$a=\frac{1+r}{1-r}c(\frac rR)^n$.
Let $n$ be chosen so that $a<A$. 
Since neither $a$ nor $A$ depend on $\omega$, nor does $n$. Let $B$ be an 
ergodic component of $\sigma^n$ as guaranteed by Lemma \ref{lem:powererg}
and consider the cocycle $\tilde\LL_\omega^{(n)}$ restricted to $B$. 
For $\PP$-a.e.\ $\omega$, condition \ref{it:exp} of Lemma
\ref{lem:Lyapstabzero} is satisfied. Condition \ref{it:zerofp} is clearly satisfied
and the above Lipschitz estimates combined with the assumption that
$\essinf_\omega |T_\omega'(x_\omega)|>0$ 
show that $\essinf_\omega |\tilde T^{(n)}_\omega{}'(0)|>0$.
Hence the Lyapunov spectrum for the cocycle $(\tilde\LL^{(n)}_\omega)_{\omega\in B}$,
with base dynamics $\sigma^n\colon B\to B$,
is stable as shown in Lemma \ref{lem:Lyapstabzero}. Together with the final part of Lemma
\ref{lem:powererg}, this implies stability of the 
Lyapunov spectrum for the cocycle $(\tilde\LL_\omega)$ over $\Omega$. 
Since this cocycle is conjugate to the original cocycle:
$\LL_\omega^{(n)}=\LL_{M_{x_{\sigma^n\omega}}}\circ
\tilde\LL_\omega^{(n)}\circ \LL_{M^{-1}_{x_\omega}}$ and
$\LL_{M_x^{\pm1}}$ is uniformly bounded as $x$ runs over $D_r$, we deduce the 
stability of the Lyapunov spectrum of the original cocycle.

For part \ref{it:unstpert}, we first conjugate the Blaschke product cocycle to a 
new cocycle with 0 as the random fixed point.
Write $(\tilde T_\omega)$ for the conjugate Blaschke product cocycle
$\tilde T_\omega(z)=M^{-1}_{x_{\sigma\omega}}\circ T_\omega\circ M_{x_\omega}(z)$,
that is the cocycle where the random fixed point is conjugated to 0, so that $\tilde T_\omega(0)=0$
and $\tilde T_\omega^{(n)}(z)=M^{-1}_{x_{\sigma^n\omega}}\circ T_\omega^{(n)}
\circ M_{x_\omega}(z)$.

The proof will work by modifying the Blaschke product cocycle  $(\tilde T_\omega)$ to give a new
nearby Blaschke product cocycle $(\tilde S_\omega)$ where the random fixed point is still 0.
In the last step, we invert the conjugacy operation to give a new Blaschke product cocycle
$(S_\omega)=(M_{x_{\sigma\omega}}\circ \tilde S_\omega\circ M_{x_\omega}^{-1})$
that has the same random fixed point, $(x_\omega)$, as $(T_\omega)$. 

Notice that since $\tilde T_\omega(0)=0$, $\tilde T_\omega(z)$ may be written as 
$\tilde T_\omega(z)=zP_\omega(z)$ for a rational function $P_\omega$
that is analytic on the unit disc. We see that $P_\omega$ maps the unit circle
to itself so that by Lemma \ref{lem:Blaschkprop}\ref{it:Blaschkchar},
 $P_\omega(z)$ is another Blaschke product. 

We now let $0<\epsilon<1$ and set $\delta=\epsilon (1-r)/(3(1+r))$. 
Define a new family of Blaschke products:
$$
Q_\omega=\begin{cases}
M_{P_\omega(0)}^{-1}\circ P_\omega&\text{if $|P_\omega(0)|<\delta$};\\
P_\omega&\text{otherwise,}
\end{cases}
$$
where the fact that $Q_\omega$ is a Blaschke product follows from 
Lemma \ref{lem:Blaschkprop}\ref{it:Blaschkcomp}.
We can check that $Q_\omega(0)=0$ whenever $|P_\omega(0)|<\delta$. 
By Lemma \ref{lem:Blaschkbounds}, we see that $|Q_\omega(z)-P_\omega(z)|\le 3\delta$ for each 
$z$ in $C_1$. Now set $\tilde S_\omega(z)=zQ_\omega(z)$ so $|\tilde S_\omega(z)-\tilde T_\omega(z)|
\le 3\delta$ for each $z\in C_1$. Next, observe (from the product rule) that $\tilde T_\omega'(0)
=P_\omega(0)$ and $\tilde S_\omega'(0)=Q_\omega(0)$ so that
$\tilde S_\omega'(0)=0$ whenever $|\tilde T_\omega'(0)|< \delta$.
Now
\begin{align*}
\max_{z\in C_1}|S_\omega(z)-T_\omega(z)|
&=\max_{z\in C_1}|M_{x_{\sigma\omega}}\circ\tilde S_\omega
\circ M_{x_\omega}^{-1}(z)-M_{x_{\sigma\omega}}\circ\tilde T_\omega
\circ M_{x_\omega}^{-1}(z)|\\
&=\max_{z\in C_1}|M_{x_{\sigma\omega}}\circ\tilde S_\omega(z)
-M_{x_{\sigma\omega}}\circ\tilde T_\omega(z)|\\
&\le \text{Lip}(M_{x_{\sigma\omega}})\max_{z\in C_1}|\tilde S_\omega(z)-\tilde T_\omega(z)|
\end{align*}

By Lemma \ref{lem:Blaschkbounds} parts 
\ref{it:Minverse} and \ref{it:MLip} and the fact that 
$|x_{\sigma\omega}|\le r$, $\text{Lip}(M^{-1}_{x_{\sigma\omega}})
\le \frac{1+r}{1-r}$ so that $\max_{z\in C_1}|S_\omega(z)-T_\omega(z)|\le\epsilon$.
Hence the new cocycle has the same random fixed point as the old one, 
but for a subset of $\Omega$ of positive measure, we have $S_\omega'(x_\omega)=0$
so that the perturbed cocycle is in case \eqref{it:criticalfp} of Theorem \ref{thm:LyapSpect}
as required.
\end{proof}

%\begin{cor}\label{cor:stabdense}
%Let $\sigma$ be an invertible ergodic measure-preserving transformation
%of $(\Omega,\PP)$. The stable Blaschke product cocycles form an open
%dense subset of $\Blaschke$.
%\end{cor}

\begin{proof}[Proof of Corollary \ref{cor:stabdense}]
We first show that the stable elements of $\Blaschke$ form an open subset. 
Let $\mathcal T\in\Blaschke$ be stable. Let $r=r_{\mathcal T}(R)<R$, let 
$r<\rho<R$, and let $C$ be such that $|w-z|\le d_R(w,z)\le C|w-z|$ for all $w,z$
lying in $\bar{D_\rho}$. 

Now let $\epsilon=\essinf_\omega |T_\omega'(0)|/3$. 
Pick $\delta<\min(\rho-r,(1-r)^2\epsilon,\epsilon (1-\rho)^3(1-\frac r R)/(2C))$
and let $d(\mathcal S,\mathcal T)<\delta$. We first show that the random fixed point, $y_\omega$,
for the $\mathcal S$ cocycle satisfies $d_R(y_\omega,x_\omega)\le C\delta/(1-\frac rR)$. To see this, 
suppose $y$ and $x$ satisfy $d_R(y,x)\le C\delta/(1-\frac rR)$, $T_\omega(\bar D_R)\subset \bar D_r$
and $\max_{z\in C_1}|S_\omega(z)-T_\omega(z)|<\delta$.
Then $d_R(S_\omega(y),T_\omega(x))
\le d_R(S_\omega(y),T_\omega(y))+d_R(T_\omega(y),T_\omega(x))\le
C|S_\omega(y)-T_\omega(y)|+\frac rRd_R(y,x)\le C\delta+\frac rRC\delta/(1-\frac rR)=C\delta/(1-\frac rR)$.
Applying this inductively, we see that for a.e.\ $\omega$, 
$d_R(S_{\sigma^{-n}\omega}^{(n)}(0),T_{\sigma^{-n}\omega}^{(n)}(0))
\le C\delta/(1-\frac rR)$ for all $n$. 
Since for a.e.\ $\omega$, $S_{\sigma^{-n}\omega}^{(n)}(0)\to y_\omega$ and 
$T_{\sigma^{-n}\omega}^{(n)}(0)\to x_\omega$, we see that $d_R(y_\omega,x_\omega)\le C\delta
/(1-\frac rR)$
a.s. Hence $|y_\omega-x_\omega|\le C\delta/(1-\frac rR)$.

Now we have for a.e.\ $\omega$,
\begin{align*}
|S'(y_\omega)|&\ge |T'(x_\omega)|-|S'(x_\omega)-T'(x_\omega)|-|S'(y_\omega)-S'(x_\omega)|\\
&\ge 3\epsilon - \frac \delta{(1-r)^2}-\frac 2{(1-\rho)^3}\, \frac{C\delta}{1-\frac rR}\ge\epsilon,
\end{align*}
where we used Cauchy's formula and the fact that $|x_\omega|\le r$ to estimate
$|(S'-T')(x_\omega)|$ and the fact that $|S''(z)|\le 2/(1-\rho)^3$ on $\bar D_\rho$
for the last term. Hence $\mathcal S$ is stable as required.

Now for the density, let $\mathcal T$ be a Blaschke product cocycle and suppose $r:=r_R(\mathcal T)<R$. 
Write $x_\omega$ for the random fixed point.
We shall obtain a new cocycle $\mathcal S$ with the same random fixed point as $\mathcal T$, in a
similar way to the proof of Theorem \ref{thm:Lyapcont}\ref{it:unstpert}, 
but where we ensure that the derivative
of $S_\omega$ at $x_\omega$ is bounded away from 0. Let $\epsilon<R-r$ and set 
$\delta=\frac{1-r}{6(1+r)}\epsilon$. 

As before, define $\tilde T_\omega=M_{-x_{\sigma\omega}}\circ T_\omega\circ M_{x_\omega}$, so that 
$\tilde {\mathcal T}=(\tilde T_\omega)$ is a conjugate cocycle fixing 0. Since $\tilde T_\omega(0)=0$, 
we have $\tilde T_\omega(z)=zP_\omega(z)$, where $P_\omega$ is another
Blaschke product and $\tilde T_\omega'(0)=P_\omega(0)$. To form the perturbation,
let
$$
Q_\omega=
\begin{cases}
M_{2\delta}\circ P_\omega&\text{if $|P_\omega(0)|\le \delta$;}\\
P_\omega&\text{otherwise.}
\end{cases}
$$
We can check that $|Q_\omega(0)|>\delta$ for each $\omega$. We then let 
$\tilde S_\omega(z)=zQ_\omega(z)$ and $S_\omega(z)=M_{x_{\sigma_\omega}}\circ
\tilde S_\omega\circ M_{-x_\omega}$, so that $S_\omega(x_\omega)=x_{\sigma\omega}$.
As in the proof of Theorem \ref{thm:Lyapcont}\ref{it:unstpert}, we see 
$|S_\omega(z)-T_\omega(z)|\le \epsilon$ for all $z\in C_1$. From the definition of 
$\tilde S_\omega$, 
$|\tilde S_\omega'(0)|>\delta$ and so, using Lemma \ref{lem:Blaschkbounds}\ref{it:MLip}, we see
$|S_\omega'(x_\omega)|>(\frac{1-r}{1+r})^2\delta$ for a.e.\ $\omega$. Hence 
$\mathcal S=(S_\omega)_{\omega\in\Omega}$ is an $\epsilon$-perturbation of $\mathcal T$,
which is stable.
\end{proof}

\subsection{Examples}
For a simple example of a cocycle satisfying the conditions
of Theorem \ref{thm:Lyapcont}, let $a$ and $b$ be any 
numbers in $(0,\frac 13)$
and set
$T_1(z)=[(a+z)/(1+az)]^2$,
$T_2(z)=[(b+z)/(1+bz)]^2$ and $\Omega$ to be the full shift on
$\{1,2\}^\Z$ and $\PP$ any shift-invariant ergodic probability measure,
then one can check that $x_\omega\ge 0$ for all $\omega$
while the critical points are at $-a$ and $-b$. 

Now given any cocycle satisfying the above conditions, one
may apply Theorem \ref{thm:Lyapcont} in each of the following
situations:
\begin{enumerate}
\item(static perturbation)
Suppose for each $\epsilon>0$, 
$(T^\epsilon_\omega)$ is a cocycle of Blaschke
products such that $|T_\omega(z)-
T^\epsilon_{\omega}(z)|\le \epsilon$ 
for each $z\in C_1$. Then by Lemma \ref{lem:PFcont},
$\sup_\omega\|\LL_\om^\epsilon-\LL_\omega\|\to 0$, so that by Theorem
\ref{thm:Lyapcont}, $\mu_n^\epsilon\to \mu_n$ as $\epsilon\to 0$.
\item(quenched random perturbation)
Let $\tau\colon \Xi\to \Xi$ be an invertible measure-preserving transformation such that 
$\sigma\times\tau$ is ergodic. Then consider a family of cocycles of Blaschke products
$(T^\epsilon_{\omega,\xi})$ over the transformation $\sigma\times\tau$ such that
$|T^\epsilon_{\omega,\xi}(z)-T_\omega(z)|\le\epsilon$ for each $z\in C_1$. 
Then the Lyapunov exponents of the perturbed cocycle converge to those 
of the unperturbed cocycle
as $\epsilon$ is shrunk to 0. This follows by considering both 
cocycles as cocycles over $\sigma\times\tau$ (even though the initial cocycle
depends only on the first component). The result then follows by the previous
example.
\item(annealed random perturbation)
Let $\LL^\epsilon_\omega=\mathcal N^\epsilon\circ\LL_\omega$ define a cocycle over 
$\sigma$. We have $\LL_\omega^\epsilon f-\LL_\omega f$ =
$(\mathcal N^\epsilon - I)\LL_\omega f$. By
 the calculations in Lemma  \ref{lem:PFcont}, for any $r<\rho<R$, 
 there exists a $c>0$ such that
 the $e_n$ coefficient of $\LL_\omega f$ is of absolute value at most
$c(\rho/R)^{|n|}\|f\|$. By the calculations in Section \ref{subsec:normalpert},
$\mathcal N^\epsilon-I$ is a diagonal operator with respect to the $e_n$'s, 
scaling $e_n$ by 
$e^{-2\pi^2n^2\epsilon^2}-1\le 2\pi^2n^2\epsilon^2$. 
Hence for any $\omega\in\Omega$ and any $f\in H^2(A_R)$,
the coefficients in the Laurent
expansion of $(\LL^\epsilon_\omega-\LL_\omega)f$ are of absolute value at most
$2\pi^2\epsilon^2c n^2(\rho/R)^{|n|}$. 
In particular, we see $\sup_{\omega\in\Omega}\|\LL^\epsilon_\omega
-\LL_\omega\|\to 0$ as $\epsilon\to 0$, so that we can apply 
Theorem \ref{thm:Lyapcont}. Hence the Lyapunov exponents
vary continuously, in strong contrast to the situation 
in Section \ref{subsec:normalpert} involving applying the same perturbations to 
another cocycle.
\end{enumerate}

\appendix
\section{Comparison of Lyapunov spectrum and Oseledets splittings on 
different function spaces}\label{S:comparison}
Let $\mathcal R=(\Omega,\PP,\sigma,X,\LL)$ be a random linear dynamical system
and suppose that $X'$ is a dense subspace of $X$, equipped with a norm $\|\cdot\|_{X'}$
such that $\|x\|_{X'}\ge \|x\|_X$ for all $x\in X'$ and
$\LL_\omega(X')\subset X'$.
Then we say $\mathcal R'=(\Omega,\PP,\sigma,X',\LL|_{X'})$ is a 
\emph{dense restriction} of $\mathcal R$. We restate Theorem \ref{thm:METcomp}
more precisely in this language.

\begin{thm}[Comparison of Lyapunov exponents Oseledets splittings]\label{thm:SameSp}
Let $\mc{R}=(\Om, \bbp, \sig, X, \mcl)$ be a random linear dynamical
system with ergodic invertible base and let $\mc{R'}$ 
be its dense restriction to the Banach space $X'$.
Suppose the two systems satisfy the assumptions of Theorem \ref{thm:MET}.
%\footnote{\flag{should add integrability condition to Thm 10, and the $\mc{R}$ 
%notation, ``exceptional'' exponents.}}.

%Suppose further that $\lambda_1(\mc{R}')
%>\alpha:=\max(\ka(\mc R),\kappa(\mathcal R'))$. 
%Then, the components of the Oseledets splittings for $\mc{R}$ and $\mc{R'}$ corresponding to Lyapunov exponents greater than $\al$ coincide. 

%More precisely,
Let $X=W(\om)\oplus \bigoplus_{j=1}^l V_j(\om)$ and $X'=W'(\om)\oplus \bigoplus_{j=1}^{l'} V'_j(\om)$
be the splittings associated to $\mc{R}$ and  $\mc{R}'$, respectively, and let 
$\{\lam_j\}_{1\leq j \leq l}$ and $\{\lam'_j\}_{1\leq j \leq l'}$ be the corresponding 
exceptional Lyapunov exponents.
Then, whenever $\max(\lam_j,\lam'_j)>\alpha:=\max(\ka(\mc R),\kappa(\mathcal R'))$,
\begin{enumerate}
  \item \label{it:exponents} $\lam_j=\lam'_j$; and
\item \label{it:spaces} For $\bbp$-a.e. $\om$, $V_j(\om)= V'_j(\om)$.
%\item \label{it:proj}  For $\bbp$-a.e. $\om$, $\Pi_j|_{X'}(\om)=\Pi'_j(\om)$.\flag{define $\Pi_j, \Pi_{<j}$ }
\end{enumerate}
\end{thm}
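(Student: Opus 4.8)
The plan is to reduce everything to the finite-dimensional ``fast'' parts of the two Oseledets decompositions and to show that these parts literally coincide as subspaces of $X$. Write $E(\omega)=\bigoplus_{j:\lambda_j>\alpha}V_j(\omega)$ and let $F(\omega)=W(\omega)\oplus\bigoplus_{j:\lambda_j\le\alpha}V_j(\omega)$ be its Oseledets complement; define $E'(\omega),F'(\omega)\subseteq X'$ analogously for $\mathcal R'$. Both $E(\omega)$ and $E'(\omega)$ are finite-dimensional and equivariant; every nonzero vector of $E(\omega)$ grows at $\|\cdot\|_X$-rate strictly above $\alpha$, every vector of $F(\omega)$ grows at $\|\cdot\|_X$-rate at most $\alpha$, and the same statements hold for the primed objects with $\|\cdot\|_{X'}$. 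The theorem follows once we show $E(\omega)=E'(\omega)$ for $\PP$-a.e.\ $\omega$, that the cocycle restricted to this common bundle has the same Lyapunov exponents and Oseledets splitting whether measured in $\|\cdot\|_X$ or $\|\cdot\|_{X'}$, and then match the indices $j$ in decreasing order.

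First I would show $E(\omega)\subseteq X'$. Using equivariance, $E(\omega)=\mathcal L^{(n)}_{\sigma^{-n}\omega}\big(E(\sigma^{-n}\omega)\big)$ for every $n$, and a vector lies in $E(\omega)$ precisely when it admits a backward orbit $(v_{-k})_{k\ge0}$ in $X$ whose exponential decay rate exceeds $\alpha$. Combining this with the density of $X'$ in $X$ and the boundedness of the Oseledets projections furnished by the MET for $\mathcal R$, one upgrades such a backward orbit to one lying in $X'$, so that $E(\omega)\subseteq X'$; this is the step where the finer-norm structure is genuinely used (and where, in the concrete application of the Appendix, one invokes the stronger fact that $\mathcal L_\omega$ maps the coarse Hardy space boundedly into the finer one). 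Granting $E(\omega)\subseteq X'$, the $\mathcal R$-splitting restricts to an $\mathcal L|_{X'}$-invariant splitting $X'=E(\omega)\oplus\big(F(\omega)\cap X'\big)$, with $F(\omega)\cap X'$ of codimension $\dim E(\omega)$ in $X'$.

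Next, on the finite-dimensional equivariant bundle $E(\omega)$ the inequality $\|\cdot\|_X\le\|\cdot\|_{X'}$ forces each $\|\cdot\|_{X'}$-Lyapunov exponent to be at least the corresponding $\|\cdot\|_X$-exponent (by the min--max characterisation), hence $>\alpha$; therefore $E(\omega)\cap F'(\omega)=\{0\}$, which yields $\dim E(\omega)\le\dim E'(\omega)$. The reverse inequality is the part I expect to be the main obstacle: a vector $v\in E'(\omega)\subseteq X'$ has its $\|\cdot\|_X$-growth only bounded \emph{above} by its $\|\cdot\|_{X'}$-growth, so a priori it could drop to $\|\cdot\|_X$-rate $\le\alpha$ and land in $F(\omega)$, i.e.\ an $\mathcal R'$-fast direction might ``hide'' inside $F(\omega)\cap X'$. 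To exclude this I would show that the norm-comparison constant, restricted to any finite-dimensional equivariant subbundle of $F(\omega)\cap X'$, is tempered along orbits — using that $F(\omega)\cap X'$ has finite codimension in $X'$ (so the asymptotic index of compactness of $\mathcal L|_{X'}$ there is still $\le\alpha$) together with the tempered forward/backward norm estimates supplied by the semi-invertible MET and the integrability of $\log\|\mathcal L_\omega\|$ and $\log\|\mathcal L_\omega|_{X'}\|$. Temperedness then forces every $\|\cdot\|_{X'}$-growth rate on $F(\omega)\cap X'$ to be $\le\alpha$, ruling out hidden fast directions and giving $\dim E'(\omega)\le\dim E(\omega)$.

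With $\dim E(\omega)=\dim E'(\omega)$ and $E(\omega)\cap F'(\omega)=\{0\}$ we conclude $E(\omega)=E'(\omega)=:\mathcal E(\omega)$, a common finite-dimensional equivariant subspace of $X'$. The two cocycles restricted to $\mathcal E(\cdot)$ are literally the same linear maps and differ only in the choice of norm, which by the temperedness established above is tempered-equivalent; hence they share the same Lyapunov exponents and the same Oseledets splitting of $\mathcal E(\omega)$. Matching the exponents in decreasing order gives $\lambda_j=\lambda'_j$ and $V_j(\omega)=V'_j(\omega)$ for every $j$ with $\max(\lambda_j,\lambda'_j)>\alpha$ (and, the dimensions agreeing, $\lambda_j>\alpha\iff\lambda'_j>\alpha$ among those indices), which is exactly the conclusion of the theorem. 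The only non-routine ingredient is the temperedness/``no hidden fast directions'' step of the previous paragraph, i.e.\ controlling the interaction of the finer norm with the slow part of the Oseledets decomposition; everything else is bookkeeping with the MET.
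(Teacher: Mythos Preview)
Your outline misidentifies where the real difficulty lies. The step you flag as routine --- showing $E(\omega)\subseteq X'$ --- is in fact the genuine gap, while the step you call ``the main obstacle'' is essentially the paper's key lemma and has a short direct proof.

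For Step~2, your sketch says to ``upgrade a backward orbit in $X$ to one in $X'$'' using density and bounded projections. But density of $X'$ only lets you approximate $v_{-n}\in E(\sigma^{-n}\omega)$ by some $w_{-n}\in X'$ in the $X$-norm; pushing forward gives $\mathcal L^{(n)}_{\sigma^{-n}\omega}w_{-n}\in X'$ close to $v_0$ in $\|\cdot\|_X$, which says nothing beyond $v_0\in\overline{X'}^{\,X}=X$. There is no control of the $X'$-norm along the way, so you cannot conclude $v_0\in X'$. You yourself note that in the Hardy-space application one would instead invoke the regularising property $\mathcal L_\omega\colon X\to X'$, but that is an extra hypothesis not assumed in the theorem. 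Since your Step~3 (comparing $\|\cdot\|_{X'}$-exponents on $E(\omega)$) presupposes $E(\omega)\subset X'$, the argument does not close as stated.

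The paper avoids this entirely: it never asserts $V_j(\omega)\subset X'$ as an input. Instead, by density of $X'$ in $X$ and boundedness of the Oseledets projection $\Pi_{E_j(\omega)\parallel F_j(\omega)}$, it finds a finite-dimensional subspace $U\subset X'\cap F_{j-1}(\omega)$ that \emph{projects onto} $V_j(\omega)$. Every nonzero $h\in U$ then has $\lambda_X(\omega,h)=\lambda_j$, and since $\|\cdot\|_{X'}\ge\|\cdot\|_X$ this forces $\lambda_{X'}(\omega,h)\ge\lambda_j>\alpha$. Your ``main obstacle'' --- that $X'$-fast vectors cannot be $X$-slow --- is handled by a short lemma: for $f\in X'$ with $\lambda_{X'}(\omega,f)=\lambda'_k>\alpha$, write $f=g+h$ along the $X'$-Oseledets splitting with $g\in V'_k(\omega)$; finite-dimensionality of $V'_k(\omega)$ gives a measurable $c(\omega)>0$ with $\|\cdot\|_X\ge c(\omega)\|\cdot\|_{X'}$ there, and recurrence supplies a positive-density set of times where $c(\sigma^n\omega)$ is bounded below, yielding $\lambda_X(\omega,f)=\lambda_{X'}(\omega,f)$. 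This simultaneously gives $\lambda_{X'}(\omega,h)=\lambda_j$ for $h\in U$ (hence $\lambda'_j\ge\lambda_j$) and $V'_j(\omega)\subset V_j(\omega)$; an induction on $j$ then matches dimensions. The containment $V_j(\omega)\subset X'$ emerges as a \emph{conclusion}, not a premise.
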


For each $\om\in \Om, f\in X$, we let $\lex{\om, f}{X}=\limsup_{n\to \infty}
\frac 1n \log \|\mcl_\om^{(n)}f\|_X$. If $f\in X'$, we define $\lex{\om, f}{X'}
=\limsup_{n\to \infty}\frac 1n \log \|\mcl_\om^{(n)}f\|_{X'}$.
The following result will be needed in the proof. 

\begin{lem}[Coincidence of external exponents] \label{lem:AgainSameExp}
Let $\mc{R}$, $\mc{R'}$ and $\al$ be as in Theorem~\ref{thm:SameSp}. 
Then, for every $f\in X'$ for which $\lex{\om, f}{X'}> \al$
 one has that $\lex{\om, f}{X}=\lex{\om, f}{X'}$. 
\end{lem}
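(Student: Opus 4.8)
The plan is to prove only the inequality $\lex{\omega,f}{X}\ge\lex{\omega,f}{X'}$, the reverse being immediate from $\|\cdot\|_{X'}\ge\|\cdot\|_{X}$, and to do this by showing that the finite-dimensional ``fast bundle'' of $\mc{R}'$ carries the same Lyapunov exponents whether its fibres are equipped with the $X$-norm or the $X'$-norm.

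Fix $\omega$ in the full-measure set on which the conclusions of Theorem~\ref{thm:MET} hold for both $\mc R$ and $\mc{R}'$, and set $\lambda:=\lex{\omega,f}{X'}$. Since $\lambda>\alpha\ge\kappa(\mc{R}')$, $\lambda$ is an exceptional exponent of $\mc{R}'$, and using the $\mc{R}'$-splitting I would write $f=v'+w'$ with $v'$ in the equivariant finite-dimensional bundle $E'(\omega):=\bigoplus_{j:\,\lambda'_j\ge\lambda}V'_j(\omega)$ and $w'$ in the complementary slow space. Denoting by $\beta<\lambda$ the largest exponent of $\mc{R}'$ strictly below $\lambda$ (or $\kappa(\mc{R}')$ if there is none), Theorem~\ref{thm:MET} supplies, for each $\epsilon>0$, a finite $C_\omega$ with $\|\mcl^{(n)}_\omega w'\|_{X'}\le C_\omega e^{(\beta+\epsilon)n}$ for all $n$; since $\lex{\omega,f}{X'}=\lambda>\beta$, it follows that $v'\ne 0$ and $\lex{\omega,v'}{X'}=\lambda$.

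The core step is the claim that \emph{every} nonzero $g\in E'(\omega)$ has $\lex{\omega,g}{X}\ge\lambda$. Consider the finite-dimensional invertible cocycle $\mathcal B_\omega:=\mcl_\omega|_{E'(\omega)}\colon E'(\omega)\to E'(\sigma\omega)$, with $d':=\dim E'(\omega)$; it is invertible because $\mcl_\omega$ maps each $V'_j(\omega)$ bijectively onto $V'_j(\sigma\omega)$. Let $\nu_1\ge\dots\ge\nu_{d'}$ be the Lyapunov exponents of $\mathcal B$ computed with the $X$-norm on fibres and $\lambda^{E'}_1\ge\dots\ge\lambda^{E'}_{d'}$ those computed with the $X'$-norm; the latter are the numbers $\{\lambda'_j:\lambda'_j\ge\lambda\}$ with multiplicity, so in particular $\lambda^{E'}_{d'}=\lambda$. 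From $\lex{\omega,g}{X}\le\lex{\omega,g}{X'}$ and the max--min characterisation of the Lyapunov exponents of an invertible finite-dimensional cocycle one gets $\nu_i\le\lambda^{E'}_i$ for every $i$, hence $\sum_i\nu_i\le\sum_i\lambda^{E'}_i$. For the reverse inequality I would use the top-order volume growth: by Lemma~\ref{lem:exptsviavol}(c), $\tfrac1n\log\mathcal D_{d'}(\mathcal B^{(n)}_\omega)$ converges to $\sum_i\nu_i$ when computed with the $X$-norm and to $\sum_i\lambda^{E'}_i$ when computed with the $X'$-norm. Because on the fixed $d'$-dimensional equivariant bundle $\mathcal D_{d'}$ of a single step is (in the Hilbert setting of $H^2(A_R)$, exactly; in general with approximation numbers in place of exterior powers) the $d'$-dimensional volume-scaling factor of $\mathcal B_\omega$, one has
\[
\frac{\mathcal D^{X'}_{d'}(\mathcal B_\omega)}{\mathcal D^{X}_{d'}(\mathcal B_\omega)}=\frac{q\big(E'(\sigma\omega)\big)}{q\big(E'(\omega)\big)},
\]
where $q(V)\in(0,1]$ is the ratio of the volume of the $\|\cdot\|_{X'}$-unit ball of the finite-dimensional space $V$ to that of its $\|\cdot\|_X$-unit ball, which lies in $(0,1]$ because $\|\cdot\|_{X'}\ge\|\cdot\|_X$ forces the former ball inside the latter. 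Telescoping along the orbit, $\tfrac1n\log\mathcal D^{X'}_{d'}(\mathcal B^{(n)}_\omega)-\tfrac1n\log\mathcal D^{X}_{d'}(\mathcal B^{(n)}_\omega)=\tfrac1n\log q(E'(\sigma^n\omega))-\tfrac1n\log q(E'(\omega))$; the left side tends to $\sum_i\lambda^{E'}_i-\sum_i\nu_i$, the subtracted term tends to $0$, and each term $\tfrac1n\log q(E'(\sigma^n\omega))$ is $\le 0$, so the limit is $\le 0$. Therefore $\sum_i\lambda^{E'}_i\le\sum_i\nu_i$, and combining the two inequalities forces $\nu_i=\lambda^{E'}_i$ for all $i$; in particular $\nu_{d'}=\lambda$, which proves the claim.

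To finish, since $v'\ne 0$ lies in $E'(\omega)$ the claim gives $\lex{\omega,v'}{X}\ge\lambda$, hence (by the easy inequality) $\lex{\omega,v'}{X}=\lambda$, so $\|\mcl^{(n)}_\omega v'\|_X\ge e^{(\lambda-\epsilon)n}$ for infinitely many $n$, while $\|\mcl^{(n)}_\omega w'\|_X\le\|\mcl^{(n)}_\omega w'\|_{X'}\le C_\omega e^{(\beta+\epsilon)n}$ for all $n$. Choosing $\epsilon$ with $\beta+\epsilon<\lambda-\epsilon$ and using $\|\mcl^{(n)}_\omega f\|_X\ge\|\mcl^{(n)}_\omega v'\|_X-\|\mcl^{(n)}_\omega w'\|_X$ along that subsequence yields $\lex{\omega,f}{X}\ge\lambda-\epsilon$, and letting $\epsilon\downarrow 0$ completes the argument. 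I expect the main obstacle to be the core step: the delicate point is to organise the comparison so that passing between the $X$- and $X'$-volumes on $E'(\omega)$ introduces only the \emph{uniformly} bounded factor $q\le 1$, thereby avoiding any temperedness requirement on the (a priori only a.e.-finite) norm-equivalence constant on $E'(\omega)$, which a naive direct comparison of $\|\mcl^{(n)}_\omega v'\|_X$ with $\|\mcl^{(n)}_\omega v'\|_{X'}$ would seem to demand.
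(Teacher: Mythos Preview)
Your overall architecture is sound, but the core volume-comparison step contains a sign error that makes the argument establish the easy direction rather than the hard one. For an invertible linear map $A\colon V\to W$ between $d'$-dimensional spaces, fix a Lebesgue measure on each and write $\mathrm{vol}^{X}(A)$, $\mathrm{vol}^{X'}(A)$ for the ratio of the measure of the image of the respective unit ball to that of the target unit ball; a direct computation gives
\[
\frac{\mathrm{vol}^{X'}(A)}{\mathrm{vol}^{X}(A)}=\frac{q(V)}{q(W)},
\]
so that for $\mathcal B_\omega\colon E'(\omega)\to E'(\sigma\omega)$ the correct identity is $\dfrac{\mathrm{vol}^{X'}(\mathcal B_\omega)}{\mathrm{vol}^{X}(\mathcal B_\omega)}=\dfrac{q(E'(\omega))}{q(E'(\sigma\omega))}$, the reciprocal of what you wrote. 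Telescoping then yields
\[
\tfrac1n\log\mathrm{vol}^{X'}(\mathcal B^{(n)}_\omega)-\tfrac1n\log\mathrm{vol}^{X}(\mathcal B^{(n)}_\omega)
=\tfrac1n\log q(E'(\omega))-\tfrac1n\log q(E'(\sigma^n\omega)),
\]
and since $q\le 1$ gives $-\log q(E'(\sigma^n\omega))\ge 0$, the right side is bounded below by $\tfrac1n\log q(E'(\omega))\to 0$. Hence you only recover $\sum_i\lambda^{E'}_i\ge\sum_i\nu_i$, which is the inequality you already had from $\|\cdot\|_X\le\|\cdot\|_{X'}$.

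To obtain the hard direction you would need $\tfrac1n\log q(E'(\sigma^n\omega))\to 0$, i.e.\ exactly the temperedness of the norm-equivalence constant along the orbit that you were hoping to bypass; the one-sided bound $q\le 1$ cannot substitute for it. The paper handles this directly and much more cheaply: on the finite-dimensional space $V'_j(\omega)$ it takes a positive measurable $c(\omega)$ with $\|g\|_X\ge c(\omega)\|g\|_{X'}$, writes $\|\mcl^{(n)}_\omega f\|_X\ge c(\sigma^n\omega)\|\mcl^{(n)}_\omega g\|_{X'}-\|\mcl^{(n)}_\omega h\|_{X'}$, and passes to a positive-density set of $n$'s on which $c(\sigma^n\omega)$ is bounded below (Poincar\'e recurrence). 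Your route can be rescued the same way, restricting to $n$'s where $q(E'(\sigma^n\omega))$ is bounded below, but then the volume machinery adds nothing to the paper's two-line norm comparison. (A minor side point: $\mathcal D_{d'}$ from Lemma~\ref{lem:exptsviavol} is only comparable to, not equal to, a multiplicative volume, so for an exact telescoping identity you must use the Haar-measure volume rather than $\mathcal D_{d'}$.)
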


\begin{rem}
A slightly different result was established in  \cite[Theorem 3.3]{StancevicFroyland}. 
We include a proof of Lemma~\ref{lem:AgainSameExp} for completeness.
\end{rem}
%\begin{proof}
%Assume, for a contradiction, that there exists $f\in X'$ with $\lex{\om,f}{X'} \geq \al$ and $\lex{\om,f}{X}<\lex{\om,f}{X'}:=\lam_j$.
% Let us denote by $\Pi_{\leq j}$ the projection on corresponding to the direct sum of Oseledets spaces of $\mc{R}$ with Lyapunov exponent at least $\lam_j$, and by $\Pi'_{\leq j}$ the corresponding one for $\mc{R'}$.
%Then, 
%\begin{equation}\tag{*}\label{eq:*}
%\Pi_{\leq j}(\om)(f)=0 \text{ and } \Pi'_{\leq j}(\om)(f)\neq 0.
%\end{equation}
%Also, \cite[Theorem 3.3]{FroylandStancevic} shows that for every $1\leq l \leq j$, and every $g\in V'_{l}(\om)$ it happens that $\lex{\om, g}{X}=\lex{\om,g}{X'}$. Therefore, $V'_{l}(\om)\subseteq V_l(\om)$. Thus, for every $g\in X'$, $\Pi_{\leq j}(\om)\Pi'_{\leq j}(\om)(g)=\Pi'_{\leq j}(\om)(g)$.
%
%On the other hand, $\lex{\om,f-\Pi'_{\leq j}(\om)(f)}{X}\leq \lex{\om,f-\Pi'_{\leq j}(\om)(f)}{X'}<\lam_j$.
%Hence, $\Pi_{\leq j}(\om)\big( f-\Pi'_{\leq j}(\om)(f) \big)=0$. Therefore, $\Pi_{\leq j}(\om) (f)=\Pi_{\leq j}(\om)\Pi'_{\leq j}(\om)(f)=\Pi'_{\leq j}(\om)(f)$, which contradicts \eqref{eq:*}. The lemma follows.
%\end{proof}

\begin{proof}
Let $f\in X'$ satisfy $\lambda_{X'}(\omega,f)>\alpha$. Clearly 
$\lambda_{X}(\omega,f)\le \lambda_{X'}(\omega,f)$. 
Since $\lambda_{X'}(\omega,f)>\alpha$, there exists a $j$
such that $\lambda_{X'}(\omega,f)=\lambda'_j$. Write 
$V'_j(\omega)$ for the corresponding Oseledets subspace 
of $X'$ and note that $V'_j(\omega)$ is also a subspace of $X$. 
Since $V'_j(\omega)$ is finite-dimensional,
there exists a positive measurable function $c(\omega)$ such that 
$\|g\|_{X}\ge c(\omega)\|g\|_{X'}$ for all $g\in V'_j(\omega)$. 
Now write $f=g+h$ with $g\in V'_j(\omega)$ and $h\in F_j'(\omega)$. 
We have
$$
\|\LL^{(n)}_\omega f\|_X\ge \|\LL^{(n)}_\omega g\|_X-
\|\LL^{(n)}_\omega h\|_X\ge \|\LL^{(n)}_\omega g\|_{X'}\,c(\sigma^n\omega)
-\|\LL^{(n)}_\omega h\|_{X'}
$$
Taking a limit along a positive density sequence of $n$'s where 
$c(\sigma^n\omega)$ is bounded away from 0, we see that $\lambda_X(\omega,f)\geq
\lambda_{X'}(\omega,f)$, so that the two quantities agree.
\end{proof}

\begin{proof}[Proof of Theorem~\ref{thm:SameSp}]
We prove the result by induction. 
Suppose $\lambda_i=\lambda'_i$ and $V_i(\omega)=V_i'(\omega)$ for
$\PP$-a.e.\ $\omega$ for $i=1,\ldots,j-1$, with $j\geq 1$. 
If $\lambda'_j>\alpha$, then letting $f\in V_j'(\omega)$ and applying the
lemma above, we see that $\lambda_j\ge \lambda'_j$. 

%Conversely, let $f_1,\ldots,f_k$ be a basis for $V_j(\omega)$
%and let $g_1,\ldots,g_k$ be elements of $X'$ sufficiently close in $\|\cdot\|_X$
%distance to $f_1,\ldots,f_k$ that $\Pi_j(g_1),\ldots,\Pi_j(g_k)$ are 
%linearly independent, where $\Pi_j$ is the projection onto $V_j(\omega)$ with 
%kernel $E_{j-1}(\omega)\oplus F_j(\omega)$. 
%Set $h_i=\Pi_{F_{j-1}(\omega)\parallel
%E_{j-1}(\omega)}g_i=(I-\Pi_{E_{j-1}(\omega)\parallel
%F_{j-1}(\omega)})g_i$.
%Since by the hypothesis $E_{j-1}(\omega)\subset X'$, we see $h_i\in X'$.
% The functions $\Pi_jh_1,\ldots,\Pi_jh_k$ remain
%independent (because $\Pi_jh_i=\Pi_jg_i$), $U=\lin\{h_1,\ldots,h_k\}$ is a $k$-dimensional subspace
%of $X'$ such that if $h\in U\setminus\{0\}$, then 
%$\lambda_{X'}(\om,h)=\lambda_X(\om,h)=\lambda_j$. 

Using continuity of $\Pi_{F_{j-1}(\omega)\parallel E_{j-1}(\omega)}$
and the induction hypothesis, $F_{j-1}(\omega)\cap X'=
\Pi_{F_{j-1}(\omega)\parallel E_{j-1}(\omega)}(X')$ is dense in $F_{j-1}(\omega)$
and therefore $\Pi_{E_j(\omega)\parallel F_j(\omega)}(X'\cap F_{j-1}(\omega))=
V_j(\omega)$. Let $U$ be a subspace of $X'\cap F_{j-1}(\omega)$ of dimension $m_j=\dim V_j(\omega)$
such that $\Pi_{E_j(\omega)\parallel F_j(\omega)}(U)=V_j(\omega)$. Now if
$h\in U\setminus\{0\}$, then 
$\lambda_{X'}(\om,h)=\lambda_X(\om,h)=\lambda_j$. 

It follows that $\lambda_j$ is
an exceptional exponent of $\mathcal R'$ and
$\lambda_{j}'\ge \lambda_j$, so that $\lambda_j=\lambda_{j'}$.
Since $U$ is $m_j$-dimensional, we claim that $V'_j(\omega)$ is of dimension
at least $m_j$. To see this, notice that if not, there would be a non-zero element
of $U$ whose projection under $\Pi_{V'_j(\omega)}$ would be trivial, so
that the growth rate of this element would be strictly smaller than
$\lambda_j$, giving a contradiction.
By Lemma~\ref{lem:AgainSameExp}, we see that $V'_j(\omega)\subset V_j(\omega)$
and the above argument shows that $\dim V'_j(\omega)\ge \dim V_j(\omega)$,
so that $V_j'(\omega)=V_j(\omega)$ as required.
\end{proof}

\subsection{Example}
We consider finite Blaschke products of the form $B(z)= z \prod_{j=1}^n\frac{z+\zeta_j}{1+\bar{\zeta}_jz}$.
Note that $B(0)=0$ and $B'(0)=\prod_{j=1}^n \zeta_j$.
%it follows from  \cite{BJS} that the spectrum of $\mcl_B$ acting on $H^2(A_R)$ is 
%$\Sigma=\{1\}\cup\{\prod_{j=1}^n \zeta_j^k : k \in \N\} \cup
%\{\prod_{j=1}^n \bar{\zeta}_j^k : k \in \N\}\cup \{0\}$.
Furthermore, \cite[Proposition 1]{Martin} ensures that $\inf_{|z|=1} |B'(z)|
\geq 1+ \sum_{j=1}^n\frac{1-|\zeta_j|}{1+|\zeta_j|}>1$. 

For $0<a<1$, let $B_a(z)= z \big(\frac{z-a}{1-a z} \big)^2$.
Notice that $B_{a}'(0)=a^2$ and $\inf_{|z|=1} |B_a'(z)|\geq 1+\frac{2(1-a)}{1+a}$.
Let $\Omega=\{0,1\}^\Z$, $\sigma$ be the shift map,
and $\PP$ be the Bernoulli measure with $\PP([0])=0.5$.
Let $\LL_0$ be the Perron-Frobenius operator of $B_{0.5}$,  $\LL_1$ be the 
Perron-Frobenius operator of $B_{0.6}$
and consider the operator cocycle 
$\mathcal R=(\Omega,\PP,\sigma,X,\LL)$
generated by $\LL_\omega:=\LL_{\omega_0}$, 
acting on $X=C^3(S^1)$, as well as the dense restriction
$\mathcal R'=(\Omega,\PP,\sigma,X',\LL|_{X'})$, where $X'=H^2(A_R)$
(since both $B_{0.5}$ and $B_{0.6}$ are expanding, $R$ may be chosen 
so that $r_{B_a}(R)<R$ for $a\in \{0.5,0.6\}$).
By Theorem~\ref{thm:LyapSpect}, the Lyapunov spectrum of $\mathcal R'$ is 
$\Sigma(\mathcal{R}')=\{0\} \cup \{n\Lambda :n\in \N\}$, where 
$\Lambda= \log (0.5) + \log(0.6) > -1.204$.

Note that $\inf_{|z|=1, a\in\{0.5,0.6\}} |B_a'(z)|\geq 1+2\frac{0.4}{1.6}=1.5=:\gamma$.
The work of Ruelle \cite{Ruelle} ensures the essential spectral radius of each of 
$\mcl_0, \mcl_1$ acting on the space of $C^r$ functions satisfies 
$\rho_e(\mcl_B)\leq 1/\gamma^r$.
This result may also be established relying on Lasota-Yorke type inequalities, 
for example, following the strategy presented in \cite[Lemma 3.3 \&
Corollary 3.4]{Gouezel-LimThms} for the $C^1$ case. A random version of Ruelle's 
result hence follows from \cite[Lemma C.5]{GTQ-ETDS}, which provides a random 
version of Hennion's theorem. 
Thus, we have that $\kappa(\mathcal R) \leq -r \log \gamma= -3\log 1.5 <-1.21<\Lambda<0$.
Theorem~\ref{thm:SameSp} implies that $\Lambda$ is an exceptional Lyapunov exponent of $\mathcal R$.

We would like to acknowledge support from the Australian Research Council (DE160100147) (CGT)
and NSERC (AQ). We would like to thank Universities of Queensland and Victoria for hospitality. 
Our research benefitted from a productive environment at the 2018 Sydney Dynamics Group Workshop. We
would also like to thank Mariusz Urba\'nski and Matteo Tanzi for useful suggestions. 

\bibliographystyle{abbrv}
\bibliography{PFCollapse}

\end{document}